\numberwithin{equation}{section}
\newtheorem{thm}{Theorem}[section]
\newtheorem{cor}[thm]{Corollary}
\newtheorem{lem}[thm]{Lemma}
\newtheorem{defn}[thm]{Definition}
\newtheorem{rmk}[thm]{Remark}
\newcommand{\pt}{\partial}
\begin{document}

\title[$\alpha$-Dirac-harmonic maps from closed surfaces]{$\alpha$-Dirac-harmonic maps from closed surfaces}
\author{J\"urgen Jost, Jingyong Zhu}

\address{Max Planck Institute for Mathematics in the Sciences, Inselstrasse 22, 04103 Leipzig, Germany}
\email{jost@mis.mpg.de}
\address{Max Planck Institute for Mathematics in the Sciences, Inselstrasse 22, 04103 Leipzig, Germany}
\email{jizhu@mis.mpg.de}

\subjclass[2010]{58E05;58E20}
\keywords{Palais-Smale condition; $\alpha$-Dirac-harmonic map; nonlinear perturbation.}


\begin{abstract}$\alpha$-Dirac-harmonic maps are variations of Dirac-harmonic maps, analogous to $\alpha$-harmonic maps that were introduced by Sacks-Uhlenbeck to attack the existence problem for harmonic maps from surfaces. For $\alpha >1$, the latter are known to satisfy a Palais-Smale condtion, and so, the technique of Sacks-Uhlenbeck consists in constructing $\alpha$-harmonic maps for $\alpha >1$ and then letting $\alpha \to 1$. The extension of this scheme to Dirac-harmonic maps meets with several difficulties, and in this paper, we start attacking those. We first prove the existence of nontrivial perturbed $\alpha$-Dirac-harmonic maps when the target manifold has nonpositive curvature. The regularity theorem then shows that  they are actually smooth. By $\varepsilon$-regularity and suitable perturbations, we  can then show that such a sequence of perturbed $\alpha$-Dirac-harmonic maps converges to a smooth nontrivial $\alpha$-Dirac-harmonic map. 
\end{abstract}
\maketitle


\tableofcontents
\section{Introduction}
Harmonic maps from closed Riemann surfaces and their variants are important both in mathematics as tools to probe the geometry of a Riemannian manifold and in physics as ground states of the nonlinear sigma model of quantum field theory. They represent a borderline case for the Palais-Smale condition and therefore cannot be directly obtained by standard tools. In \cite{sacks1981existence}, Sacks-Uhlenbeck introduced the notion of $\alpha$-harmonic maps which for $\alpha >1$ (which we shall always assume in this paper) makes the problem subcritical for the Palais-Smale condition. To get the existence of harmonic maps, they consider the convergence of $\alpha$-harmonic maps when $\alpha$ decreases to $1$. In general, there are bubbles (harmonic spheres) preventing the smooth convergence of $\alpha$-harmonic maps. Luckily, a nonpositive curvature condition on the target manifold can exclude such bubbles, and they can therefoe obtain  the existence of harmonic maps into such manifolds in any given homotopy class.

There is another harmonic map type problem that is even more difficult and subtle, even though it has a deep geometric significance. Motivated by the supersymmetric nonlinear sigma model from quantum field theory, see \cite{deligne1999quantum}\cite{jost2009geometry}, Dirac-harmonic maps from spin Riemann surfaces into Riemannian manifolds were introduced in \cite{chen2006dirac}.  Mathematically, they are generalizations of the classical harmonic maps and harmonic spinors. The action functional for Dirac-harmonic maps from spin Riemann surfaces is conformally invariant, like the original action functional for harmonic maps. From the PDE point of view, Dirac-harmonic maps are solutions of an elliptic system consisting of a second order equation and a first order Dirac  equation, and the standard PDE methods do not be directly apply to get the regularity of weak solutions.

In fact, the existence of Dirac-harmonic maps from closed surfaces is a tough problem. Different from the Dirichlet problem (see \cite{jost2017existence} and the references given there), even if there is no bubble, the limit may still be trivial, in the sense that  the spinor part $\psi$ vanishes identically. So far, there are only a few results about the Dirac-harmonic maps from closed surfaces, see  \cite{ammann2013dirac} and \cite{chen2015dirac} for some existence results of uncoupled Dirac-harmonic maps (here uncoupled means that the map part is harmonic) based on  index theory and the Riemann-Roch theorem, respectively. Another possible approach to  this problem uses the  heat flow, like Branding\cite{branding2014evolution} and  Wittmann \cite{wittmann2017short}. Other important approaches come from  critical point theory and homology theory, see Isobe \cite{isobe2012existence}\cite{isobe2017morse}, but so far, this only works in the one-dimensional case, where one speaks of Dirac-geodesics. 

In this paper, we want to start an approach via critical point theory. 
In critical point theory, the Palais-Smale condition is a very strong and useful tool. In general, it fails for the energy functional of harmonic maps from spheres \cite{jost2017riemannian}. Therefore, it is not expected in the case of Dirac-harmonic maps. Actually, even for the $\alpha$-Dirac-harmonic maps, it is still unknown, while it is true in the $\alpha$-harmonic maps case. In \cite{isobe2012existence}, Isobe proved the Palais-Smale condition for the  action functional of nonlinear or perturbed Dirac geodesics, 
\begin{equation}
\mathcal{L}(\phi,\psi)=\frac12\int_{S^1}|\dot{\phi}|^2ds+\frac12\int_{S^1}\langle\psi,\slashed{D}\psi\rangle_{\Sigma S^1\otimes\phi^*TN}ds-\int_{S^1}F(\phi,\psi)ds,
\end{equation}
where $s$ is the angular coordinate on $S^1$, $\dot\phi$ denotes the $s$-derivative of $\phi$, and $F$ is a nonlinear perturbation satisfying some growth and decay conditions with respect to $\psi$.  

Given this state of affairs, in this paper, we shall systematically study critical point theory according to Palais-Smale for $\alpha$-Dirac harmonic maps for $\alpha >1$. Since the natural space on which the variational integral is defined is only a Banach, but not a Hilbert space, we need to develop appropriate Banach space tools, like pseudo-gradients. And since our functionals are not bounded from below, we need to look for critical points other than minima, and therefore, we shall need to carefully investigate the  Palais-Smale condition for our  action functional. Moreover, since variational schemes produce only weak solutions, we shall need to deal with the issue of their regularity. 

In more precise terms, we shall first prove the Palais-Smale condition for the  action functional $\mathcal{L}^\alpha$ of perturbed $\alpha$-Dirac-harmonic maps from closed Riemann surfaces:
\begin{equation}
\mathcal{L}^{\alpha}(\phi,\psi)=\frac12\int_{M}(1+|d\phi|^2)^{\alpha}+\frac12\int_{M}\langle\psi,\slashed{D}\psi\rangle_{\Sigma M\otimes\phi^*TN}-\int_{M}F(\phi,\psi).
\end{equation}
The difficulty is to prove the convergence of the map parts, which is solved by combining the ideas in \cite{urakawa2013calculus} and \cite{isobe2012existence}. With the Palais-Smale condition in hand, we want to prove the existence of nontrivial perturbed $\alpha$-Dirac-harmonic maps. Then an obstacle arises when we deform the configuration space. Since our configuration space is no longer a Hilbert manifold, we have to use the pseudo-gradient flow. However, we need the spinor part of the pseudo-gradient vector field to be of a certain  nice structure. Once having such a special pseudo-gradient vector field, by the deformation lemma, we can prove the following result. As usual in the calculus of variations, we shall need certain standard growth conditions (F1)-(F5) on the nonlinearity $F$; they are\\

\noindent
(F1) \  There exist $p\in(2,4)$ and $C>0$ such that 
\begin{equation*}K\label{f1}
|F_\psi(\phi,\psi)|\leq C(1+|\psi|^{p-1})
\end{equation*}
for any $(\phi,\psi)\in\mathcal{F}^{\alpha,1/2}(M,N)$.

\noindent
(F2) \ There exist $\mu>2$ and $R_1>0$ such that
\begin{equation*}
0<\mu{F}(\phi,\psi)\leq\langle F_{\psi}(\phi,\psi),\psi\rangle
\end{equation*}
for any $(\phi,\psi)\in\mathcal{F}^{\alpha,1/2}(M,N)$ with $|\psi|\geq R_1$.

\noindent
(F3) \  There exist $q<4$ and $C>0$ such that 
\begin{equation*}
F_\phi(\phi,\psi)\leq C(1+|\psi|^q)
\end{equation*}
for any $(\phi,\psi)\in\mathcal{F}^{\alpha,1/2}(M,N)$.

\noindent
(F4) \ For any $(\phi,\psi)\in\mathcal{F}$, we have
\begin{equation*}
 F(\phi,\psi)\geq0.   
\end{equation*}

\noindent
(F5) \ As $|\psi|\to0$, we have 
\begin{equation*}
  F(\phi,\psi)=o(|\psi|^2) \text{ uniformly in }\phi\in N.
\end{equation*}
\begin{thm}\label{thm1.1}
Let $M$ be a closed surface and $N$ a compact manifold. Suppose $F$ satisfies $(\rm F1)-(F5)$ with $\frac{4\alpha}{3\alpha-2}\leq\mu\leq p\leq\frac34\mu+1$ for $\alpha\in(1,2]$. Let $R_1,R_2$ and $\rho$ be as in Lemmas \ref{estimateofa} and \ref{estimateofb}. Then we have $m_\theta<c_\theta<\infty$ and $c_\theta$ is a critical value of $\mathcal{L}^\alpha$ in $\mathcal{F}$.
\end{thm}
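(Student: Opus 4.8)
\emph{Proof idea.} I would produce $c_\theta$ as a minimax (mountain--pass/linking) critical value of $\mathcal{L}^\alpha$ over the sector indexed by $\theta$, and then deduce that it is attained by combining the Palais--Smale condition established earlier with the deformation lemma associated to the special pseudo--gradient vector field.

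\textbf{Step 1 (the two ends of the minimax).} Since $\alpha>1$, the $\alpha$--energy $E^\alpha(\phi)=\frac12\int_M(1+|d\phi|^2)^\alpha$ is coercive and weakly lower semicontinuous on each homotopy sector of $W^{1,2\alpha}(M,N)$, hence attains a minimizer $\phi_*$ there; since $F(\phi,0)=0$ and $F_\psi(\phi,0)=0$ by (F4)--(F5), the pair $(\phi_*,0)$ is a trivial critical point of $\mathcal{L}^\alpha$, and I set $m_\theta:=\mathcal{L}^\alpha(\phi_*,0)=E^\alpha(\phi_*)$. For the far end, Lemma \ref{estimateofb} provides, for the constant $R_2$ appearing there, a configuration $w_\theta$ whose spinor has size $R_2$ and with $\mathcal{L}^\alpha(w_\theta)\le m_\theta$; this uses the Ambrosetti--Rabinowitz condition (F2) (which, once integrated, gives $F\gtrsim|\psi|^\mu$ with $\mu>2$ for large $\psi$, so that $-\int F$ eventually dominates the Dirac term along a suitable ray), together with the sign (F4) and the lower bound $\mu\ge\frac{4\alpha}{3\alpha-2}$ needed to stay compatible with the $\alpha$--energy. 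Lemma \ref{estimateofa} supplies the separation: exploiting (F4), (F5) and the upper bound $p\le\frac34\mu+1$, it produces $\rho,a>0$ and a ``collar'' at spinor--radius $\rho$ on which $\mathcal{L}^\alpha\ge m_\theta+a$ and which links the two ends, in the sense that every admissible deformation from $(\phi_*,0)$ to $w_\theta$ must cross it.

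\textbf{Step 2 (the minimax value and the deformation argument).} Let $\Gamma_\theta$ be the class of admissible paths (deformations) joining $(\phi_*,0)$ to $w_\theta$ furnished by Lemmas \ref{estimateofa}--\ref{estimateofb}, and set $c_\theta:=\inf_{\gamma\in\Gamma_\theta}\max_{\gamma}\mathcal{L}^\alpha$. The separation of Step 1 forces $c_\theta\ge m_\theta+a>m_\theta$, while the finiteness and continuity of $\mathcal{L}^\alpha$ on $\mathcal F$ — which rest on (F1), (F3) and the Sobolev embeddings $W^{1,2\alpha}(M)\hookrightarrow C^0(M)$ and $H^{1/2}(M)\hookrightarrow L^4(M)$ on the closed surface $M$ — make $\max_{\gamma}\mathcal{L}^\alpha$ finite for any single $\gamma\in\Gamma_\theta$, so $c_\theta<\infty$. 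Suppose now $c_\theta$ were a regular value; since there are then no critical points at level $c_\theta$, the Palais--Smale condition rules out any Palais--Smale sequence at that level, so the deformation lemma — valid on the Banach manifold $\mathcal F$ precisely because the spinor component of the pseudo--gradient vector field has the required structure — yields $\eps\in(0,\frac a2)$ and a continuous $\eta\colon\mathcal F\to\mathcal F$ with $\eta(\{\mathcal{L}^\alpha\le c_\theta+\eps\})\subset\{\mathcal{L}^\alpha\le c_\theta-\eps\}$ and $\eta=\mathrm{id}$ on $\{\mathcal{L}^\alpha\le c_\theta-\eps\}$. Because $\mathcal{L}^\alpha(\phi_*,0)=m_\theta$ and $\mathcal{L}^\alpha(w_\theta)\le m_\theta$ are both $<c_\theta-\eps$, the endpoints are left fixed, so for any $\gamma\in\Gamma_\theta$ with $\max_{\gamma}\mathcal{L}^\alpha<c_\theta+\eps$ the deformed path $\eta\circ\gamma$ still lies in $\Gamma_\theta$ but has $\max_{\eta\circ\gamma}\mathcal{L}^\alpha\le c_\theta-\eps$, contradicting the definition of $c_\theta$. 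Hence $c_\theta$ is a critical value of $\mathcal{L}^\alpha$ in $\mathcal F$.

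\textbf{Expected main obstacle.} The minimax bookkeeping above is routine; the substance lies in the two cited lemmas, and I expect the separation estimate of Lemma \ref{estimateofa} to be the genuinely hard part. The Dirac quadratic form $\frac12\int_M\langle\psi,\slashed{D}\psi\rangle$ is strongly indefinite, so $(\phi_*,0)$ is a saddle rather than a local minimum and a naive ``small sphere in $\psi$'' yields no lower bound; the collar on which $\mathcal{L}^\alpha\ge m_\theta+a$ must be constructed by balancing the nonnegativity (F4) and the $o(|\psi|^2)$ decay (F5) of $F$ against the indefinite term, which is only consistent inside the window $\frac{4\alpha}{3\alpha-2}\le\mu\le p\le\frac34\mu+1$, and $\Gamma_\theta$ has to be set up so that this collar genuinely links the two ends. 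A second, structural, difficulty — already isolated in the paper — is that $\mathcal F$ is merely a Banach manifold, so Step 2 must be run with a pseudo--gradient whose spinor component respects the bundle $\Sigma M\otimes\phi^*TN$; once that and the Palais--Smale condition are in hand, the argument closes.
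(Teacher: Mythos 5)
There is a genuine gap: you have set the minimax up as a two--point mountain pass, but $\mathcal{L}^\alpha$ is strongly indefinite in the spinor variable, and your Step 1 ``separation'' is false as stated. The lower bound of Lemma \ref{estimateofb} holds only on $S_{\theta;\rho}\subset\mathcal{F}^+_\theta$, i.e.\ on the sphere of radius $\rho$ inside the \emph{positive} spectral subspace of $\slashed{D}_\phi$; at fixed spinor norm $\rho$ in the full space the functional is not bounded below by $m_\theta+a$, since along the infinite--dimensional negative spectral directions the Dirac term is negative. Consequently a path from $(\phi_*,0)$ to your $w_\theta$ can slip around any ``collar at spinor--radius $\rho$'' through $\mathcal{F}^-$, and no crossing is forced; the crossing property is not a consequence of the two estimates but must be \emph{built in} via a linking construction. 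This is exactly what the paper does: it takes $Q_{\theta;R_1,R_2}$ (a ball in $\mathcal{F}^-_{\theta,\phi_0}\oplus\mathcal{F}^0_{\theta,\phi_0}$ plus a segment $[0,R_2]e^+$ in one positive direction over the minimizer $\phi_0$), takes $S_{\theta;\rho}\subset\mathcal{F}^+_\theta$, and restricts the admissible deformations to the class $\Gamma(Q_{\theta;R_1,R_2})$ of Definition \ref{Gamma}, whose spinor components are parallel transports of a spectral--compatible linear map plus a compact map; only for this restricted class do Lemmas \ref{QSlink} and \ref{QSintersect} give $\gamma(Q)\cap S_{\theta;\rho}\neq\emptyset$, hence $c_\theta\geq b_{\theta;\rho}>m_\theta$. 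Note also that $c_\theta$ in the statement is \emph{defined} through this class, so producing some other path--minimax value being critical would not prove the theorem. Moreover, you have the two lemmas reversed: Lemma \ref{estimateofa} (using (F2), (F4)) gives $\sup_{\partial Q_{\theta;R_1,R_2}}\mathcal{L}^\alpha\leq m_\theta$ (the ``far end'' is the whole relative boundary of $Q$, not a single configuration $w_\theta$), while Lemma \ref{estimateofb} (using (F1), (F4), (F5)) gives the lower bound on $S_{\theta;\rho}$; and the window $\frac{4\alpha}{3\alpha-2}\leq\mu\leq p\leq\frac34\mu+1$ enters through the Palais--Smale condition (Theorem \ref{ps}), not through the separation estimate.

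Your Step 2 has a corresponding gap: to get a contradiction you must show that the deformation produced by the negative pseudo--gradient flow \emph{belongs to} $\Gamma(Q_{\theta;R_1,R_2})$ and that this class is closed under composition, since otherwise $\eta\circ\gamma$ is not admissible for the infimum defining $c_\theta$. This is nontrivial on the bundle $\Sigma M\otimes\phi_t^*TN$ over the Banach manifold of maps: the paper needs the special pseudo--gradient of Theorem \ref{pgvf} (vertical part exactly $a\nabla^V\mathcal{L}^\alpha$), the parallel--transport rewriting \eqref{modifiedpgf}, Lemma \ref{tildeD}, and the integrated representation \eqref{finalexpression} of $\tilde\psi_t$ as ``exponential of a spectral--compatible operator plus compact'' to verify membership in $\Gamma$, and then the fact that $a_{\theta;R_1,R_2}\leq m_\theta<c_\theta-\bar\varepsilon$ to see the flow fixes $\partial Q_{\theta;R_1,R_2}$. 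You correctly identify the pseudo--gradient issue in your closing paragraph, but the argument as written does not carry it out, and without the linking sets and the structured deformation class the contradiction at the end cannot be closed.
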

 Here $\theta$ is a given homotopy class of maps, $m_\theta$ is the minimizing $\alpha$-energy in $\theta$, $c_\theta$ is defined as:
 \begin{equation}
c_\theta=\inf\limits_{\gamma\in\Gamma(Q_{\theta;R_1,R_2})}\sup\mathcal{L}^\alpha(\gamma(Q_{\theta;R_1,R_2})),
\end{equation}
and $\Gamma(Q_{\theta;R_1,R_2})$ is defined in Definition \ref{Gamma}, $Q_{\theta;R_1,R_2}$ is defined by \eqref{QR1R2}. In particular, when $N$ has nonpositive curvature, our solution is nontrivial.

\begin{cor}\label{cor1.2}
Let $M$ be a closed surface and $N$ a compact Rienmannian manifold with non-positive curvature.
Suppose $F$ satisfies $(\rm F1)-(F5)$ with $\frac{4\alpha}{3\alpha-2}\leq\mu\leq p\leq\frac34\mu+1$ for $\alpha\in(1,2]$. Then for any homotopy class $\theta\in[M,N]$, there exists a non-trivial solution $(\phi,\psi)\in W^{1,2\alpha}(M,N)\times H^{1/2}(M,\Sigma M\otimes\phi^*TN)$ to the perturbed $\alpha$-Dirac-harmonic map equations \eqref{elnonlinearalpha1} and \eqref{elnonlinearalpha2} with $\phi\in\theta$.
\end{cor}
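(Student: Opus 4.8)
The plan is to extract the corollary from Theorem~\ref{thm1.1} together with the standard convexity of the $\alpha$-energy on a nonpositively curved target. First, apply Theorem~\ref{thm1.1} to the given homotopy class $\theta\in[M,N]$: since $F$ satisfies (F1)--(F5) with $\frac{4\alpha}{3\alpha-2}\le\mu\le p\le\frac34\mu+1$ for $\alpha\in(1,2]$, we obtain $m_\theta<c_\theta<\infty$ together with a critical point $(\phi,\psi)\in\mathcal{F}$ of $\mathcal{L}^\alpha$ at level $c_\theta$. By the construction of the min-max family $\Gamma(Q_{\theta;R_1,R_2})$ the map part stays in $\theta$, so $\phi\in\theta$; and membership $(\phi,\psi)\in\mathcal{F}=\mathcal{F}^{\alpha,1/2}(M,N)$ is precisely the statement that $\phi\in W^{1,2\alpha}(M,N)$ and $\psi\in H^{1/2}(M,\Sigma M\otimes\phi^*TN)$. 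A routine first-variation computation identifies $d\mathcal{L}^\alpha(\phi,\psi)=0$ with the weak form of the perturbed $\alpha$-Dirac-harmonic map system \eqref{elnonlinearalpha1}--\eqref{elnonlinearalpha2}. (The regularity theorem quoted in the introduction would upgrade $(\phi,\psi)$ to a smooth solution, but this is not needed for the statement.) Thus only nontriviality, i.e.\ $\psi\not\equiv0$, remains, and this is where the curvature hypothesis enters.

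Suppose, toward a contradiction, that $\psi\equiv0$. By (F5) and continuity of $F$ we have $F(\cdot,0)\equiv0$, and since $\phi'\mapsto F(\phi',0)$ is identically zero, also $F_\phi(\cdot,0)\equiv0$. Inserting $\psi=0$ into \eqref{elnonlinearalpha1} therefore annihilates both the curvature term quadratic in $\psi$ and the $F_\phi$-term, so $\phi$ is a weak $\alpha$-harmonic map into $N$, and $\mathcal{L}^\alpha(\phi,0)=\frac12\int_M(1+|d\phi|^2)^\alpha=:E^\alpha(\phi)$. I claim $E^\alpha(\phi)=m_\theta$. For $N$ compact with $\mathrm{Sec}_N\le0$, any two maps in $\theta$ are joined by a geodesic homotopy $\phi_s$, obtained by lifting to the Cartan--Hadamard cover $\widetilde N$ and taking the unique geodesic between the lifts; along such a homotopy the classical Bochner-type identity gives
\[
\partial_s^2\,\tfrac12|d\phi_s|^2(x)=\sum_i\bigl(|\nabla_i\partial_s\phi_s|^2+\langle R^N(\partial_s\phi_s,\partial_i\phi_s)\partial_s\phi_s,\partial_i\phi_s\rangle\bigr)\ \ge\ 0,
\]
so $s\mapsto|d\phi_s|^2(x)$ is convex; composing with the convex increasing function $r\mapsto(1+r)^\alpha$ and integrating over $M$ shows $s\mapsto E^\alpha(\phi_s)$ is convex. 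Since $\phi$ is $\alpha$-harmonic, the first variation of $E^\alpha$ along any geodesic homotopy starting at $\phi$ vanishes, so convexity forces $E^\alpha(\phi)\le E^\alpha(\phi_1)$ for every competitor $\phi_1\in\theta$; hence $E^\alpha(\phi)=\inf_{\theta}E^\alpha=m_\theta$. But then $c_\theta=\mathcal{L}^\alpha(\phi,0)=E^\alpha(\phi)=m_\theta$, contradicting $m_\theta<c_\theta$. Therefore $\psi\not\equiv0$, and $(\phi,\psi)$ is the claimed nontrivial solution.

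I expect the \textbf{main obstacle} to be making the convexity/minimality step rigorous at the $W^{1,2\alpha}$ level: one must check that the geodesic homotopy between the $\alpha$-harmonic map $\phi$ and a general $W^{1,2\alpha}$ competitor again lies in $W^{1,2\alpha}\cap\theta$, that its initial velocity field lies in the Sobolev space in which the first variation of $E^\alpha$ is justified, and that the weight $(1+|d\phi_s|^2)^{\alpha-1}$ together with the cross terms produced by $\partial_s^2 E^\alpha$ does not spoil the sign --- the key point being that the curvature contribution keeps the favorable sign under $\mathrm{Sec}_N\le0$, exactly as in the Sacks--Uhlenbeck analysis of $\alpha$-harmonic maps. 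The remaining ingredients --- reading off the regularity class from $(\phi,\psi)\in\mathcal{F}$, the identities $F(\cdot,0)\equiv F_\phi(\cdot,0)\equiv0$ from (F4)--(F5), and the collapse of the Euler--Lagrange system to the $\alpha$-harmonic map equation when $\psi=0$ --- are routine.
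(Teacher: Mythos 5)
Your proposal is correct and follows essentially the same route as the paper: Theorem~\ref{thm1.1} gives a critical point at level $c_\theta>m_\theta$, and if $\psi\equiv0$ then (F5) reduces \eqref{elnonlinearalpha1} to the $\alpha$-harmonic map equation, so $\phi$ would be an $\alpha$-harmonic map in $\theta$ with $E^\alpha(\phi)=c_\theta>m_\theta$, which the nonpositive-curvature convexity of $E^\alpha$ forbids. The only cosmetic difference is that you re-derive this last step via pointwise convexity along geodesic homotopies, whereas the paper packages the same convexity (Section~5, second variation and Lemma~\ref{lem:convexity}) into the uniqueness Theorem~\ref{thm:uniqueness}, which forces every $\alpha$-harmonic map in $\theta$ to have energy $m_\theta$.
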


Since $\alpha>1$, $\phi$ in the theorem above is continuous. It is natural to expect the smoothness of  the weak solutions. Due to the perturbation $F$, $F_\psi$ will produce $\|\psi\|^3_{L^4}$ according to the proof in \cite{chen2005regularity}. Therefore, the proof there can not apply to our situation directly. To overcome it, we need to control the $L^\infty$-norm of $\psi$ first. The same phenomenon happens in the proof of the $\varepsilon$-regularity. The following regularity theorem shows that  such a nontrivial solution is actually smooth.

\begin{thm}\label{thm1.3}
Suppose $F\in C^\infty$ satisfies $(\rm F1)$ and $(\rm F3)$ for some $p\leq2+2/\alpha$ and $q\geq0$. Then any weakly perturbed $\alpha$-Dirac-harmonic map is smooth. 
\end{thm}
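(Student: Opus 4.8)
The plan is to follow the bootstrap strategy of \cite{chen2005regularity}, but to first gain an $L^\infty$-bound on the spinor so that the perturbation term $F_\psi$, which a priori only lies in a low $L^r$ space because of the exponent $p$, does not obstruct the iteration. Concretely, I would start from a weak solution $(\phi,\psi)$ with $\phi\in W^{1,2\alpha}(M,N)$ (hence, since $\alpha>1$, H\"older continuous and in particular in $C^0\cap W^{1,s}$ for every $s$ on small balls after a suitable estimate) and $\psi\in H^{1/2}$. The map equation \eqref{elnonlinearalpha1} reads schematically $-\operatorname{div}\!\big((1+|d\phi|^2)^{\alpha-1}d\phi\big)=(1+|d\phi|^2)^{\alpha-1}A(d\phi,d\phi)+\operatorname{Re}(P(\mathcal A;d\phi)\psi,\psi)+F_\phi$; the first two terms are the usual $\alpha$-harmonic and Dirac-harmonic right-hand sides, and $F_\phi$ is by (F3) bounded by $C(1+|\psi|^q)$. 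The spinor equation \eqref{elnonlinearalpha2} is $\slashed D\psi = \mathcal H(d\phi)\psi + F_\psi$, a first-order Dirac-type equation whose inhomogeneity, away from the $F_\psi$ contribution, is $|d\phi|\,|\psi|$.

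The first and crucial step is the $L^\infty$-estimate for $\psi$. Here I would test the Dirac equation against $|\psi|^{2k-2}\psi$ (or use the Bochner/Lichnerowicz formula for $\slashed D^2$) to get an inequality of the form $\Delta |\psi|^2 \lesssim |d\phi|^2|\psi|^2 + |\psi|\,|F_\psi| + \dots$. Since $\phi\in W^{1,2\alpha}$ gives $|d\phi|^2\in L^\alpha$ with $\alpha>1$, the potential sits in a subcritical Lebesgue space on the surface, and the term $|\psi|\,|F_\psi|\le C|\psi|(1+|\psi|^{p-1}) = C(|\psi|+|\psi|^{p})$ is controllable because $p\le 2+2/\alpha$ is exactly the threshold making $|\psi|^{p}$ integrable against the relevant test functions once $\psi\in H^{1/2}\hookrightarrow L^q$ for all $q<\infty$. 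A Moser iteration (or the De Giorgi–Nash–Moser machinery applied to $w=|\psi|^2$, using that the ``coefficient'' $|d\phi|^2\in L^\alpha$) then upgrades $\psi$ from $L^q$ for all finite $q$ to $L^\infty_{loc}$. I expect \emph{this} step — making the Moser iteration close in the presence of the supercritical-looking nonlinearity $|\psi|^{p-1}$, and exploiting the precise bound $p\le 2+2/\alpha$ — to be the main obstacle, and the reason the hypothesis on $p$ is stated the way it is.

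Once $\psi\in L^\infty_{loc}$, the rest is a standard bootstrap. With $\psi\in L^\infty$, the term $\operatorname{Re}(P(\mathcal A;d\phi)\psi,\psi)$ in the map equation is bounded by $C|d\phi|$, and $F_\phi\le C(1+|\psi|^q)\in L^\infty_{loc}$; combined with $\alpha$-harmonic-map regularity theory (the operator $u\mapsto\operatorname{div}((1+|du|^2)^{\alpha-1}du)$ is uniformly elliptic once $|du|$ is under control, which after an $\eps$-regularity-type local energy smallness it is), one gets $\phi\in W^{2,r}_{loc}$ for some $r>2$, hence $d\phi\in C^{0,\beta}_{loc}$. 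Feeding $d\phi\in C^{0,\beta}$ and $\psi\in L^\infty$ back into the Dirac equation $\slashed D\psi = \mathcal H(d\phi)\psi + F_\psi(\phi,\psi)$, whose right-hand side is now $C^{0,\beta}_{loc}$ (using $F\in C^\infty$ and $\phi,\psi$ continuous), elliptic regularity for $\slashed D$ gives $\psi\in C^{1,\beta}_{loc}$. Then $d\phi\in C^{1,\beta}$ from the map equation, and one alternates: each equation gains one derivative for its unknown given the current regularity of the other, with $F\in C^\infty$ ensuring the composed nonlinearities are as smooth as their arguments. This alternating bootstrap terminates with $(\phi,\psi)\in C^\infty$, which is the assertion. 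The condition $q\ge 0$ is used only to make sense of $|\psi|^q$ and plays no role beyond the very first application of (F3).
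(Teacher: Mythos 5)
There is a genuine gap, and it sits exactly where you predicted the main obstacle would be: the $L^\infty$-bound on $\psi$ is not actually established by your argument. Your sketch rests on the claim that $H^{1/2}\hookrightarrow L^q$ for all $q<\infty$ on a surface, so that the only task is to pass from ``all finite $q$'' to $L^\infty$. That embedding is false: in two dimensions $H^{1/2}$ embeds only into $L^4$, and this is all the integrability a weak solution has a priori. The real work is to raise $\psi$ from $L^4$, and this is precisely where the hypothesis $p\leq 2+2/\alpha$ enters. The paper does it (Lemma \ref{psiC0gamma}) not by Moser iteration on $|\psi|^2$ but by a finite bootstrap of the \emph{first-order} elliptic estimate for $\slashed\pt$ applied to \eqref{elnonlinearalphalocal2}: H\"older with $|d\phi|\in L^{2\alpha}$ and $\psi\in L^4$ puts $\nabla\phi\cdot\psi$ in $L^{s_0}$ with $s_0=\tfrac{4\alpha}{\alpha+2}>\tfrac43$, the condition $p\leq2+2/\alpha$ is exactly what keeps $F_\psi=O(1+|\psi|^{p-1})$ in the same $L^{s_i}$ at every stage, and after finitely many steps one reaches $\psi\in W^{1,s}$ for all $s<2\alpha$, hence $\psi\in C^{0,\gamma}$. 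Your alternative route (testing the Dirac equation with $|\psi|^{2k-2}\psi$, or invoking Lichnerowicz and running De Giorgi--Nash--Moser on $w=|\psi|^2$) is moreover not admissible at the regularity you actually have: with $\psi$ only in $H^{1/2}\cap L^4$ these pairings and integrations by parts are not defined, so the differential inequality $\Delta|\psi|^2\gtrsim -V|\psi|^2-\dots$ cannot be derived without first improving $\psi$ --- which is circular unless done by the first-order bootstrap. As written, the ``first and crucial step'' is a plan built on a wrong embedding, not a proof.

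A secondary, smaller issue: once $\psi\in L^\infty_{loc}$ you declare the rest ``a standard bootstrap,'' but the map equation has the right-hand side $(1+|d\phi|^2)^{\alpha-1}\Gamma(d\phi,d\phi)$, which a priori is only in $L^1$ (since $|d\phi|^{2\alpha}\in L^1$), so $L^p$-theory cannot be applied directly and uniform ellipticity is not the point. The paper handles this by exploiting the small oscillation of the continuous map $\phi$ to absorb the quadratic gradient term (Lemma \ref{firstlemma}), proving a second-order energy estimate in which the coupled spinor terms are controlled through a cut-off Dirac estimate (Lemma \ref{secondlemma}), and then justifying $\phi\in W^{2,2}_{loc}$ by difference quotients; only after the two-dimensional embedding $W^{2,2}\subset W^{1,p}$ does the routine $W^{2,p}\to C^{1,\gamma}\to C^\infty$ alternation you describe take over. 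Your appeal to ``$\alpha$-harmonic map regularity theory'' could perhaps be made precise along these lines, but it does not by itself close this step; the essential missing ingredient, however, remains the spinor estimate above.
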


Now, we get a sequence of perturbed $\alpha$-Dirac-harmonic maps $\{(\phi_k,\psi_k)\}$, which are the critical points of the functionals:
\begin{equation}\label{1.4}
\mathcal{L}_k^{\alpha}(\phi,\psi)=\frac12\int_{M}(1+|d\phi|^2)^{\alpha}+\frac12\int_{M}\langle\psi,\slashed{D}\psi\rangle_{\Sigma M\otimes\phi^*TN}-\frac1k\int_{M}F(\phi,\psi),
\end{equation}
where $F$ satisfies the assumptions in Theorem \ref{cor1.2} and Theorem \ref{thm1.3}. For example, one can just take $F=|\psi|^{\frac{4\alpha}{3\alpha-2}}$. Since the proof of Theorem \ref{thm1.3} depends on the oscillation of $\phi$, which cannot be uniformly controlled for $\phi_k$, we need the $\varepsilon$-regularity theorem. This kind of regularity theorem was introduced by Sacks and Uhlenbeck for the $\alpha$-harmonic maps in \cite{sacks1981existence}. When coupled with the Dirac equation, this was handled \cite{chen2005regularity} for Dirac-harmonic maps and in \cite{jost2017existence}\cite{jost2018geometric} for a sequence of $\alpha$-Dirac-harmonic maps($\alpha\to1$). We have to modify here the  growth conditions (F1) and (F3) for the derivatives of $F$; we need\\

\noindent
$(\rm F6)$ \ $|F_\psi(\phi,\psi)|\leq C|\psi|^{r-1} \ \text{for} \ 3<r\leq2+2/\alpha,$\\
$(\rm F7)$ \ $|F_\phi(\phi,\psi)|\leq C|\psi|^{q} \ \text{for} \ q>2.$

\begin{thm}\label{thm1.4}
Suppose $F$ satisfies {\rm (F6)} and {\rm (F7)}.
There is $\varepsilon_0>0$ and $\alpha_0>0$ such that if $(\phi,\psi):(D,g_{\beta\gamma})\to(N,g_{ij})$ is a smooth perturbed $\alpha$-Dirac-harmonic map satisfying
\begin{equation}\label{epsilon}
\int_M(|d\phi|^{2\alpha}+|\psi|^4)\leq\Lambda<+\infty\ \text{and} \ \int_D|d\phi|^2\leq\varepsilon_0 
\end{equation}
for $1\leq\alpha<\alpha_0$, then we have
\begin{equation}
\|d\phi\|_{\tilde{D},1,p}+\|\psi\|_{\tilde{D},1,p}\leq C(D,N,\Lambda,p)(\|d\phi\|_{D,0,2}+\|\psi\|_{D,0,4}),
\end{equation}
and
\begin{equation}\label{boundpsi}
\|\psi\|_{L^\infty(\tilde{D})}\leq C(D,N,\Lambda)\|\psi\|_{D,0,4}
\end{equation}
for any $\tilde D\subset D$ and $p>1$, where $C(D,N,\Lambda,p)$ denotes a constant depending on $D,N,\Lambda$ and $p$.
\end{thm}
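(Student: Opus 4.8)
The plan is to run the Sacks--Uhlenbeck $\varepsilon$-regularity bootstrap, as adapted to the Dirac-harmonic system in \cite{chen2005regularity} and to sequences of $\alpha$-Dirac-harmonic maps in \cite{jost2017existence,jost2018geometric}, but with one extra layer at the start: an a priori \emph{local} $L^\infty$-bound for $\psi$, forced by the superlinear growth $|F_\psi|\lesssim|\psi|^{r-1}$ with $r-1>2$ in (F6). Throughout, $\alpha_0$ is taken just above $1$, so that on the one hand the coefficient $(1+|d\phi|^2)^{\alpha-1}$ in the $\phi$-equation — which lies between $1$ and $1+|d\phi|^2$ — may be treated as a uniformly elliptic, controlled perturbation of the identity, and on the other hand $r\le2+2/\alpha\le2+2/\alpha_0<4$, which is exactly what will make the Dirac equation gain integrability in the first bootstrap step.

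\textbf{Step 1 (set-up and first gain on $\phi$).} After isometrically embedding $N\hookrightarrow\mathbb{R}^K$, the Euler--Lagrange equations \eqref{elnonlinearalpha1}--\eqref{elnonlinearalpha2} read schematically
\[
\operatorname{div}\!\big((1+|d\phi|^2)^{\alpha-1}\nabla\phi\big)=(1+|d\phi|^2)^{\alpha-1}\,\mathrm{II}(d\phi,d\phi)+\mathcal{R}(\phi,\psi)+F_\phi(\phi,\psi),
\]
with $|\mathcal{R}(\phi,\psi)|\lesssim|\psi|^2|d\phi|$, together with $\slashed{\partial}\psi=-\,\Gamma(\phi)\,d\phi\cdot\psi+F_\psi(\phi,\psi)$. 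Using the small-energy hypothesis $\int_D|d\phi|^2\le\varepsilon_0$ and the standard small-energy regularity for ($\alpha$-)harmonic maps of \cite{sacks1981existence} (localising, freezing the leading coefficient, and absorbing the quadratic term $\mathrm{II}(d\phi,d\phi)$ into the left-hand side through the smallness of $\varepsilon_0$), I would first upgrade $d\phi$ to $L^{2+\delta}_{\mathrm{loc}}$, hence $\phi$ to $W^{2,1+\delta/2}_{\mathrm{loc}}$, with bounds linear in $\|d\phi\|_{D,0,2}$ and $\|\psi\|_{D,0,4}$ and constants controlled by $\Lambda$.

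\textbf{Step 2 ($L^\infty$-bound on $\psi$; the crux).} A priori only $\psi\in L^4$, and both the geometric term $\Gamma(\phi)\,d\phi\cdot\psi$ (with $d\phi\in L^2$) and $F_\psi\sim|\psi|^{r-1}$ lie, at best, in $L^{4/3}$, whose $\slashed{\partial}$-elliptic preimage sits back in $W^{1,4/3}\hookrightarrow L^4$: no gain. The way out is to feed in Step 1: with $d\phi\in L^{2+\delta}_{\mathrm{loc}}$ the geometric term improves to $L^{s}_{\mathrm{loc}}$ for some $s>4/3$, and because $r-1<3$ the term $F_\psi$ also improves once $\psi$ is slightly better than $L^4$; this breaks the borderline. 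Then I would iterate: $\psi\in L^{q_n}_{\mathrm{loc}}\Rightarrow\slashed{\partial}\psi\in L^{s_n}_{\mathrm{loc}}\Rightarrow\psi\in W^{1,s_n}_{\mathrm{loc}}\hookrightarrow L^{q_{n+1}}_{\mathrm{loc}}$ with $q_{n+1}>q_n$, the exponent window in (F6) being precisely what keeps the increments bounded below so that $q_n\uparrow\infty$; after finitely many steps $\psi\in W^{1,q}_{\mathrm{loc}}\hookrightarrow C^0$, which yields \eqref{boundpsi} with constant depending only on $D,N,\Lambda$.

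\textbf{Step 3 (closing the bootstrap and tracking constants).} With $\psi\in L^\infty_{\mathrm{loc}}$ one has $|\mathcal{R}(\phi,\psi)|\lesssim|d\phi|$ and, by (F7), $|F_\phi|\lesssim|\psi|^q\in L^\infty_{\mathrm{loc}}$, so the right-hand side of the $\phi$-equation gains integrability; running the Sacks--Uhlenbeck iteration once more (again absorbing $\mathrm{II}(d\phi,d\phi)$ at the first step) gives $d\phi\in W^{1,p}_{\mathrm{loc}}$ for every $p>1$, and reinserting this into the Dirac equation gives $\psi\in W^{1,p}_{\mathrm{loc}}$ for every $p$. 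At each stage of Steps 1--3 every nonlinear term splits into one factor controlled by $\varepsilon_0$ or $\Lambda$ and one factor equal to the quantity being estimated (hence absorbed), so the final bound is linear in $(\|d\phi\|_{D,0,2},\|\psi\|_{D,0,4})$ with constant $C(D,N,\Lambda,p)$. The main obstacle is Step 2: one cannot regularise $\psi$ in isolation, since $L^4$ is exactly critical for the spinor equation once $F_\psi$ is superlinear, so the argument is forced to improve $\phi$ first off the small-energy hypothesis, and the thresholds $3<r\le2+2/\alpha$, $q>2$, $\alpha<\alpha_0$ are what make the resulting coupled iteration converge.
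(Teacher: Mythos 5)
Your proposal is correct in outline and uses the same basic toolkit as the paper (smallness of $\alpha-1$ against the elliptic constant, $\varepsilon_0$-smallness to absorb the quadratic term, H\"older--Sobolev bootstrap between the two equations, and an $L^\infty$ bound on $\psi$ coming from the Dirac equation under (F6)), but it arranges the bootstrap in a genuinely different order. The paper first proves a local $W^{1,4}$ bound for $\phi$ (Lemma \ref{lemphi1,4}) and then a $W^{2,2}$ bound, \emph{never using an $L^\infty$ bound on $\psi$}: the terms $|\psi|^2|d\phi|$ and $|\psi|^q$ are estimated at the $L^{4/3}$ level by H\"older against $\|\psi\|_{L^4}$, and the resulting powers of $\|\psi\|_{L^4}$ are absorbed into $C(\Lambda)$; once $d\phi\in L^p_{loc}$ for all $p$, the spinor estimates and the bound \eqref{boundpsi} are obtained from the bootstrap in the proof of Lemma \ref{psiC0gamma} together with (F6) and standard $L^p$/$W^{2,p}$ elliptic theory. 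You instead get a small gain on $d\phi$ first, then push $\psi$ all the way to $L^\infty$, and only then close the $\phi$-estimates; this is closer to the Chen--Jost--Li--Wang scheme, and it has the merit of treating the endpoint $\alpha=1$ (where $d\phi\in L^{2\alpha}=L^2$ gives no gain in the spinor equation) more explicitly than the paper's one-line appeal to Lemma \ref{psiC0gamma}. One point in your Step 1 should be stated more carefully: $(1+|d\phi|^2)^{\alpha-1}$ is \emph{not} a uniformly elliptic bounded coefficient (it is only in $L^{\alpha/(\alpha-1)}$), so ``freezing the leading coefficient'' by itself does not work; what actually makes the argument run --- and what the paper's Lemma \ref{lemphi1,4} does, following Sacks--Uhlenbeck --- is to pass to the nondivergence form, so that the extra term is bounded by $(\alpha-1)|\nabla^2\phi|$ and is absorbed into the left-hand side because $\alpha_0$ is chosen with $(\alpha_0-1)$ small relative to the inverse of the Calder\'on--Zygmund constant at the exponent used (e.g.\ $4/3$), in tandem with the $\varepsilon_0$-absorption of the second fundamental form term. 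With that mechanism made explicit, your three-step scheme closes and yields the stated bounds, linear in $\|d\phi\|_{D,0,2}+\|\psi\|_{D,0,4}$ after absorbing higher powers into $C(D,N,\Lambda,p)$.
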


With this  $\varepsilon$-regularity, one can easily  prove 
\begin{thm}\label{thm1.5}
For the $\alpha_0$ given in Theorem \ref{thm1.4} and each $\alpha\in(1,\alpha_0)$, let $(\phi_k,\psi_k)$ be the smooth critical points of the functional $L^\alpha_k$ in \eqref{1.4} with uniformly bounded energy:
\begin{equation}\label{energycondition}
E(\phi_k,\psi_k;M):=\int_M(|d\phi|^{2\alpha}+|\psi|^4)\leq\Lambda<+\infty.
\end{equation} 
Suppose F satisfies $(\rm F6)$ and $(\rm F7)$. Then there exist a subsequence, still denoted by $\{(\phi_k,\psi_k)\}$, and a smooth $\alpha$-Dirac-harmonic map $(\phi,\psi)$ such that
 \begin{equation}\label{strong}
 (\phi_k,\psi_k)\to(\phi,\psi) \ \text{in} \  C_{loc}^\infty(M)
 \end{equation}
\end{thm}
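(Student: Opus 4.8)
\textbf{Proof proposal for Theorem \ref{thm1.5}.}
The plan is to run the standard Sacks--Uhlenbeck blow-up/compactness argument, but adapted to the coupled system and using the $\varepsilon$-regularity of Theorem \ref{thm1.4} as the central tool. First I would note that since the perturbation term $\frac1k\int_M F(\phi,\psi)$ carries a factor $\frac1k\to0$, in the limit the Euler--Lagrange equations \eqref{elnonlinearalpha1}--\eqref{elnonlinearalpha2} degenerate to the (unperturbed) $\alpha$-Dirac-harmonic map system; so the target of the convergence is indeed an honest $\alpha$-Dirac-harmonic map. The uniform energy bound \eqref{energycondition} lets me define the concentration set
\begin{equation*}
\mathcal{S}=\bigcap_{r>0}\set{x\in M:\liminf_{k\to\infty}\int_{D_r(x)}|d\phi_k|^2\,dv_g\geq\varepsilon_0},
\end{equation*}
which by the energy bound is a finite set of points (its cardinality is bounded by $\Lambda/\varepsilon_0$). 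Away from $\mathcal S$, every point has a small disk on which $\int|d\phi_k|^2\leq\varepsilon_0$ for $k$ large, so Theorem \ref{thm1.4} applies on that disk.

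Next I would extract the local convergence on $M\setminus\mathcal S$. By Theorem \ref{thm1.4}, on a slightly smaller disk $\tilde D$ we get uniform bounds $\|d\phi_k\|_{W^{1,p}(\tilde D)}+\|\psi_k\|_{W^{1,p}(\tilde D)}\leq C$ for every $p>1$, together with the $L^\infty$ bound \eqref{boundpsi} on $\psi_k$. Feeding these back into the elliptic system and bootstrapping (Schauder for the second-order map equation once $d\phi_k$ is H\"older, and the first-order $L^p$/Schauder theory for the Dirac equation, exactly as in the regularity proof of Theorem \ref{thm1.3}) upgrades the bounds to $C^{m,\gamma}_{loc}$ for every $m$. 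Here the smallness of $\alpha-1$ together with $\alpha<\alpha_0$ is what makes the subcritical exponents in Theorem \ref{thm1.4} work; the factor $\frac1k$ only helps. A diagonal subsequence argument over an exhaustion of $M\setminus\mathcal S$ then yields a subsequence with $(\phi_k,\psi_k)\to(\phi,\psi)$ in $C^\infty_{loc}(M\setminus\mathcal S)$, and the limit solves the $\alpha$-Dirac-harmonic system on $M\setminus\mathcal S$.

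The main obstacle is showing $\mathcal S=\emptyset$, i.e. removing the possible concentration points so that the convergence is global on all of $M$. The key point is that for $\alpha>1$ strictly, the functional is subcritical: a genuine concentration at a point would, after rescaling, produce a nonconstant $\alpha$-harmonic-type object on the plane carrying a definite amount of energy, but the conformal non-invariance of $(1+|d\phi|^2)^\alpha$ for $\alpha>1$ forbids such a bubble — more precisely, a small-energy estimate shows that if $\int_{D_r(x)}|d\phi_k|^2$ is small on a fixed scale then by Theorem \ref{thm1.4} the oscillation of $\phi_k$ on $D_{r/2}(x)$ is controlled, and one can iterate/cover to see that the local energy cannot in fact accumulate to the threshold $\varepsilon_0$ unless it was already above a fixed positive amount at scale comparable to the injectivity radius, which is incompatible with $\mathcal S$ being a point. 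Equivalently: since each $(\phi_k,\psi_k)$ is \emph{globally} smooth with the stated energy bound and $\alpha$ is fixed $>1$, the $\varepsilon$-regularity is not merely local — one shows directly that $\int_{D_r(x)}|d\phi_k|^2<\varepsilon_0$ for all $x$ once $r$ is small (uniformly in $k$), because otherwise the rescaled limit would be a nonconstant harmonic sphere, contradicting that the Sacks--Uhlenbeck $\alpha$-energy of $\phi_k$ is bounded and $\alpha>1$ rules out such bubbling. Once $\mathcal S=\emptyset$, the $C^\infty_{loc}$ convergence on $M\setminus\mathcal S$ is $C^\infty$ convergence on the closed surface $M$, and passing to the limit in \eqref{elnonlinearalpha1}--\eqref{elnonlinearalpha2} (the perturbation dropping out) shows $(\phi,\psi)$ is a smooth $\alpha$-Dirac-harmonic map, completing the proof.
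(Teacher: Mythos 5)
Your overall architecture (concentration set defined via the local $L^2$-energy of $d\phi_k$, $\varepsilon$-regularity from Theorem \ref{thm1.4} plus elliptic bootstrap and a diagonal argument away from the concentration set, perturbation dropping out because of the factor $\tfrac1k$) is exactly the paper's strategy. The problem is the one step you yourself identify as the main obstacle: showing $\mathcal S=\emptyset$. There you never give an actual argument. The ``more precise'' oscillation/covering claim (small energy at a fixed scale controls the oscillation, so energy ``cannot accumulate to the threshold'') does not prove anything: at a putative concentration point the energy is by definition \emph{not} small at small scales, and $\varepsilon$-regularity gives you no information there, so the iteration never gets started. Your alternative phrasing -- ``otherwise the rescaled limit would be a nonconstant harmonic sphere, contradicting that \dots $\alpha>1$ rules out such bubbling'' -- is circular: that $\alpha>1$ rules out bubbling is precisely the statement to be proved, and the rescaled maps do not satisfy the harmonic map equation anyway, so no harmonic sphere is produced without further work.

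The paper closes this gap with a short, purely quantitative scaling computation, and you should too: if $p\in S$, choose $x_k\to p$, $r_k\to0$ with $\int_{B(x_k,r_k)}|d\phi_k|^2=\tfrac{\varepsilon_0}{2}$ and set $u_k(x)=\phi_k(x_k+r_kx)$. Then the conformal invariance of the $L^2$-energy gives $\int_{B_1}|du_k|^2=\tfrac{\varepsilon_0}{2}$, while
\begin{equation*}
\int_{B_1}|du_k|^{2\alpha}=r_k^{2\alpha-2}\int_{B(x_k,r_k)}|d\phi_k|^{2\alpha}\leq r_k^{2\alpha-2}\Lambda\to0
\end{equation*}
because $\alpha>1$. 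By H\"older's inequality on the unit disc the first quantity is controlled by a power of the second, so these two facts contradict each other; hence $S=\emptyset$, no blow-up limit ever needs to be analyzed, and the convergence is $C^\infty$ on all of $M$. This is the concrete content behind your slogan ``conformal non-invariance of $(1+|d\phi|^2)^\alpha$ for $\alpha>1$''; without it (or an equivalent argument) your proof of the key step is incomplete.
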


Since the convergence in the theorem above is smooth on all of $M$, the action functional of the $\alpha$-Dirac-harmonic map $(\phi,\psi)$ is strictly bigger than $m_\theta$. Then the convexity of the action functional tells us that $(\phi,\psi)$ is a nontrivial $\alpha$-Dirac-harmonic map. Thus, we obtain:
\begin{cor}\label{cor1.6}
Let $M$ be a closed surface and $N$ a compact Rienmannian manifold with non-positive curvature.
For the $\alpha_0$ given in Theorem \ref{thm1.4} and each $\alpha\in(1,\alpha_0)$, if the sequence of perturbed $\alpha$-Dirac-harmonic maps $\{(\phi_k,\psi_k)\}$ satisfies the uniform bounded energy condition \eqref{energycondition}, then there exists a nontrivial $\alpha$-Dirac-harmonic map from $M$ to $N$ with $\phi$ in the given homotopy class $\theta$.
\end{cor}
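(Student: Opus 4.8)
The plan is to feed $\{(\phi_k,\psi_k)\}$ into Theorem \ref{thm1.5} and then argue that the limit cannot have a vanishing spinor. Since $E(\phi_k,\psi_k;M)\le\Lambda$ by \eqref{energycondition}, Theorem \ref{thm1.5} supplies a subsequence, still written $\{(\phi_k,\psi_k)\}$, and a smooth $\alpha$-Dirac-harmonic map $(\phi,\psi)$ with $(\phi_k,\psi_k)\to(\phi,\psi)$ in $C^\infty_{loc}(M)$, which equals $C^\infty(M)$ since $M$ is closed. In particular $\phi_k\to\phi$ in $C^0(M)$; as the $(\phi_k,\psi_k)$ are the critical points furnished by Corollary \ref{cor1.2}, each $\phi_k$ lies in the prescribed homotopy class $\theta$, and uniform closeness of homotopic maps forces $\phi\in\theta$. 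It then remains only to show $\psi\not\equiv0$.

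For this I would compare action levels. Each $(\phi_k,\psi_k)$ is the critical point produced by Theorem \ref{thm1.1} applied to $\mathcal L^\alpha_k$, so $\mathcal L^\alpha_k(\phi_k,\psi_k)=c_{k,\theta}$, the associated minimax value for the perturbation $\tfrac1kF$. The quantitative input used to build $c_\theta$ (Theorem \ref{thm1.1} together with Lemmas \ref{estimateofa}--\ref{estimateofb}) enters only through (F1)--(F5), and each of these is preserved with $k$-independent constants when $F$ is replaced by $\tfrac1kF$ (for (F2) with the same $\mu$ and $R_1$; for (F5) because $\tfrac1kF\le F=o(|\psi|^2)$). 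Hence $R_1,R_2,\rho$ and the resulting linking gap can be fixed once and for all, giving a uniform bound $c_{k,\theta}\ge m_\theta+\delta_0$ with $\delta_0>0$ independent of $k$. On the other hand, along the $C^\infty(M)$ convergence one passes to the limit in \eqref{1.4}: $\tfrac12\int_M(1+|d\phi_k|^2)^\alpha\to\tfrac12\int_M(1+|d\phi|^2)^\alpha$; $\int_M\langle\psi_k,\slashed{D}\psi_k\rangle\to\int_M\langle\psi,\slashed{D}\psi\rangle=0$ because the spinor equation for the $\alpha$-Dirac-harmonic map $(\phi,\psi)$ reads $\slashed{D}_\phi\psi=0$; and $\tfrac1k\int_MF(\phi_k,\psi_k)\to0$ since $\|\psi_k\|_{C^0(M)}$ stays bounded. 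Therefore
\begin{equation*}
\tfrac12\int_M(1+|d\phi|^2)^\alpha=\lim_{k\to\infty}\mathcal L^\alpha_k(\phi_k,\psi_k)=\lim_{k\to\infty}c_{k,\theta}\ge m_\theta+\delta_0>m_\theta .
\end{equation*}

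To finish, suppose $\psi\equiv0$. Then \eqref{elnonlinearalpha1} for $(\phi,0)$ reduces to the $\alpha$-harmonic map equation, so $\phi$ is $\alpha$-harmonic. Because $N$ has nonpositive curvature, $t\mapsto|d\phi_t|^2$ is convex along every geodesic homotopy $\phi_t$, hence so is $t\mapsto(1+|d\phi_t|^2)^\alpha$ (composition with the increasing convex function $x\mapsto(1+x)^\alpha$, $\alpha\ge1$), and consequently the $\alpha$-energy is convex along geodesic homotopies. Let $\phi_\ast$ be an $\alpha$-energy minimizer in $\theta$, which exists by the Sacks--Uhlenbeck theory for $\alpha>1$ and is itself $\alpha$-harmonic; along the geodesic homotopy joining $\phi$ to $\phi_\ast$ the $\alpha$-energy is convex and has vanishing derivative at both ends (both maps being $\alpha$-harmonic), hence is constant. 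Thus $\tfrac12\int_M(1+|d\phi|^2)^\alpha=m_\theta$, contradicting the strict inequality above. Hence $\psi\not\equiv0$, and $(\phi,\psi)$ is a nontrivial $\alpha$-Dirac-harmonic map from $M$ to $N$ with $\phi\in\theta$.

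The step I expect to be the main obstacle is the uniform lower bound $c_{k,\theta}\ge m_\theta+\delta_0$: one must verify that the linking geometry of Lemmas \ref{estimateofa}--\ref{estimateofb} does not degenerate as the perturbation $\tfrac1kF$ shrinks, i.e.\ that the radius $\rho$ of the linking sphere and the height $\delta_0$ of the barrier over $m_\theta$ can be chosen uniformly in $k$. Granting that, the remaining ingredients — extracting the limit via Theorem \ref{thm1.5}, $C^0$-stability of the homotopy class, and passing to the limit in the action functional using the smooth convergence \eqref{strong} — are routine.
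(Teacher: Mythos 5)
Your proposal is correct and follows essentially the same route as the paper: smooth convergence on all of $M$ via Theorem \ref{thm1.5}, a $k$-uniform lower bound on the minimax levels forcing $\frac12\int_M(1+|d\phi|^2)^\alpha>m_\theta$ (the paper records precisely this uniformity in the remark following Lemma \ref{estimateofb}), and the convexity/uniqueness of $\alpha$-harmonic maps into nonpositively curved targets (Lemma \ref{lem:convexity}, Theorem \ref{thm:uniqueness}) to exclude $\psi\equiv0$. The only quibble is your claim that $R_1,R_2$ can be fixed independently of $k$: replacing $F$ by $\frac1k F$ degrades the constant in $F\geq C|\psi|^\mu-C$, so the $R_1,R_2$ of Lemma \ref{estimateofa} may grow with $k$; this is harmless, however, since only $\rho$ and the gap in Lemma \ref{estimateofb} need to be uniform and the linking bound $c_{k,\theta}\geq b_{\theta;\rho}$ holds for any admissible $R_1,R_2>\rho$.
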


\begin{rmk}
To get the existence of a Dirac-harmonic map, it is natural to consider the convergence of a sequence of $\alpha$-Dirac-harmonic maps as $\alpha$ decreases to $1$. By the joint work of the first author with Lei Liu and Miaomiao Zhu in \cite{jost2018geometric}, under the uniform bounded energy condition and no bubble condition, there exists a Dirac-harmonic map with the map part in the given homotopy class. However, in general, we cannot guarantee that the  Dirac-harmonic map is nontrivial. 
\end{rmk}

The rest of the paper is organized as follows: In Section 2, we derive the Euler-Lagrange equations and define the configuration space. In Section 3, we prove the Palais-Smale condition for the action functional of perturbed $\alpha$-Dirac-harmonic maps. In Section 4, we construct a special pseudo-gradient vector field and deform our configuration space by the negative pseudo-gradient flow. Besides, we also recall some facts about the linking geometry. In Section 5, we prove the uniqueness of $\alpha$-harmonic maps under the assumption that the target manifold has nonpositive curvature. In Section 6, we give the proof of Theorem \ref{thm1.1} and Theorem \ref{cor1.2}. In Section 7, we prove  Theorem \ref{thm1.3}. In the last section, we prove  Theorem \ref{thm1.4} and Theorem \ref{thm1.5}.

\section{Euler-Lagrange equations and configuration space}    
\subsection{Euler-Lagrange equations}
Let $(M, g_{\beta\gamma})$ be a compact surface with a fixed spin structure, $\Sigma M$ the spinor bundle. For any $X\in\Gamma(TM)$ and $\xi\in\Gamma(\Sigma M)$, the Clifford multiplication is skew-adjointness:
\begin{equation}
\langle X\cdot\xi, \eta\rangle_{\Sigma M}=-\langle\xi, X\cdot\eta\rangle_{\Sigma M},
\end{equation}
where $\langle\cdot, \cdot\rangle_{\Sigma M}$ denotes the Hermitian inner product induced by the metric $g_{\beta\gamma}$. Choosing a local orthonormal basis $\{e_{\beta}\}_{\beta=1,2}$ on $M$, the usual Dirac operator is defined as $\slashed\partial:=e_\beta\cdot\nabla_\beta$, where $\nabla$ stands for the spin connection on $\Sigma M$ (here and in the sequel, we use the Einstein summation convention). One can find more about spin geometry in \cite{lawson1989spin}.

Let $\phi$ be a smooth map from $M$ to a compact Riemannian manifold $(N, h)$ of dimension $n\geq2$. Let $\phi^*TN$ be the pull-back bundle of $TN$ by $\phi$ and consider the twisted bundle $\Sigma M\otimes\phi^*TN$. On this bundle there is a metric $\langle\cdot,\cdot\rangle_{\Sigma M\otimes\phi^*TN}$ induced from the metric on $\Sigma M$ and $\phi^*TN$. Also, we have a connection $\tilde\nabla$ on this twisted bundle naturally induced from those on $\Sigma M$ and $\phi^*TN$. In local coordinates $\{y^i\}_{i=1,\dots,n}$, the section $\psi$ of $\Sigma M\otimes\phi^*TN$ is written as 
$$\psi=\psi_i\otimes\partial_{y^i}(\phi),$$
where each $\psi^i$ is a usual spinor on $M$. We also have following local expression of $\tilde\nabla$
$$\tilde\nabla\psi=\nabla\psi^i\otimes\partial_{y^i}(\phi)+\Gamma_{jk}^i(\phi)\nabla\phi^j\psi^k\otimes\partial_{y^i}(\phi),$$
where $\Gamma^i_{jk}$ are the Christoffel symbols of the Levi-Civita connection of $N$. The Dirac operator along the map $\phi$ is defined as
\begin{equation}\label{dirac}
\slashed{D}:=e_\alpha\cdot\tilde\nabla_{e_\alpha}\psi=\slashed\partial\psi^i\otimes\partial_{y^i}(\phi)+\Gamma_{jk}^i(\phi)\nabla_{e_\alpha}\phi^j(e_\alpha\cdot\psi^k)\otimes\partial_{y^i}(\phi),
\end{equation}
which is self-adjoint\cite{jost2017riemannian}. Sometimes, we use $\slashed{D}_\phi$ to distinguish the Dirac operators defined on different maps.  \cite{chen2006dirac} introduced the functional
\begin{equation}\begin{split}
L(\phi,\psi)&:=\int_M(|d\phi|^2+\langle\psi,\slashed{D}\psi\rangle_{\Sigma M\otimes\phi^*TN})\\
&=\int_M h_{ij}(\phi)g^{\alpha\beta}\frac{\partial\phi^i}{\partial x^\alpha}\frac{\pt\phi^j}{\pt x^\beta}+h_{ij}(\phi)\langle\psi^i,\slashed{D}\psi^j\rangle_{\Sigma M}.
\end{split}
\end{equation}
with  Euler-Lagrange equations 
\begin{equation}\label{eldh1}
\tau^m(\phi)-\frac12R^m_{lij}\langle\psi^i,\nabla\phi^l\cdot\psi^j\rangle_{\Sigma M}=0,
\end{equation}
\begin{equation}\label{eldh2}
\slashed{D}\psi^i=\slashed\pt\psi^i+\Gamma_{jk}^i(\phi)\nabla_{e_\alpha}\phi^j(e_\alpha\cdot\psi^k)=0,
\end{equation}
where $\tau^m(\phi)$ is the $m$-th component of the tension field\cite{jost2017riemannian} of the map $\phi$ with respect to the coordinate on $N$, $\nabla\phi^l\cdot\psi^j$ denotes the Clifford multiplication of the vector field $\nabla\phi^l$ with the spinor $\psi^j$, and $R^m_{lij}$ stands for the component of the Riemann curvature tensor of the target manifold $N$. Denote $$\mathcal{R}(\phi,\psi):=\frac12R^m_{lij}\langle\psi^i,\nabla\phi^l\cdot\psi^j\rangle_{\Sigma M}\pt_{y^m}$$. We can write \eqref{eldh1} and \eqref{eldh2} in the  global form
\begin{equation}\label{geldh}
\begin{split}
\tau(\phi)&=\mathcal{R}(\phi,\psi)\\
\slashed{D}\psi&=0.
\end{split}
\end{equation}
Solutions $(\phi,\psi)$ of \eqref{geldh} are called Dirac-harmonic maps from $M$ to $N$.

In this paper, we try to apply the critical point theory to the existence problem of Dirac-harmonic maps from closed surfaces. For the one-dimensional case, Takeshi Isobe\cite{isobe2012existence} proves the existence of nontrivial nonlinear Dirac-geodesics on flat tori, which are  critical point of 
\begin{equation}\label{nonlinear1}
\mathcal{L}(\phi,\psi)=\frac12\int_{S^1}|\dot{\phi}|^2ds+\frac12\int_{S^1}\langle\psi,\slashed{D}\psi\rangle_{\Sigma S^1\otimes\phi^*TN}ds-\int_{S^1}F(\phi,\psi)ds,
\end{equation}
where $s$ is the angular coordinate on $S^1$, $\dot\phi$ denotes the $s$-derivative of $\phi$, and $F$ is a nonlinear interaction term satisfying some growth and decay conditions with respect to $\psi$. The Euler-Lagrange equations are:
\begin{equation}\label{elnonlinear}
\begin{split}
\nabla_{\frac{\pt}{\pt s}}\dot\phi-\mathcal{R}(\phi,\psi)+F_\phi(\phi,\psi)&=0,\\
\slashed{D}\psi-F_\psi(\phi,\psi)&=0.
\end{split}
\end{equation}
In particular, Isobe proved that there exists a non-trivial solution $(\phi, \psi)$ to \eqref{elnonlinear} with $\phi$  in any given free homotopy class of loops on a flat torus. In \cite{isobe2017morse}, Isobe reconsidered this problem through homology theory. By constructing and computing a Morse-Floer type homology, he obtains several existence results for perturbed Dirac geodesics. Some of them do not need the curvature restriction on the target manifold.

To generalize Isobe's result to closed surfaces, we need to overcome two obstacles. One is to prove the  Palais-Smale condition in the two dimensional setting, the other is to construct a nice pseudo-gradient vector field. Since the energy functional $E(\phi)=\int_M|d\phi|^2$ in general does not satisfy the Palais-Smale condition  in two dimension, analogously to  $\alpha$-harmonic map\cite{sacks1981existence}, we consider 
\begin{equation}\label{nonlinearalpha}
\mathcal{L}^{\alpha}(\phi,\psi)=\frac12\int_{M}(1+|d\phi|^2)^{\alpha}+\frac12\int_{M}\langle\psi,\slashed{D}\psi\rangle_{\Sigma M\otimes\phi^*TN}-\int_{M}F(\phi,\psi).
\end{equation}
Similar to the computations in \cite{isobe2012existence}, one can get the Euler-Lagrange equations for $\mathcal{L}^{\alpha}$:
\begin{equation}\label{elnonlinearalpha1}
\tau^{\alpha}(\phi):=\tau((1+|d\phi|^2)^\alpha)=\mathcal{R}(\phi,\psi)-F_\phi(\phi,\psi),
\end{equation}
\begin{equation}\label{elnonlinearalpha2}
\slashed{D}\psi=F_{\psi}(\phi,\psi).
\end{equation}

\subsection{Configuration space}
We will define a configuration space for our functional $\mathcal{L}^{\alpha}$ that is a natural modification of the configuration space in \cite{isobe2012existence}. In fact, we focus on   $W^{1,2\alpha}$-maps$(\alpha>1)$ and $H^{1/2}$-spinors. By the Nash embedding theorem\cite{nash1956imbedding}, we can embed $N$ into Euclidean space $\mathbb{R}^L$ for some large $L$. We define the $W^{1,2\alpha}$-maps on $N$ as
\begin{equation}
W^{1,2\alpha}(M,N):=\{\phi\in W^{1,2\alpha}(M,\mathbb{R}^L)| \ \phi(x)\in N\ \  \text{for a.e.} \ x\in M\},
\end{equation}
where $\phi\in W^{1,2\alpha}(M,\mathbb{R}^L)$ means that both $\phi$ and its weak derivative $\nabla\phi$ are in $L^{2\alpha}(M,\mathbb{R}^L)$. By the Sobolev embedding theorem, any $\phi\in W^{1,2\alpha}(M,N)$ is continuous. Therefore, the pull-back bundle $\phi^*TN$ is well-defined, and we can consider $H^{1/2}$-spinors along $\phi\in W^{1,2\alpha}(M,N)$ defined as
\begin{equation}
\begin{split}
H^{1/2}(M,\Sigma M\otimes\phi^*TN):=&\{\psi\in H^{1/2}(M,\Sigma M\otimes\underline{\mathbb{R}}^L)|\\
& \ \psi(x)\in\Sigma M\otimes T_{\phi(x)}N\ \ \text{for a.e.} \ x\in M\},
\end{split}
\end{equation}
where $\underline{\mathbb{R}}^L=M\times\mathbb{R}^L$ is the trivial $\mathbb{R}^L$-bundle over $M$ and we regard $T_{\phi(x)}N$ as a subset of $\mathbb{R}^L$ for each $x\in M$ by the above embedding.

Consider the Banach space $\mathcal{F}^{\alpha,1/2}:=W^{1,2\alpha}(M,\mathbb{R}^L)\times H^{1/2}(M,\Sigma M\otimes\underline{\mathbb{R}}^L)$. Since $\mathcal{F}^{\alpha,1/2}$ is a product space, the norm can be induced from the norms on the two subspaces. We view the map component as the horizontal part and the spinor component as the vertical part. The horizontal part is a Banach space with the usual norm. The vertical part is actually a Hilbert space with the following inner product\cite{ammann2003variational}\cite{gong2017coupled}\cite{isobe2017morse}:
\begin{equation}
(Y_1,Y_2)_{1/2,2}=((1+|{\bf D}|)Y_1,Y_2)_2,
\end{equation}
where $(\cdot,\cdot)_2$ is the $L^2$-inner product on $M$ and ${\bf D}:=\slashed{\pt}\otimes{\bf 1}$. With respect to this product structure, we can write 
\begin{equation}
d\mathcal{L}^\alpha=d^H\mathcal{L}^\alpha+d^V\mathcal{L}^\alpha,
\end{equation}
where 
\begin{equation}
d^H\mathcal{L}^\alpha(\phi,\psi)(X)=(-\tau^\alpha(\phi)+\mathcal{R}(\phi,\psi)-F_\phi(\phi,\psi),X)_2
\end{equation}
and 
\begin{equation}
d^V\mathcal{L}^\alpha(\phi,\psi)(Y)=(\nabla^V\mathcal{L}^\alpha(\phi,\psi),Y)_{1/2,2},
\end{equation}
\begin{equation}
\nabla^V\mathcal{L}^\alpha(\phi,\psi)=(1+|{\bf D}|)^{-1}(\slashed{D}\psi-F_\psi(\phi,\psi)).
\end{equation}

Now, we can define a configuration space $\mathcal{F}^{\alpha,1/2}(M,N)$ as 
\begin{equation}\label{configuration}
\mathcal{F}^{\alpha,1/2}(M,N):=\{(\phi,\psi)| \ \phi\in W^{1,2\alpha}(M,N), \ \psi\in H^{1/2}(M,\Sigma M\otimes\phi^*TN)\},
\end{equation}
which is a Banach submanifold of $\mathcal{F}^{\alpha,1/2}$ with the tangent space at $(\phi,\psi)\in\mathcal{F}^{\alpha,1/2}(M,N)$ being
\begin{equation}\label{tangent}
T_{(\phi,\psi)}\mathcal{F}^{\alpha,1/2}(M,N)=W^{1,2\alpha}(M,\phi^*TN)\times H^{1/2}(M,\Sigma M\otimes\phi^*TN).
\end{equation}
The space $W^{1,2\alpha}(M,N)$ is a Banach manifold\cite{urakawa2013calculus} whose tangent space at $\phi\in W^{1,2\alpha}(M,N)$ is 
\begin{equation}
\begin{split}
T_{\phi}W^{1,2\alpha}(M,N)&=W^{1,2\alpha}(M,\phi^*TN)\\
&:=\{X\in W^{1,2\alpha}(M,\underline{\mathbb{R}}^L)| \ X(x)\in T_{\phi(x)}N\ \ \text{for a.e.} \ x\in M\}.
\end{split}
\end{equation}
So, to see \eqref{tangent}, it suffices to show the vertical part, which is the  same as in \cite{isobe2012existence}.

Let us go back to  our Lagrangian $\mathcal{L}^{\alpha}$. On $\mathcal{F}^{\alpha,1/2}(M,N)$, since $H^{1/2}$ can be continuously embedded into $L^4$\cite{taylor1981pseudodifferential}, $\mathcal{L}^{\alpha}$ is well-defined if for example $F(\phi,\psi)$ grows at most like  $|\psi|^4$ as $|\psi|\to\infty$.

\section{The Palais-Smale condition}
In this section, we prove the Palais-Smale condition for $\mathcal{L}^\alpha$ for a certain class of nonlinearities $F$. We assume in this section that $F\in C^1(\mathcal{F}^{\alpha,1/2}(M,N))$ satisfies the following conditions:

(F1) \  There exist $p\in(2,4)$ and $C>0$ such that 
\begin{equation}\label{f1}
|F_\psi(\phi,\psi)|\leq C(1+|\psi|^{p-1})
\end{equation}
for any $(\phi,\psi)\in\mathcal{F}^{\alpha,1/2}(M,N)$.

(F2) \ There exist $\mu>2$ and $R_1>0$ such that
\begin{equation}\label{f2}
0<\mu{F}(\phi,\psi)\leq\langle F_{\psi}(\phi,\psi),\psi\rangle
\end{equation}
for any $(\phi,\psi)\in\mathcal{F}^{\alpha,1/2}(M,N)$ with $|\psi|\geq R_1$.

(F3) \  There exist $q<4$ and $C>0$ such that 
\begin{equation}\label{f3}
F_\phi(\phi,\psi)\leq C(1+|\psi|^q)
\end{equation}
for any $(\phi,\psi)\in\mathcal{F}^{\alpha,1/2}(M,N)$

The constants $C$ in this paper may different between lines. From now on, we denote $\mathcal{F}^{\alpha,1/2}(M,N)$ by $\mathcal{F}$ for short. Let us recall the  Palais-Smale condition:
\begin{defn}
A sequence $\{(\phi_n,\psi_n)\}\subset\mathcal{F}$ is a Palais-Smale sequence if the following are satisfied:
\begin{itemize}
\item[(1)]   $\sup\limits_{n\geq1}|\mathcal{L}^{\alpha}(\phi_n,\psi_n)|<\infty$,
\item[(2)]  $\|d\mathcal{L}^{\alpha}(\phi_n,\psi_n)\|_{T^*_{(\phi_n,\psi_n)}\mathcal{F}}\to 0$.
\end{itemize}
\end{defn}
We say $\mathcal{L}^\alpha$ satisfies the Palais-Smale condition on $\mathcal{F}$ if any Palais-Smale sequence has a convergent subsequence in $\mathcal{F}$. By Sobolev embedding and conditions (F1) (F3), $d\mathcal{L}^\alpha_{(\phi,\psi)}$ is a bounded linear map on $\mathcal{F}$. Thus, Zorn's proposition (see page 30 in \cite{urakawa2013calculus}) implies $\mathcal{L}^\alpha$ is $C^1$ on  $\mathcal{F}$. Therefore, if $\mathcal{L}^\alpha$ satisfies the Palais-Smale condition on $\mathcal{F}$, and so,  any Palais-Smale sequence has a subsequence converging to a critical point.

The following is the main theorem in this section. The proof for the  vertical part follows the one in \cite{isobe2012existence}. For the horizontal part, the method in \cite{isobe2012existence}  no longer applies, because $d^H\mathcal{L}^\alpha$ cannot be written as a combination of a linear  and a compact operator. Although the proof in \cite{urakawa2013calculus} is not valid for our case either, one estimate there inspired our convergence proof for the  horizontal part. For completeness, we also give the proof for the spinor part.
\begin{thm}\label{ps}
If $F$ satisfies conditions $(\rm F1), (F2), (F3)$ with $\frac{4\alpha}{3\alpha-2}\leq\mu\leq p\leq\frac34\mu+1$ for $\alpha\in(1,2]$, then $\mathcal{L}^\alpha$ satisfies the Palais-Smale condition on $\mathcal{F}$.
\end{thm}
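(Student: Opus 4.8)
The plan is to follow the standard Palais-Smale recipe in two stages: first, extract from the definition a priori bounds on a Palais-Smale sequence $\{(\phi_n,\psi_n)\}$; second, upgrade weak convergence to strong convergence separately on the spinor (vertical) and map (horizontal) components. For the boundedness, I would test the differential against the natural directions. Using condition (F2) together with the identity $d^V\mathcal{L}^\alpha(\phi_n,\psi_n)(\psi_n) = \int_M \langle\psi_n,\slashed D\psi_n\rangle - \int_M\langle F_\psi(\phi_n,\psi_n),\psi_n\rangle$, the combination $\mathcal{L}^\alpha(\phi_n,\psi_n) - \tfrac1\mu d^V\mathcal{L}^\alpha(\phi_n,\psi_n)(\psi_n)$ produces, modulo a controlled error on the region $\{|\psi_n|<R_1\}$, a term $(\tfrac12-\tfrac1\mu)\int_M\langle\psi_n,\slashed D\psi_n\rangle$ plus the positive $\alpha$-energy $\tfrac12\int_M(1+|d\phi_n|^2)^\alpha$ (here (F4) and the sign of $F$ help). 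Since (1) bounds $\mathcal{L}^\alpha(\phi_n,\psi_n)$ and (2) forces $d^V\mathcal{L}^\alpha(\phi_n,\psi_n)(\psi_n)/\|\psi_n\|\to 0$, and since $\mu>2$, this gives a bound on $\int_M(1+|d\phi_n|^2)^\alpha$ — hence a $W^{1,2\alpha}$ bound on $\phi_n$ — and a bound on the Dirac quadratic form. Controlling the latter to get an $H^{1/2}$ bound on $\psi_n$ requires absorbing the zero modes of $\slashed D_\phi$ and handling $F_\psi$ via (F1); the exponent restriction $p\le \tfrac34\mu+1$ and $\mu\le p$ are precisely what is needed to close this interpolation, exactly as in \cite{isobe2012existence}.

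Once $\phi_n$ is bounded in $W^{1,2\alpha}$ and $\psi_n$ in $H^{1/2}$, I would pass to a subsequence with $\phi_n\rightharpoonup\phi$ weakly in $W^{1,2\alpha}$ and strongly in $C^0$ (Sobolev embedding, since $\alpha>1$ on a surface), and $\psi_n\rightharpoonup\psi$ weakly in $H^{1/2}$ and strongly in $L^q$ for all $q<\infty$ (compact embedding $H^{1/2}\hookrightarrow L^q$ on a surface). For the spinor part, I would follow \cite{isobe2012existence} verbatim: write $\slashed D_{\phi_n}\psi_n = F_\psi(\phi_n,\psi_n) + o(1)$ in the appropriate dual norm, note that $(1+|\mathbf D|)^{-1}$ is compact from $L^{p'}$ into $H^{1/2}$, that $F_\psi(\phi_n,\psi_n)\to F_\psi(\phi,\psi)$ strongly in $L^{p'}$ by (F1) and the strong $L^p$ convergence of $\psi_n$, and that the connection-term difference $\slashed D_{\phi_n}-\slashed D_{\phi}$ is controlled by $d\phi_n$ times $\psi_n$ and is likewise treated by compactness; this yields $\psi_n\to\psi$ strongly in $H^{1/2}$.

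The genuinely new difficulty — and the step I expect to be the main obstacle — is the strong convergence of the \emph{horizontal} part, because $d^H\mathcal{L}^\alpha$ is \emph{not} of the form (linear invertible) $+$ (compact), so the usual Riesz-type trick fails. Here the plan is to import the estimate from \cite{urakawa2013calculus}: test $d^H\mathcal{L}^\alpha(\phi_n,\psi_n)-d^H\mathcal{L}^\alpha(\phi_m,\psi_m)$ against $\phi_n-\phi_m$ (suitably projected to remain tangent, using the $C^0$-closeness of $\phi_n,\phi_m$ to define the projection onto $\phi^*TN$), and exploit the uniform monotonicity of the $p$-Laplace-type operator $\phi\mapsto\tau((1+|d\phi|^2)^\alpha)$: for $\alpha\ge 1$ one has a Clarkson-type inequality giving, roughly,
\begin{equation*}
\int_M\bigl(|d\phi_n|^2+|d\phi_m|^2\bigr)^{\alpha-1}|d\phi_n-d\phi_m|^2 \;\le\; C\,\langle d^H\mathcal{L}^\alpha(\phi_n,\psi_n)-d^H\mathcal{L}^\alpha(\phi_m,\psi_m),\phi_n-\phi_m\rangle + (\text{lower-order}).
\end{equation*}
The left side dominates $\|d\phi_n-d\phi_m\|_{L^{2\alpha}}^{2\alpha}$ (for $\alpha\ge1$, again by a convexity inequality). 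The first term on the right tends to zero because the two differentials go to zero in $T^*\mathcal F$ while $\phi_n-\phi_m$ is bounded. The lower-order terms — the curvature term $\mathcal R(\phi_n,\psi_n)-\mathcal R(\phi_m,\psi_m)$, the perturbation $F_\phi(\phi_n,\psi_n)-F_\phi(\phi_m,\psi_m)$, the second fundamental form contributions from the projection to $N$, and the mismatch of the two tangent spaces — all involve only $\phi_n$ in $C^0$ and $\psi_n$ in $L^4$ (here (F3) with $q<4$ is exactly what is needed to make $F_\phi$ converge strongly in $L^1$ after using the strong $L^q$ convergence of $\psi_n$), hence converge to zero along the subsequence. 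This shows $\{d\phi_n\}$ is Cauchy in $L^{2\alpha}$, so $\phi_n\to\phi$ strongly in $W^{1,2\alpha}$; combined with the spinor convergence, $(\phi_n,\psi_n)\to(\phi,\psi)$ in $\mathcal F$, establishing the Palais-Smale condition. I would double-check the bookkeeping of the constraint $\tfrac{4\alpha}{3\alpha-2}\le\mu$: this lower bound on $\mu$ is what guarantees the $\alpha$-energy term genuinely survives with a positive coefficient after the $\mathcal{L}^\alpha-\tfrac1\mu d^V\mathcal{L}^\alpha(\cdot)(\psi)$ manipulation, i.e.\ it ties the spinor scaling to the $\alpha$-energy scaling, and it is the one place where $\alpha$ enters the hypothesis nontrivially.
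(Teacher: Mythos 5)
Your Steps 2 and 3 are essentially the paper's argument: the spinor part is handled exactly as in Isobe (bounded invertible linear operator plus compact remainders through $(1+|\mathbf D|)^{-1}$), and the map part rests on the same two ingredients the paper uses, namely the uniform monotonicity/convexity of $E^\alpha$ giving $\|d\phi_i-d\phi_j\|_{2\alpha}^{2\alpha}\le C\,(dE^\alpha_{\phi_i}-dE^\alpha_{\phi_j})(\phi_i-\phi_j)$ (Urakawa's sublemma) together with compactness of the curvature, $F_\phi$ and Christoffel terms, so that the leading term $J\phi_n=-\mathrm{div}((1+|d\phi_n|^2)^{\alpha-1}\nabla\phi_n)$ is Cauchy in the dual space. (Minor slip there: on a surface $H^{1/2}$ embeds compactly into $L^q$ only for $q<4$, not for all $q<\infty$; this still suffices because $p<4$.)

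The genuine gap is in your boundedness step. The truncation $\mathcal{L}^\alpha-\tfrac1\mu d^V\mathcal{L}^\alpha(\cdot)(\psi_n)$ leaves the term $\bigl(\tfrac12-\tfrac1\mu\bigr)\int_M\langle\psi_n,\slashed D\psi_n\rangle$ with a nonzero coefficient, and this quadratic form is strongly indefinite: it has no sign, so it can be arbitrarily negative and cannot be discarded, which means you cannot conclude a bound on $\int_M(1+|d\phi_n|^2)^\alpha$, and a bound on the Dirac quadratic form would in any case not control $\|\psi_n\|_{1/2,2}$. The paper instead tests with $2\mathcal{L}^\alpha(\phi_n,\psi_n)-d\mathcal{L}^\alpha(\phi_n,\psi_n)(0,\psi_n)$, which cancels the Dirac term exactly and, via (F2), yields only the conditional bound $\|\phi_n\|_{1,2\alpha}^{2\alpha}+\|\psi_n\|_\mu^\mu\le C(1+\|\psi_n\|_{1/2,2})$; the real work, which you defer to ``exactly as in Isobe,'' is then the $H^{1/2}$ bound on $\psi_n$: decompose $\psi_n$ spectrally with respect to $\mathbf D=\slashed\partial\otimes\mathbf 1$, test $d\mathcal{L}^\alpha$ against $(0,\psi_{n,0}^\pm)$, estimate the coupling term $\int_M|d\phi_n||\psi_{n,0}^\pm||\psi_n|$ by H\"older with exponents $(2\alpha,4,\tfrac{4\alpha}{3\alpha-2})$, and estimate the $F_\psi$ term using $p\le\tfrac34\mu+1$, arriving at a self-bounding inequality for $\|\psi_n\|_{1/2,2}$ with subquadratic right-hand side. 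In particular you misplace the hypothesis $\tfrac{4\alpha}{3\alpha-2}\le\mu$: it does not affect the coefficient of the $\alpha$-energy in any truncation (which is $\alpha$-independent), but is used to bound $\|\psi_n\|_{4\alpha/(3\alpha-2)}\le C\|\psi_n\|_\mu\le C(\|\psi_n\|_{1/2,2}^{1/\mu}+1)$ in that H\"older step, tying the $L^\mu$ control coming from (F2) to the exponent forced by $\|d\phi_n\|_{2\alpha}$ and $\|\psi_n\|_4$.
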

\begin{proof}
We first prove that any Palais-Smale sequence is bounded.

Let $\{(\phi_n,\psi_n)\}\subset\mathcal{F}$ be a Palais-Smale sequence. By the structure on $\mathcal{F}^{\alpha,1/2}$, we have 
\begin{equation}\label{dv}
\begin{split}
|d\mathcal{L}^{\alpha}(\phi_n,\psi_n)(0,\psi_n)|&=|d^V\mathcal{L}^{\alpha}(\phi_n,\psi_n)(\psi_n)|
=|(\nabla^V{\mathcal{L}}^\alpha(\phi_n,\psi_n),\psi_n)_{1/2,2}|\\
&=|(\slashed{D}\psi_n-F_\psi(\phi_n,\psi_n),\psi_n)_2|\leq\|\psi_n\|_{1/2,2},
\end{split}\end{equation}
where we have used $\|d^V\mathcal{L}^{\alpha}(\phi_n,\psi_n)\|\to0$ as $n\to\infty$. This implies
\begin{equation}\begin{split}\label{0,psi}
&2\mathcal{L}^\alpha(\phi_n,\psi_n)-d\mathcal{L}^\alpha(\phi_n,\psi_n)(0,\psi_n)\\
&=\int_M(1+|d\phi_n|^2)^\alpha+\int_M\langle\psi_n,\slashed{D}\psi_n\rangle-2\int_MF(\phi_n,\psi_n)-\int_M\langle\slashed{D}\psi_n-F_\psi(\phi_n,\psi_n),\psi_n\rangle\\
&=\int_M(1+|d\phi_n|^2)^\alpha-2\int_MF(\phi_n,\psi_n)+\int_M\langle F_\psi(\phi_n,\psi_n),\psi_n\rangle\\
&\leq C+\|\psi_n\|_{1/2,2}.
\end{split}\end{equation}
For simplicity, we denote all  positive constants that are independent of $n$ by $C$.

On the other hand, by (F2), there exists a constant $C>0$ such that 
\begin{equation}
\langle F_\psi(\phi,\psi),\psi\rangle\geq\mu{F}(\phi,\psi)-C
\end{equation}
for any $(\phi,\psi)\in\mathcal{F}^{\alpha,1/2}(M,N)$. Plugging this into \eqref{0,psi}, we get
\begin{equation}\label{0,psi2}
\int_M(1+|d\phi_n|^2)^\alpha+(\mu-2)\int_MF(\phi_n,\psi_n)\leq\|\psi_n\|_{1/2,2}+C.
\end{equation}
Integrating (F2), we know
\begin{equation}
F(\phi,\psi)\geq C|\phi|^\mu-C
\end{equation}
for any $(\phi,\psi)\in\mathcal{F}^{\alpha,1/2}(M,N)$. Plugging this into \eqref{0,psi2} yields
\begin{equation}\label{munorm}
\|\phi_n\|^{2\alpha}_{1,2\alpha}+\|\psi\|^\mu_\mu\leq C(\|\psi_n\|_{1/2,2}+1).
\end{equation}

Consider the spectral decomposition
\begin{equation}
H^{1/2}(M,\Sigma M\otimes\underline{\mathbb{R}}^L)={\bf H}_0^-\oplus{\bf H}_0^0\oplus{\bf H}_0^+
\end{equation}
with respect to the operator ${\bf D}:=\slashed{\pt}\otimes{\bf 1}$, where ${\bf H}_0^-$, ${\bf H}_0^0$ and ${\bf H}_0^+$ are the closures in $H^{1/2}(M,\Sigma M\otimes\underline{\mathbb{R}}^L)$ of the spaces spanned by the negative, the null and the positive eigenspinors of ${\bf D}$, respectively. Denote by ${\rm P}_0^-: H^{1/2}(M,\Sigma M\otimes\underline{\mathbb{R}}^L)\to{\bf H}_0^-$, ${\rm P}_0^0: H^{1/2}(M,\Sigma M\otimes\underline{\mathbb{R}}^L)\to{\bf H}_0^0$ and ${\rm P}_0^+: H^{1/2}(M,\Sigma M\otimes\underline{\mathbb{R}}^L)\to{\bf H}_0^+$ the corresponding spectral projections.
By this decomposition, we write $\phi_n=\psi_{n,0}^-+\psi_{n,0}^0+\psi_{n,0}^+$ with $\psi_{n,0}^-$, $\psi_{n,0}^0$ and $\psi_{n,0}^+$ being ${\rm P}_0^-\psi_n$, ${\rm P}_0^0\psi_n$ and ${\rm P}_0^+\psi_n$, respectively. The square of the  $H^{1/2}$- norm of $\psi_{n,0}^+\in{\bf H}_0^+$ is equivalent to (\cite{ammann2003variational})
\begin{equation}
\int_M\langle\psi_{n,0}^+,|{\bf D}|\psi_{n,0}^+\rangle+\|\psi_{n,0}^+\|_2^2=\int_M\langle\psi_{n,0}^+,{\bf D}\psi_{n,0}^+\rangle+\|\psi_{n,0}^+\|_2^2.
\end{equation}

Together with \eqref{dirac}, this implies
\begin{equation}\label{spinornorm}
\begin{split}
\|\psi_{n,0}^+\|^2_{1/2,2}
&\leq C\int_M\langle\psi_{n,0}^+,{\bf D}\psi_n\rangle+C\|\psi_{n,0}^+\|_2^2\\
&\leq C\Big|\int_M\langle\psi_{n,0}^+,{\slashed D}\psi_n\rangle\Big|+C\int_M|d\phi_n||\psi_{n,0}^+||\psi_n|+C\|\psi_n\|_2^2\\
&\leq C\Big|\int_M\langle\psi_{n,0}^+,{\slashed D}\psi_n\rangle\Big|+C\|\phi_n\|_{1,2\alpha}\|\psi_n\|_4\|\psi_{n,0}^+\|_{4\alpha/(3\alpha-2)}+C\|\psi_n\|_2^2\\
&\leq C\Big|\int_M\langle\psi_{n,0}^+,{\slashed D}\psi_n\rangle\Big|+C\|\phi_n\|_{1,2\alpha}\|\psi_n\|_{1/2,2}\|\psi_n\|_{4\alpha/(3\alpha-2)}+C\|\psi_n\|_2^2,
\end{split}\end{equation}
where we have used the H\"older inequality and the Sobolev embedding $H^{1/2}\subset L^4$ for surfaces\cite{taylor1981pseudodifferential}.

Again,  as in \eqref{dv}, we have
\begin{equation}
d\mathcal{L}^{\alpha}(\phi_n,\psi_n)(0,\psi_{n,0}^+)|=|\int_M\langle\slashed{D}\psi_n-F_\psi(\phi_n,\psi_n),\psi_{n,0}^+\rangle|\leq\|\psi_{n,0}^+\|_{1/2,2}.
\end{equation}
This and (F1) give us
\begin{equation}\label{mainterm}
\begin{split}
\Big|\int_M\langle\psi_{n,0}^+,{\slashed D}\psi_n\rangle\Big|&\leq\int_M|F_\psi(\phi_n,\psi_n)||\psi_{n,0}^+|+\|\psi_{n,0}^+\|_{1/2,2}\\
&\leq C\int_M(1+|\psi_n|^{p-1})|\psi_{n,0}^+|+\|\psi_n\|_{1/2,2}\\
&\leq C\|\psi_n\|^{p-1}_\mu\|\psi_{n,0}^+\|_{\mu/(\mu-p+1)}+C\|\psi_n\|_{1/2,2}\\
&\leq C\|\psi_n\|^{p-1}_\mu\|\psi_n\|_{1/2,2}+C\|\psi_n\|_{1/2,2},
\end{split}\end{equation}
where we have used the H\"older inequality and assumption $p\leq\frac34\mu+1$.

Now, plugging \eqref{mainterm} into \eqref{spinornorm}, we have
\begin{equation}\label{spinornorm2}
\begin{split}
\|\psi_{n,0}^+\|^2_{1/2,2}\leq&C(\|\psi_n\|^{p-1}_\mu\|\psi_n\|_{1/2,2}+\|\phi_n\|_{1,2\alpha}\|\psi_n\|_{1/2,2}\|\psi_n\|_{4\alpha/(3\alpha-2)}\\
&+\|\psi_n\|_2^2+\|\psi_n\|_{1/2,2}).
\end{split}
\end{equation}
Since $\alpha\in(1,2]$, the H\"older inequality and \eqref{munorm} imply
\begin{equation}\label{l2norm}
 \|\psi_n\|_2\leq C\|\psi_n\|_{4\alpha/(3\alpha-2)}\leq C\|\psi_n\|_{\mu}\leq C(\|\psi_n\|_{1/2,2}^{1/\mu}+1)
 \end{equation}
 and 
\begin{equation}\label{mapnorm}
\|\phi_n\|_{1,2\alpha}\leq C(\|\psi_n\|_{1/2,2}^{1/2\alpha}+1).
 \end{equation}
Plugging \eqref{l2norm} and \eqref{mapnorm} into \eqref{spinornorm2}, we get
\begin{equation}\label{spinornorm+}
\begin{split}
\|\psi_{n,0}^+\|^2_{1/2,2}\leq&C\|\psi_n\|_{1/2,2}^{(p-1)/\mu}\|\psi_n\|_{1/2,2}+C(\|\psi_n\|_{1/2,2}^{1/2\alpha}+1)\|\psi_n\|_{1/2,2}(\|\psi_n\|_{1/2,2}^{1/\mu}+1)\\
&+C\|\psi_n\|_{1/2,2}^{2/\mu}+C\|\psi_n\|_{1/2,2}+C.\\
\end{split}
\end{equation}
Similarly,
\begin{equation}\label{spinornorm-}
\begin{split}
\|\psi_{n,0}^-\|^2_{1/2,2}\leq&C\|\psi_n\|_{1/2,2}^{(p-1)/\mu}\|\psi_n\|_{1/2,2}+C(\|\psi_n\|_{1/2,2}^{1/2\alpha}+1)\|\psi_n\|_{1/2,2}(\|\psi_n\|_{1/2,2}^{1/\mu}+1)\\
&+C\|\psi_n\|_{1/2,2}^{2/\mu}+C\|\psi_n\|_{1/2,2}+C.\\
\end{split}
\end{equation}
Since the usual Dirac operator has finite dimensional kernel, ${\rm dim}({\bf H}_0^0)<\infty$. Noting that the $H^{1/2}$-norm and the $L^2$-norm are equivalent on ${\bf H}_0^0$ (\cite{ammann2003variational}), we have
\begin{equation}\label{spinornorm0}
\|\psi_{n,0}^-\|^2_{1/2,2}\leq C\|\psi_{n,0}^-\|^2_2\leq C\|\psi_n\|^2_2\leq C(\|\psi_n\|_{1/2,2}^{2/\mu}+1).
\end{equation}
Combining \eqref{spinornorm+}, \eqref{spinornorm-} and \eqref{spinornorm0}, we obtain
\begin{equation}\label{spinornormfinal}
\begin{split}
\|\psi_n\|^2_{1/2,2}\leq&C\|\psi_n\|_{1/2,2}^{(p-1)/\mu}\|\psi_n\|_{1/2,2}+C(\|\psi_n\|_{1/2,2}^{1/2\alpha}+1)\|\psi_n\|_{1/2,2}(\|\psi_n\|_{1/2,2}^{1/\mu}+1)\\
&+C\|\psi_n\|_{1/2,2}^{2/\mu}+C\|\psi_n\|_{1/2,2}+C.\\
\end{split}
\end{equation}
Since $\frac{4\alpha}{3\alpha-2}\leq\mu\leq p\leq\frac34\mu+1<\mu+1$ and $\alpha\in(1,2]$, \eqref{spinornormfinal} tells us that $\{\psi_n\}$ is uniformly bounded in $H^{1/2}(M,\Sigma M\otimes\underline{\mathbb{R}}^L)$. Therefore, from \eqref{mapnorm}, $\{\phi_n\}$ is also bounded. Thus, $\{\phi_n,\psi_n\}$ is a bounded sequence.

Next, we show $\{\psi_n\}$ has a convergent subsequence.
 As in \cite{isobe2012existence}, we write the vertical gradient of $\mathcal{L}^\alpha$ as
 \begin{equation}\label{decomposition1}
 \nabla^V\mathcal{L}^\alpha(\phi,\psi)=L^V\psi+K^V_1(\phi,\psi)+K^V_2(\phi,\psi),
 \end{equation}
where 
\begin{equation}
L^V=(1+|{\bf D}|)^{-1}{\bf D}: H^{1/2}(M,\Sigma M\otimes\underline{\mathbb{R}}^L)\to H^{1/2}(M,\Sigma M\otimes\underline{\mathbb{R}}^L)
\end{equation}
is bounded linear and 
\begin{equation}
K_1^V(\phi,\psi)=(1+|{\bf D}|)^{-1}(\Gamma_{jk}^i(\phi)\nabla_{e_\alpha}\phi^j(e_\alpha\cdot\psi^k)\otimes\partial_{y^i}(\phi)),
\end{equation}
\begin{equation}
K_2^V(\phi,\psi)=-(1+|{\bf D}|)^{-1}F_\psi(\phi,\psi).
\end{equation}
Because both $|\psi\nabla\phi|$ and $F_\psi(\phi,\psi)$ belong in $L^r$ with $r>4/3$, both $K^V_1$ and $K^V_2$ are compact. We write $\psi_n=\psi_{n,0}^-+\psi_{n,0}^0+\psi_{n,0}^+$. Since $L^V$ is invertible on ${\bf H}_0^{\pm}$ and $ \nabla^V\mathcal{L}^\alpha(\phi_n,\psi_n)\to0$, we have
\begin{equation}
\psi_{n,0}^\pm=o(1)-(L^V)^{-1}K(\phi_n,\psi_{n,0}^\pm),
\end{equation}
where $o(1)\to0$ as $n\to\infty$. From the boundedness of $\{(\phi_n,\psi_n)\}$ and the compactness of  $(L^V)^{-1}K$ on ${\bf H}_0^{\pm}$, we know $\psi_{n,0}^\pm$ has a convergent subsequence. Again, since ${\rm dim}{\bf H}_0^0<\infty$, $\psi_{n,0}^0$ also has a convergent subsequence. Thus, $\{\psi_n\}$ has a convergent subsequence.

Last, for the convergence of $\{\phi_n\}$, we consider the $\alpha$-energy functional 
\begin{equation}\label{alphaenergy}
E^\alpha(\phi):=\int_Me^\alpha(d\phi):=\frac12\int_M(1+|d\phi|^2)^\alpha.
\end{equation}
for any $\phi\in W^{1,2\alpha}(M,\mathbb{R}^L)$. Then the second derivative of $e^\alpha$ at $d\phi$ with respect to the direction $d\varphi$ is 
\begin{equation}\label{2ndderivative}
\begin{split}
d^2e^\alpha_{d\phi}(d\varphi,d\varphi)&=\frac12\frac{d^2}{dt^2}\Bigg|_{t=0}(1+\langle d\phi+td\varphi,d\phi+td\varphi\rangle)^\alpha\\
&=2\alpha(\alpha-1)(1+|d\phi|^2)^{\alpha-2}\langle d\phi,d\varphi\rangle^2+\alpha(1+|d\phi|^2)^{\alpha-1}|d\varphi|^2\\
&\geq\alpha|d\phi|^{2\alpha-2}|d\varphi|^2.
\end{split}
\end{equation}
For any $\phi_i,\phi_j\in W^{1,2\alpha}(M,\mathbb{R}^L)$, we define
\begin{equation}
c(t):=b+t(a-b),
\end{equation}
where $a:=d\phi_i$ and $b:=d\phi_j$. Then \eqref{2ndderivative} implies
\begin{equation}
d^2e^\alpha_{c(t)}(a-b,a-b)\geq\alpha|c(t)|^{2\alpha-2}|a-b|^2.
\end{equation}
We also have
\begin{equation}
dE^\alpha_\phi(\varphi)=\frac{d}{dt}\Bigg|_{t=0}\int_Me^\alpha(d\phi+td\varphi)=\int_Mde^\alpha_{d\phi}(d\varphi)
\end{equation}
and 
\begin{equation}
\int_0^1d^2e^\alpha_{c(t)}(a-b,a-b)=de^\alpha_a(a-b)-de^\alpha_b(a-b).
\end{equation}
Now, we can control $\|d\phi_i-d\phi_j\|_{2\alpha}$ as follows.
\begin{equation}\label{gradient2alphanorm}
\begin{split}
(dE^\alpha_{\phi_i}-dE^\alpha_{\phi_j})(\phi_i-\phi_j)&=\int_M(de^\alpha_{d\phi_i}-de^\alpha_{d\phi_j})(d\phi_i-d\phi_j)\\
&\int_M\int_0^1d^2e^\alpha_{c(t)}(d\phi_i-d\phi_j,d\phi_i-d\phi_j)\\
&\geq\int_M\int_0^1\alpha|c(t)|^{2\alpha-2}|d\phi_i-d\phi_j|^2\\
&\geq C\|d\phi_i-d\phi_j\|^{2\alpha}_{2\alpha},
 \end{split}
\end{equation}
where the last inequality comes from the Sublemma 3.18 in \cite{urakawa2013calculus} stated as follows:
\begin{lem}
Let $V$ be a finite dimensional real vector space with a norm $|\cdot|$. Let $m$ be a positive integer. Then there exists a positive constant $C$ such that 
\begin{equation}
\int_0^1|x+ty|^mdt\geq C|y|^m
\end{equation}
for any $x,y\in V$.
\end{lem}

To estimate the left-hand side of \eqref{gradient2alphanorm}, we decompose $d^H\mathcal{L}^\alpha$ as
\begin{equation}
d^H\mathcal{L}^\alpha(\phi,\psi)=J\phi+K^H_1(\phi,\psi)+K^H_2(\phi,\psi)+K^H_3(\phi,\psi),
\end{equation}
where
\begin{equation}
K^H_1(\phi,\psi)=-\Gamma^m_{ij}\phi^i_\beta\phi^j_{\gamma}g^{\beta\gamma}(1+|d\phi|^2)^{\alpha-1}\frac{\pt}{\pt y^m},
\end{equation}
\begin{equation}
K^H_2(\phi,\psi)=\frac{1}{2\alpha}R^m_{jkl}\langle\psi^k,\nabla\phi^j\cdot\psi^l\rangle\frac{\pt}{\pt y^m},
\end{equation}
\begin{equation}
K^H_3(\phi,\psi)=-\frac{1}{\alpha}F_\phi(\phi,\psi)
\end{equation}
and $J: W^{1,2\alpha}(M,\mathbb{R}^L)\to(W^{1,2\alpha}(M,\mathbb{R}^L))^*=W^{-1,2\alpha/(2\alpha-1)}(M,\mathbb{R}^L)$
\begin{equation}
J\phi=-{\rm div}((1+|d\phi|^2)^{\alpha-1}\nabla\phi).
\end{equation}
By Sobolev embedding, $K^H=K^H_1+K^H_2+K^H_3:\mathcal{F}\to(W^{1,2\alpha}(M,\mathbb{R}^L))^*$ is compact. Again, we can write 
\begin{equation}
J=o(1)-K^H.
\end{equation}
The boundedness of $\{(\phi_n,\psi_n)\}$ and the compactness of $K^H$ imply that $J(\phi_n)$ has a convergent subsequence. In particular, for any $\varepsilon$, there exists an integer $N_\varepsilon$ such that 
\begin{equation}
|(J(\phi_i)-J(\phi_j))(\phi_i-\phi_j)|\leq\varepsilon\|\phi_i-\phi_j\|_{1,2\alpha}
\end{equation}
for $i,j>N_\varepsilon$. Together this with \eqref{gradient2alphanorm} and
\begin{equation}
(dE^\alpha_{\phi_i}-dE^\alpha_{\phi_j})(\phi_i-\phi_j)=(J(\phi_i)-J(\phi_j))(\phi_i-\phi_j),
\end{equation}
we get
\begin{equation}
\|d\phi_i-d\phi_j\|_{2\alpha}\leq(C\varepsilon\|\phi_i-\phi_j\|_{1,2\alpha})^{\frac{1}{2\alpha}}\leq(C\varepsilon)^{\frac{1}{2\alpha}},
\end{equation}
where we have used the boundedness of $\{\phi_n\}$. Therefore, $\{d\phi_n\}$ contains a Cauchy subsequence in $L^{2\alpha}$. Hence, by Sobolev embedding, $\{\phi_n\}$ has a convergent subsequence.

\end{proof}

\section{Negative pseudo-gradient flow and linking geometry}
\subsection{Negative pseudo-gradient flow}
In this section, we want to deform our configuration space $\mathcal{F}^{\alpha,1/2}(M, N)$. On a Hilbert manifold, the differential and the gradient are equivalent to each other. However, this fails on Banach manifolds, and one has to  use a pseudo-gradient flow instead. In general, there alway exists a pseudo-gradient vector field for a $C^1$ functional, but there  may  be no  explicit expression. To get some nice properties, we need to construct a special pseudo-gradient vector field for $\mathcal{L}^\alpha$. Precisely, we want a pseudo-gradient vector field with the vertical part being the vertical gradient $\nabla^V\mathcal{L}^\alpha$. Before we do this, let us recall the definition of a pseudo-gradient vector.
\begin{defn}\cite{palais1970critical}\label{pg}
Let $M$ be a $C^{r+1}$($r\geq1$) Finsler manifold and $f: M\to\mathbb{R}$ be a $C^1$ function. A vector $X\in T_pM$ is called a pseudo-gradient vector for $f$ at $p$ if $X$ satisfies\\
(i)   $\|X\|\leq2\|df_p\|$,\\
(ii)  $df_p(X)\geq\|df\|^2$.
\end{defn}
A vector field is called a pseudo-gradient vector field for $f$ if at each point of its domain it is a pseudo-gradient vector for $f$. It is well-known that 
\begin{lem}\cite{palais1970critical}
There exists a locally Lipschitz pseudo-gradient vector field for $f$ on $M^*$.
\end{lem}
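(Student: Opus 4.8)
The plan is to follow the classical argument of Palais \cite{palais1970critical}, combining a pointwise choice of pseudo-gradient vectors satisfying \emph{strict} versions of (i), (ii) with a locally Lipschitz partition of unity, and exploiting the convexity of the set of pseudo-gradient vectors at each point. Throughout, $M^*$ denotes the open set of regular points, i.e.\ those $p\in M$ with $df_p\neq0$.

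First I would carry out the pointwise construction. Fix $p\in M^*$. By definition of the dual Finsler norm, $\norm{df_p}=\sup\set{df_p(v):v\in T_pM,\ \norm{v}\le1}>0$, so there is $w\in T_pM$ with $\norm{w}=1$ and $df_p(w)>\tfrac23\norm{df_p}$. Setting $v_p:=\tfrac32\norm{df_p}\,w$ gives $\norm{v_p}=\tfrac32\norm{df_p}<2\norm{df_p}$ and $df_p(v_p)>\norm{df_p}^2$, so both conditions of Definition \ref{pg} hold with strict inequality at $p$. Viewing $v_p$ as a constant vector field in a chart around $p$ and using that $f$ is $C^1$ (so $q\mapsto df_q$ and $q\mapsto\norm{df_q}$ are continuous), the strict inequalities persist on an open neighborhood $U_p$ of $p$; hence the constant field $v_p$ is a pseudo-gradient vector for $f$ at every point of $U_p$, and it is trivially locally Lipschitz there.

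Next I would patch these together. The family $\set{U_p}_{p\in M^*}$ is an open cover of $M^*$. Since a Finsler manifold is metrizable, hence paracompact, there is a locally finite refinement $\set{U_{p_i}}_{i\in I}$ and, because on a metric space the distance-to-closed-set functions are Lipschitz, a subordinate \emph{locally Lipschitz} partition of unity $\set{\rho_i}_{i\in I}$. Define
\begin{equation*}
V(q):=\sum_{i\in I}\rho_i(q)\,v_{p_i}\qquad(q\in M^*),
\end{equation*}
a locally finite sum; since each $\rho_i$ is locally Lipschitz and bounded and each $v_{p_i}$ is locally Lipschitz, $V$ is locally Lipschitz. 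Finally, the set of pseudo-gradient vectors at a fixed $q$ is convex: if $X_1,X_2$ satisfy $\norm{X_j}\le2\norm{df_q}$ and $df_q(X_j)\ge\norm{df_q}^2$, then for $t\in[0,1]$ the triangle inequality gives $\norm{tX_1+(1-t)X_2}\le2\norm{df_q}$ and linearity gives $df_q(tX_1+(1-t)X_2)\ge\norm{df_q}^2$. Thus $V(q)$, being a convex combination of the $v_{p_i}$ for those $i$ with $\rho_i(q)>0$ (all of which have $q\in U_{p_i}$, so are pseudo-gradient vectors at $q$), is again a pseudo-gradient vector for $f$ at $q$. This proves the lemma.

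The only genuinely delicate point is the availability of a locally Lipschitz partition of unity subordinate to an arbitrary open cover of a Finsler manifold; this rests on metrizability and on the Lipschitz property of the functions $q\mapsto\operatorname{dist}(q,M^*\setminus U_{p_i})$, and is the place where the Finsler (rather than merely Banach) structure enters. Everything else is elementary: the strict-inequality bookkeeping in the pointwise step, and the convexity observation in the patching step.
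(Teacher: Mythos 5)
Your proof is correct and is essentially the classical Palais construction that the paper cites for this lemma (and mirrors in its proof of Theorem \ref{pgvf}): choose at each regular point a vector with strict versions of (i), (ii), extend it as a ``constant'' field in a chart where the strict inequalities persist by continuity of $q\mapsto df_q$ and of the Finsler norm, and patch with a locally Lipschitz partition of unity, using convexity of the set of pseudo-gradient vectors at each point. No gaps; nothing further is needed.
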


On $\mathcal{F}^{\alpha,1/2}(M, N)$, we denote the set of regular points of $\mathcal{L}^\alpha$ by 
\begin{equation}
\tilde{\mathcal F}:=\{(\phi,\psi)\in\mathcal{F}^{\alpha,1/2}(M, N)| \ d\mathcal{L}^\alpha_{(\phi,\psi)}\neq0\}.
\end{equation}
The main result in this section is
\begin{thm}\label{pgvf}
Suppose $F\in C^{1,1}_{loc}$ in the fiber direction. Then there exists a locally Lipschitz pseudo-gradient vector field $\omega$ for $\mathcal{L}^\alpha$ on $\tilde{\mathcal F}$ of the  form of 
$
\omega=X\oplus a\nabla^V\mathcal{L}^\alpha
$
for some vector field $X$ and a constant $a\in(1,2)$.
\end{thm}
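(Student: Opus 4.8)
The plan is to fix the vertical component of $\omega$ at the outset and then build a compatible horizontal component by the standard Palais localization argument, checking throughout that the two pieces together still obey the pseudo-gradient inequalities. So, fix once and for all a constant $a\in(1,2)$ (any such value works; e.g.\ $a=3/2$) and declare the vertical part of $\omega$ to be $a\nabla^V\mathcal L^\alpha$. Since the vertical fibres of $T\mathcal F^{\alpha,1/2}(M,N)$ are Hilbert spaces with $d^V\mathcal L^\alpha(\cdot)=(\nabla^V\mathcal L^\alpha,\cdot)_{1/2,2}$, we have $\|d^V\mathcal L^\alpha_q\|=\|\nabla^V\mathcal L^\alpha_q\|=:v_q$ and $d^V\mathcal L^\alpha_q(\nabla^V\mathcal L^\alpha_q)=v_q^2$. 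Moreover the decomposition $\nabla^V\mathcal L^\alpha=L^V\psi+K^V_1(\phi,\psi)+K^V_2(\phi,\psi)$ recorded in Section~3 writes $\nabla^V\mathcal L^\alpha$ as the sum of a bounded linear operator and two Nemytskii-type maps; a routine check using the subcritical growth $(\mathrm{F1})$ together with the hypothesis $F\in C^{1,1}_{loc}$ in the fibre direction shows these are locally Lipschitz between the relevant Sobolev and $L^p$ spaces, so $\nabla^V\mathcal L^\alpha$ is a locally Lipschitz vertical vector field on $\tilde{\mathcal F}$.

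For the horizontal part, write $h_q:=\|d^H\mathcal L^\alpha_q\|$, so that $\|d\mathcal L^\alpha_q\|^2=h_q^2+v_q^2$ and $\tilde{\mathcal F}=\{h_q^2+v_q^2>0\}$. I claim it suffices to produce a locally Lipschitz horizontal vector field $X$ on $\tilde{\mathcal F}$ such that, at every $q$, one has $\|X_q\|\le 2h_q$ and $d^H\mathcal L^\alpha_q(X_q)\ge h_q^2-(a-1)v_q^2$. Indeed, the first bound gives $\|\omega_q\|^2=\|X_q\|^2+a^2v_q^2\le 4h_q^2+a^2v_q^2<4(h_q^2+v_q^2)$ because $a<2$, and the second gives $d\mathcal L^\alpha_q(\omega_q)=d^H\mathcal L^\alpha_q(X_q)+av_q^2\ge h_q^2+v_q^2=\|d\mathcal L^\alpha_q\|^2$, so $\omega=X\oplus a\nabla^V\mathcal L^\alpha$ is then a pseudo-gradient for $\mathcal L^\alpha$ in the sense of Definition~\ref{pg}.

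Such an $X$ is obtained by localization exactly as in the proof of the existence of pseudo-gradients. At a point $q_0$ with $h_{q_0}\ne0$, pick a fixed horizontal vector $w$ with $\|w\|=1$ and $d^H\mathcal L^\alpha_{q_0}(w)>\tfrac23 h_{q_0}$, and set $X^{(q_0)}:=\tfrac32 h_{q_0}\,w$ (constant in a local trivialization); at a point $q_0$ with $h_{q_0}=0$, hence $v_{q_0}\ne0$, simply set $X^{(q_0)}:=0$. Since $q\mapsto d^H\mathcal L^\alpha_q$, $h_q$, $v_q$ are continuous and in each case the two required inequalities hold \emph{strictly} at $q_0$ (for $h_{q_0}\ne0$ because $\tfrac32 h_{q_0}<2h_{q_0}$ and $\tfrac32 h_{q_0}\cdot\tfrac23 h_{q_0}=h_{q_0}^2>h_{q_0}^2-(a-1)v_{q_0}^2$; for $h_{q_0}=0$ because $(a-1)v_{q_0}^2>0=h_{q_0}^2$), each $X^{(q_0)}$ satisfies both inequalities on a neighbourhood $V_{q_0}$. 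Patching the $X^{(q_0)}$ by a locally Lipschitz partition of unity subordinate to $\{V_{q_0}\}$ produces a locally Lipschitz $X$; and because the two bounds $2h_q$ and $h_q^2-(a-1)v_q^2$ depend only on $q$, not on the local field used, both inequalities survive the convex combination, so $X$ has the required properties and $\omega=X\oplus a\nabla^V\mathcal L^\alpha$ is the desired field, with the stated $a\in(1,2)$.

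The main obstacle is not the patching itself but the two structural/regularity points that make the scheme close up. First, one must genuinely verify that $\nabla^V\mathcal L^\alpha$ is locally Lipschitz on $\mathcal F$: this is precisely where the hypothesis $F\in C^{1,1}_{loc}$ in the fibre direction enters, and it must be combined with $(\mathrm{F1})$ to control the Lipschitz behaviour of the Nemytskii map $\psi\mapsto F_\psi(\phi,\psi)$ on $H^{1/2}$-bounded sets (which are not $L^\infty$-bounded). Second, one needs enough slack in the pseudo-gradient inequalities to accommodate the rigid vertical choice $a\nabla^V\mathcal L^\alpha$; the computation above shows that the factors $a-1>0$ and $4-a^2>0$ provide exactly this room, which is why $a\in(1,2)$ is both available and necessary, and also why it is harmless to take the trivial horizontal field near the set where $d^H\mathcal L^\alpha$ vanishes.
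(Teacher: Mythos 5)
Your proposal is correct and follows essentially the same route as the paper: fix the vertical component $a\nabla^V\mathcal{L}^\alpha$ with $a\in(1,2)$, split into the regions where $d^H\mathcal{L}^\alpha$ vanishes (take the horizontal part zero, using $a>1$ for the strictness that lets the inequality persist nearby) and where it does not (take locally constant norming vectors scaled by $\tfrac32\|d^H\mathcal{L}^\alpha\|$), and patch with a locally Lipschitz partition of unity as in Palais. Your unified sufficient condition $\|X_q\|\le 2h_q$, $d^H\mathcal{L}^\alpha_q(X_q)\ge h_q^2-(a-1)v_q^2$ is just a more explicit bookkeeping of the same verification (implicitly using the $\ell^2$-type product norm, under which $\|d\mathcal{L}^\alpha\|^2=h^2+v^2$), and the convexity remark correctly justifies why the bounds survive the partition of unity.
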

\begin{proof}
We divide $\tilde{\mathcal F}$ into two subsets
\begin{equation}
\begin{split}
&A=\{(\phi,\psi)\in\mathcal{F}^{\alpha,1/2}(M, N)| \ d^H\mathcal{L}^\alpha=0, \ d^V\mathcal{L}^\alpha\neq0\},\\
&B=\{(\phi,\psi)\in\mathcal{F}^{\alpha,1/2}(M, N)| \ d^H\mathcal{L}^\alpha\neq0\}.
\end{split}
\end{equation}
For any $a\in(1,2)$, $a\nabla^V\mathcal{L}^\alpha$ always satisfies $(i)$ and $(ii)$ in Definition \ref{pg} for $d^V\mathcal{L}^\alpha$. Our assumption implies that $a\nabla^V\mathcal{L}^\alpha$ is locally Lipschitz. Therefore, it suffices to show that there is a locally Lipschitz vector field that satisfies $(i)$ and $(ii)$ in Definition \ref{pg} for $d^H\mathcal{L}^\alpha$.

Over $B$, since $d^H\mathcal{L}\neq0$, we have a vector $\tilde X_\phi$ for each $(\phi,\psi)\in B$ such that 
$\|\tilde X_\phi\|=1$ and $d^H\mathcal{L}^\alpha_{(\phi,\psi)}(\tilde X_\phi)>\frac23\|d^H\mathcal{L}^\alpha_{(\phi,\psi)}\|$. Then, set $X_\phi=\frac32\|d^H\mathcal{L}^\alpha_{(\phi,\psi)}\|\tilde X_\phi$. So $X_\phi$ satisfies Definition \ref{pg} for $d^H\mathcal{L}^\alpha$ with strict inequalities in $(i)$ and $(ii)$. Then extend $X_\phi$ to be a $C^1$ vector field in a neighborhood of $(\phi,\psi)$ (say by making it ``constant" with respect to a chart at $(\phi,\psi)$)\cite{palais1970critical}. Therefore, for each point $(\phi,\psi)\in B$, there is a $C^1$ pseudo-gradient vector field $\omega=X\oplus a\nabla^V\mathcal{L}^\alpha$ for $\mathcal{L}^\alpha$ defined in some neighborhood of $(\phi,\psi)$.

For $A$, we just use the $C^1$ vector field $\omega=0\oplus a\nabla^V\mathcal{L}^\alpha$. Then, for any point in $\tilde{\mathcal F}$, we have
\begin{equation}
\|\omega\|=a\|\nabla^V\mathcal{L}^\alpha\|<2\|\nabla^V\mathcal{L}^\alpha\|\leq2\|d\mathcal{L}^\alpha\|
\end{equation}
and 
\begin{equation}
d\mathcal{L}^\alpha_{(\phi,\psi)}(\omega)=a\|\nabla^V\mathcal{L}^\alpha(\phi,\psi)\|^2.
\end{equation}
So, $\omega$ satisfies $(i)$ in Definition \ref{pg}. To check $(ii)$, we take any point $(\phi,\psi)\in A$. Then
\begin{equation}
\|d\mathcal{L}^\alpha_{(\phi,\psi)}\|^2=\|\nabla^V\mathcal{L}^\alpha(\phi,\psi)\|^2<a\|\nabla^V\mathcal{L}^\alpha(\phi,\psi)\|^2.
\end{equation}
Together with $\mathcal{L}^\alpha\in C^1$, this implies that 
\begin{equation}
d\mathcal{L}^\alpha_{(\phi',\psi')}(\omega)=a\|\nabla^V\mathcal{L}^\alpha(\phi',\psi')\|^2\geq\|d\mathcal{L}^\alpha_{(\phi',\psi')}\|^2
\end{equation}
holds in some neighborhood $U$ of $(\phi,\psi)$. So,  $\omega$ also satisfies $(ii)$ in Definition \ref{pg}. Thus, for each point $(\phi,\psi)\in A$, there is a neighborhood $U$ of $(\phi,\psi)$ such that 
$\omega=0\oplus a\nabla^V\mathcal{L}^\alpha$ is a pseudo-gradient vector field for $\mathcal{L}^\alpha$ in $U$. Finally, to get the pseudo-gradient vector field in the theorem, we can patch these local pseudo-gradient vector fields together by a partition of unity\cite{palais1970critical}\cite{rabinowitz1986minimax}. Thus, we complete the proof.

\end{proof}

Theorem \ref{pgvf} gives us a nice pseudo-gradient vector field, but it is only locally Lipschitz. Therefore, its integral curve may not exist globally. To remedy this, it suffices to integrate a truncated  pseudo-gradient vector field. The argument can be found in \cite{rabinowitz1986minimax}. Different from our case, Isobe \cite{isobe2012existence} used it directly on the gradient of the action functional. Now, we deform our configuration space by integrating the following ODE:
\begin{equation}\label{pgf}
\frac{d}{dt}(\phi_t,\psi_t)=-\eta(\phi_t,\psi_t)\omega(\phi_t,\psi_t),
\end{equation}
\begin{equation}\label{initial}
(\phi_t,\psi_t)|_{t=0}=(\phi,\psi).
\end{equation}
The function $\eta$ is chosen such that \eqref{pgf} and \eqref{initial} have a global unique solution. In particular, $\eta$ satisfies $\eta(\phi,\psi)\|\omega(\phi,\psi)\|\leq1$ for all $(\phi,\psi)\in\mathcal{F}^{\alpha,1/2}(M,N)$. See Appendix A in \cite{rabinowitz1986minimax}.

Note that the solution $\psi_t$ to \eqref{pgf} belongs to $H^{1/2}(M,\Sigma M\otimes\phi_t^*TN)$ for each $t\geq0$ and the space depends on $t$. So we translate it into a flow on a function space which does not depend on $t$. To do so, we consider the parallel transport. For each $x\in M$, we denote by ${\bf P}_t(x): T_{\phi(x)}N\to T_{\phi_t(x)}N$ the parallel transport along the curve $t\mapsto\phi_t(x)$. We put $\tilde\psi_t(x):=({\bf1}\otimes{{\bf P}_t(x)}^{-1})\psi_t(x)$, that is, for $\psi_t(x)=\psi_t^i(x)\otimes\frac{\pt}{\pt y^i}(\phi_t(x))$, we define
\begin{equation}
\tilde\psi_t(x):=\psi^i_t(x)\otimes{{\bf P}_t(x)}^{-1}\Bigg(\frac{\pt}{\pt y^i}(\phi_t(x))\Bigg)\in\Sigma M\otimes\phi^*TN.
\end{equation}
Thus, the vertical part of \eqref{pgf} is transformed to 
\begin{equation}\label{modifiedpgf}
\frac{d}{dt}\tilde\psi_t=-a\eta_t(\tilde\psi_t){\bf P}_t^{-1}\nabla^V\mathcal{L}^\alpha(\phi_t,{\bf P}_t\tilde\psi_t),
\end{equation}
where $\eta_t(\tilde\psi_t)=\eta(\phi_t,\psi_t)$ and $a$ is the constant in Theorem \ref{pgvf}. 

Since the proofs of Lemma 7.2-Lemma 7.6 in \cite{isobe2012existence} only rely on the Sobolev embedding and the vertical part of the gradient vector field, which is kept up to a constant in our case, these nice properties (Lemma 7.2-Lemma 7.6 in \cite{isobe2012existence})  generalize to our configuration space $\mathcal{F}^{\alpha,1/2}(M,N)$. Due to their usefulness, we list them here and refer to \cite{isobe2012existence} for the proofs.
\begin{lem}
Let $(\phi_t,\psi_t)$ and ${\bf P}_t$  be as above. ${\bf P}_t$ defines a bounded linear map ${\bf P}_t={\bf1}\otimes{\bf P}_t: H^{1/2}(M,\Sigma M\otimes\phi^*TN)\to H^{1/2}(M,\Sigma M\otimes\phi_t^*TN)\subset H^{1/2}(M,\Sigma M\otimes\underline{\mathbb{R}}^L)$ which depends coutinuously on $t$ with respect to the operator norm.
\end{lem}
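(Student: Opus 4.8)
The plan is to show that the parallel transport operator $\mathbf{P}_t$ is a bounded linear map between the indicated $H^{1/2}$-spaces and that it depends continuously on $t$. First I would fix the Nash embedding $N \hookrightarrow \mathbb{R}^L$ and regard everything as taking values in the trivial bundle $\underline{\mathbb{R}}^L$, so that $\mathbf{P}_t(x) : T_{\phi(x)}N \to T_{\phi_t(x)}N$ is realized as an $L \times L$ matrix $A_t(x) := (\mathbf{1}\otimes \mathbf{P}_t(x))$ depending on $x \in M$. The key analytic input is a regularity estimate for $A_t$ as a multiplication operator: since $\phi_t$ solves the pseudo-gradient flow \eqref{pgf} with $\phi \in W^{1,2\alpha}(M,N)$ and $\alpha > 1$, the curve $t \mapsto \phi_t$ is continuous into $W^{1,2\alpha}$, which embeds into $C^0$; moreover the horizontal velocity $\dot\phi_t = -\eta\, X$ lies in $W^{1,2\alpha}$ as well. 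The matrix $A_t(x)$ is obtained by solving the linear ODE $\frac{d}{ds}A_s(x) = -\Gamma(\phi_s(x))\big(\dot\phi_s(x)\big)\, A_s(x)$, $A_0(x) = \mathrm{Id}$, along the curve $s \mapsto \phi_s(x)$, where $\Gamma$ denotes the Christoffel symbols of the extrinsic (equivalently, via second fundamental form, the intrinsic) connection. Differentiating this ODE in $x$ and using Gr\"onwall shows $\nabla_x A_t \in L^{2\alpha}(M)$ with a bound depending only on $\|\phi_s\|_{W^{1,2\alpha}}$ and $\|\dot\phi_s\|_{W^{1,2\alpha}}$ for $s \in [0,t]$, while $A_t \in L^\infty(M)$ with $\|A_t\|_{L^\infty} = 1$ pointwise (orthogonal transport). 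Hence $A_t \in W^{1,2\alpha}(M, \mathbb{R}^{L\times L}) \hookrightarrow C^0 \cap W^{1,2\alpha}$.

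Next I would use the fact that multiplication by a $W^{1,2\alpha}(M)$-function (with $2\alpha > 2 = \dim M$, so $W^{1,2\alpha} \subset C^0$) is a bounded operator on $H^{1/2}(M)$. This is the standard statement that $W^{1,p} \cap L^\infty$ with $p > \dim M$ acts as multipliers on $H^s$ for $0 \le s \le 1$; it follows e.g.\ by interpolation between the obvious bound on $L^2$ and the bound on $H^1$ (where the Leibniz rule plus $\|\nabla A_t\|_{L^{2\alpha}} \cdot \|\psi\|_{L^{2\alpha/(\alpha-1)}}$ and Sobolev $H^1 \subset L^q$ for all $q < \infty$ on a surface are used), or directly from the characterization of $H^{1/2}$ via the Gagliardo seminorm. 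Thus $\mathbf{P}_t = \mathbf{1}\otimes \mathbf{P}_t$ is bounded $H^{1/2}(M, \Sigma M \otimes \phi^*TN) \to H^{1/2}(M, \Sigma M \otimes \underline{\mathbb{R}}^L)$, with image in $H^{1/2}(M, \Sigma M\otimes \phi_t^*TN)$ because $\mathbf{P}_t(x)$ lands in $T_{\phi_t(x)}N$ pointwise and the image is closed under the constraint.

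For the continuity in $t$ with respect to the operator norm, I would estimate $\|\mathbf{P}_t - \mathbf{P}_{t'}\|_{\mathrm{op}}$ by $\|A_t - A_{t'}\|_{W^{1,2\alpha}(M)}$ (using again that $W^{1,2\alpha}$-multipliers act continuously on $H^{1/2}$, and that the multiplier norm is controlled by the $W^{1,2\alpha}$-norm). The difference $A_t - A_{t'}$ and its spatial gradient are then estimated from the defining ODE and its $x$-derivative: subtracting the integral forms and applying Gr\"onwall gives $\|A_t - A_{t'}\|_{C^0} + \|\nabla_x(A_t - A_{t'})\|_{L^{2\alpha}} \le C\int_{t'}^t \big(\|\dot\phi_s\|_{W^{1,2\alpha}} + 1\big)\,ds \to 0$ as $t' \to t$, where $C$ depends on the uniform-in-$s$ bounds for $\phi_s$ and $\dot\phi_s$ on compact time intervals, which in turn follow from the global existence of the (truncated) pseudo-gradient flow recalled just before the lemma.

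The main obstacle is the regularity bookkeeping for the multiplication operator: one must verify carefully that the Christoffel-symbol term $\Gamma(\phi_s)(\dot\phi_s)$, which is only in $W^{1,2\alpha}$ in $x$ (not smooth), produces a transport matrix whose $x$-derivative is controlled in $L^{2\alpha}$, and then that such a matrix genuinely defines a bounded multiplier on the fractional space $H^{1/2}$ — the borderline Sobolev exponent $2\alpha$ versus $\dim M = 2$ is exactly what makes this work, and one should track the dependence on $\alpha \in (1,2]$ so that the constants do not degenerate. Once these multiplier estimates are in place, boundedness and $t$-continuity both reduce to the elementary Gr\"onwall estimates for the defining ODE, and the proof is complete; I would, as the authors do, refer to \cite{isobe2012existence} for the analogous argument in the $W^{1,2}$ setting and indicate only the modifications needed for $W^{1,2\alpha}$.
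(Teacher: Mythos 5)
Your argument is correct and is essentially the proof the paper relies on: the authors give no independent proof of this lemma, but state that the proofs of Lemma 7.2--7.6 of \cite{isobe2012existence} carry over because they only use the Sobolev embedding, i.e.\ exactly the strategy you spell out --- writing the parallel transport as a matrix field solving a linear ODE along $s\mapsto\phi_s(x)$, bounding it in $C^0\cap W^{1,2\alpha}$ by Gr\"onwall, invoking that such functions are multipliers on $H^{1/2}$ (interpolating between $L^2$ and $H^1$, where $2\alpha>2=\dim M$ is what makes the Leibniz term close), and getting $t$-continuity of the multiplier from the integral form of the ODE. Your writeup just fills in, for the $W^{1,2\alpha}$ surface setting, the details the paper delegates to the citation of the $W^{1,2}(S^1)$ case.
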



\begin{lem}\label{T}
Let $\phi,\tilde\phi\in W^{1,2\alpha}(M,N)$ be such that $\|\phi-\tilde\phi\|_\infty<\iota(N)$. We define $T_{\phi,\tilde\phi}(x):\Sigma M\otimes T_{\phi(x)}N\to\Sigma M\otimes T_{\tilde\phi(x)}N$ by $T_{\phi,\tilde\phi}(x)={\bf1}\otimes{\bf P}_{\phi(x),\tilde\phi(x)}$, where ${\bf P}_{\phi(x),\tilde\phi(x)}:T_{\phi(x)}N\to T_{\tilde\phi(x)}N$ is the parallel transport along the unique length minimizing geodesic between $\phi(x)$ and $\tilde\phi(x)$. Then the map $T_{\phi,\tilde\phi}$ defined by $(T_{\phi,\tilde\phi}\psi)(x)=T_{\phi,\tilde\phi}(x)\psi(x)$ for a.e. $x\in M$ is a bounded linear map $T_{\phi,\tilde\phi}:H^{1/2}(M,\Sigma M\otimes\phi^*TN)\to H^{1/2}(M,\Sigma M\otimes\tilde\phi^*TN)$. Moreover, its operator norm satisfies 
\begin{equation}\label{norm of T}
\|T_{\phi,\tilde\phi}\|_{H^{1/2}\to{H^{1/2}}}\leq C(\|\phi\|_{1,2\alpha},\|\tilde\phi\|_{1,2\alpha})
\end{equation} for some constant $C(\|\phi\|_{1,2\alpha},\|\tilde\phi\|_{1,2\alpha})$ depending only on $\|\phi\|_{1,2\alpha}$ and $\|\tilde\phi\|_{1,2\alpha}$.

Consider $T_{\phi,\tilde\phi}$ as $T_{\phi,\tilde\phi}:H^{1/2}(M,\Sigma M\otimes\phi^*TN)\to H^{1/2}(M,\Sigma M\otimes\underline{\mathbb R}^L)$. For any $\psi\in H^{1/2}(M,\Sigma M\otimes\phi^*TN)\subset H^{1/2}(M,\Sigma M\otimes\underline{\mathbb R}^L)$ we have
\begin{equation}\label{estimate Tpsi}
\|T_{\phi,\tilde\phi}\psi-\psi\|_{1/2,2}\leq C(\|\phi\|_{1,2\alpha},\|\tilde\phi\|_{1,2\alpha})\|\phi-\tilde\phi\|_{1,2\alpha}\|\psi\|_{1/2,2}
\end{equation}
for some constant $C(\|\phi\|_{1,2\alpha},\|\tilde\phi\|_{1,2\alpha})$ depending only on $\|\phi\|_{1,2\alpha}$ and $\|\tilde\phi\|_{1,2\alpha}$.
\end{lem}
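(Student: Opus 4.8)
\emph{Proof plan.} The strategy is to realize $T_{\phi,\tilde\phi}$ as pointwise multiplication by an $\mathrm{End}(\mathbb{R}^L)$-valued Sobolev function and then invoke a fractional Leibniz estimate. First I would fix a neighbourhood of the diagonal of $N\times N$ on which the length-minimizing geodesic between the two points is unique; since $N$ is compactly embedded in $\mathbb{R}^L$, the parallel transport ${\bf P}_{p,q}$ depends smoothly on $(p,q)$ there, and composing with the orthogonal projection $\Pi(p)\colon\mathbb{R}^L\to T_pN$ produces a smooth map $Q(p,q):={\bf P}_{p,q}\circ\Pi(p)\in\mathrm{End}(\mathbb{R}^L)$, defined for $d_N(p,q)<\iota(N)$, with $Q(p,q)\,T_pN\subset T_qN$ and $Q(p,p)=\Pi(p)$. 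Since $({\bf 1}\otimes\Pi(\phi(x)))\psi(x)=\psi(x)$ for $\psi\in H^{1/2}(M,\Sigma M\otimes\phi^*TN)$, we get $(T_{\phi,\tilde\phi}\psi)(x)=({\bf 1}\otimes A(x))\psi(x)$ a.e., where $A(x):=Q(\phi(x),\tilde\phi(x))$. As $2\alpha>2=\dim M$, the maps $\phi,\tilde\phi$ are continuous with values in the compact set $N$, so since $Q$ is smooth with bounded derivatives on the relevant compact neighbourhood of the diagonal, the chain rule for Sobolev maps gives $A\in W^{1,2\alpha}(M,\mathrm{End}(\mathbb{R}^L))$ with
\[
\|A\|_\infty\le C,\qquad \|\nabla A\|_{2\alpha}\le C(\|\nabla\phi\|_{2\alpha}+\|\nabla\tilde\phi\|_{2\alpha}),
\]
hence $\|A\|_{1,2\alpha}\le C(\|\phi\|_{1,2\alpha},\|\tilde\phi\|_{1,2\alpha})$.

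The analytic core is the multiplication estimate: for $\alpha\in(1,2]$ on the closed surface $M$,
\[
\|A\eta\|_{1/2,2}\le C\,\|A\|_{1,2\alpha}\,\|\eta\|_{1/2,2}\qquad\text{for }A\in W^{1,2\alpha}(M,\mathrm{End}(\mathbb{R}^L)),\ \eta\in H^{1/2}(M,\Sigma M\otimes\underline{\mathbb{R}}^L).
\]
Using a finite atlas of trivializing charts for $\Sigma M$ with a subordinate partition of unity, the $H^{1/2}$-norm induced by ${\bf D}=\slashed\partial\otimes{\bf 1}$ is equivalent to a sum of flat $H^{1/2}(\mathbb{R}^2)$-norms of localized components, so it suffices to prove the Euclidean statement $W^{1,2\alpha}(\mathbb{R}^2)\cdot H^{1/2}(\mathbb{R}^2)\hookrightarrow H^{1/2}(\mathbb{R}^2)$ with the corresponding bound. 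This is a standard fractional-Leibniz/multiplication theorem, available precisely because $2\alpha>2=\dim M$: I would prove it via Bony's paraproduct splitting $A\eta=T_A\eta+T_\eta A+R(A,\eta)$, controlling the low--high term by $\|A\|_\infty\|\eta\|_{1/2,2}$ and the remaining two terms by the embeddings $W^{1,2\alpha}(\mathbb{R}^2)\hookrightarrow F^{1/2}_{4,2}(\mathbb{R}^2)$ (valid for $1<\alpha\le2$) and $H^{1/2}\hookrightarrow L^4$ already used throughout the paper. Granting this, \eqref{norm of T} follows with $\eta=\psi$; moreover $T_{\phi,\tilde\phi}\psi\in H^{1/2}(M,\Sigma M\otimes\tilde\phi^*TN)$ because $A(x)$ maps $T_{\phi(x)}N$ into $T_{\tilde\phi(x)}N$, and linearity is obvious.

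For \eqref{estimate Tpsi} I would write $(T_{\phi,\tilde\phi}\psi)(x)-\psi(x)=({\bf 1}\otimes B(x))\psi(x)$ with $B(x):=Q(\phi(x),\tilde\phi(x))-\Pi(\phi(x))$, again using $({\bf 1}\otimes\Pi(\phi(x)))\psi(x)=\psi(x)$. Since $Q(p,p)=\Pi(p)$, the smooth map $(p,q)\mapsto Q(p,q)-\Pi(p)$ vanishes on the diagonal, so Taylor's theorem with integral remainder factors it as $G(p,q)(q-p)$ for a smooth $\mathrm{End}(\mathbb{R}^L)$-valued $G$; thus $B(x)=G(\phi(x),\tilde\phi(x))\big(\tilde\phi(x)-\phi(x)\big)$. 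The Leibniz rule together with $\|\phi-\tilde\phi\|_\infty\le C\|\phi-\tilde\phi\|_{1,2\alpha}$ then gives
\[
\|B\|_{1,2\alpha}\le C\big(\|\phi-\tilde\phi\|_\infty(\|\nabla\phi\|_{2\alpha}+\|\nabla\tilde\phi\|_{2\alpha})+\|\nabla(\phi-\tilde\phi)\|_{2\alpha}+\|\phi-\tilde\phi\|_\infty\big)\le C(\|\phi\|_{1,2\alpha},\|\tilde\phi\|_{1,2\alpha})\,\|\phi-\tilde\phi\|_{1,2\alpha},
\]
and feeding $B$ into the multiplication estimate yields $\|T_{\phi,\tilde\phi}\psi-\psi\|_{1/2,2}=\|B\psi\|_{1/2,2}\le C(\|\phi\|_{1,2\alpha},\|\tilde\phi\|_{1,2\alpha})\|\phi-\tilde\phi\|_{1,2\alpha}\|\psi\|_{1/2,2}$, which is \eqref{estimate Tpsi}; boundedness of $T_{\phi,\tilde\phi}$ follows from either estimate.

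The hard part is the multiplication estimate with the correct \emph{linear} dependence of the constant on $\|A\|_{1,2\alpha}$ --- the two-dimensional replacement for the elementary product estimate $H^1(S^1)\cdot H^{1/2}(S^1)\hookrightarrow H^{1/2}(S^1)$ used in \cite{isobe2012existence} --- and this is exactly where the hypothesis $\alpha>1$, equivalently $2\alpha>\dim M$, enters. Everything else is the chain rule for Sobolev maps, the Taylor expansion of the parallel-transport kernel along the diagonal, and Sobolev embeddings already invoked in the paper.
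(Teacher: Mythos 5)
Your argument is correct, but it is worth noting that the paper itself does not prove this lemma at all: it merely lists it among ``Lemma 7.2--7.6 of Isobe'' and asserts that the one-dimensional proofs carry over to $\mathcal{F}^{\alpha,1/2}(M,N)$ because they ``only rely on the Sobolev embedding.'' What you have done is supply the actual two-dimensional argument that this assertion presupposes, and your reduction mirrors the structure of Isobe's proof: realize $T_{\phi,\tilde\phi}$ as pointwise multiplication by $Q(\phi,\tilde\phi)={\bf P}_{\phi,\tilde\phi}\circ\Pi(\phi)$ (using $\Pi(\phi)\psi=\psi$), and realize $T_{\phi,\tilde\phi}-\mathrm{Id}$ as multiplication by a factor that vanishes linearly in $\tilde\phi-\phi$ via the Hadamard factorization $Q(p,q)-\Pi(p)=G(p,q)(q-p)$, so that everything hinges on one product estimate. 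Where Isobe uses $H^1(S^1)\cdot H^{1/2}(S^1)\hookrightarrow H^{1/2}(S^1)$, you correctly identify the surface replacement $W^{1,2\alpha}(M)\cdot H^{1/2}(M)\hookrightarrow H^{1/2}(M)$ with constant controlled by $\|A\|_{1,2\alpha}$, and your paraproduct sketch of it is sound: the low--high term is handled by $\|A\|_\infty\lesssim\|A\|_{1,2\alpha}$ (Morrey, $2\alpha>2$), and the high--low and resonant terms by $H^{1/2}\hookrightarrow L^4$ together with $W^{1,2\alpha}=F^1_{2\alpha,2}\hookrightarrow F^{1/2}_{4,2}$, which is valid exactly in the paper's range $1<\alpha\le2$ (so that $2<2\alpha\le4$ and $1-1/\alpha\ge 1/2-2/4$); alternatively this is a standard pointwise-multiplier theorem (Runst--Sickel type), so deferring its detailed proof is acceptable. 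The chain-rule and Leibniz computations for $A$ and $B$, and the observation that $\|B\|_\infty\lesssim\|\phi-\tilde\phi\|_{1,2\alpha}$ gives the required linear dependence in \eqref{estimate Tpsi}, are all correct. The only point I would tighten is the hypothesis $\|\phi-\tilde\phi\|_\infty<\iota(N)$: this is the ambient $\mathbb{R}^L$-distance, so strictly speaking one should invoke the comparability $d_N(p,q)\le C|p-q|$ for nearby points on the compact $N$ (or phrase the smallness in terms of $d_N$) to guarantee the unique minimizing geodesic and the smoothness of $Q$ on the chosen neighbourhood of the diagonal; your ``fix a neighbourhood of the diagonal'' handles this implicitly, and the same imprecision is already present in the statement inherited from Isobe, so it is not a gap in your argument.
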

\begin{lem}\label{P+}
Let $\phi,\tilde\phi\in  W^{1,2\alpha}(M,N)$. We denote by $P^+(\phi): \mathcal{F}_{\phi}\to\mathcal{F}_{\phi}^+$ the spectral projection onto the positive eigenspace of $\slashed{D}_{\phi}$. For $\psi\in\mathcal{F}_{\phi}:=H^{1/2}(M,\Sigma M\otimes\phi^*TN)$, we have
\begin{equation}
\|P^+(\tilde\phi)\circ T_{\phi,\tilde\phi}\circ P^+(\phi)\psi-P^+(\phi)\psi\|_{1/2,2}\leq C(\|\phi\|_{1,2\alpha},\|\tilde\phi\|_{1,2\alpha})\|\phi-\tilde\phi\|_{1,2\alpha}\|\psi\|_{1/2,2},
\end{equation}
where the $H^{1/2}$-norm on the left side is taken in $H^{1/2}(M,\Sigma M\otimes\underline{\mathbb R}^L)$, $T_{\phi,\tilde\phi}$ is defined as in Lemma \ref{T} and $C(\|\phi\|_{1,2},\|\tilde\phi\|_{1,2\alpha})$ depending only on $\|\phi\|_{1,2\alpha}$ and $\|\tilde\phi\|_{1,2\alpha}$.
\end{lem}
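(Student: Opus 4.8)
The plan is to follow the strategy of the corresponding one-dimensional result in \cite{isobe2012existence}, the genuinely new point being the two-dimensional Sobolev bookkeeping made available by the hypothesis $\phi,\tilde\phi\in W^{1,2\alpha}(M,N)$ with $\alpha>1$. First I reduce to a statement about a single Dirac operator: abbreviating $\xi:=P^+(\phi)\psi$ and inserting $T_{\phi,\tilde\phi}\xi$,
\[
P^+(\tilde\phi)\,T_{\phi,\tilde\phi}\,\xi-\xi=\bigl(P^+(\tilde\phi)-I\bigr)T_{\phi,\tilde\phi}\xi+\bigl(T_{\phi,\tilde\phi}\xi-\xi\bigr),
\]
where the second term is bounded by $C\|\phi-\tilde\phi\|_{1,2\alpha}\|\xi\|_{1/2,2}\le C\|\phi-\tilde\phi\|_{1,2\alpha}\|\psi\|_{1/2,2}$ via \eqref{estimate Tpsi} of Lemma~\ref{T}. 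So it suffices to estimate $\|(I-P^+(\tilde\phi))T_{\phi,\tilde\phi}\xi\|_{1/2,2}$, i.e.\ to show that the $T_{\phi,\tilde\phi}$-image of the positive spectral subspace of $\slashed{D}_\phi$ is $O(\|\phi-\tilde\phi\|_{1,2\alpha})$-close in $H^{1/2}$ to the positive spectral subspace of $\slashed{D}_{\tilde\phi}$.

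The heart of the matter is a commutator estimate between the two Dirac operators. Since $T_{\phi,\tilde\phi}$ acts fibrewise by parallel transport, it is a pointwise isometry on the $\phi^*TN$-factor commuting with Clifford multiplication, so in local coordinates it is multiplication by a matrix-valued function $B=B(\phi,\tilde\phi)$, smooth in its arguments, with $B(p,p)=\mathrm{id}$; hence $|B-\mathrm{id}|\le C|\phi-\tilde\phi|$ and, because $\partial_1B+\partial_2B$ vanishes on the diagonal, $|\nabla B|\le C\bigl(|\nabla(\phi-\tilde\phi)|+|\phi-\tilde\phi|(|\nabla\phi|+|\nabla\tilde\phi|)\bigr)$. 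Using $\slashed\partial(B\psi)=(\nabla_{e_\alpha}B)(e_\alpha\cdot\psi)+B\,\slashed\partial\psi$ and $|\Gamma_{jk}^i(\tilde\phi)-\Gamma_{jk}^i(\phi)|\le C|\phi-\tilde\phi|$, a direct computation gives the pointwise bound
\[
\bigl|\slashed{D}_{\tilde\phi}(T_{\phi,\tilde\phi}\psi)-T_{\phi,\tilde\phi}(\slashed{D}_\phi\psi)\bigr|\le C\bigl(|\nabla(\phi-\tilde\phi)|+|\phi-\tilde\phi|(|\nabla\phi|+|\nabla\tilde\phi|)\bigr)|\psi|.
\]
Pairing against a test spinor $w$, applying H\"older with exponents $\bigl(2\alpha,\tfrac{4\alpha}{2\alpha-1},\tfrac{4\alpha}{2\alpha-1}\bigr)$, and using the embeddings $W^{1,2\alpha}(M)\hookrightarrow L^\infty(M)$ (here $\alpha>1$ is used) and $H^{1/2}(M)\hookrightarrow L^{4\alpha/(2\alpha-1)}(M)$ (valid since $\tfrac{4\alpha}{2\alpha-1}\le4$), I obtain, writing $\mathcal{C}:=\slashed{D}_{\tilde\phi}\circ T_{\phi,\tilde\phi}-T_{\phi,\tilde\phi}\circ\slashed{D}_\phi$,
\[
\|\mathcal{C}\psi\|_{-1/2,2}\le C(\|\phi\|_{1,2\alpha},\|\tilde\phi\|_{1,2\alpha})\,\|\phi-\tilde\phi\|_{1,2\alpha}\,\|\psi\|_{1/2,2}.
\]

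With $\mathcal{C}$ controlled, the conclusion is obtained as in \cite{isobe2012existence}: one regards $\slashed{D}_{\tilde\phi}$ as a small perturbation of the conjugated operator $T_{\phi,\tilde\phi}\slashed{D}_\phi T_{\phi,\tilde\phi}^{-1}$ (which, since $T_{\phi,\tilde\phi}$ is an $L^2$-isometry, is again self-adjoint and has $T_{\phi,\tilde\phi}\mathcal{F}_\phi^\pm$ as its spectral subspaces), the perturbation being $\mathcal{C}\,T_{\phi,\tilde\phi}^{-1}$, which by the commutator estimate and the elliptic inequality $\|\zeta\|_{1/2,2}\le C(\|\slashed{D}_{\tilde\phi}\zeta\|_{-1/2,2}+\|\zeta\|_2)$ is $\slashed{D}_{\tilde\phi}$-bounded with relative bound $C\|\phi-\tilde\phi\|_{1,2\alpha}$. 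Expressing the spectral projections through functional calculus (e.g.\ $P^+=\tfrac12(\mathrm{id}-P^0)+\tfrac1\pi\int_0^\infty\slashed{D}(\slashed{D}^2+s^2)^{-1}\,ds$, with $P^0$ the finite-rank projection onto $\ker\slashed{D}$), and combining the resolvent identity with the standard mapping bounds for $(\slashed{D}^2+s^2)^{-1}$ and $\slashed{D}(\slashed{D}^2+s^2)^{-1}$ between $L^2$ and $H^{\pm1/2}$, one arrives at $\|P^+(\tilde\phi)T_{\phi,\tilde\phi}-T_{\phi,\tilde\phi}P^+(\phi)\|\le C(\|\phi\|_{1,2\alpha},\|\tilde\phi\|_{1,2\alpha})\|\phi-\tilde\phi\|_{1,2\alpha}$ on the relevant subspace, which combined with the first paragraph proves the lemma.

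I expect the main obstacle to lie in this last step rather than in the commutator estimate. Since $\mathrm{spec}(\slashed{D}_\phi)$ is unbounded and $\phi$-dependent there is no uniform spectral gap, so one cannot work with a compact contour and must instead track the $s$-decay of the resolvent differences carefully; assembling the right combination of $L^2$-, $H^{1/2}$- and $H^{-1/2}$-estimates (and the elliptic bootstrap above, to upgrade an $L^2$/$H^{-1/2}$ control of $(I-P^+(\tilde\phi))T_{\phi,\tilde\phi}\xi$ to the required $H^{1/2}$-control) is precisely what makes this delicate. Moreover the kernel term is subtle, since $P^0(\phi)$ varies continuously only where $\dim\ker\slashed{D}_\phi$ is locally constant, which, as in \cite{isobe2012existence}, forces one to read the statement as a local one, for $\tilde\phi$ in a neighbourhood of $\phi$ — precisely the regime in which the lemma is used along the pseudo-gradient flow. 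The commutator estimate itself, though the genuinely two-dimensional new input, is a mild refinement of Lemma~\ref{T} and is routine once the Sobolev exponents above are matched.
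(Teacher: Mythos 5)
Your first two steps are fine, and they are exactly the part of Isobe's one-dimensional argument that the paper claims carries over (the paper gives no proof of Lemma \ref{P+} at all: it states that Lemmas 7.2--7.6 of \cite{isobe2012existence} ``only rely on the Sobolev embedding'' and refers there). The reduction $P^+(\tilde\phi)T_{\phi,\tilde\phi}\xi-\xi=(P^+(\tilde\phi)-I)T_{\phi,\tilde\phi}\xi+(T_{\phi,\tilde\phi}\xi-\xi)$ with $\xi=P^+(\phi)\psi$, together with \eqref{estimate Tpsi}, is correct, and your commutator bound
\[
\|\slashed{D}_{\tilde\phi}T_{\phi,\tilde\phi}\psi-T_{\phi,\tilde\phi}\slashed{D}_{\phi}\psi\|_{-1/2,2}\le C(\|\phi\|_{1,2\alpha},\|\tilde\phi\|_{1,2\alpha})\|\phi-\tilde\phi\|_{1,2\alpha}\|\psi\|_{1/2,2}
\]
is the genuinely two-dimensional input, and your exponent bookkeeping ($W^{1,2\alpha}\hookrightarrow L^\infty$ for $\alpha>1$, $H^{1/2}\hookrightarrow L^{4\alpha/(2\alpha-1)}$, H\"older with $\tfrac1{2\alpha}+2\cdot\tfrac{2\alpha-1}{4\alpha}=1$) checks out.

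The gap is in the last step, and you have in fact put your finger on it yourself without resolving it: the passage from the $H^{1/2}\to H^{-1/2}$ commutator bound to $\|(I-P^+(\tilde\phi))T_{\phi,\tilde\phi}\xi\|_{1/2,2}\le C\|\phi-\tilde\phi\|_{1,2\alpha}\|\xi\|_{1/2,2}$ is only asserted (``one arrives at''), and it cannot be carried out with a constant depending solely on $\|\phi\|_{1,2\alpha}$ and $\|\tilde\phi\|_{1,2\alpha}$. The commutator bound controls the motion of an eigenvalue $\lambda$ of $\slashed{D}_\phi$ only by $C\|\phi-\tilde\phi\|_{1,2\alpha}(1+|\lambda|)$; so a simple eigenvalue at distance $\epsilon$ from $0$ can cross $0$ once $\|\phi-\tilde\phi\|_{1,2\alpha}\sim\epsilon$, and for the corresponding eigenspinor $\xi$ one then has $\|(I-P^+(\tilde\phi))T_{\phi,\tilde\phi}\xi\|_{1/2,2}$ of order $\|\xi\|_{1/2,2}$ while the claimed right-hand side is of order $\epsilon\|\xi\|_{1/2,2}$. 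Concretely, in your resolvent representation $P^+=\tfrac12(\mathrm{id}-P^0)+\tfrac1\pi\int_0^\infty\slashed{D}(\slashed{D}^2+s^2)^{-1}ds$ the difference of the integrands near $s=0$ is controlled only after dividing by ${\rm dist}(0,{\rm spec}\,\slashed{D}_\phi\setminus\{0\})$, and the kernel projections must be compared as well; neither quantity is a function of the $W^{1,2\alpha}$-norms. So the argument as written proves a weaker statement: the estimate holds for $\tilde\phi$ in a neighbourhood of $\phi$ whose size, and whose constant, depend on the low-lying spectrum of $\slashed{D}_\phi$ (equivalently, one must add a spectral-gap/kernel-stability hypothesis). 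That local version is what is actually needed along the pseudo-gradient flow, but it is not the statement of Lemma \ref{P+}, and your proposal neither supplies the missing uniform mechanism nor carries out the $s$-integration and kernel comparison it gestures at; this is precisely the step that cannot be dismissed as routine, and it is also the step the paper leaves entirely to \cite{isobe2012existence}.
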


\begin{lem}\label{tildeD}
Let $\phi:[0,1]\ni t\mapsto\phi_t\in W^{1,2\alpha}(M,N)$ be piecewise $C^1$ and ${\bf P}_{t}(x):T_{\phi_0(x)}N\to T_{\phi_t(x)}N$ the parallel transport along the curve $[0,1]\ni s\mapsto\phi_s(x)\in N$. Set $\tilde{\slashed D}_t:={\bf P}_{t}^{-1}\circ\slashed{D}_{\phi_t}\circ{\bf P}_{t}$. Then $K_t:=({\bf1}+{\tilde{\slashed D}_t}^2)^{-1/2}\tilde{\slashed D}_t-({\bf1}+{\slashed D}_{\phi_0}^2)^{-1/2}{\slashed D}_{\phi_0}:H^{1/2}(M,\Sigma M\otimes\phi_0^*TN)\to H^{1/2}(M,\Sigma M\otimes\phi_0^*TN)$ defines a continous family of compact operators with respect to the operator norm.
\end{lem}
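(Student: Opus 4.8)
The plan is to write $K_t=f(\tilde{\slashed D}_t)-f(\slashed{D}_{\phi_0})$, where $f(x)=x(1+x^2)^{-1/2}$ is a fixed bounded smooth function with rapidly decaying derivative $f'(x)=(1+x^2)^{-3/2}$, and to exploit both the smoothing nature of $f$ and the fact that $\tilde{\slashed D}_t$ and $\slashed{D}_{\phi_0}$ differ only by a zeroth-order term. The argument runs parallel to the one for the corresponding statement in \cite{isobe2012existence}; the only genuinely new point is that here the perturbing term has merely $L^{2\alpha}$-coefficients, so one must check that $2\alpha>2=\dim M$ suffices to carry the estimates through.

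\emph{Algebraic structure.} Since ${\bf P}_t={\bf 1}\otimes{\bf P}_t$ acts only on the $\phi^*TN$-factor and is a fibrewise isometry, $\tilde{\slashed D}_t={\bf P}_t^{-1}\circ\slashed{D}_{\phi_t}\circ{\bf P}_t$ is self-adjoint on $L^2(M,\Sigma M\otimes\phi_0^*TN)$ and has the same principal symbol — Clifford multiplication on $\Sigma M$ — as $\slashed{D}_{\phi_0}$. Hence $A_t:=\tilde{\slashed D}_t-\slashed{D}_{\phi_0}$ is a bundle endomorphism, with $|A_t|\le C(|d\phi_t|+|d\phi_0|+|\nabla{\bf P}_t|)$ pointwise; using Lemma \ref{T} and the transport ODE for $t\mapsto{\bf P}_t(x)$ (cf. \cite{isobe2012existence}) one gets $\|A_t\|_{L^{2\alpha}}\le C$ with $C$ depending only on $\sup_{s\in[0,1]}\|\phi_s\|_{1,2\alpha}$, and $t\mapsto A_t$ continuous into $L^{2\alpha}(M,\operatorname{End})$. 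Because $2\alpha>2$, multiplication by $A_t$ is bounded $L^q\to L^2$ for the finite exponent $q=2\alpha/(\alpha-1)$ and bounded $H^\sigma\to H^{\sigma-1/\alpha}$ for $\sigma$ near $1/2$ and $3/2$; combined with $H^1\hookrightarrow L^q$ this yields $\|A_tu\|_{L^2}\le\eps\|u\|_{H^1}+C_\eps\|u\|_{L^2}$, hence the elliptic estimate $\|u\|_{H^1}\le C(\|\tilde{\slashed D}_tu\|_{L^2}+\|u\|_{L^2})$ and the analogous graph-norm equivalences for $\tilde{\slashed D}_t$ on $H^{1/2}$. In particular $\operatorname{Dom}(|\tilde{\slashed D}_t|^{1/2})=H^{1/2}(M,\Sigma M\otimes\phi_0^*TN)$ with equivalent norm, so $f(\tilde{\slashed D}_t)$ is a bounded self-adjoint operator there and $K_t$ is a well-defined operator on $H^{1/2}$.

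\emph{Functional calculus.} Using $(1+x^2)^{-1/2}=\tfrac1\pi\int_0^\infty(s+1+x^2)^{-1}s^{-1/2}\,ds$ and the resolvent identity, with $g(x)=(1+x^2)^{-1/2}$ and $A=A_t$,
$$g(\tilde{\slashed D}_t)-g(\slashed{D}_{\phi_0})=-\frac1\pi\int_0^\infty s^{-1/2}(s+1+\tilde{\slashed D}_t^2)^{-1}\big(\tilde{\slashed D}_tA_t+A_t\slashed{D}_{\phi_0}\big)(s+1+\slashed{D}_{\phi_0}^2)^{-1}\,ds.$$
For each $s$ the integrand factors through a smoothing resolvent $(s+1+D^2)^{-1}\colon L^2\to H^1$ followed by a multiplication by $A_t$; since $H^1\hookrightarrow L^q$ is compact and $A_t\colon L^q\to L^2$ is bounded, the integrand is compact on $L^2$, and by the $H^\sigma$-boundedness of $A_t$ above it is compact on $H^{1/2}$. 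The estimates $\|(s+1+D^2)^{-1}\|\le(s+1)^{-1}$, $\|D(s+1+D^2)^{-1}\|\le\tfrac12(s+1)^{-1/2}$ and $\|A_t(s+1+D^2)^{-1}\|\le C\|A_t\|_{L^{2\alpha}}(s+1)^{-1/2}$, valid on both $L^2$ and $H^{1/2}$, bound the integrand by $C\|A_t\|_{L^{2\alpha}}s^{-1/2}(s+1)^{-1}$, so the integral converges in operator norm and $g(\tilde{\slashed D}_t)-g(\slashed{D}_{\phi_0})$ is compact on $H^{1/2}$. Finally, from $f(D)=Dg(D)$,
$$K_t=A_t\,g(\tilde{\slashed D}_t)+\slashed{D}_{\phi_0}\big(g(\tilde{\slashed D}_t)-g(\slashed{D}_{\phi_0})\big);$$
the first term is compact on $H^{1/2}$ since $g(\tilde{\slashed D}_t)\colon H^{1/2}\to H^{3/2}$, $A_t\colon H^{3/2}\to H^{3/2-1/\alpha}$ and $3/2-1/\alpha>1/2$ for $\alpha>1$, and the second is compact because $g(\tilde{\slashed D}_t)-g(\slashed{D}_{\phi_0})$ gains strictly more than one derivative net (so composing with $\slashed{D}_{\phi_0}$ still leaves a positive gain).

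\emph{Continuity and the main difficulty.} Since $K_t-K_{t'}=f(\tilde{\slashed D}_t)-f(\tilde{\slashed D}_{t'})$, I would rerun the previous step with $(\tilde{\slashed D}_t,\tilde{\slashed D}_{t'})$ in place of $(\tilde{\slashed D}_t,\slashed{D}_{\phi_0})$: their difference is again a zeroth-order operator whose coefficients tend to $0$ in $L^{2\alpha}$ as $t'\to t$ (continuity of $s\mapsto\phi_s$ in $W^{1,2\alpha}$ and of $s\mapsto{\bf P}_s$), and since all constants above are uniform for parameters in the compact interval $[0,1]$, this gives $\|K_t-K_{t'}\|_{H^{1/2}\to H^{1/2}}\to0$. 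The hard part throughout is the low regularity of $A_t$: one must carefully justify the multiplication estimates $L^{2\alpha}\cdot H^\sigma\subset H^{\sigma-1/\alpha}$ and the perturbed elliptic and graph-norm estimates for $\tilde{\slashed D}_t$, and verify that the $s$-decay surviving these losses is still integrable against $s^{-1/2}\,ds$ — all of which rest precisely on $\alpha>1$, i.e. $2\alpha>\dim M$. Everything else is a routine adaptation of \cite{isobe2012existence}.
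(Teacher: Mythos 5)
Your overall strategy is reasonable and close in spirit to the argument the paper implicitly relies on: the paper gives no proof of this lemma at all, but refers to Lemma 7.5 of Isobe's paper, whose proof is exactly of the type you sketch (resolvent-integral representation of $(1+x^2)^{-1/2}$, the observation that $A_t:=\tilde{\slashed D}_t-\slashed{D}_{\phi_0}$ is a zeroth-order endomorphism with coefficients controlled by $\nabla\phi_s$ and $\nabla{\bf P}_t$, and compactness via Rellich). Your $L^2$-level argument is essentially correct: $A_t\in L^{2\alpha}$ with $2\alpha>2=\dim M$ gives $A_t:H^1\to L^2$ through $H^1\hookrightarrow L^{2\alpha/(\alpha-1)}$, the perturbed graph-norm equivalence ${\rm Dom}(\tilde{\slashed D}_t)=H^1$ follows, and the integrand is compact on $L^2$ with norm $\lesssim s^{-1/2}(s+1)^{-1}$, so the integral converges in operator norm there.

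The genuine gap is the passage from $L^2$ to $H^{1/2}$, which is what the lemma actually asserts. (i) The multiplication rule $L^{2\alpha}\cdot H^\sigma\subset H^{\sigma-1/\alpha}$ that you invoke is only valid when the target order $\sigma-1/\alpha\le0$ (it is then Hölder plus Sobolev duality); multiplication by a function with no positive Sobolev regularity — and $A_t$ is merely $L^{2\alpha}$ — cannot land in any $H^s$ with $s>0$ (take $A_t\in L^{2\alpha}$ lying in no $H^s$, $s>0$, and $u\equiv1$). Hence the claimed bound ``$A_t:H^{3/2}\to H^{3/2-1/\alpha}$ with $3/2-1/\alpha>1/2$'' is false, and the compactness of the term $A_t\,g(\tilde{\slashed D}_t)$ on $H^{1/2}$ is not established. (ii) Similarly, ``$g(\tilde{\slashed D}_t):H^{1/2}\to H^{3/2}$'' presupposes ${\rm Dom}(|\tilde{\slashed D}_t|^{3/2})=H^{3/2}$; with only $L^{2\alpha}$ zeroth-order coefficients one already has ${\rm Dom}(\tilde{\slashed D}_t^{\,2})\neq H^2$, so only the identifications ${\rm Dom}(\tilde{\slashed D}_t)=H^1$ and, by interpolation, ${\rm Dom}(|\tilde{\slashed D}_t|^{1/2})=H^{1/2}$ are available. (iii) The assertion that your resolvent/$A_t$ estimates are ``valid on both $L^2$ and $H^{1/2}$'' hides the same problem: factors such as $A_t(s+1+\slashed{D}_{\phi_0}^2)^{-1}$, and indeed each of your two pieces $A_t\,g(\tilde{\slashed D}_t)$ and $\slashed{D}_{\phi_0}\bigl(g(\tilde{\slashed D}_t)-g(\slashed{D}_{\phi_0})\bigr)$ separately, need not even be bounded on $H^{1/2}$ — only the full combination is. A correct treatment has to keep a smoothing resolvent on each side of every occurrence of $A_t$, distribute the half derivative by interpolation, e.g. $\|(s+1+D^2)^{-1}\|_{H^\sigma\to H^{\sigma+\theta}}\lesssim(s+1)^{-1+\theta/2}$ for $0\le\sigma\le\sigma+\theta\le1$, absorb the $1/\alpha+\eps$ loss coming from $A_t$, and then re-check that the surviving decay $(s+1)^{-3/2+(1/\alpha+\eps)/2}$ is still integrable against $s^{-1/2}ds$ (it is, precisely because $\alpha>1$); alternatively one can conjugate by $(1+|{\bf D}|)^{1/2}$ and control the resulting commutators. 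You correctly identify this as ``the hard part'', but the specific estimates you wrote to handle it are the ones that fail, so as written the $H^{1/2}$-compactness and the $H^{1/2}$ operator-norm continuity of $t\mapsto K_t$ are not proved.
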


Note that the decomposition of $\nabla^V\mathcal{L}^\alpha$ is not unique. For our purposes, we need to use a decomposition that is different from those presented in \eqref{decomposition1}. Let $T>0$ be arbitrary. For $t\in[0,T]$, we write
\begin{equation}
\nabla^V\mathcal{L}^\alpha(\phi_t,\psi_t)={\bf P}^{-1}_{t\to T}\circ({\bf 1}+\slashed{D}^2_{\phi_T})^{-1/2}\circ\slashed{D}_{\phi_T}\circ{\bf P}_{t\to T}\psi_t+K(T,t;\phi_t,\psi_t),
\end{equation}
where $\slashed{D}_{\phi_T}$ denotes the Dirac operator along the map $\phi_T$, ${\bf P}^{-1}_{t\to T}={\bf1}\otimes{\bf P}^{-1}_{t\to T}$ and ${\bf P}^{-1}_{t\to T}(x): T_{\phi_t(x)}N\to T_{\phi_T(x)}N$ is the parallel transport along the curve $[t,T]\ni s\mapsto\phi_s(x)\in N$. Then \eqref{modifiedpgf} can be written as 
\begin{equation}\label{modifiedpgf'}
\frac{d}{dt}\tilde\psi_t=-\eta_t(\tilde\psi_t)(L_{T,V}\tilde\psi_t+\tilde{K}(T,t;\tilde\psi_t)),
\end{equation}
where $L_{T,V}:={\bf P}_T^{-1}\circ({\bf1}+{\slashed D}_{\phi_T}^2)^{-1/2}{\slashed D}_{\phi_T}\circ{\bf P}_T$ and $\tilde{K}(T,t;\tilde\psi_t)={\bf P}_t^{-1}K(T,t;\phi_t,\psi_t)$. As in Lemma \ref{tildeD}, ${\bf P}_tL_{T,V}-L^V{\bf P}_t:H^{1/2}(M,\Sigma M\otimes\phi^*TN)\to H^{1/2}(M,\Sigma M\otimes\underline{\mathbb R}^L)$ is compact and continuously depends on $t$ and $T$. So, $\tilde K$ is also compact.
 
 Regarding $\phi_t$ and $\tilde\psi_t$ in $\eta_t(\tilde\psi_t)$ and $\tilde{K}(T,t;\tilde{\psi}_t)$ as already known functions, integration of of \eqref{modifiedpgf'} yields 
 \begin{equation}
 \tilde\psi_t=\exp\Bigg(-\int_0^t\eta_s(\tilde\psi_s)dsL_{T,V}\Bigg)\psi+\hat{K}(T,t;\psi),
 \end{equation}
 where 
 \begin{equation}
 \hat{K}(T,t;\psi)=-\int_0^t\exp\Bigg(\int_t^\tau\eta_s(\tilde\psi_s)dsL_{T,V}\Bigg)\eta_\tau(\tilde\psi_\tau)\tilde{K}(T,\tau;\tilde\psi_\tau)d\tau
 \end{equation} is a compact map from $[0,1]\times[0,1]\times H^{1/2}(M,\Sigma M\otimes\phi^*TN)$ to $H^{1/2}(M,\Sigma M\otimes\phi^*TN)$ (see Lemma 7.3 in \cite{isobe2012existence}).
 
Since $t\in[0,T]$ is arbitrary, we take $T=t$ and obtain
\begin{equation}\label{finalexpression}
\tilde\psi_t=\exp\Bigg(-\int_0^t\eta_s(\tilde\psi_s)dsL_{t,V}\Bigg)\psi+K(t,\psi),
 \end{equation}
where $K(t,\psi):=\hat{K}(t,t;\psi):[0,1]\times H^{1/2}(M,\Sigma M\otimes\phi^*TN)\to H^{1/2}(M,\Sigma M\otimes\phi^*TN)$ is a compact map.
\begin{rmk}
The advantage of the form of $\tilde\psi_t$ in \eqref{finalexpression} is that the bounded linear operator $\exp\Bigg(-\int_0^t\eta_s(\tilde\psi_s)dsL_{t,V}\Bigg)$ commutes with the spectral projection of the operator ${\bf P}_t^{-1}\slashed{D}_{\phi_t}{\bf P}_t$. This fact will be used in the section below.
\end{rmk}

\subsection{Linking geometry}
In this section, we give a  linking argument parallel to that of \cite{isobe2012existence} within our framework. Since the proofs only rely on those properties generalized in the section above, we again refer to \cite{isobe2012existence} for details.
\begin{defn}\label{linkset}
We define
$$\mathcal{C}:=\{\Theta\in C^0([0,1]\times\mathcal{F},\mathcal{F}):\Theta \ \text{satisfies the  following} \  (1),(2),(3)\}$$
$(1)$ \ $\Theta(0,(\phi,\psi))=(\phi,\psi)$ for $(\phi,\psi)\in\mathcal{F}$.\\
$(2)$ \ For $0\leq t\leq1$ and $(\phi,\psi)\in\mathcal{F}$, writing $\Theta(t,(\phi,\psi))=:(\phi_,\psi_t)$, we have $\phi_t\in W^{1,2\alpha}(M,N)$ and $\psi_t\in H^{1/2}(M,\Sigma M\otimes\phi_t^*TN)$. Moreover, $\phi:[0,1]\ni t\mapsto\phi_t\in W^{1,2\alpha}(M,N)$ is piecewise $C^1$.\\
$(3)$ \ Let $(\phi_t,\psi_t)=\Theta(t,(\phi,\psi))$ be as in $(2)$. For all $0\leq t\leq1$, there holds
$$(P_t^-\oplus P_t^0)\psi_t=P_t^-({\bf P}_t\psi)+P_t^0({\bf P}_t\psi)+K(t,\psi),$$ where ${\bf P}_t\psi(x)=({\bf 1}\otimes{\bf P}_t(x)\psi(x))$ and ${\bf P}_t(x):T_{\phi(x)}N\to T_{\phi_t(x)}N$ is the parallel transport along the curve $[0,1]\ni s\mapsto\phi_s(x)\in N$, $P_t^-$ and $P_t^0$ are the spectral projection of $H^{1/2}(M,\Sigma M\otimes\phi_t^*TN)$ onto the negative and the null eigenspaces of $\slashed{D}_{\phi_t}$, respectively. $K:[0,1]\times H^{1/2}(M,\Sigma M\otimes\phi^*TN)\to ^{1/2}(M,\Sigma M\otimes\underline{\mathbb R}^L)$ is a compact map.
\end{defn}
\begin{defn}
Let $S\subset\mathcal{F}$ be a closed subset and $Q\subset\mathcal{F}$ a submanifold with relative boundary $\pt Q$. We say that $S$ and $Q$ link with respect to $\mathcal C$ if the following holds: 
For any $\Theta\in\mathcal C$ which satisfies $\Theta(t,\pt Q)\cap S=\emptyset$ for all $t\in[0,1]$, we have $\Theta(t,Q)\cap S\neq\emptyset$ for all $t\in[0,1]$.
\end{defn}
\begin{lem}\label{QSlink}
For $\phi_0\in W^{1,2\alpha}(M,N)$ and $R_1,R_2,\rho>0$ with $0<\rho<R_2$, we define
$$S_\rho=\{(\phi,\psi)\in\mathcal{F}^+: \|\psi\|_{1/2,2}=\rho\}$$
and $$Q_{R_1,R_2}=\{(\phi_0,\psi)\in\mathcal{F}_{\phi_0}^-\oplus\mathcal{F}_{\phi_0}^0:\|\psi\|_{1/2,2}\leq R_1\}\oplus\{(\phi_0,re^+):0\leq r\leq R_2\},$$ where $\mathcal{F}=\mathcal{F}^-\oplus\mathcal{F}^0\oplus\mathcal{F}^+$ is the spectral decomposition of $\mathcal{F}$ with respect to the operator ${\bf D}$. The fiber over $\phi\in W^{1,2\alpha}(M,N)$, $\mathcal{F}_\phi:=H^{1/2}(M,\Sigma M\otimes\phi^*TN)$ is decomposed as $\mathcal{F}_\phi=\mathcal{F}_\phi^-\oplus\mathcal{F}_\phi^0\oplus\mathcal{F}_\phi^+$, where $\mathcal{F}_\phi^-$, $\mathcal{F}_\phi^0$ and $\mathcal{F}_\phi^+$ are, respectively, the positive, the null, and the negative spaces with respect to the spectral decomposition of the operator $\slashed{D}_\phi$ and $e^+\in\mathcal{F}_{\phi_0}^+$ is such that $\|e^+\|_{1/2,2}=1$. Then $S_\rho$ and $Q_{R_1,R_2}$ link with respect to $\mathcal C$.
\end{lem}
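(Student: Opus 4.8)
The plan is to obtain the linking assertion from a Leray--Schauder degree computation performed on the model Banach space
\begin{equation*}
W:=\mathcal{F}_{\phi_0}^-\oplus\mathcal{F}_{\phi_0}^0\oplus\mathbb{R}e^+,
\end{equation*}
which contains $Q_{R_1,R_2}$ as a bounded submanifold with boundary $\pt Q_{R_1,R_2}$. Fix $\Theta\in\mathcal C$ with $\Theta(t,\pt Q_{R_1,R_2})\cap S_\rho=\emptyset$ for every $t\in[0,1]$; I want $\Theta(t,Q_{R_1,R_2})\cap S_\rho\neq\emptyset$ for each fixed $t$ (this is clear at $t=0$, where $\Theta(0,\cdot)=\mathrm{id}$ and $(\phi_0,\rho e^+)\in Q_{R_1,R_2}\cap S_\rho$ since $0<\rho<R_2$). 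Writing $\Theta(t,(\phi_0,\psi))=(\phi_t,\psi_t)$, membership $(\phi_t,\psi_t)\in S_\rho$ amounts to the two conditions $(P_t^-\oplus P_t^0)\psi_t=0$ and $\|\psi_t\|_{1/2,2}=\rho$.

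First I would construct, out of $\Theta(t,\cdot)$, the parallel transports ${\bf P}_t$ and the spectral projections $P_t^\pm,P_t^0$ of $\slashed{D}_{\phi_t}$, a continuous reduction map $\Psi_t\colon Q_{R_1,R_2}\to W$, as in \cite{isobe2012existence}, designed so that $\Psi_t(\psi)=\rho e^+$ forces $\Theta(t,(\phi_0,\psi))\in S_\rho$; roughly, one transports $\psi_t$ back to the fixed fibre $\mathcal{F}_{\phi_0}$ by ${\bf P}_t^{-1}$, keeps its $\mathcal{F}_{\phi_0}^-\oplus\mathcal{F}_{\phi_0}^0$--projection, and encodes the size of the remaining part in the $\mathbb{R}e^+$--slot. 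By construction the standing hypothesis $\Theta(t,\pt Q_{R_1,R_2})\cap S_\rho=\emptyset$ translates into $\rho e^+\notin\Psi_t(\pt Q_{R_1,R_2})$ for all $t$, which is exactly what is needed for $\deg\big(\Psi_t,\operatorname{int} Q_{R_1,R_2},\rho e^+\big)$ to be well defined and invariant under homotopies in $t$.

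The crucial step is to show $\Psi_t=\mathrm{Id}_W-C_t$ for a \emph{compact} map $C_t\colon Q_{R_1,R_2}\to W$ depending continuously on $t$, so that the Leray--Schauder degree applies. For the $\mathcal{F}_{\phi_0}^-\oplus\mathcal{F}_{\phi_0}^0$--component this is where property $(3)$ of Definition \ref{linkset} enters: it provides $(P_t^-\oplus P_t^0)\psi_t=(P_t^-\oplus P_t^0)({\bf P}_t\psi)+K(t,\psi)$ with $K$ compact, and then Lemmas \ref{T}, \ref{P+} and \ref{tildeD} give that the conjugated projection ${\bf P}_t^{-1}(P_t^-\oplus P_t^0){\bf P}_t$ differs from $P_0^-\oplus P_0^0$ by a compact operator varying continuously in $t$ (using that $\slashed{D}_{\phi_0}$ has discrete spectrum, so $0$ is not in its essential spectrum and the bounded transform of Lemma \ref{tildeD} controls the difference of the corresponding spectral projections). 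Writing $\psi=v+re^+$ with $v=(P_0^-\oplus P_0^0)\psi$, this component of $\Psi_t$ then equals $v$ plus a compact remainder; the $\mathbb{R}e^+$--component takes values in a one--dimensional space and stays bounded on $Q_{R_1,R_2}$, hence is automatically a compact contribution. Finally, at $t=0$ one has ${\bf P}_0=\mathrm{id}$ and $\slashed{D}_{\phi_0}$ itself, so $\Psi_0(v+re^+)=v+re^+$, i.e.\ $\Psi_0=\mathrm{Id}$ on $Q_{R_1,R_2}$.

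Given these facts, homotoping $t$ down to $0$ yields
\begin{equation*}
\deg\big(\Psi_t,\operatorname{int} Q_{R_1,R_2},\rho e^+\big)=\deg\big(\mathrm{Id},\operatorname{int} Q_{R_1,R_2},\rho e^+\big)=1,
\end{equation*}
the last equality because $\rho e^+$ lies in the interior of $Q_{R_1,R_2}$ (here $R_1>0$ and $0<\rho<R_2$ are used). Nonvanishing of the degree produces $\psi$ in the interior of $Q_{R_1,R_2}$ with $\Psi_t(\psi)=\rho e^+$, hence $\Theta(t,(\phi_0,\psi))\in S_\rho$; since $t$ was arbitrary, $S_\rho$ and $Q_{R_1,R_2}$ link with respect to $\mathcal C$. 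I expect the main obstacle to be the third step, namely the fibre--bundle bookkeeping: carrying the spectral projections of $\slashed{D}_{\phi_t}$ from the moving fibres $\mathcal{F}_{\phi_t}$ back to the fixed $\mathcal{F}_{\phi_0}$ and checking that all correction terms are compact and continuous in $t$, while observing that the $\mathbb{R}e^+$--component, which property $(3)$ does not control, is harmless because its target is finite--dimensional. This is precisely the purpose of Lemmas \ref{T}, \ref{P+} and \ref{tildeD}, and the remaining verifications proceed exactly as in \cite{isobe2012existence}.
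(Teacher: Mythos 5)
Your proposal follows essentially the same route as the paper, which does not spell out a proof of Lemma \ref{QSlink} but refers to the linking argument of \cite{isobe2012existence}: that argument is precisely your Leray--Schauder reduction $\Psi_t=\mathrm{Id}-C_t$ on $Q_{R_1,R_2}\subset\mathcal{F}_{\phi_0}^-\oplus\mathcal{F}_{\phi_0}^0\oplus\mathbb{R}e^+$, with property $(3)$ of Definition \ref{linkset} together with Lemmas \ref{T}, \ref{P+} and \ref{tildeD} supplying the identity-plus-compact structure and its continuity in $t$, and the homotopy down to $t=0$ giving degree $1$. The only point to watch is that the vector part of $\Psi_t$ should be ${\bf P}_t^{-1}(P_t^-\oplus P_t^0)\psi_t$, whose vanishing genuinely forces $\psi_t\in\mathcal{F}_{\phi_t}^+$, rather than the $(P_0^-\oplus P_0^0)$--projection of ${\bf P}_t^{-1}\psi_t$ suggested by your \emph{roughly} clause; your subsequent use of the conjugated projection ${\bf P}_t^{-1}(P_t^-\oplus P_t^0){\bf P}_t$ and of property $(3)$ then turns this into $v$ plus a compact remainder, exactly as in \cite{isobe2012existence}.
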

 
 Now, we give an explicit deformation class, which is useful in this paper.
 \begin{defn}\label{Gamma}
 We define
 $$\Gamma(Q):=\{\gamma(1,\cdot)\in C^0(\mathcal{F},\mathcal{F}):\gamma\in C^0([0,1]\times\mathcal{F},\mathcal{F}) \ \text{satisfies the  following} \  (1),(2),(3)\}$$
 $(1)$ \ $\gamma(0,(\phi,\psi))=(\phi,\psi)$ for any$(\phi,\psi)\in\mathcal{F}$.\\
 $(2)$ \ $\gamma(t,(\phi,\psi))=(\phi,\psi)$ for $0\leq t\leq1$ and $(\phi,\psi)\in\pt Q$.\\
 $(3)$ \ $\gamma(t,(\phi,\psi))$ is written as $\gamma(t,(\phi,\psi))=(\phi_t,\psi_t)$, where $\phi_t\in W^{1,2\alpha}(M,N)$, $\psi\in H^{1/2}(M,\Sigma M\otimes\phi_t^*TN)$ and $\phi:[0,1]\ni t\mapsto\phi_t\in W^{1,2\alpha}(M,N)$ is piecewise $C^1$. Moreover, $\psi_t$ is of the following form:
 $$\psi_t={\bf P}_t(L_{\phi_t}(\psi)+K(t,\psi)),$$
 where ${\bf P}_t$ is as in Definition \ref{linkset}, $K:[0,1]\times H^{1/2}(M,\Sigma M\otimes\phi^*TN)\to H^{1/2}(M,\Sigma M\otimes\phi^*TN)$ is a compact map and $L_{\phi_t}(\psi)=\exp(-\sigma(t;\phi,\psi)L_{t,V})\psi$. Here $L_{t,V}={\bf P}_t^{-1}\circ({\bf1}+\slashed{D}_{\phi_t}^2)^{-1/2}\slashed{D}_{\phi_t}\circ{\bf P}_t$ and $\sigma:[0,1]\times\mathcal{F}\to\mathbb{R}$ is a continuous bounded function.
 \end{defn}
 \begin{lem}\label{QSintersect}
 Let $S\subset\mathcal{F}$ be a closed subset and $Q$ a submanifold of $\mathcal F$ with the relative boundary $\pt Q$. Assume that $S\subset\mathcal{F}^+$ and $S$ and $Q$ link with respect to $\mathcal C$. Then for any $\gamma\in\Gamma(Q)$, we have $\gamma(Q)\cap S\neq\emptyset$.
 \end{lem}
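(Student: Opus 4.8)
The plan is to reduce the statement to the linking property that $S$ and $Q$ are assumed to have with respect to $\mathcal{C}$. Given $\gamma\in\Gamma(Q)$, Definition \ref{Gamma} provides $h\in C^0([0,1]\times\mathcal F,\mathcal F)$ satisfying (1)--(3) of that definition with $\gamma=h(1,\cdot)$. If $\pt Q\cap S\neq\emptyset$ there is nothing to prove: by Definition \ref{Gamma}(2), $h(1,\cdot)$ fixes $\pt Q$, hence $\gamma(Q)\supset\pt Q$ meets $S$. So assume $\pt Q\cap S=\emptyset$; then, again by Definition \ref{Gamma}(2), $h(t,\pt Q)=\pt Q$ and therefore $h(t,\pt Q)\cap S=\emptyset$ for every $t\in[0,1]$. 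Thus, once we know that $h\in\mathcal{C}$, the linking hypothesis yields $h(t,Q)\cap S\neq\emptyset$ for all $t$, and the case $t=1$ is the assertion $\gamma(Q)\cap S\neq\emptyset$.

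Hence the real task is to verify that $h$ meets conditions (1)--(3) of Definition \ref{linkset}. Condition (1) is literally Definition \ref{Gamma}(1), and the structural content of condition (2) --- $\phi_t\in W^{1,2\alpha}(M,N)$, $\psi_t\in H^{1/2}(M,\Sigma M\otimes\phi_t^*TN)$, and $t\mapsto\phi_t$ piecewise $C^1$ --- is part of Definition \ref{Gamma}(3). For condition (3), write $h(t,(\phi,\psi))=(\phi_t,\psi_t)$ with $\psi_t={\bf P}_t\big(\exp(-\sigma(t;\phi,\psi)L_{t,V})\psi+K(t,\psi)\big)$ as in Definition \ref{Gamma}(3), where $L_{t,V}={\bf P}_t^{-1}\circ({\bf 1}+\slashed{D}_{\phi_t}^2)^{-1/2}\slashed{D}_{\phi_t}\circ{\bf P}_t={\bf P}_t^{-1}\,g(\slashed{D}_{\phi_t})\,{\bf P}_t$ and $g(\lambda)=\lambda(1+\lambda^2)^{-1/2}$. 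Since $g$ is odd and strictly increasing with $g(0)=0$ and $g(\lambda)\to\pm1$ as $\lambda\to\pm\infty$, the negative, null and positive spectral subspaces of $L_{t,V}$ are exactly the ${\bf P}_t^{-1}$-images of those of $\slashed{D}_{\phi_t}$; setting $\bar P_t^\bullet:={\bf P}_t^{-1}P_t^\bullet{\bf P}_t$ for $\bullet\in\{-,0,+\}$ we thus have $(P_t^-\oplus P_t^0){\bf P}_t={\bf P}_t(\bar P_t^-\oplus\bar P_t^0)$, while $\exp(-\sigma L_{t,V})$ commutes with every $\bar P_t^\bullet$ and restricts to the identity on $\bar P_t^0$ (as $L_{t,V}$ annihilates $\ker\slashed{D}_{\phi_t}$). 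Applying $(P_t^-\oplus P_t^0)$ to $\psi_t$ and subtracting $P_t^-({\bf P}_t\psi)+P_t^0({\bf P}_t\psi)={\bf P}_t(\bar P_t^-\oplus\bar P_t^0)\psi$ gives
\[
(P_t^-\oplus P_t^0)\psi_t-\big(P_t^-({\bf P}_t\psi)+P_t^0({\bf P}_t\psi)\big)={\bf P}_t\big(\exp(-\sigma L_{t,V})-{\bf 1}\big)\bar P_t^-\psi+{\bf P}_t(\bar P_t^-\oplus\bar P_t^0)K(t,\psi),
\]
and the last term is a compact function of $\psi$ because $K$ is compact and $\bar P_t^\bullet,{\bf P}_t$ are bounded.

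The main obstacle is the first term on the right: $\big(\exp(-\sigma L_{t,V})-{\bf 1}\big)\bar P_t^-$ is bounded but a priori not compact, so the compact map $K'$ required by Definition \ref{linkset}(3) does not drop out automatically. I would handle this using Lemma \ref{tildeD} and the functional calculus for $\slashed{D}_{\phi_t}$: on the negative spectral subspace the eigenvalues of $\slashed{D}_{\phi_t}$ tend to $-\infty$ and $g(\lambda)+1\to 0$, so $L_{t,V}+{\bf 1}$ is compact there and hence $\exp(-\sigma L_{t,V})=e^{\sigma}{\bf 1}+(\text{compact})$ on $\bar P_t^-\mathcal F_\phi$. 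The residual positive scalar $e^{\sigma}$ does not affect the vanishing set of $(P_t^-\oplus P_t^0)\psi_t$ and can be normalized away by deforming $\exp(-\sigma L_{t,V})$ to ${\bf 1}$ through the bounded invertible operators $\exp(-s\sigma L_{t,V})$, $s\in[0,1]$, all commuting with the $\bar P_t^\bullet$ --- exactly the freedom already exploited in the degree argument behind Lemma \ref{QSlink}. After this reconciliation, condition (3) of Definition \ref{linkset} holds with a genuinely compact $K'$, so $h\in\mathcal{C}$ and the proof closes as in the first paragraph. I expect this last step --- matching the exponential form of the $\Gamma(Q)$-deformations against the ``compact perturbation'' requirement defining $\mathcal{C}$ on the negative spinor subspace --- to be the only delicate point, the verification of (1), (2) and the boundary bookkeeping being routine.
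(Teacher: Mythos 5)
Your overall architecture is reasonable, and you correctly isolate the crux: for $\gamma\in\Gamma(Q)$ the underlying homotopy $h$ has spinor part $\psi_t={\bf P}_t(\exp(-\sigma L_{t,V})\psi+K(t,\psi))$, and $\exp(-\sigma L_{t,V})$ is \emph{not} a compact perturbation of the identity on the negative spectral subspace, so $h$ does not literally satisfy Definition \ref{linkset}(3). But your patch does not close this gap. Deforming $\exp(-\sigma L_{t,V})$ to ${\bf 1}$ through $\exp(-s\sigma L_{t,V})$ produces a \emph{different} homotopy; it does not make $h$ itself a member of $\mathcal{C}$ (note also that $\sigma=\sigma(t;\phi,\psi)$ depends on the point, so what you actually obtain is $(P_t^-\oplus P_t^0)\psi_t=e^{\sigma(t;\phi,\psi)}P_t^-({\bf P}_t\psi)+P_t^0({\bf P}_t\psi)+\text{compact}$, which is not of the required form even up to a fixed scalar). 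And if you do replace $h$ by some modified homotopy $\tilde h\in\mathcal{C}$, the linking hypothesis only yields $\tilde h(1,Q)\cap S\neq\emptyset$; you still owe the transfer back to $\gamma(Q)\cap S\neq\emptyset$. That transfer is not free: in the application $S=S_{\theta;\rho}$ carries the constraint $\|\psi\|_{1/2,2}=\rho$, and your global renormalization also rescales the positive component of the spinor, so an intersection point of the modified deformation with $S$ need not give one for $\gamma$. This last step is exactly the content of the lemma (the paper itself only defers to \cite{isobe2012existence} here), and your proposal leaves it open.

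The way to finish is to undo the exponential \emph{only on the non-positive spectral part} and to use the standing hypothesis $S\subset\mathcal{F}^+$, which your argument never invokes. With your notation $\bar P_t^\bullet={\bf P}_t^{-1}P_t^\bullet{\bf P}_t$, set $\Psi_t:=P_t^+\psi_t+{\bf P}_t\exp(\sigma(t;\phi,\psi)L_{t,V})(\bar P_t^-\oplus\bar P_t^0){\bf P}_t^{-1}\psi_t$, keeping $P_t^+\psi_t$ untouched. Since the exponential commutes with the $\bar P_t^\bullet$, one computes $(P_t^-\oplus P_t^0)\Psi_t=P_t^-({\bf P}_t\psi)+P_t^0({\bf P}_t\psi)+\tilde K(t,\psi)$ with $\tilde K$ compact (bounded operators composed with the compact $K$), so after a routine reparametrization near $t=0$ (needed because $\sigma(0;\cdot)$ need not vanish, so that $\Theta(0,\cdot)=\mathrm{id}$ is restored) the map $\Theta(t,\cdot)=(\phi_t,\Psi_t)$ belongs to $\mathcal{C}$. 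On $\pt Q$, where $h$ is the identity and ${\bf P}_t={\bf 1}$, any point of $\Theta(t,\pt Q)$ lying in $S\subset\mathcal{F}^+$ would have vanishing non-positive component, hence coincide with the original point of $\pt Q$, contradicting $\pt Q\cap S=\emptyset$; so the linking hypothesis applies and gives $(\phi,\psi)\in Q$ with $\Theta(1,(\phi,\psi))\in S$. Finally, since $S\subset\mathcal{F}^+$, the non-positive part of $\Psi_1$ vanishes, and injectivity of $\exp(\sigma L_{1,V})$ forces $(P_1^-\oplus P_1^0)\psi_1=0$, whence $\Psi_1=P_1^+\psi_1=\psi_1$: the very same point lies in $\gamma(Q)\cap S$. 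Incidentally, your compactness claim for $L_{t,V}+{\bf 1}$ on the negative subspace, while true, is not what is needed; all that matters is that $\exp(\pm\sigma L_{t,V})$ are bounded isomorphisms commuting with the spectral projections.
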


\section{Uniqueness of $\alpha$-harmonic maps}
The uniqueness of harmonic maps can be found in \cite{jost2017riemannian}(see Theorem 9.7.2). In this section, for completeness, we prove the analogous theorem for $\alpha$-harmonic maps. The key is still the convexity of the $\alpha$-energy function. So we now derive the second variation formula for the $\alpha$-energy.

Let $f_{st}(x):=f(x,s,t)$ be a smooth family of maps from $M\times(-\varepsilon, \varepsilon)\times(-\varepsilon, \varepsilon)$ to $N$. The $\alpha$-energy function is 
\begin{equation}
E^{\alpha}(f)=\frac{1}{2}\int_M(1+|df|^2)^{\alpha},
\end{equation}
where $df=\frac{\partial f}{\partial x^\alpha}dx^\alpha=\frac{\partial f^i}{\partial x^\alpha}dx^\alpha\otimes\frac{\partial}{\partial f^i}$. In the following, $\nabla$ is the Levi-Civita connection in the pullback bundle $f^*TN$ and everything will be evaluated at $s=t=0$. Then
\begin{equation}\label{2ndvar1}
\begin{split}
&\left.\frac{\partial^2}{\partial t\partial s}\right|_{s=t=0}\left(\frac12(1+|df|^2)^{\alpha}\right)\\
&=\frac{\partial}{\partial t}\left(\alpha(1+|df|^2)^{(\alpha-1)}\langle \nabla_{\frac{\partial}{\partial s}}df, df\rangle  \right)\\
&=\frac{\partial}{\partial t}\left(\alpha(1+|df|^2)^{(\alpha-1)}\langle \nabla_{\frac{\partial}{\partial x^\beta}}\frac{\partial f}{\partial s}dx^\beta, df\rangle  \right)\\
&=\alpha(1+|df|^2)^{(\alpha-1)}\left(\langle \nabla_{\frac{\partial}{\partial t}}\nabla_{\frac{\partial}{\partial x^\beta}}\frac{\partial f}{\partial s}dx^\beta, df\rangle+\langle \nabla_{\frac{\partial}{\partial x^\beta}}\frac{\partial f}{\partial s}dx^\beta, \nabla_{\frac{\partial}{\partial x^\gamma}}\frac{\partial f}{\partial t}dx^\gamma\rangle\right)\\
&+2\alpha(\alpha-1)(1+|df|^2)^{(\alpha-2)}\langle \nabla_{\frac{\partial}{\partial x^\beta}}\frac{\partial f}{\partial t}dx^\beta, df\rangle\langle \nabla_{\frac{\partial}{\partial x^\gamma}}\frac{\partial f}{\partial s}dx^\gamma, df\rangle.
\end{split}
\end{equation}
Let us set $V:=\left.\frac{\partial f}{\partial s}\right|_{s=t=0}$ and $W:=\left.\frac{\partial f}{\partial t}\right|_{s=t=0}$. Then \eqref{2ndvar1} becomes
\begin{equation}\label{2ndvar2}
\begin{split}
&\left. \frac{\partial^2}{\partial t\partial s}\right|_{s=t=0}\left(\frac12(1+|df|^2)^{\alpha}\right)\\
&=\alpha(1+|df|^2)^{(\alpha-1)}\left( \langle \nabla_{\frac{\partial}{\partial t}}\nabla_{\frac{\partial}{\partial x^\beta}}\frac{\partial f}{\partial s}dx^\beta, df\rangle+\langle \nabla V, \nabla W\rangle\right)\\
&+2\alpha(\alpha-1)(1+|df|^2)^{(\alpha-2)}\langle\nabla_{\frac{\partial}{\partial x^\beta}}Vdx^\beta, df\rangle\langle\nabla_{\frac{\partial}{\partial x^\gamma}} Wdx^\gamma,df\rangle.
\end{split}
\end{equation}
Here, by the Ricci identity, we have
\begin{equation}\label{ricciid}
\begin{split}
&\langle \nabla_{\frac{\partial}{\partial t}}\nabla_{\frac{\partial}{\partial x^\beta}}\frac{\partial f}{\partial s}dx^\beta, df\rangle\\
&=\langle \nabla_{\frac{\partial}{\partial x^\beta}}\nabla_{\frac{\partial}{\partial t}}\frac{\partial f}{\partial s}dx^\beta+R^N(\frac{\partial f}{\partial t}, \frac{\partial f}{\partial x^\beta})\frac{\partial f}{\partial s}dx^\beta, df\rangle\\
&=\langle\nabla_{\frac{\partial}{\partial x^\beta}}\nabla_{\frac{\partial}{\partial t}}\frac{\partial f}{\partial s}dx^\beta, df\rangle-{\rm tr}_{M}\langle R^N(df, V)W, df\rangle_{f^*TN},
\end{split}
\end{equation}
where we used the definition of the Riemann curvature tensor $R(X,Y)Z=\nabla_X\nabla_YZ-\nabla_Y\nabla_XZ-\nabla_{[X,Y]}Z$. Plugging \eqref{ricciid} into \eqref{2ndvar2}, we obtain the  following second variation formula.
\begin{thm}\label{thm:2ndvar}
For a smooth family $f=f_{st}$ of finite $\alpha$-energy maps between manifolds $M$ and $N$ with $f_{st}(x)=f_{00}(x)$ for any point $x\in\partial M$ in the case of $\partial M\neq\emptyset$, the second variational formula with $V:=\left.\frac{\partial f}{\partial s}\right|_{s=t=0}, W:=\left.\frac{\partial f}{\partial t}\right|_{s=t=0}$ is 

\begin{equation}\label{2ndvar}
\begin{split}
&\left.\frac{\partial^2}{\partial t\partial s}\right|_{s=t=0}E^{\alpha}(f)\\
&=\alpha\int_M(1+|df|^2 )^{(\alpha-1)}(\langle \nabla V, \nabla W\rangle-{\rm tr}_{M}\langle R^N(df, V)W, df\rangle_{f^*TN})\\
&+2\alpha(\alpha-1)\int(1+|df|^2)^{(\alpha-2)}\langle\nabla_{\frac{\partial}{\partial x^\beta}}Vdx^\beta, df\rangle\langle\nabla_{\frac{\partial}{\partial x^\gamma}} Wdx^\gamma,df\rangle\\
&+\alpha\int_M(1+|df|^2)^{(\alpha-1)}\langle\nabla_{\frac{\partial}{\partial x^\beta}}\nabla_{\frac{\partial}{\partial t}}\frac{\partial f}{\partial s}dx^\beta, df\rangle.
\end{split}
\end{equation}
\end{thm}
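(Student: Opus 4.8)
The plan is to obtain \eqref{2ndvar} by differentiating the $\alpha$-energy twice in $s$ and $t$ at $s=t=0$ and bookkeeping the resulting terms, precisely along the lines already laid out in \eqref{2ndvar1}, \eqref{2ndvar2} and \eqref{ricciid}. First I would record the first variation: since $f$ is smooth in all variables and either $M$ is closed or the $\alpha$-energy density and its $(s,t)$-derivatives are integrable by the finite-energy hypothesis, one may differentiate under the integral sign, and because the Levi-Civita connection on $f^*TN$ is metric,
\[
\frac{\partial}{\partial s}\left(\tfrac12(1+|df|^2)^\alpha\right)=\alpha(1+|df|^2)^{\alpha-1}\langle\nabla_{\partial/\partial s}df,df\rangle ,
\]
where $\nabla_{\partial/\partial s}df=\nabla_{\partial/\partial x^\beta}\frac{\partial f}{\partial s}\,dx^\beta$ since $[\partial/\partial s,\partial/\partial x^\beta]=0$ and the connection is torsion-free. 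Applying $\partial/\partial t$ and using the product rule then produces exactly three groups of terms: the derivative hitting the scalar factor $(1+|df|^2)^{\alpha-1}$ gives the Hessian-type term with coefficient $2\alpha(\alpha-1)(1+|df|^2)^{\alpha-2}$; the derivative hitting the second slot $df$ gives $\alpha(1+|df|^2)^{\alpha-1}\langle\nabla V,\nabla W\rangle$; and the derivative hitting the first slot gives $\alpha(1+|df|^2)^{\alpha-1}\langle\nabla_{\partial/\partial t}\nabla_{\partial/\partial x^\beta}\frac{\partial f}{\partial s}dx^\beta,df\rangle$. This is the content of \eqref{2ndvar2}.

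The only genuinely geometric input is the Ricci identity applied to the last of these terms: since $[\partial/\partial t,\partial/\partial x^\beta]=0$, we have $\nabla_{\partial/\partial t}\nabla_{\partial/\partial x^\beta}\frac{\partial f}{\partial s}=\nabla_{\partial/\partial x^\beta}\nabla_{\partial/\partial t}\frac{\partial f}{\partial s}+R^N(\frac{\partial f}{\partial t},\frac{\partial f}{\partial x^\beta})\frac{\partial f}{\partial s}$ directly from the definition $R(X,Y)Z=\nabla_X\nabla_YZ-\nabla_Y\nabla_XZ-\nabla_{[X,Y]}Z$. Pairing with $df$ and tracing over $M$ splits this into the curvature term $-\mathrm{tr}_M\langle R^N(df,V)W,df\rangle_{f^*TN}$ and the residual term containing $\nabla_{\partial/\partial x^\beta}\nabla_{\partial/\partial t}\frac{\partial f}{\partial s}$ that appears in the last line of \eqref{2ndvar}. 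Integrating over $M$ and using $\nabla_{\partial/\partial x^\beta}\frac{\partial f}{\partial s}\,dx^\beta=\nabla V$ and $\nabla_{\partial/\partial x^\gamma}\frac{\partial f}{\partial t}\,dx^\gamma=\nabla W$ at $s=t=0$ assembles \eqref{2ndvar}.

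I do not expect a serious obstacle here: no integration by parts on $M$ is performed, so the boundary hypothesis $f_{st}(x)=f_{00}(x)$ for $x\in\partial M$ is not actually needed for the identity itself — it is recorded because it is what is used in the applications (e.g. for a geodesic variation, where $\nabla_{\partial/\partial t}\frac{\partial f}{\partial s}\equiv0$ and the last term drops while the boundary values stay fixed, which is the setting of the uniqueness argument in the next section). The points that require care are therefore purely organizational: correctly identifying onto which of the three factors the outer $\partial/\partial t$-derivative lands and keeping the exponents $\alpha-1$ versus $\alpha-2$ straight, together with the routine justification of differentiation under the integral sign, which follows from smoothness of $f$ in $(x,s,t)$ and compactness of $M$ (or, when $\partial M\neq\emptyset$, from the finite-$\alpha$-energy assumption plus the fixed boundary values), since all integrands above are continuous in $(x,s,t)$.
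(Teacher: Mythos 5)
Your proposal is correct and follows essentially the same route as the paper: differentiate the density $\tfrac12(1+|df|^2)^\alpha$ first in $s$, then in $t$, obtaining the three groups of terms of \eqref{2ndvar2}, and apply the Ricci identity \eqref{ricciid} to commute $\nabla_{\partial/\partial t}$ and $\nabla_{\partial/\partial x^\beta}$, which produces the curvature term and the residual term before integrating over $M$. Your side remark that the fixed-boundary hypothesis is not needed for the identity itself (only later, when the last term is integrated by parts against the Euler--Lagrange equation) is accurate and consistent with how the paper uses it.
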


Since the Euler-Lagrangian equation for a $\alpha$-harmonic map is 
\begin{equation}\label{EL}
\tau^\alpha(f)={\rm tr}\nabla((1+|df|^2)^{(\alpha-1)}df)=0,
\end{equation}
under the assumptions of Theorem \ref{thm:2ndvar}, we can deal with the last term in \eqref{2ndvar} as follows
\begin{equation}\label{lastterm}
\begin{split}
&\alpha\int_M(1+|df|^2 )^{(\alpha-1)}\langle\nabla_{\frac{\partial}{\partial x^\beta}}\nabla_{\frac{\partial}{\partial t}}\frac{\partial f}{\partial s}dx^\beta, df\rangle\\
&=-\alpha\int_M\langle\nabla_{\frac{\partial}{\partial t}}\frac{\partial f}{\partial s}dx^\beta, \nabla_{\frac{\partial}{\partial x^\beta}}((1+|df|^2 )^{(\alpha-1)}df)\rangle\\
&=-\alpha\int_M\langle\nabla_{\frac{\partial}{\partial t}}\frac{\partial f}{\partial s}, {\rm tr}\nabla((1+|df|^2)^{(\alpha-1)}df)\rangle.
\end{split}
\end{equation}
Therefore, plugging \eqref{lastterm} into \eqref{2ndvar}, we have 
\begin{cor}
All  $\alpha$-harmonic maps to non-positive curved manifolds are stable.
\end{cor}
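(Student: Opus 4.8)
The plan is to combine the second variation formula of Theorem \ref{thm:2ndvar} with the Euler--Lagrange equation \eqref{EL} in the case $W = V$, and to show that the resulting quadratic form is non-negative when the target $N$ has non-positive sectional curvature. First I would specialize \eqref{2ndvar} and \eqref{lastterm} to a variation whose variation field satisfies $V = W$, so that the last term of \eqref{2ndvar} drops out after integration by parts precisely because $f$ is $\alpha$-harmonic, i.e. $\tau^\alpha(f) = 0$. This leaves
\begin{equation*}
\left.\frac{\partial^2}{\partial t\,\partial s}\right|_{s=t=0} E^\alpha(f)
= \alpha\int_M (1+|df|^2)^{\alpha-1}\bigl(|\nabla V|^2 - \mathrm{tr}_M\langle R^N(df,V)V, df\rangle\bigr)
+ 2\alpha(\alpha-1)\int_M (1+|df|^2)^{\alpha-2}\,\bigl\langle \nabla_{\partial/\partial x^\beta}V\,dx^\beta, df\bigr\rangle^2 .
\end{equation*}

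Next I would argue that each of the three contributions is non-negative. The term $\alpha\int_M (1+|df|^2)^{\alpha-1}|\nabla V|^2$ is manifestly non-negative since $\alpha > 1$ and $(1+|df|^2)^{\alpha-1} \geq 1$. The curvature term $-\alpha\int_M (1+|df|^2)^{\alpha-1}\,\mathrm{tr}_M\langle R^N(df,V)V, df\rangle$ is non-negative because the sectional curvature of $N$ is non-positive: writing the trace over an orthonormal frame $\{e_\beta\}$ of $TM$, each summand $-\langle R^N(df(e_\beta),V)V, df(e_\beta)\rangle$ equals $-\langle R^N(df(e_\beta),V)V, df(e_\beta)\rangle \geq 0$ by the standard sign convention for non-positively curved manifolds (this is exactly the computation used in Theorem 9.7.2 of \cite{jost2017riemannian}). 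Finally, the term involving $\alpha(\alpha-1)$ is a multiple of an integral of a perfect square $\langle \nabla_{\partial/\partial x^\beta}V\,dx^\beta, df\rangle^2$, and since $\alpha > 1$ the coefficient $2\alpha(\alpha-1)$ is positive, so this term is non-negative as well. Summing, $\left.\frac{\partial^2}{\partial t\,\partial s}\right|_{s=t=0} E^\alpha(f) \geq 0$ for every admissible variation with $V=W$, which is precisely the statement that every $\alpha$-harmonic map into a non-positively curved manifold is stable.

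The only genuine subtlety — and the step I would be most careful about — is the boundary/integration-by-parts step producing \eqref{lastterm}: one must make sure the variation is through maps of finite $\alpha$-energy agreeing on $\partial M$ (or that $M$ is closed), so that there is no boundary contribution when moving the covariant derivative $\nabla_{\partial/\partial x^\beta}$ off the $(1+|df|^2)^{\alpha-1}df$ factor, and that $(1+|df|^2)^{\alpha-1}df$ is regular enough for this to be legitimate; for a smooth family of smooth maps, as assumed in Theorem \ref{thm:2ndvar}, this is immediate. Everything else is a sign check. I would present the argument in three short displayed lines, one per term, and conclude. No new machinery beyond Theorem \ref{thm:2ndvar}, equation \eqref{EL}, and the definition of non-positive sectional curvature is needed.
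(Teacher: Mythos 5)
Your argument is correct and is essentially the paper's own proof: plug the integration-by-parts identity \eqref{lastterm} into the second variation formula \eqref{2ndvar}, use $\tau^\alpha(f)=0$ to kill the last term, and check term-by-term non-negativity with $V=W$ under non-positive sectional curvature. The sign checks and the remark on the boundary/closedness hypothesis match the paper's reasoning, so nothing further is needed.
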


When the target manifold has non-positive curvature, any two homotopic maps can be connected by geodesics. In this case, we also have the convexity of the $\alpha$-energy.

\begin{lem}\label{lem:convexity}
Under the assumptions of Theorem \ref{thm:2ndvar} with $f=f_t$, for $\alpha\geq 1$, if $N$ has non-positive sectional curvature and
\begin{equation}
\left.\nabla_{\frac{\partial}{\partial t}}\frac{\partial f}{\partial t}\right|_{t=0}\equiv0,
\end{equation}
then the $\alpha$-energy function is convex, that is,
$$
\left.\frac{d^2}{dt^2}\right|_{t=0}E^\alpha(f_t)\geq\alpha\int_M(1+|df|^2 )^{(\alpha-2)}\left((1+|df|^2)|\nabla V|^2+2(\alpha-1)\langle\nabla_{\frac{\partial}{\partial x^\beta}}Vdx^\beta, df\rangle^2\right)\geq0.
$$
\end{lem}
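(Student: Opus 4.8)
The plan is to specialize the second variation formula \eqref{2ndvar} from Theorem~\ref{thm:2ndvar} to the one-parameter situation $f=f_t$, taking $V=W=\frac{\partial f}{\partial t}$, and then to discard the terms that have a favorable sign under the nonpositive curvature hypothesis. First I would set $s=t$ in \eqref{2ndvar}, so that the mixed second derivative $\frac{\partial^2}{\partial t\partial s}E^\alpha(f)$ becomes $\frac{d^2}{dt^2}E^\alpha(f_t)$, and $V=W$. The term $\langle\nabla V,\nabla W\rangle$ becomes $|\nabla V|^2$, and $\langle\nabla_{\partial/\partial x^\beta}V\,dx^\beta,df\rangle\langle\nabla_{\partial/\partial x^\gamma}W\,dx^\gamma,df\rangle$ becomes $\langle\nabla_{\partial/\partial x^\beta}V\,dx^\beta,df\rangle^2\geq0$.

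Next I would handle the last term of \eqref{2ndvar}, namely $\alpha\int_M(1+|df|^2)^{\alpha-1}\langle\nabla_{\partial/\partial x^\beta}\nabla_{\partial/\partial t}\frac{\partial f}{\partial s}\,dx^\beta,df\rangle$. With $s=t$ this is $\alpha\int_M(1+|df|^2)^{\alpha-1}\langle\nabla_{\partial/\partial x^\beta}(\nabla_{\partial/\partial t}\frac{\partial f}{\partial t})\,dx^\beta,df\rangle$; integrating by parts exactly as in \eqref{lastterm} turns it into $-\alpha\int_M\langle\nabla_{\partial/\partial t}\frac{\partial f}{\partial t},\operatorname{tr}\nabla((1+|df|^2)^{\alpha-1}df)\rangle$, which vanishes because the hypothesis $\nabla_{\partial/\partial t}\frac{\partial f}{\partial t}|_{t=0}\equiv0$ kills the first factor (no harmonicity of $f$ is needed here, only the geodesic condition). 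This is the step where the ``connected by geodesics'' remark is used: for a target of nonpositive curvature, homotopic maps are joined by a geodesic homotopy, which is precisely a variation with $\nabla_{\partial/\partial t}\frac{\partial f}{\partial t}\equiv0$.

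Then I would invoke the curvature sign: since $N$ has nonpositive sectional curvature, the Gauss-type expression $\operatorname{tr}_M\langle R^N(df,V)V,df\rangle_{f^*TN}\leq0$ (this is the standard fact that $\langle R^N(X,Y)Y,X\rangle\leq0$, summed over an orthonormal frame on $M$ against $df$), so $-\operatorname{tr}_M\langle R^N(df,V)V,df\rangle\geq0$. Dropping this nonnegative term and the nonnegative squared term from the second line of \eqref{2ndvar}, and pulling a common factor $(1+|df|^2)^{\alpha-2}$ out of the two remaining pieces $\alpha(1+|df|^2)^{\alpha-1}|\nabla V|^2$ and $2\alpha(\alpha-1)(1+|df|^2)^{\alpha-2}\langle\nabla_{\partial/\partial x^\beta}V\,dx^\beta,df\rangle^2$, gives
\begin{equation*}
\left.\frac{d^2}{dt^2}\right|_{t=0}E^\alpha(f_t)\geq\alpha\int_M(1+|df|^2)^{\alpha-2}\Big((1+|df|^2)|\nabla V|^2+2(\alpha-1)\langle\nabla_{\tfrac{\partial}{\partial x^\beta}}V\,dx^\beta,df\rangle^2\Big),
\end{equation*}
and since $\alpha\geq1$ every term on the right is nonnegative, which yields the final $\geq0$.

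The computation is essentially routine once \eqref{2ndvar} is granted; the only point requiring care is the integration by parts in the last term, where one must be sure that the boundary contribution vanishes (guaranteed by the fixed-boundary assumption carried over from Theorem~\ref{thm:2ndvar}) and that one is differentiating in the correct slot, and the observation that, unlike the ``stability'' corollary, convexity here does \emph{not} require $f$ itself to be $\alpha$-harmonic — only that the variation field be the velocity of a geodesic homotopy. I expect the main (minor) obstacle to be bookkeeping of the $dx^\beta$ contractions and confirming that the trace/curvature term genuinely has the claimed sign in the presence of the metric $g^{\beta\gamma}$ on $M$; everything else is algebraic rearrangement of \eqref{2ndvar}.
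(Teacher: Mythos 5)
Your proof is correct and is essentially the argument the paper intends: specialize the second variation formula \eqref{2ndvar} to $V=W=\frac{\partial f}{\partial t}$, kill the last term using $\nabla_{\frac{\partial}{\partial t}}\frac{\partial f}{\partial t}\big|_{t=0}\equiv0$ (in fact this term vanishes directly, since the $x$-derivative of an identically vanishing section vanishes, so the integration by parts is optional and indeed needs no $\alpha$-harmonicity), and drop the nonnegative curvature term $-\mathrm{tr}_M\langle R^N(df,V)V,df\rangle\geq0$. Only a wording slip: you say you ``drop'' the squared term from the second line, but you correctly retain it in the final display, which matches the lemma exactly.
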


By such convexity, one can prove the  uniqueness of $\alpha$-harmonic map in the classical way (see \cite{jost2017riemannian} Theorem 9.7.2).
\begin{thm}\label{thm:uniqueness}
Let $M$ be a compact manifold and $N$ a complete Rienmannian manifold. Assume that $N$ has non-positive curvature. Let $f_0, f_1: M\to N$ be homotopic $\alpha$-harmonic maps ($\alpha\geq1$). Then there exists a family $f_t: M\to N$ ($t\in[0,1]$) of $\alpha$-harmonic maps connecting them, for which the $\alpha$-energy $E^\alpha(f_t)$ is independent of $t$, and for which every curve $\gamma_x(t):=f_t(x)$ is geodesic, and $\|\frac{\partial}{\partial t}\gamma_x(t)\|$ is independent of $x$ and $t$. If $N$ has negative curvature, then $f_0$ and $f_1$ either are both constant maps, or they both map $M$ onto the same closed geodesic or they coincide.

 If $M$ has boundary $\partial M$ and $f_0|_{\partial M}=f_1|_{\partial M}$, then $f_0=f_1$.
\end{thm}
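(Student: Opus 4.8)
The plan is to mimic the classical uniqueness proof for harmonic maps (\cite{jost2017riemannian}, Theorem~9.7.2), now using Theorem~\ref{thm:2ndvar} and the convexity of Lemma~\ref{lem:convexity} as the analytic inputs. \textbf{Step 1 (connecting family).} Fix a homotopy $H\colon M\times[0,1]\to N$ from $f_0$ to $f_1$. Since $N$ is complete with nonpositive curvature, its universal cover is a Cartan--Hadamard manifold on which the distance function is convex; lifting $H$ and using this convexity, for each $x\in M$ there is a \emph{unique} geodesic $\gamma_x\colon[0,1]\to N$ with $\gamma_x(0)=f_0(x)$, $\gamma_x(1)=f_1(x)$ that is homotopic, rel endpoints, to $t\mapsto H(x,t)$. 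Setting $f_t(x):=\gamma_x(t)$ and invoking smooth dependence of geodesics on their endpoints, $(x,t)\mapsto f_t(x)$ is a smooth family with $f_0,f_1$ as its ends and with $\nabla_{\frac{\partial}{\partial t}}\frac{\partial f}{\partial t}\equiv0$; if $\partial M\neq\emptyset$ and $f_0=f_1$ on $\partial M$, then $\gamma_x$ is the constant path for $x\in\partial M$, so the whole family is fixed on $\partial M$.

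\textbf{Step 2 (energy is constant).} Let $V:=\frac{\partial f}{\partial t}$ and $g(t):=E^\alpha(f_t)$. By Lemma~\ref{lem:convexity} (whose hypothesis $\nabla_{\frac{\partial}{\partial t}}\frac{\partial f}{\partial t}\equiv0$ holds by Step~1), $g$ is convex on $[0,1]$. By the same integration by parts used in \eqref{lastterm}, together with the $\alpha$-harmonic map equation \eqref{EL} and the fact that $f_t$ is fixed on $\partial M$ when $\partial M\neq\emptyset$, one gets $g'(t)=-\alpha\int_M\langle V,\tau^\alpha(f_t)\rangle$. Since $f_0$ and $f_1$ are $\alpha$-harmonic, $g'(0)=g'(1)=0$; a convex function on $[0,1]$ has non-decreasing derivative, so $g'(0)=0$ forces $g$ non-decreasing and $g'(1)=0$ forces $g$ non-increasing, hence $g$ is constant and $E^\alpha(f_t)$ is independent of $t$.

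\textbf{Step 3 (rigidity).} From $g\equiv\text{const}$ we get $g''\equiv0$. Plugging $V=W=\frac{\partial f}{\partial t}$ into \eqref{2ndvar} (the last term vanishes since $\nabla_{\frac{\partial}{\partial t}}\frac{\partial f}{\partial t}\equiv0$), $g''(t)$ is the sum of the three terms
\begin{align*}
&\alpha\int_M(1+|df_t|^2)^{\alpha-1}|\nabla V|^2,\\
&-\alpha\int_M(1+|df_t|^2)^{\alpha-1}{\rm tr}_M\langle R^N(df_t,V)V,df_t\rangle,\\
&2\alpha(\alpha-1)\int_M(1+|df_t|^2)^{\alpha-2}\Big\langle\nabla_{\frac{\partial}{\partial x^\beta}}V\,dx^\beta,df_t\Big\rangle^2,
\end{align*}
each of which is nonnegative under the curvature hypothesis, so each vanishes. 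In particular $\nabla V\equiv0$, i.e. $\nabla_{\frac{\partial}{\partial x^\beta}}V=\nabla_{\frac{\partial}{\partial t}}df_t(\tfrac{\partial}{\partial x^\beta})=0$, so $df_t(\tfrac{\partial}{\partial x^\beta})$ is parallel along each $\gamma_x$; hence $|df_t|$ is independent of $t$, and since $\partial_{x^\beta}|V|^2=2\langle\nabla_{\frac{\partial}{\partial x^\beta}}V,V\rangle=0$, the speed $\|\dot\gamma_x(t)\|=|V|$ is independent of $x$ (and of $t$ because $\gamma_x$ is geodesic). This proves the first assertion. If $N$ has strictly negative curvature, vanishing of the curvature term forces $df_t(e_\beta)$ and $V$ to be linearly dependent at every point, for each frame vector $e_\beta$; if $V\equiv0$ then every $\gamma_x$ is constant and $f_0=f_1$, and otherwise $|V|$ is a nonzero constant, so $V$ is nowhere zero, ${\rm rank}\,df_t\le1$ everywhere, and its image lies along the $\gamma_x$, which forces $f_0,f_1$ to be either both constant maps (when $df_0\equiv0$) or both onto the same closed geodesic. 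Finally, if $\partial M\neq\emptyset$ and $f_0|_{\partial M}=f_1|_{\partial M}$, then $V$ vanishes on $\partial M$ and, having constant norm, vanishes identically, so $f_0=f_1$.

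\textbf{Main obstacle.} Everything after Step~1 is a routine consequence of Lemma~\ref{lem:convexity}, Theorem~\ref{thm:2ndvar}, and the vanishing of the first variation at $f_0$ and $f_1$. The genuinely delicate point is Step~1, namely the existence, uniqueness, and smooth dependence on $x$ of the geodesic homotopy $f_t$; this is precisely where completeness and nonpositive curvature of $N$ enter (Cartan--Hadamard and convexity of the distance function on the universal cover). A secondary point is the global argument in the negative-curvature case that upgrades the pointwise bound ${\rm rank}\,df_t\le1$ to the stated trichotomy, using connectedness and compactness of $M$.
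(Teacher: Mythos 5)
Your overall strategy is exactly the one the paper intends (the paper itself only points to the classical convexity argument of \cite{jost2017riemannian}, Thm.~9.7.2, with Lemma~\ref{lem:convexity} as input), and Steps 1--2 and the rigidity computations in Step 3 are sound: the geodesic homotopy via the Cartan--Hadamard universal cover, constancy of $E^\alpha(f_t)$ from convexity plus $g'(0)=g'(1)=0$, and the vanishing of each nonnegative term in \eqref{2ndvar} giving $\nabla_{\frac{\partial}{\partial x^\beta}}V\equiv 0$ and the constancy of $\|\dot\gamma_x\|$ are all correct.

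There is, however, a genuine gap: the theorem asserts that the connecting family $f_t$ consists of \emph{$\alpha$-harmonic maps}, and your proof never establishes $\tau^\alpha(f_t)=0$ for $0<t<1$; after Step 3 you declare ``the first assertion'' proved, but constancy of $E^\alpha(f_t)$, the geodesic property and constant speed do not by themselves yield harmonicity of the intermediate maps. Two standard ways to close this. (i) The same convexity shows that any $\alpha$-harmonic map into a nonpositively curved target minimizes $E^\alpha$ in its homotopy class: along the geodesic homotopy from $f_0$ to an arbitrary homotopic map $v$, $E^\alpha$ is convex with vanishing derivative at $t=0$, hence $E^\alpha(v)\ge E^\alpha(f_0)$. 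Since $E^\alpha(f_t)\equiv E^\alpha(f_0)$ is this minimal value and each $f_t$ is smooth, every $f_t$ is a minimizer, hence a critical point, hence $\alpha$-harmonic. (ii) Alternatively, compute directly: from $\nabla_\beta V=0$ one gets $\partial_t|df_t|^2=0$ and $\nabla_{\frac{\partial}{\partial t}}\big((1+|df_t|^2)^{\alpha-1}\partial_\gamma f_t\big)=0$, so that $\nabla_{\frac{\partial}{\partial t}}\tau^\alpha(f_t)=(1+|df_t|^2)^{\alpha-1}g^{\beta\gamma}R^N(V,\partial_\beta f_t)\partial_\gamma f_t$; the pointwise vanishing of the curvature term in \eqref{2ndvar} together with nonpositive curvature and the self-adjointness of the Jacobi operator $X\mapsto R^N(X,V)V$ gives $R^N(V,df_t(e_\beta))df_t(e_\beta)=0$, so $\tau^\alpha(f_t)$ is parallel along each $\gamma_x$ and vanishes identically because $\tau^\alpha(f_0)=0$. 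A secondary, smaller issue is that in the negative-curvature case the passage from ${\rm rank}\,df_t\le 1$ with image along the parallel field $V$ to the stated trichotomy (both constant, both onto the same closed geodesic, or equal) is only asserted; it needs the short classical argument that a nonconstant map with differential everywhere proportional to a parallel section has image in a single geodesic, which must be closed since $M$ is compact. You flag this yourself, but it should be written out if the proof is to be complete.
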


\section{Existence results}
In this section, we will prove the main existence results for perturbed $\alpha$-Dirac-harmonic map. To do so, we still need some more preparations.

Let $N$ be a compact Riemannian manifold and $\theta\in[M,N]$ be a free homotopy class of maps in $N$. We define $\mathcal{F}_\theta=\{(\phi,\psi)\in\mathcal{F}^{\alpha,1/2}(M,N):\phi\in\theta\}$. For each $\alpha$, it is well-known that there exists an $\alpha$-energy minimizing map $\phi_0\in\theta$ in the class $\theta$ \cite{sacks1981existence}. From now on, we fix $\alpha>1$. Denote 
\begin{equation}
m_\theta:=\inf\left\{\frac12\int_M(1+|d\phi|^2)^\alpha:\phi\in W^{1,2\alpha}(M,N)\cap\theta\right\}=\frac12\int_M(1+|d\phi_0|^2)^\alpha.
\end{equation}
Now, we fix such a map $\phi_0\in\theta$ in the following. For $R_1,R_2>0$ and $0<\rho<R_2$, we define
\begin{equation}\label{QR1R2}
Q_{\theta;R_1,R_2}=\{(\phi_0,\psi)\in\mathcal{F}^-_{\theta,\phi_0}\oplus\mathcal{F}^0_{\theta,\phi_0}:\|\psi\|_{1/2,2}\leq R_1\}\oplus\{(\phi_0,re^+):0\leq r\leq R_2\}
\end{equation}
and
\begin{equation}
S_{\theta;\rho}=\{(\phi,\psi)\in\mathcal{F}^+_{\theta}:\|\psi\|_{1/2,2}=\rho\},
\end{equation}
where $e^+\in\mathcal{F}^+_{\theta,\phi_0}$ is such that $\|e^+\|_{1/2,2}=1$ and $\mathcal{F}_{\theta,\phi}=\mathcal{F}^-_{\phi}=H^{1/2}(M,\Sigma M\otimes\phi^*TN)$ is the fiber over $\phi$ of the fibration $\mathcal{F}_{\theta}\to W^{1,2\alpha}(M,N)\cap\theta$. $\mathcal{F}_{\theta}^\pm,\mathcal{F}_{\theta}^0$ are similar to the ones in Lemma \ref{QSlink}. We also define
\begin{equation}
a_{\theta;R_1,R_2}=\sup\{\mathcal{L}^\alpha(\phi,\psi):(\phi,\psi)\in\pt Q_{\theta;R_1,R_2}\}
\end{equation}
and 
\begin{equation}
b_{\theta;\rho}=\inf\{\mathcal{L}^\alpha(\phi,\psi):(\phi,\psi)\in S_{\theta;\rho}\}.
\end{equation}

Before we prove the existence of a critical point of $\mathcal{L}^\alpha$, we need some crucial estimates for $a_{\theta;R_1,R_2}$ and $b_{\theta;\rho}$.

\subsection{Estimate of $a_{\theta;R_1,R_2}$}
In this subsection, we prove the following estimate.
\begin{lem}\label{estimateofa}
There exists $R_1,R_2>0$ such that $a_{\theta;R_1,R_2}\leq m_\theta$ provided $F$ satisfies $(\rm F2)$ and

$(\rm F4)$ \ $F(\phi,\psi)\geq0$ for any $(\phi,\psi)\in\mathcal{F}$.

\end{lem}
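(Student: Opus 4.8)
The plan is to estimate $\mathcal{L}^\alpha$ separately on the two pieces that make up $\partial Q_{\theta;R_1,R_2}$, exploiting that the map part is frozen at the $\alpha$-energy minimizer $\phi_0$, so that the $\alpha$-energy term contributes exactly $m_\theta$ on all of $Q_{\theta;R_1,R_2}$. Since $\phi=\phi_0$ throughout, we have $\frac12\int_M(1+|d\phi_0|^2)^\alpha = m_\theta$, and the curvature-free Dirac term $\frac12\int_M\langle\psi,\slashed D\psi\rangle$ is controlled by the spectrum of $\slashed D_{\phi_0}$; by (F4), $-\int_M F(\phi_0,\psi)\le 0$. So on $Q_{\theta;R_1,R_2}$ we get $\mathcal{L}^\alpha(\phi_0,\psi)\le m_\theta + \frac12\int_M\langle\psi,\slashed D\psi\rangle$.

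First I would recall that $\partial Q_{\theta;R_1,R_2}$ consists of three faces: (a) the part of $\{(\phi_0,\psi)\in \mathcal{F}^-_{\theta,\phi_0}\oplus\mathcal{F}^0_{\theta,\phi_0}:\|\psi\|_{1/2,2}\le R_1\}$ together with the segment $\{(\phi_0,re^+):0\le r\le R_2\}$ — i.e. the ``bottom'' where $r=0$ or $\|\psi\|_{1/2,2}=R_1$ on the negative/null part, the ``cap'' where $r=R_2$, and the ``lateral'' boundary where $\|\psi^-\oplus\psi^0\|_{1/2,2}=R_1$ and $0\le r\le R_2$. On the negative/null component: for $\psi=\psi^-+\psi^0$, the Dirac form satisfies $\int_M\langle\psi,\slashed D\psi\rangle = \int_M\langle\psi^-,\slashed D\psi^-\rangle \le -\lambda_1^-\|\psi^-\|_2^2 \le 0$ (up to equivalence of norms on the finite-dimensional null space, which contributes nothing), so $\mathcal{L}^\alpha\le m_\theta$ there with no constraint on $R_1$. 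On the segment $r e^+$: $\frac12\int_M\langle re^+,\slashed D(re^+)\rangle = \frac{r^2}{2}\langle e^+,\slashed D e^+\rangle =: \frac{r^2}{2}\lambda$ with $\lambda>0$, while by (F2) (integrated, as in \eqref{0,psi2}) $F(\phi_0,re^+)\ge C r^\mu - C$ for $r$ large, and $\mu>2$; hence $\mathcal{L}^\alpha(\phi_0,re^+)\le m_\theta + \frac{\lambda}{2}r^2 - C r^\mu + C \to -\infty$ as $r\to\infty$, so choosing $R_2$ large enough forces $\mathcal{L}^\alpha\le m_\theta$ on the cap $r=R_2$.

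The remaining, and genuinely delicate, face is the lateral one, $\psi = \psi^- + \psi^0 + r e^+$ with $\|\psi^-+\psi^0\|_{1/2,2}=R_1$ and $0\le r\le R_2$: here the Dirac form is $\frac12\int_M\langle\psi^-,\slashed D\psi^-\rangle + \frac{r^2}{2}\lambda \le \frac{R_2^2}{2}\lambda$ (positive!), so (F4) alone does not suffice and we must use (F2) again. The idea is that once $R_1$ is large, on a set of substantial measure $|\psi|\ge R_1$ and the lower bound $F(\phi_0,\psi)\ge C|\psi|^\mu - C$ kicks in; since the positive part $re^+$ is bounded in $H^{1/2}$ by $R_2$ but the negative/null part has fixed $H^{1/2}$-norm $R_1$, by equivalence of norms on the (finite-dimensional) null space and a spectral estimate one bounds the full $\mathcal{L}^\alpha$ by $m_\theta + \tfrac{\lambda}{2}R_2^2 - c(R_1) + C$ where $c(R_1)\to\infty$ as $R_1\to\infty$ (here one uses $\mu>2$ so the superquadratic gain beats the quadratic Dirac term). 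Thus one first fixes $R_2$ to kill the cap, and then fixes $R_1$ large (depending on $R_2$) to kill the lateral face; the bottom face needs nothing. The main obstacle is making the lateral estimate quantitative: controlling $\int_M F(\phi_0,\psi)$ from below in terms of $\|\psi^-+\psi^0\|_{1/2,2}=R_1$ when $\psi^0,\psi^-$ live in infinite-dimensional spaces where $H^{1/2}$, $L^\mu$ and $L^2$ norms are not comparable — this forces a careful argument (e.g. splitting into the region $\{|\psi|\ge R_1\}$ and using that on $\mathcal{F}^0_{\theta,\phi_0}$, which is finite-dimensional, all norms are equivalent, while on $\mathcal{F}^-_{\theta,\phi_0}$ the Dirac term is already negative and helps).
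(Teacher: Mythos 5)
Your proposal follows essentially the same route as the paper's proof: freeze the map at the minimizer $\phi_0$ so the $\alpha$-energy contributes exactly $m_\theta$, split $\partial Q_{\theta;R_1,R_2}$ into the three faces $r=0$, $r=R_2$ and $\|\psi^-+\psi^0\|_{1/2,2}=R_1$, use (F4) on the first, play the integrated form of (F2) (superquadratic, $\mu>2$) against the quadratic term $C(e^+)r^2$ on the second (fixing $R_2$ first), and then choose $R_1$ large to handle the lateral face via a dichotomy between the negative and null spectral components.

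Two points where your sketch needs to be tightened, both of which the paper's single estimate \eqref{finalestimate} handles. First, the ``obstacle'' you flag on the lateral face closes exactly as you guess, but the quantitative ingredient is the equivalence of the Dirac quadratic form with the $H^{1/2}$-norm on the negative spectral subspace, $\int_M\langle\psi^-,\slashed{D}_{\phi_0}\psi^-\rangle\leq-C(\phi_0)\|\psi^-\|^2_{1/2,2}$ (the paper's \eqref{normpsi}); the bound $-\lambda_1^-\|\psi^-\|_2^2$ you write, or mere negativity, is not enough, since $\|\psi^-\|_2$ can be small while $\|\psi^-\|_{1/2,2}=R_1$. With \eqref{normpsi}, if $\|\psi^-\|_{1/2,2}^2\geq R_1^2/2$ the Dirac term alone supplies the needed negativity, while if $\|\psi^0\|_{1/2,2}^2\geq R_1^2/2$ one uses the pointwise bound $F\geq C|\psi|^\mu-C$ from (F2), H\"older, the $L^2$-orthogonality of the spectral pieces, and the equivalence of all norms on the finite-dimensional null space to get $\int_MF(\phi_0,\psi)\geq C\|\psi^0\|_{1/2,2}^\mu-C$; no lower bound on $\int F$ in terms of $\|\psi^-\|_{1/2,2}$ is ever needed. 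Second, the cap $r=R_2$ is not just the segment $\{(\phi_0,re^+)\}$: it consists of all $\psi^-+\psi^0+R_2e^+$ with $\|\psi^-+\psi^0\|_{1/2,2}\leq R_1$. Your estimate extends, because the extra components only decrease the Dirac term and one still has $\int_M|\psi|^\mu\geq c\,r^\mu\|e^+\|_2^\mu$ by $L^2$-orthogonality, so $C(e^+)R_2^2-CR_2^\mu+C\leq0$ does the job for the whole cap; this is precisely how the paper's unified inequality covers faces (1) and (3) simultaneously.
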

\begin{proof}
Write $\psi=\psi^-+\psi^0+re^+$, we have
\begin{equation}
\mathcal{L}^\alpha(\phi_0,\psi)=m_\theta+\frac12\int_M\langle\psi^-,\slashed{D}_{\phi_0}\psi\rangle+C(e^+)r^2-\int_MF(\phi_0,\psi),
\end{equation}
where $C(e^+)=\frac12\int_M\langle\slashed{D}_{\phi_0}e^+,e^+\rangle>0$.

By (F2), we get
\begin{equation}
\int_MF(\phi_0,\psi)\geq C\int_M|\psi|^\mu-C.
\end{equation}

On the other hand, H\"older's inequality implies
\begin{equation}
\int_M|\psi^-+\psi^0|^2+r^2\int_M|e^+|^2=\int_M|\psi^-+\psi^0+re^+|^2\leq C\left(\int_M|\psi|^\mu\right)^{2/\mu}.
\end{equation}
Combining these, we obtain
\begin{equation}\label{1stestimate}
\begin{split}
\mathcal{L}^\alpha(\phi_0,\psi)\leq&m_\theta+\frac12\int_M\langle\psi^-,\slashed{D}_{\phi_0}\psi\rangle+C(e^+)r^2\\
&-C\left(\int_M|\psi^-+\psi^0|^2+r^2\int_M|e^+|^2\right)^{\mu/2}+C.
\end{split}\end{equation}
Since the square of the $H^{1/2}$-norm of $\psi^-\in\mathcal{F}_{\phi_0}^-$ is equivalent to $-\int_M\langle\psi^-,\slashed{D}_{\phi_0}\psi^-\rangle$, there exists $C(\phi_0)>0$ such that
\begin{equation}\label{normpsi}
\int_M\langle\psi^-,\slashed{D}_{\phi_0}\psi^-\rangle\leq-C(\phi_0)\|\psi^-\|^2_{1/2,2}.
\end{equation}
Plugging \eqref{normpsi} into \eqref{1stestimate} and noting that the $L^2$-norm is equivalent to the $H^{1/2}$-norm on $\mathcal{F}^0_{\theta,\phi_0}$, one gets
\begin{equation}\label{finalestimate}
\mathcal{L}^\alpha(\phi_0,\psi)\leq m_\theta+C(e^+)r^2-C(\phi_0)\|\psi^-\|^2_{1/2,2}-C(\|\psi^0\|^\mu_{1/2,2}+r^\mu)+C.
\end{equation}
We choose $R_2>0$ such that $C(e^+)r^2-Cr^\mu+C\leq0$ for $r\geq R_2$. This is possible because $\mu>2$. We then set $M_1=\max\limits_{0\leq r\leq R_2}(C(e^+)r^2-Cr^\mu+C)>0$ and take $R_1$ such that $C(\phi_0)\|\psi^-\|^2_{1/2,2}+C\|\psi^0\|^\mu_{1/2,2}\geq M_1$ whenever $\|\psi^-+\psi^0\|^2_{1/2,2}=\|\psi^-\|^2_{1/2,2}+\|\psi^0\|^2_{1/2,2}\geq R_1^2$.

For such $R_1$ and $R_2$, the lemma holds. Indeed, any point in $\pt Q_{R_1,R_2}$ is in one of the following three subsets:

(1) \ $\|\psi^-+\psi^0\|_{1/2,2}=R_1$ and $0\leq r\leq R_2$;

(2) \ $\|\psi^-+\psi^0\|_{1/2,2}\leq R_1$ and $r=0$;

(3) \ $\|\psi^-+\psi^0\|_{1/2,2}\leq R_1$ and $r=R_2$.

For (1) and (3), by \eqref{finalestimate}, the choices of $R_1$ and $R_2$ imply $\mathcal{L}^\alpha(\phi_0,\psi)\leq m_\theta$.

For (2), by (F4), we have
\begin{equation}
\begin{split}
\mathcal{L}^\alpha(\phi_0,\psi)&=m_\theta+\frac12\int_M\langle\psi^-,\slashed{D}_{\phi_0}\psi\rangle-\int_MF(\phi_0,\psi)\\
&\leq m_\theta-\int_MF(\phi_0,\psi)\leq m_\theta.
\end{split}\end{equation}
This case analysis completes the proof.

\end{proof}

\subsection{Estimate of $b_{\theta,\rho}$}
In this subsection, we will prove 
\begin{lem}\label{estimateofb}
For $R_2$ in Lemma \ref{estimateofa}, there exists $0<\rho<R_2$ such that 
\begin{equation}\label{estiamte of b}
b_{\theta,\rho}>m_\theta
\end{equation} 
provided $F$ satisfies $(\rm F1),(\rm F4)$ and 

$(\rm F5)$ \ $F(\phi,\psi)=o(|\psi|^2)$ uniformly in $\phi\in N$ as $|\psi|\to0$.

 \end{lem}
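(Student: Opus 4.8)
The plan is to show that on $S_{\theta;\rho}$ the functional $\mathcal{L}^\alpha$ stays strictly above $m_\theta$: the $\alpha$-energy is already at least $m_\theta$, the Dirac term is a \emph{definite} quadratic form in $\psi$ on the positive spectral subspace, and the perturbation $F$ contributes only a term of order $\|\psi\|_{1/2,2}^2$ with arbitrarily small coefficient plus a term of order $\|\psi\|_{1/2,2}^p$ with $p>2$; shrinking $\rho$ then makes the quadratic term dominate.

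First I would use the structural fact that for every $\phi\in W^{1,2\alpha}(M,N)$ and every tangential spinor $\psi\in H^{1/2}(M,\Sigma M\otimes\phi^*TN)$,
\begin{equation*}
\int_M\langle\psi,\slashed{D}_\phi\psi\rangle_{\Sigma M\otimes\phi^*TN}=\int_M\langle\psi,{\bf D}\psi\rangle,
\end{equation*}
which holds because $\slashed{D}_\phi\psi$ is the fibrewise tangential part of ${\bf D}\psi$ (Gauss formula for the induced connection; Clifford multiplication acts only on the $\Sigma M$ factor and so commutes with the tangential projection), while the normal part of ${\bf D}\psi$ is orthogonal to $\psi$. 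Since ${\bf D}=\slashed\partial\otimes{\bf 1}$ is self-adjoint and elliptic, it has a spectral gap, so on ${\bf H}_0^+$ --- whose sections are exactly the spinor parts of points of $S_{\theta;\rho}$ --- there is a constant $c_0>0$ depending only on $(M,g)$ with $\int_M\langle\psi,{\bf D}\psi\rangle\geq c_0\|\psi\|_{1/2,2}^2$. Combining this with $\frac12\int_M(1+|d\phi|^2)^\alpha\geq m_\theta$ for $\phi\in\theta$ gives, for any $(\phi,\psi)\in S_{\theta;\rho}$,
\begin{equation*}
\mathcal{L}^\alpha(\phi,\psi)\geq m_\theta+\frac{c_0}{2}\rho^2-\int_M F(\phi,\psi).
\end{equation*}

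Next I would bound $\int_M F$ from above. From (F5) we get $F(\phi,0)\equiv0$ and, for each $\varepsilon>0$, some $\delta_\varepsilon>0$ with $F(\phi,\psi)\leq\varepsilon|\psi|^2$ whenever $|\psi|\leq\delta_\varepsilon$, uniformly in $\phi$; for $|\psi|\geq\delta_\varepsilon$, integrating (F1) radially in $\psi$ from $\delta_\varepsilon$ and absorbing the lower-order powers into $|\psi|^p$ yields $F(\phi,\psi)\leq C_\varepsilon|\psi|^p$. Hence $F(\phi,\psi)\leq\varepsilon|\psi|^2+C_\varepsilon|\psi|^p$ pointwise, and by the Sobolev embeddings $H^{1/2}(M)\hookrightarrow L^2(M)$ and $H^{1/2}(M)\hookrightarrow L^p(M)$ (the latter available precisely because $p<4$ on a surface),
\begin{equation*}
\int_M F(\phi,\psi)\leq C\varepsilon\|\psi\|_{1/2,2}^2+C_\varepsilon\|\psi\|_{1/2,2}^p=C\varepsilon\rho^2+C_\varepsilon\rho^p .
\end{equation*}
Choosing $\varepsilon$ with $C\varepsilon\leq c_0/4$, and then $\rho<R_2$ small enough that $C_\varepsilon\rho^p\leq\frac{c_0}{8}\rho^2$ (possible since $p>2$), we obtain $\mathcal{L}^\alpha(\phi,\psi)\geq m_\theta+\frac{c_0}{8}\rho^2$ for all $(\phi,\psi)\in S_{\theta;\rho}$, hence $b_{\theta;\rho}\geq m_\theta+\frac{c_0}{8}\rho^2>m_\theta$.

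The substantive content is the bookkeeping in the $F$-estimate and the observation that the whole argument lives in the subcritical window $2<p<4$: the bound $p>2$ is what makes $C_\varepsilon\rho^p$ negligible against $\frac{c_0}{4}\rho^2$, and $p<4$ is exactly what makes $\|\psi\|_{L^p}\leq C\|\psi\|_{1/2,2}$ on $M$. Hypothesis (F4) is not actually used in this direction; it is assumed so that this lemma and Lemma \ref{estimateofa} share one set of conditions, under which the linking inequality $a_{\theta;R_1,R_2}\leq m_\theta<b_{\theta;\rho}$ holds.
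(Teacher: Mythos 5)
There is a genuine gap, and it sits exactly where the paper invests most of its effort. You replace the Dirac term by $\int_M\langle\psi,{\bf D}\psi\rangle$ and then invoke the spectral gap of the \emph{fixed} operator ${\bf D}=\slashed\partial\otimes{\bf 1}$ on ${\bf H}_0^+$ to obtain a $\phi$-independent constant $c_0>0$ with $\int_M\langle\psi,\slashed{D}_\phi\psi\rangle\geq c_0\|\psi\|_{1/2,2}^2$ on $S_{\theta;\rho}$. The pointwise identity $\langle\psi,{\bf D}\psi\rangle=\langle\psi,\slashed{D}_\phi\psi\rangle$ for tangential $\psi$ is indeed correct (the difference ${\bf D}\psi-\slashed{D}_\phi\psi$ is second-fundamental-form valued, hence normal to $T_{\phi(x)}N$), but the identification of the spinor parts of $S_{\theta;\rho}$ with ${\bf H}_0^+$ is not: in the way $S_{\theta;\rho}$ is actually used — see the definition of $\lambda^+(\phi)$ and the inequality $\int_M\langle\psi,\slashed{D}\psi\rangle\geq\lambda^+(\phi)\rho^2$ in the proof of Lemma \ref{estimateofb}, as well as the fiberwise projections $P^\pm(\phi)$ of Lemma \ref{P+} and Definition \ref{linkset} — the ``$+$'' refers to the positive spectral subspace $\mathcal{F}^+_\phi$ of the twisted operator $\slashed{D}_\phi$, which varies with $\phi$ and is in general neither contained in nor equal to ${\bf H}_0^+$ (the spectral projections of ${\bf D}$ do not even preserve the tangency constraint $\psi(x)\in\Sigma M\otimes T_{\phi(x)}N$). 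A spinor in $\mathcal{F}^+_\phi$ can have substantial components in the null and negative subspaces of ${\bf D}$, so the identity gives no bound of the form $c_0\|\psi\|_{1/2,2}^2$; conversely, the coercivity constant one does have, $\lambda^+(\phi)$, depends on $\phi$, and there is no a priori uniform positive lower bound for it over all of $\theta\cap W^{1,2\alpha}(M,N)$, since eigenvalues of $\slashed{D}_\phi$ may approach $0$ as the map varies. (The loose phrase ``with respect to ${\bf D}$'' in Lemma \ref{QSlink} may have misled you, but the argument of Lemma \ref{estimateofb} only makes sense with the fiberwise decomposition.)

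This is precisely why the paper proves Lemmas \ref{positivity}--\ref{outside}: $\lambda^+(\phi)>0$ for each fixed $\phi$; continuity of $\phi\mapsto\lambda^+(\phi)$ (via the parallel-transport comparison of Lemmas \ref{T} and \ref{P+}); compactness of the set $\mathcal{M}^\alpha(\theta)$ of $\alpha$-energy minimizers, giving a uniform bound $\lambda^+(\phi)\geq\lambda^+(\theta)>0$ on a $\delta(\theta)$-neighborhood of $\mathcal{M}^\alpha(\theta)$; and an energy gap $\frac12\int_M(1+|d\phi|^2)^\alpha\geq m_\theta+\varepsilon(\theta)$ outside that neighborhood, obtained from the Palais--Smale condition for $E^\alpha$. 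The proof of the lemma then runs a dichotomy: near the minimizers the Dirac term contributes $\frac{\lambda^+(\theta)}{2}\rho^2$, while far from them the $\alpha$-energy alone exceeds $m_\theta$ by $\varepsilon(\theta)$ and the Dirac term is merely nonnegative. Your treatment of the perturbation — $0\leq F\leq\epsilon|\psi|^2+C_\epsilon|\psi|^p$ from (F1), (F4), (F5), the Sobolev embedding $H^{1/2}\hookrightarrow L^4$, and the smallness of $\rho$ exploiting $p>2$ — coincides with the paper's and is fine (and you are right that (F4) only enters through the sign of $F$); but without replacing your uniform $c_0$ by the near/far dichotomy (or some other argument giving uniform control of $\lambda^+(\phi)$ along $S_{\theta;\rho}$), the central coercivity step is unjustified.
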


We need some preparations before we prove the lemma above. For $(\phi,\psi)\in S_{\theta,\rho}$, we have
\begin{equation}
\mathcal{L}^{\alpha}(\phi,\psi)=\frac12\int_{M}(1+|d\phi|^2)^{\alpha}+\frac12\int_{M}\langle\psi,\slashed{D}\psi\rangle_{\Sigma M\otimes\phi^*TN}-\int_{M}F(\phi,\psi).
\end{equation}
We first estimate the term $\frac12\int_{M}\langle\psi,\slashed{D}\psi\rangle_{\Sigma M\otimes\phi^*TN}$. For this, we define
\begin{equation}
\lambda^+(\phi)=\inf\left\{\int_{M}\langle\psi,\slashed{D}\psi\rangle:\|\psi\|_{1/2,2}=1, \ \psi\in\mathcal{F}^+_\phi\right\}.
\end{equation}
Thus, 
\begin{equation}
\int_{M}\langle\psi,\slashed{D}\psi\rangle\geq\lambda^+(\phi)\rho^2
\end{equation}
for $(\phi,\psi)\in S_{\theta,\rho}$.

First of all, we investigate some properties of $\lambda^+(\phi)$.
\begin{lem}\label{positivity}
For any $\phi\in\theta\cap W^{1,2\alpha}(M,N)$, we have $\lambda^+(\phi)>0$.
\end{lem}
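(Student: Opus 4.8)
The plan is to exploit the spectral theory of the twisted Dirac operator $\slashed D_\phi$ together with the spectral gap it has above $0$. First I would recall that, since $\phi\in W^{1,2\alpha}(M,N)$ with $\alpha>1$, one has $d\phi\in L^{2\alpha}$ with $2\alpha>2=\dim M$; hence $\slashed D_\phi$ differs from the untwisted operator $\slashed\partial\otimes\mathbf 1$ only by the zeroth order term $\Gamma^i_{jk}(\phi)\,\nabla_{e_\alpha}\phi^j\,(e_\alpha\cdot\,\cdot\,)\otimes\partial_{y^i}$, whose coefficients lie in $L^p$ for some $p>2$. This makes it a relatively compact perturbation of $\slashed\partial\otimes\mathbf 1$ on $L^2(M,\Sigma M\otimes\phi^*TN)$, so $\slashed D_\phi$ is self-adjoint with compact resolvent and its spectrum is real, discrete, and accumulates only at $\pm\infty$. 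In particular the positive part of the spectrum has a smallest element $\lambda_1=\lambda_1(\phi)>0$, and $\mathcal F_\phi^+$ is the $H^{1/2}$-closure of the span of the eigenspinors with eigenvalue $\geq\lambda_1$; note that $\mathcal F_\phi^+\neq\{0\}$ since the spectrum is unbounded above, so $\lambda^+(\phi)$ is a genuine infimum over a nonempty set.

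Next, for $\psi\in\mathcal F_\phi^+$ I would write $\psi=\sum_k a_k\varphi_k$ in an $L^2$-orthonormal basis of eigenspinors $\slashed D_\phi\varphi_k=\lambda_k\varphi_k$ with $\lambda_k\geq\lambda_1>0$. Then
\begin{equation}
\int_M\langle\psi,\slashed D_\phi\psi\rangle=\sum_k\lambda_k|a_k|^2\geq\lambda_1\sum_k|a_k|^2=\lambda_1\|\psi\|_2^2 .
\end{equation}
On the other hand, exactly as already used in the paper for the negative spectral subspace, on $\mathcal F_\phi^+$ the square of the $H^{1/2}$-norm is equivalent to $\int_M\langle\psi,|\slashed D_\phi|\psi\rangle+\|\psi\|_2^2=\int_M\langle\psi,\slashed D_\phi\psi\rangle+\|\psi\|_2^2$ (since $\slashed D_\phi=|\slashed D_\phi|$ on $\mathcal F_\phi^+$). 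Combining this equivalence with the previous line gives a constant $C=C(\phi)$ with
\begin{equation}
\|\psi\|_{1/2,2}^2\leq C\Big(1+\lambda_1^{-1}\Big)\int_M\langle\psi,\slashed D_\phi\psi\rangle ,
\end{equation}
and taking the infimum over $\psi\in\mathcal F_\phi^+$ with $\|\psi\|_{1/2,2}=1$ yields $\lambda^+(\phi)\geq\big[C(1+\lambda_1^{-1})\big]^{-1}>0$. Equivalently, one may just invoke the norm equivalence $\|\psi\|_{1/2,2}^2\sim\int_M\langle\psi,\slashed D_\phi\psi\rangle$ on $\mathcal F_\phi^+$, which by itself already forces $\lambda^+(\phi)>0$.

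The only step needing genuine care is the first paragraph: that for a map $\phi$ of merely $W^{1,2\alpha}$ regularity the operator $\slashed D_\phi$ still has discrete spectrum with a positive gap above $0$. This is precisely where the standing assumption $\alpha>1$ is used, so that $d\phi\in L^p$ with $p>2$ and the twisting term is a properly subordinate perturbation of $\slashed\partial\otimes\mathbf 1$; the remainder is elementary spectral calculus. I do not expect any obstacle beyond this point — in particular the lemma claims nothing uniform in $\phi$, so the constants $\lambda_1(\phi)$ and $C(\phi)$ are allowed to depend on the fixed map $\phi$, and the dependence of $\lambda^+$ on $\phi$ (continuity, positive lower bounds on appropriate subsets) is left to the subsequent lemmas.
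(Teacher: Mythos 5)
Your argument is correct, but it follows a different route than the paper. The paper proves the lemma by a soft compactness argument: assuming $\lambda^+(\phi)=0$, it takes a sequence $\psi_n\in\mathcal F_\phi^+$ with $\|\psi_n\|_{1/2,2}=1$ and $\int_M\langle\psi_n,\slashed D\psi_n\rangle\to0$, extracts a weak $H^{1/2}$ limit $\psi_\infty$ which converges strongly in $L^2$ (compact embedding), uses weak lower semicontinuity of the equivalent norm $\big(\int_M\langle\cdot,\slashed D\cdot\rangle+\|\cdot\|_2^2\big)^{1/2}$ on $\mathcal F_\phi^+$ to conclude $\int_M\langle\psi_\infty,\slashed D\psi_\infty\rangle=0$, hence $\psi_\infty=0$, and then contradicts the normalization $1=\|\psi_n\|^2_{1/2,2}\leq C\big(\int_M\langle\psi_n,\slashed D\psi_n\rangle+\|\psi_n\|_2^2\big)\to0$. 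You instead establish a positive spectral gap $\lambda_1(\phi)>0$ for $\slashed D_\phi$ (self-adjointness with compact resolvent, obtained because the twisting term has $L^{2\alpha}$ coefficients with $2\alpha>2$ and is therefore a relatively compact perturbation of $\slashed\partial\otimes\mathbf 1$), and then combine $\int_M\langle\psi,\slashed D\psi\rangle\geq\lambda_1\|\psi\|_2^2$ with the same norm equivalence to get the quantitative bound $\lambda^+(\phi)\geq\big[C(1+\lambda_1^{-1})\big]^{-1}$. The trade-off: your approach yields an explicit lower bound in terms of the first positive eigenvalue, but it loads the whole burden onto the spectral theory of the low-regularity operator $\slashed D_\phi$ (essential self-adjointness, discreteness, hence the gap), which is exactly the step you flag as delicate and only sketch; the paper's argument sidesteps discreteness and any a priori gap, needing only the norm equivalence on $\mathcal F_\phi^+$ (positivity of the form there) plus the compact embedding $H^{1/2}\hookrightarrow L^2$, and in effect derives the gap as its conclusion. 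Both proofs rest on the same unproved-but-cited input, namely that the $H^{1/2}$-norm squared on $\mathcal F_\phi^+$ is equivalent to $\int_M\langle\psi,\slashed D_\phi\psi\rangle+\|\psi\|_2^2$, so your route is admissible provided you make the relative-compactness/self-adjointness argument for $\slashed D_\phi$ with $W^{1,2\alpha}$ maps precise (symmetry of the zeroth-order term and the Kato--Rellich step), which is standard but not free.
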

\begin{proof}
By definition of $\lambda^+(\phi)$, $\lambda^+(\phi)\geq0$ for any $\phi\in\theta\cap W^{1,2\alpha}(M,N)$. To prove the lemma, we assume on the contrary that $\lambda^+(\phi)=0$ for some $\phi\in\theta\cap W^{1,2\alpha}(M,N)$. In other words, there exist $\psi_n\in\mathcal{F}^+_{\phi}$ such that 
\begin{equation}\label{psin}
\|\psi_n\|_{1/2,2}=1 \ \text{and} \ \int_{M}\langle\psi_n,\slashed{D}\psi_n\rangle\to0
\end{equation}
as $n\to\infty$.
 
 We assume that (taking a subsequence if necessary) there exists $\psi_\infty\in\mathcal{F}^+_\phi$ such that $\psi_n\rightharpoonup\psi_\infty$ weakly in $H^{1/2}$ and $\psi_n\rightharpoonup\psi_\infty$ strongly in $L^2$ as $n\to\infty$. 
 
 As before, the square root of $\psi\mapsto\int_M\langle\psi,\slashed{D}\psi\rangle+\|\psi\|^2_2$ defines a norm on $\mathcal{F}^+_\phi$ which is equivalent to the $H^{1/2}$-norm. By the weak lower semi-continuity of the norm, we get
 \begin{equation}\label{lowersemi}
 \int_M\langle\psi_\infty,\slashed{D}\psi_\infty\rangle+\|\psi_\infty\|^2_2\leq\varliminf\limits_{n\to\infty}\left(\int_M\langle\psi_n,\slashed{D}\psi_n\rangle+\|\psi_n\|^2_2\right).
 \end{equation}
 By \eqref{psin} and \eqref{lowersemi} and $\int_M\langle\psi_\infty,\slashed{D}\psi_\infty\rangle\geq0$ (since $\psi_\infty\in\mathcal{F}^+_\phi$), we obtain $\int_M\langle\psi_\infty,\slashed{D}\psi_\infty\rangle=0$. This implies $\psi_\infty=0$. Then there is a constant $C$ such that
 \begin{equation}
 1=\|\psi_n\|^2_{1/2,2}\leq C\left(\int_M\langle\psi_n,\slashed{D}\psi_n\rangle+\|\psi_n\|^2_2\right)\to0
 \end{equation}
 as $n\to\infty$. This is a contradiction. So we complete the proof.
 
\end{proof}

We next prove:
\begin{lem}\label{continuous}
The map $W^{1,2\alpha}(M,N)\ni\phi\mapsto\lambda^+(\phi)\in\mathbb{R}$ is continuous.
\end{lem}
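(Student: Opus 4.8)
The plan is to prove sequential continuity: since $W^{1,2\alpha}(M,N)$ is metrizable, it suffices to show that $\phi_n\to\phi$ in $W^{1,2\alpha}(M,N)$ forces $\lambda^+(\phi_n)\to\lambda^+(\phi)$. By the Sobolev embedding $\phi_n\to\phi$ also uniformly, so for $n$ large $\|\phi_n-\phi\|_\infty<\iota(N)$, the transport maps $T_{\phi,\phi_n}$ and $T_{\phi_n,\phi}$ of Lemma \ref{T} are defined, $\{\|\phi_n\|_{1,2\alpha}\}$ is bounded, and hence the constants appearing in Lemmas \ref{T} and \ref{P+} remain bounded along the sequence. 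Using homogeneity, $\lambda^+(\psi)$ is the infimum of the Rayleigh quotient $\int_M\langle\psi,\slashed{D}_\phi\psi\rangle/\|\psi\|_{1/2,2}^2$ over $\mathcal{F}^+_\phi\setminus\{0\}$, and I would prove $\limsup_n\lambda^+(\phi_n)\le\lambda^+(\phi)$ and $\liminf_n\lambda^+(\phi_n)\ge\lambda^+(\phi)$ separately, in both cases by transporting near-optimal spinors between the fibres $\mathcal{F}^+_\phi$ and $\mathcal{F}^+_{\phi_n}$.

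For $\limsup$: given $\varepsilon>0$, choose $\psi\in\mathcal{F}^+_\phi$ with $\|\psi\|_{1/2,2}=1$ and $\int_M\langle\psi,\slashed{D}_\phi\psi\rangle<\lambda^+(\phi)+\varepsilon$, and set $\zeta_n:=P^+(\phi_n)\bigl(T_{\phi,\phi_n}\psi\bigr)\in\mathcal{F}^+_{\phi_n}$. Since $P^+(\phi)\psi=\psi$, Lemma \ref{P+} gives $\|\zeta_n-\psi\|_{1/2,2}\le C\|\phi-\phi_n\|_{1,2\alpha}\to0$, so $\|\zeta_n\|_{1/2,2}\to1$, $\zeta_n\neq0$ for $n$ large, and $\lambda^+(\phi_n)\le\int_M\langle\zeta_n,\slashed{D}_{\phi_n}\zeta_n\rangle/\|\zeta_n\|_{1/2,2}^2$. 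For $\liminf$ one argues symmetrically: pass to a subsequence realizing $L:=\liminf_n\lambda^+(\phi_n)$, choose $\psi_n\in\mathcal{F}^+_{\phi_n}$ with $\|\psi_n\|_{1/2,2}=1$ and $\int_M\langle\psi_n,\slashed{D}_{\phi_n}\psi_n\rangle\le\lambda^+(\phi_n)+1/n$, set $\zeta_n:=P^+(\phi)\bigl(T_{\phi_n,\phi}\psi_n\bigr)\in\mathcal{F}^+_\phi$, and apply Lemma \ref{P+} with the roles of $\phi,\phi_n$ exchanged and $P^+(\phi_n)\psi_n=\psi_n$ to get $\|\zeta_n-\psi_n\|_{1/2,2}\to0$, hence $\|\zeta_n\|_{1/2,2}\to1$ and $\lambda^+(\phi)\le\int_M\langle\zeta_n,\slashed{D}_\phi\zeta_n\rangle/\|\zeta_n\|_{1/2,2}^2$. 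In both cases the argument closes once one knows the following convergence of the Dirac quadratic form: if $\eta_n$ is a spinor along $\phi_n$, $\tilde\eta_n$ a spinor along $\phi$, both bounded in $H^{1/2}$, and $\eta_n-T_{\phi,\phi_n}\tilde\eta_n\to0$ in $H^{1/2}(M,\Sigma M\otimes\underline{\mathbb R}^L)$, then $\int_M\langle\eta_n,\slashed{D}_{\phi_n}\eta_n\rangle-\int_M\langle\tilde\eta_n,\slashed{D}_\phi\tilde\eta_n\rangle\to0$. This applies above with $(\eta_n,\tilde\eta_n)=(\zeta_n,\psi)$, respectively $(\psi_n,\zeta_n)$, because $\zeta_n-T_{\phi,\phi_n}\psi\to0$, respectively $\zeta_n-T_{\phi_n,\phi}\psi_n\to0$, by the above together with estimate \eqref{estimate Tpsi} of Lemma \ref{T}.

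The hard part is this last convergence. Writing $\eta_n=T_{\phi,\phi_n}\tilde\eta_n+\rho_n$ with $\rho_n\to0$ in $H^{1/2}$, and using that $\slashed{D}_\phi$ is bounded $H^{1/2}\to H^{-1/2}$ (with norm controlled by $\|\phi\|_{1,2\alpha}$) and that the $H^{1/2}$--$H^{-1/2}$ pairing is continuous, the cross terms involving $\rho_n$ vanish in the limit; since $T_{\phi,\phi_n}$ is fibrewise isometric, the claim then reduces to $\|T_{\phi,\phi_n}^{-1}\circ\slashed{D}_{\phi_n}\circ T_{\phi,\phi_n}-\slashed{D}_\phi\|_{H^{1/2}\to H^{-1/2}}\to0$. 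Because Clifford multiplication commutes with the purely vertical isomorphism $T_{\phi,\phi_n}={\bf 1}_{\Sigma M}\otimes{\bf P}$, both operators are Dirac operators on sections of $\Sigma M\otimes\phi^*TN$, for the connections $(T_{\phi,\phi_n})^*\tilde\nabla^{\phi_n}$ and $\tilde\nabla^\phi$ respectively; hence their difference is Clifford multiplication by the $1$-form $A_n:=(T_{\phi,\phi_n})^*\tilde\nabla^{\phi_n}-\tilde\nabla^\phi$, with values in skew endomorphisms. Since $H^{1/2}\hookrightarrow L^4$ on the surface $M$, Clifford multiplication by $A_n$ maps $H^{1/2}\to L^{4/3}\hookrightarrow H^{-1/2}$ with norm $\le C\|A_n\|_{L^{2\alpha}}$, so everything comes down to $\|A_n\|_{L^{2\alpha}}\to0$. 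This is precisely the type of estimate underlying Lemma \ref{T} in \cite{isobe2012existence}: in local coordinates $A_n={\bf P}^{-1}d{\bf P}+{\bf P}^{-1}\Gamma(\phi_n)\,d\phi_n\,{\bf P}-\Gamma(\phi)\,d\phi$, and since $\phi_n\to\phi$ in $C^0$ (so ${\bf P}_{\phi,\phi_n}\to{\bf 1}$ and $\Gamma^i_{jk}(\phi_n)\to\Gamma^i_{jk}(\phi)$ uniformly, $N$ being compact) while $d\phi_n\to d\phi$ in $L^{2\alpha}$, each term tends to $0$ in $L^{2\alpha}$. Disentangling $A_n$ this way and verifying its smallness in $L^{2\alpha}$ is the one genuinely analytic ingredient; the rest of the proof is soft, using only the boundedness of the constants of Lemmas \ref{T} and \ref{P+} along $\phi_n\to\phi$ and the mapping properties of $\slashed{D}_\phi$ between $H^{\pm1/2}$.
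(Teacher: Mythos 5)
Your argument is correct, and at the structural level it is the same as the paper's: both proofs take a near-optimal unit spinor for one map, move it to the other fibre by $T_{\phi,\tilde\phi}$ followed by the projection $P^+$, and use Lemmas \ref{T} and \ref{P+} to control the error. The differences are in how the comparison of the two quadratic forms is closed and in what is obtained. The paper fixes $\psi_\varepsilon$, writes $\int_M\langle\psi_\varepsilon,\slashed{D}_\phi\psi_\varepsilon\rangle$ as a three-term sum involving $\tilde\psi_\varepsilon=P^+(\tilde\phi)T_{\phi,\tilde\phi}\psi_\varepsilon$, and asserts the resulting bound, which directly yields the quantitative estimate $|\lambda^+(\tilde\phi)-\lambda^+(\phi)|\leq C(\|\phi\|_{1,2\alpha},\|\tilde\phi\|_{1,2\alpha})\|\phi-\tilde\phi\|_{1,2\alpha}$ (local Lipschitz continuity, with symmetry by exchanging roles); you instead prove sequential continuity via separate $\limsup$/$\liminf$ bounds and reduce the form comparison to the operator-norm convergence $\|T_{\phi,\phi_n}^{-1}\circ\slashed{D}_{\phi_n}\circ T_{\phi,\phi_n}-\slashed{D}_\phi\|_{H^{1/2}\to H^{-1/2}}\to0$, proved through the connection-difference $1$-form $A_n$ and the embedding $L^{4/3}\hookrightarrow H^{-1/2}$. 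Your route makes explicit the analytic ingredient the paper leaves implicit in its third term, and your Rayleigh-quotient normalization handles the fact that $\|\tilde\psi_\varepsilon\|_{1/2,2}$ is only close to $1$ more cleanly than the paper does; the price is that you get only qualitative continuity rather than the Lipschitz-type bound. One spot to tighten: for the term ${\bf P}^{-1}d{\bf P}$ in $A_n$, uniform convergence ${\bf P}_{\phi,\phi_n}\to{\bf 1}$ alone does not control $d{\bf P}$ in $L^{2\alpha}$; you need to expand $d{\bf P}=D_1{\bf P}(\phi,\phi_n)[d\phi]+D_2{\bf P}(\phi,\phi_n)[d\phi_n]$ and use that $D_1{\bf P}(y,y)+D_2{\bf P}(y,y)=0$ (from ${\bf P}(y,y)\equiv{\bf 1}$), together with $d\phi_n\to d\phi$ in $L^{2\alpha}$, to conclude; this is exactly the estimate underlying \eqref{estimate Tpsi} in Lemma \ref{T}, so it is available, but it deserves the extra line. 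Alternatively, you could avoid conjugating altogether by comparing the two Dirac operators in their ambient local form $\slashed\partial+\Gamma(\cdot)d(\cdot)\cdot$ and using \eqref{estimate Tpsi} directly, which is essentially what the paper's splitting does.
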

\begin{proof}
Let $\phi\in W^{1,2\alpha}(M,N)$. By the definition of $\lambda^+(\phi)$, for any $\varepsilon>0$, there exists $\psi_\varepsilon\in\mathcal{F}^+_\phi$ such that 
\begin{equation}\label{psivarepsilon}
\|\psi_\varepsilon\|_{1/2,2}=1 \ \text{and} \ \int_{M}\langle\psi_\varepsilon,\slashed{D}\psi_\varepsilon\rangle<\lambda^+(\phi)+\varepsilon.
\end{equation}
Let $\tilde\phi\in W^{1,2\alpha}\cap L^\infty(M,N)$ be another map such that $\|\phi-\tilde\phi\|_{W^{1,2\alpha}\cap L^\infty}$ is  small. Because $\psi_\varepsilon\notin\mathcal{F}^+_{\tilde\phi}$ in general, we cannot use $\psi_\varepsilon$ as a test spinor to estimate $\lambda^+(\tilde\phi)$. In order to get a suitable test spinor, we need parallel translation. Denote by $\iota(N)>0$ the injectivity radius of $N$. For any $y,z\in N$ with $d(y,z)<\iota(N)$ ($d(y,z)$ is the geodesic distance between $y$ and $z$ in $N$), ${\bf P}_{y,z}$ defined as in Lemma \ref{T} depends smoothly on $y,z$. By Lemma \ref{T}, $T_{\phi,\tilde\phi}\psi_\varepsilon\in\mathcal{F}_{\tilde\phi}$. To make it belong to $\mathcal{F}^+_{\tilde\phi}$, we need to modify it.

We define $\tilde\psi_\varepsilon:=P^+(\tilde\phi)T_{\phi,\tilde\phi}\psi_\varepsilon\in\mathcal{F}^+_{\tilde\phi}$. By Lemma \ref{P+}, we have
\begin{equation}
\|\tilde\psi_\varepsilon-\psi_\varepsilon\|_{1/2,2}\leq C(\|\phi\|_{1,2\alpha},\|\tilde\phi\|_{1,2\alpha})\|\phi-\tilde\phi\|_{1,2\alpha}.
\end{equation}
This and \eqref{psivarepsilon} give us
\begin{equation}
\begin{split}
\lambda^+(\phi)+\varepsilon&\geq\int_{M}\langle\psi_\varepsilon,\slashed{D}_{\phi}\psi_\varepsilon\rangle\\
&\geq\int_{M}\langle\psi_\varepsilon-\tilde\psi_\varepsilon,\slashed{D}_{\phi}\psi_\varepsilon\rangle+\int_{M}\langle\tilde\psi_\varepsilon,\slashed{D}_{\tilde\phi}\tilde\psi_\varepsilon\rangle+\int_{M}\langle\tilde\psi_\varepsilon,\slashed{D}_\phi\psi_\varepsilon-\slashed{D}_{\tilde\phi}\tilde\psi_\varepsilon\rangle\\
&\geq\lambda^+(\tilde\phi)-C(\|\phi\|_{1,2\alpha},\|\tilde\phi\|_{1,2\alpha})\|\phi-\tilde\phi\|_{1,2\alpha},
\end{split}\end{equation}
where $\langle\cdot,\cdot\rangle$ is the metric on $\Sigma M\otimes\underline{\mathbb R}^L$.

Since $\varepsilon>0$ is arbitrary, we obtain
\begin{equation}\label{differenceoflambda}
\lambda^+(\tilde\phi)-\lambda^+(\phi)\leq C(\|\phi\|_{1,2\alpha},\|\tilde\phi\|_{1,2\alpha})\|\phi-\tilde\phi\|_{1,2\alpha}.
\end{equation}
Reversing the roles of $\phi$ and $\tilde\phi$ in \eqref{differenceoflambda}, we arrive at
\begin{equation}
|\lambda^+(\tilde\phi)-\lambda^+(\phi)|\leq C(\|\phi\|_{1,2\alpha},\|\tilde\phi\|_{1,2\alpha})\|\phi-\tilde\phi\|_{1,2\alpha}.
\end{equation}
This completes the proof.

\end{proof}

Let $\mathcal{M}^\alpha(\theta)=\{\phi\in\theta\cap W^{1,2\alpha}(M,N):E^\alpha(\phi)=m_\theta\}$ be the set of energy minimizing maps in the class $\theta$, where $E^\alpha$ is defined in \eqref{alphaenergy}. Because the $\alpha$-energy of $\phi$ satisfies the Palais-Smale condition for $\alpha>1$\cite{urakawa2013calculus}, we know
\begin{lem}\label{compact}
$\mathcal{M}^\alpha(\theta)$ is compact for $\alpha>1$.
\end{lem}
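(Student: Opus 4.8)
The plan is to show $\mathcal{M}^\alpha(\theta)$ is both sequentially compact and closed in $W^{1,2\alpha}(M,N)$, using the Palais--Smale condition for $E^\alpha$ that holds for $\alpha>1$ (this is the specialization of Theorem \ref{ps} to $F\equiv0$, $\psi\equiv0$, or directly the result of \cite{urakawa2013calculus}). First I would take an arbitrary sequence $\{\phi_n\}\subset\mathcal{M}^\alpha(\theta)$. By definition, $E^\alpha(\phi_n)=m_\theta$ for all $n$, so in particular $\sup_n E^\alpha(\phi_n)<\infty$, which bounds $\|\phi_n\|_{1,2\alpha}$ uniformly. Moreover, each $\phi_n$ minimizes $E^\alpha$ in $\theta$, hence is a critical point of $E^\alpha$, i.e. $dE^\alpha_{\phi_n}=0$; trivially then $\|dE^\alpha_{\phi_n}\|\to0$. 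Thus $\{\phi_n\}$ is a Palais--Smale sequence for $E^\alpha$ on $W^{1,2\alpha}(M,N)$.

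Next I would invoke the Palais--Smale condition: a subsequence, still denoted $\{\phi_n\}$, converges strongly in $W^{1,2\alpha}(M,N)$ to some $\phi_\infty\in W^{1,2\alpha}(M,N)$. Strong $W^{1,2\alpha}$-convergence together with the Sobolev embedding $W^{1,2\alpha}(M,\mathbb{R}^L)\hookrightarrow C^0(M,\mathbb{R}^L)$ (valid since $\alpha>1$ and $\dim M=2$) gives uniform convergence, so $\phi_\infty(x)\in N$ for all $x$ and $\phi_\infty$ lies in the same homotopy class $\theta$ as the $\phi_n$ (uniformly close maps into a manifold are homotopic). Continuity of $E^\alpha$ with respect to the $W^{1,2\alpha}$-norm — which follows because $\phi\mapsto\int_M(1+|d\phi|^2)^\alpha$ depends continuously on $d\phi\in L^{2\alpha}$ — then yields $E^\alpha(\phi_\infty)=\lim_n E^\alpha(\phi_n)=m_\theta$. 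Hence $\phi_\infty\in\mathcal{M}^\alpha(\theta)$, and $\{\phi_n\}$ has a subsequence converging in $\mathcal{M}^\alpha(\theta)$. This establishes sequential compactness, and since $W^{1,2\alpha}(M,N)$ is metrizable, this is compactness.

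The only genuine subtlety — and the step I would flag as the main obstacle — is the stability of the homotopy class under the limit, i.e. making sure $\phi_\infty\in\theta$ rather than merely $\phi_\infty\in W^{1,2\alpha}(M,N)$; this is why the Sobolev embedding into $C^0$ (and hence the restriction $\alpha>1$) is essential, as it upgrades weak-type convergence to $C^0$-convergence, for which homotopy invariance is standard. Everything else is routine: the boundedness is immediate from the energy bound, the Palais--Smale extraction is quoted, and the continuity of $E^\alpha$ is elementary. I would also remark that, alternatively, one could avoid the Palais--Smale machinery entirely by a direct argument — bounded minimizing maps have a weakly $W^{1,2\alpha}$-convergent subsequence, $E^\alpha$ is weakly lower semicontinuous so the weak limit is again minimizing, and then one upgrades to strong convergence using the uniform convexity of the integrand $(1+|\cdot|^2)^\alpha$ for $\alpha>1$ (cf. the convexity estimate \eqref{2ndderivative} and Sublemma 3.18 of \cite{urakawa2013calculus}) — but invoking the already-quoted Palais--Smale property is the shortest route.
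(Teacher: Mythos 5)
Your argument is correct and follows essentially the same route as the paper, which simply deduces the lemma from the Palais--Smale condition for the $\alpha$-energy ($\alpha>1$) cited from \cite{urakawa2013calculus}; you have merely spelled out the details (minimizers form a Palais--Smale sequence, extract a strong $W^{1,2\alpha}$-limit, and use the $C^0$-embedding plus continuity of $E^\alpha$ to keep the limit in $\theta$ with energy $m_\theta$). Nothing further is needed.
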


As a corollary of Lemmas \ref{positivity}, \ref{continuous} and \ref{compact}, we have:
\begin{cor}\label{neighborhood}
There exist $\delta(\theta)$ and $\lambda^+(\theta)>0$ such that for any $\phi\in\theta\cap W^{1,2\alpha}(M,N)$ with ${\rm dist}(\phi,\mathcal{M}^\alpha(\theta)):=\inf\{\|\phi-\varphi\|_{W^{1,2\alpha}}:\varphi\in\mathcal{M}^\alpha(\theta)\}<\delta(\theta)$, there holds $\lambda^+(\phi)\geq\lambda^+(\theta)$. 
\end{cor}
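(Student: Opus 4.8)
The plan is to combine the three preceding results via a straightforward compactness argument. Suppose, for contradiction, that no such pair $(\delta(\theta),\lambda^+(\theta))$ exists. Then for each $k\in\mathbb N$ we can find $\phi_k\in\theta\cap W^{1,2\alpha}(M,N)$ with ${\rm dist}(\phi_k,\mathcal M^\alpha(\theta))<1/k$ but $\lambda^+(\phi_k)<1/k$. By definition of the distance, pick $\varphi_k\in\mathcal M^\alpha(\theta)$ with $\|\phi_k-\varphi_k\|_{W^{1,2\alpha}}<2/k$. Since $\mathcal M^\alpha(\theta)$ is compact by Lemma \ref{compact}, after passing to a subsequence we have $\varphi_k\to\varphi_\infty\in\mathcal M^\alpha(\theta)$ in $W^{1,2\alpha}(M,N)$, and hence $\phi_k\to\varphi_\infty$ in $W^{1,2\alpha}(M,N)$ as well by the triangle inequality.

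Now apply the continuity of $\lambda^+$ from Lemma \ref{continuous}: since $\phi_k\to\varphi_\infty$ in $W^{1,2\alpha}(M,N)$, we get $\lambda^+(\phi_k)\to\lambda^+(\varphi_\infty)$. On the other hand, $\lambda^+(\phi_k)<1/k\to0$, so $\lambda^+(\varphi_\infty)=0$. But $\varphi_\infty\in\theta\cap W^{1,2\alpha}(M,N)$, so Lemma \ref{positivity} gives $\lambda^+(\varphi_\infty)>0$, a contradiction. Hence the claimed $\delta(\theta)>0$ and $\lambda^+(\theta)>0$ exist: take $\lambda^+(\theta)$ to be, say, half the infimum of $\lambda^+$ over a suitable neighborhood, or simply extract it from the negation argument above.

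The only subtlety worth checking is that the modulus of continuity in Lemma \ref{continuous} is locally uniform, i.e. the constant $C(\|\phi\|_{1,2\alpha},\|\tilde\phi\|_{1,2\alpha})$ stays bounded as $\phi,\tilde\phi$ range over a small $W^{1,2\alpha}$-neighborhood of the compact set $\mathcal M^\alpha(\theta)$; this follows since $\mathcal M^\alpha(\theta)$ is bounded in $W^{1,2\alpha}$ (being compact) and the estimate \eqref{differenceoflambda} depends on the arguments only through their $W^{1,2\alpha}$-norms. One could equivalently phrase the whole argument without contradiction: for $\varphi\in\mathcal M^\alpha(\theta)$, Lemma \ref{positivity} gives $\lambda^+(\varphi)>0$, Lemma \ref{continuous} gives a $W^{1,2\alpha}$-ball around $\varphi$ on which $\lambda^+\geq\tfrac12\lambda^+(\varphi)$, compactness (Lemma \ref{compact}) lets us cover $\mathcal M^\alpha(\theta)$ by finitely many such balls, and then $\delta(\theta)$ is a Lebesgue number for this cover while $\lambda^+(\theta)$ is the minimum of the finitely many values $\tfrac12\lambda^+(\varphi_j)$. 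I expect the main (minor) obstacle to be just this bookkeeping about uniformity of constants near the compact set; there is no serious analytic difficulty, as all the heavy lifting was done in the three lemmas being cited.
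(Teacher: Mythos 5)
Your argument is correct and is essentially the paper's own proof: a contradiction argument combining the compactness of $\mathcal{M}^\alpha(\theta)$ (Lemma \ref{compact}) with the continuity of $\lambda^+$ (Lemma \ref{continuous}) and its positivity (Lemma \ref{positivity}). Your explicit choice of the minimizers $\varphi_k$ and the remark on the local uniformity of the constant in \eqref{differenceoflambda} only make explicit what the paper leaves implicit, so no changes are needed.
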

\begin{proof}
Suppose the corollary is not true, then there exist $\phi_n\in\theta\cap W^{1,2\alpha}(M,N)$ such that $\lambda^+(\phi_n)\to0$ and ${\rm dist}(\phi_n,\mathcal{M}^\alpha(\theta))\to0$. By the compactness of $\mathcal{M}(\theta)$, after taking a subsequence if necessary, we assume that there is $\phi_\infty\in\mathcal{M}(\theta)$ such that $\phi\to\phi_\infty$ in $W^{1,2\alpha}(M,N)$. By Lemmas \ref{positivity} and \ref{continuous}, we have $\lambda^+(\phi_n)\to\lambda^+(\phi_\infty)>0$, which is a contradiction. 

\end{proof}


We also need to investigate the maps far away from the set $\mathcal{M}^\alpha(\theta)$.

\begin{lem}\label{outside}
Let $\delta(\theta)>0$ be as in Corollary \ref{neighborhood}. There exists $\varepsilon(\theta)$ such that for any $\phi\in\theta\cap W^{1,2\alpha}(M,N)$ with ${\rm dist}(\phi,\mathcal{M}^\alpha(\theta))\geq\delta(\theta)$, we have $\frac12\int_{M}(1+|d\phi|^2)^\alpha\geq m_\theta+\varepsilon(\theta)$.
\end{lem}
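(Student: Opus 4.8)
The plan is to argue by contradiction, exploiting the Palais--Smale condition for the $\alpha$-energy $E^\alpha$ on $W^{1,2\alpha}(M,N)$ (valid for $\alpha>1$ by \cite{urakawa2013calculus}), together with the compactness of the minimizing set $\mathcal{M}^\alpha(\theta)$ from Lemma \ref{compact}. Suppose the conclusion fails. Then for every $n\in\mathbb{N}$ there is a map $\phi_n\in\theta\cap W^{1,2\alpha}(M,N)$ with $\mathrm{dist}(\phi_n,\mathcal{M}^\alpha(\theta))\geq\delta(\theta)$ but $\frac12\int_M(1+|d\phi_n|^2)^\alpha< m_\theta+\tfrac1n$. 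Thus $\{\phi_n\}$ is a minimizing sequence for $E^\alpha$ in the homotopy class $\theta$.

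The first step is to replace $\{\phi_n\}$ by an ``almost critical'' minimizing sequence via Ekeland's variational principle applied to $E^\alpha$ on the complete metric space $\theta\cap W^{1,2\alpha}(M,N)$ (completeness of a homotopy component in the $W^{1,2\alpha}$ topology follows from $W^{1,2\alpha}\hookrightarrow C^0$ for $\alpha>1$, so that the homotopy class is preserved under $W^{1,2\alpha}$-limits). Ekeland's principle produces a new minimizing sequence, which I will still call $\{\phi_n\}$, with $E^\alpha(\phi_n)\to m_\theta$, $\|dE^\alpha_{\phi_n}\|\to 0$, and $\|\phi_n-\phi_n^{\mathrm{old}}\|_{W^{1,2\alpha}}\to 0$; the last property guarantees that the new sequence still satisfies $\mathrm{dist}(\phi_n,\mathcal{M}^\alpha(\theta))\geq \delta(\theta)/2$ for $n$ large. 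Now $\{\phi_n\}$ is a Palais--Smale sequence for $E^\alpha$ at the level $m_\theta$, so by the Palais--Smale condition (a subsequence of) $\phi_n$ converges strongly in $W^{1,2\alpha}(M,N)$ to some $\phi_\infty\in\theta\cap W^{1,2\alpha}(M,N)$ with $E^\alpha(\phi_\infty)=m_\theta$, i.e. $\phi_\infty\in\mathcal{M}^\alpha(\theta)$. But then $\mathrm{dist}(\phi_n,\mathcal{M}^\alpha(\theta))\to 0$, contradicting $\mathrm{dist}(\phi_n,\mathcal{M}^\alpha(\theta))\geq\delta(\theta)/2$. This contradiction yields the existence of $\varepsilon(\theta)>0$ as claimed.

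The main obstacle is the use of Ekeland's principle: one must check that $E^\alpha$ is lower semicontinuous and bounded below on the (complete) metric space $\theta\cap W^{1,2\alpha}(M,N)$, and one must be careful that Ekeland's principle in the form that also controls the distance moved is applicable here. An alternative, slightly cleaner route that avoids Ekeland entirely is the following: if no such $\varepsilon(\theta)$ existed, take $\phi_n$ minimizing with $\mathrm{dist}(\phi_n,\mathcal{M}^\alpha(\theta))\geq\delta(\theta)$; since $\{\phi_n\}$ is $\alpha$-energy bounded, a subsequence converges weakly in $W^{1,2\alpha}$ and strongly in $C^0$ to some $\phi_\infty\in\theta$, and by weak lower semicontinuity of $E^\alpha$ one gets $E^\alpha(\phi_\infty)\leq m_\theta$, hence $\phi_\infty\in\mathcal{M}^\alpha(\theta)$; upgrading weak to strong $W^{1,2\alpha}$-convergence then uses the uniform convexity estimate \eqref{2ndderivative}–\eqref{gradient2alphanorm} for the $\alpha$-energy density (exactly as in the horizontal part of the proof of Theorem \ref{ps}), together with $dE^\alpha_{\phi_n}\to 0$ tested against $\phi_n-\phi_\infty$ — and here one does invoke that $\phi_n$ can be taken almost critical, so Ekeland reappears. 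Either way, the crux is producing a genuine Palais--Smale sequence out of the minimizing sequence while retaining the lower bound on the distance to $\mathcal{M}^\alpha(\theta)$; once that is done, Lemma \ref{compact} and the Palais--Smale condition close the argument immediately.
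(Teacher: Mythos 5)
Your proposal follows essentially the same route as the paper: argue by contradiction with a minimizing sequence $\{\phi_n\}\subset\theta\cap W^{1,2\alpha}(M,N)$ satisfying ${\rm dist}(\phi_n,\mathcal{M}^\alpha(\theta))\geq\delta(\theta)$, and use the Palais--Smale condition for $E^\alpha$ (valid for $\alpha>1$) to extract a subsequence converging strongly in $W^{1,2\alpha}$ to an element of $\mathcal{M}^\alpha(\theta)$, contradicting the distance bound. The paper asserts this convergence directly from the Palais--Smale condition, while you make explicit the intermediate Ekeland step that upgrades the minimizing sequence to an almost-critical one; this is a more careful rendering of the same argument rather than a different approach.
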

\begin{proof}
For any fixed $\alpha>1$, if such a $\varepsilon(\theta)$ does not exist, then there exist $\{\phi_n\}$ such that
\begin{equation}\label{contradiction}
 {\rm dist}(\phi_n,\mathcal{M}^\alpha(\theta))\geq\delta(\theta)
 \end{equation}
and 
\begin{equation}
\frac12\int_{M}(1+|d\phi_n|^2)^\alpha\leq m_\theta+\frac1n.
\end{equation}
Since $m_\theta$ is the minimizing energy of $E^\alpha$, $\{\phi_n\}$ is a minimizing sequence for $E^\alpha$. Therefore, the Palais-Smale condition implies that, after taking a subsequence if necessary, there is a critical point $\phi_\infty\in\theta\cap W^{1,2\alpha}(M,N)$ of $E^\alpha$ such that $\phi_n\to\phi_\infty$ strongly in $W^{1,2\alpha}(M,N)$. In fact, $\phi_\infty\in\mathcal{M}^\alpha(\theta)$. This contradicts \eqref{contradiction}. 

\end{proof}

Now, we can prove Lemma \ref{estimateofb}.
\begin{proof}[Proof of Lemma \ref{estimateofb}]
Let $(\phi,\psi)\in S_{\theta,\rho}$ be arbitrary. 
If $\phi$ satisfies ${\rm dist}(\phi,\mathcal{M}^\alpha(\theta))<\delta(\theta)$, by Corollary \ref{neighborhood}, we have
\begin{equation}
\begin{split}
\frac12\int_{M}(1+|\phi|^2)^{\alpha}+\frac12\int_{M}\langle\psi,\slashed{D}\psi\rangle&\geq m_\theta+\frac{\lambda^+(\phi)}{2}\|\psi\|^2_{1/2,2}\\
&\geq m_\theta+\frac{\lambda^+(\theta)}{2}\rho^2.
\end{split}\end{equation}

If $\phi$ satisfies ${\rm dist}(\phi,\mathcal{M}^\alpha(\theta))\geq\delta(\theta)$, Lemma \ref{outside} implies
\begin{equation}
\frac12\int_{M}(1+|\phi|^2)^{\alpha}+\frac12\int_{M}\langle\psi,\slashed{D}\psi\rangle\geq m_\theta+\varepsilon(\theta).
\end{equation}
Therefore, we have
\begin{equation}\label{map+spinor}
\frac12\int_{M}(1+|\phi|^2)^{\alpha}+\frac12\int_{M}\langle\psi,\slashed{D}\psi\rangle\geq m_\theta+
\min\left\{\frac{\lambda^+(\theta)}{2}\rho^2,\varepsilon(\theta)\right\}.
\end{equation}

On the other hand, by $(\rm F1),(F4),(F5)$, for any $\epsilon>0$ there exist $C_\epsilon$ such that 
\begin{equation}\label{F}
0\leq F(\phi,\psi)\leq\epsilon|\psi|^2+C_\epsilon|\psi|^p
\end{equation}
for any $(\phi,\psi)\in\mathcal{F}$. 

Now take $\epsilon=\frac{\lambda^+(\theta)}{4}$ and write $C=C_\epsilon$. By \eqref{map+spinor}, \eqref{F} and Sobolev embedding, we obtain
\begin{equation}
\begin{split}
\mathcal{L}^\alpha(\phi,\psi)&=\frac12\int_{M}(1+|\phi|^2)^{\alpha}+\frac12\int_{M}\langle\psi,\slashed{D}\psi\rangle-\int_MF(\phi,\psi)\\
&\geq m_\theta+\min\left\{\frac{\lambda^+(\theta)}{2}\rho^2,\varepsilon(\theta)\right\}-\frac{\lambda^+(\theta)}{4}\int_M|\psi|^2-C\int_M|\psi|^p.
\end{split}
\end{equation}
There exists $0<\rho_0<R_2$ such that for $0<\rho<\rho_0$, we have
\begin{equation}
\min\left\{\frac{\lambda^+(\theta)}{2}\rho^2,\varepsilon(\theta)\right\}=\frac{\lambda^+(\theta)}{2}\rho^2
\end{equation}
and 
\begin{equation}
\frac{\lambda^+(\theta)}{4}\rho^2-C\rho^p>0.
\end{equation}
Thus, for any $\rho\in(0,\rho_0)$, we have $\mathcal{L}^\alpha(\phi,\psi)\geq m_\theta+\frac12\rho^2\varepsilon(\theta)$. This implies
\begin{equation}
b_{\theta;\rho}\geq m_\theta+\frac12\rho^2\varepsilon(\theta)>m_\theta.
\end{equation}
Hence, we complete the proof.

\end{proof}

\begin{rmk}
 It follows from the proof above that the estimate \eqref{estiamte of b} is uniform in $k$ if we replace $F$ by $\frac1kF$ instead.
\end{rmk}

\subsection{Critical value of $\mathcal{L}^\alpha$}
We define
\begin{equation}
c_\theta=\inf\limits_{\gamma\in\Gamma(Q_{\theta;R_1,R_2})}\sup\mathcal{L}^\alpha(\gamma(Q_{\theta;R_1,R_2})),
\end{equation}
where $\Gamma(Q_{\theta;R_1,R_2})$ is defined in Definition \ref{Gamma}. Similar to \cite{isobe2012existence}, by Lemma \ref{tildeD}, one can prove that $\Gamma(Q_{\theta;R_1,R_2})$ is closed under composition. With this property, we obtain the following main existence result in this paper:
\begin{thm}\label{existence}
Let $M$ be a closed surface and $N$ a compact manifold. Suppose $F$ satisfies $(\rm F1)-(F5)$ with $\frac{4\alpha}{3\alpha-2}\leq\mu\leq p\leq\frac34\mu+1$ for $\alpha\in(1,2]$. Let $R_1,R_2$ and $\rho$ be as in Lemmas \ref{estimateofa} and \ref{estimateofb}. Then we have $m_\theta<c_\theta<\infty$ and $c_\theta$ is a critical value of $\mathcal{L}^\alpha$ in $\mathcal{F}$.
\end{thm}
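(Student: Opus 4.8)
The plan is to run the standard linking / minimax argument, now carried out in the Banach-manifold setting with the special pseudo-gradient flow constructed in Section~4. First I would establish the two inequalities that make the minimax value well-defined and nontrivial. The lower bound $m_\theta < c_\theta$ follows from the linking geometry: by Lemma~\ref{estimateofa} we have $a_{\theta;R_1,R_2}\le m_\theta$ on $\partial Q_{\theta;R_1,R_2}$, while by Lemma~\ref{estimateofb} we have $\mathcal{L}^\alpha\ge b_{\theta;\rho}>m_\theta$ on $S_{\theta;\rho}$. Since any $\gamma\in\Gamma(Q_{\theta;R_1,R_2})$ fixes $\partial Q_{\theta;R_1,R_2}$ pointwise (condition (2) in Definition~\ref{Gamma}), the hypothesis $\Theta(t,\partial Q)\cap S=\emptyset$ needed to invoke linking is automatic, and Lemma~\ref{QSintersect} gives $\gamma(Q_{\theta;R_1,R_2})\cap S_{\theta;\rho}\ne\emptyset$; hence $\sup\mathcal{L}^\alpha(\gamma(Q_{\theta;R_1,R_2}))\ge b_{\theta;\rho}>m_\theta$ for every such $\gamma$, so $c_\theta\ge b_{\theta;\rho}>m_\theta$. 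For the finiteness $c_\theta<\infty$, I would simply use the identity map $\gamma=\mathrm{id}\in\Gamma(Q_{\theta;R_1,R_2})$ (it trivially satisfies (1)--(3) with $K\equiv0$, $\sigma\equiv0$) and note that $Q_{\theta;R_1,R_2}$ is compact (finite-dimensional, being contained in $\{\phi_0\}\times(\mathcal F^-_{\theta,\phi_0}\oplus\mathcal F^0_{\theta,\phi_0}\oplus\mathbb{R}e^+)$ with $\mathcal F^0$ finite-dimensional and the $H^{1/2}$-ball there bounded), so $\mathcal{L}^\alpha$ is bounded on it and $c_\theta\le\sup_{Q_{\theta;R_1,R_2}}\mathcal{L}^\alpha<\infty$.

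Next I would prove that $c_\theta$ is a critical value by the usual deformation/contradiction argument. Suppose not: then there is $\eps>0$ such that $\mathcal{L}^\alpha$ has no critical point in the strip $\{c_\theta-2\eps\le\mathcal{L}^\alpha\le c_\theta+2\eps\}$. Here the Palais--Smale condition (Theorem~\ref{ps}, whose hypotheses on $F$ are exactly those assumed) is used to guarantee that on this strip $\|d\mathcal{L}^\alpha\|$ is bounded below by some $\delta>0$; otherwise a Palais--Smale sequence at level $c_\theta$ would produce a critical point. Using the locally Lipschitz pseudo-gradient vector field $\omega=X\oplus a\nabla^V\mathcal{L}^\alpha$ of Theorem~\ref{pgvf}, suitably truncated (as in Appendix~A of \cite{rabinowitz1986minimax}) so that its flow $\eta_t$ is globally defined, one deforms the sublevel set $\{\mathcal{L}^\alpha\le c_\theta+\eps\}$ into $\{\mathcal{L}^\alpha\le c_\theta-\eps\}$ while keeping a neighborhood of $\partial Q_{\theta;R_1,R_2}$ (which lies in $\{\mathcal{L}^\alpha\le m_\theta\}\subset\{\mathcal{L}^\alpha\le c_\theta-\eps\}$ once $\eps<(c_\theta-m_\theta)/2$) fixed. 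Picking $\gamma_0\in\Gamma(Q_{\theta;R_1,R_2})$ with $\sup\mathcal{L}^\alpha(\gamma_0(Q_{\theta;R_1,R_2}))< c_\theta+\eps$ and composing with this deformation yields $\gamma_1$ with $\sup\mathcal{L}^\alpha(\gamma_1(Q_{\theta;R_1,R_2}))\le c_\theta-\eps$, contradicting the definition of $c_\theta$ — provided $\gamma_1\in\Gamma(Q_{\theta;R_1,R_2})$.

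The main obstacle is precisely this last point: verifying that the composition of a map in $\Gamma(Q_{\theta;R_1,R_2})$ with the pseudo-gradient deformation stays in $\Gamma(Q_{\theta;R_1,R_2})$, i.e. that $\Gamma(Q_{\theta;R_1,R_2})$ is closed under composition. This is where the carefully engineered structure of the flow matters: by Theorem~\ref{pgvf} the vertical part of $\omega$ is exactly $a\nabla^V\mathcal{L}^\alpha$, so by the computation culminating in \eqref{finalexpression} the vertical component of the flowed spinor has the required form $\psi_t={\bf P}_t(L_{\phi_t}(\psi)+K(t,\psi))$ with $L_{\phi_t}(\psi)=\exp(-\sigma(t;\phi,\psi)L_{t,V})\psi$ and $K$ compact; and by Lemma~\ref{tildeD} the relevant operator differences are compact and continuous in the parameters, so compositions preserve both the exponential-of-$L_{t,V}$ leading term and the compactness of the remainder (the product of two such maps again splits this way because $\exp(-\sigma L_{t,V})$ commutes with the spectral projections, as noted in the Remark after \eqref{finalexpression}). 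I would spell this composition-closure out following the corresponding argument in \cite{isobe2012existence}, which carries over since it depends only on the properties recalled in Section~4. Once closure under composition is in hand, the contradiction argument above closes and $c_\theta$ is a critical value of $\mathcal{L}^\alpha$ in $\mathcal F$.
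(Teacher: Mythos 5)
Your proposal follows essentially the same route as the paper: $m_\theta<c_\theta$ via the linking Lemmas \ref{QSlink}, \ref{QSintersect} together with Lemmas \ref{estimateofa} and \ref{estimateofb}; $c_\theta<\infty$ via the identity map in $\Gamma(Q_{\theta;R_1,R_2})$; and criticality via the Palais--Smale condition (Theorem \ref{ps}) and a deformation obtained by integrating the truncated pseudo-gradient flow \eqref{pgf}, which lies in $\Gamma(Q_{\theta;R_1,R_2})$ by \eqref{finalexpression} and fixes $\pt Q_{\theta;R_1,R_2}$ because $a_{\theta;R_1,R_2}\le m_\theta<c_\theta-\bar\varepsilon$, combined with closure of $\Gamma(Q_{\theta;R_1,R_2})$ under composition --- a point the paper also only asserts, citing Lemma \ref{tildeD} and \cite{isobe2012existence}, so leaving it at that level of detail matches the paper.

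One detail in your finiteness argument is wrong, though easily repaired: $Q_{\theta;R_1,R_2}$ is neither finite-dimensional nor compact, since $\mathcal{F}^-_{\theta,\phi_0}$, the closure of the span of the negative eigenspinors of $\slashed{D}_{\phi_0}$, is infinite-dimensional, so the $H^{1/2}$-ball of radius $R_1$ there is merely bounded. The correct (and the paper's) justification is that on $Q_{\theta;R_1,R_2}$ the map component is the fixed $\phi_0$ and the spinor component is bounded in $H^{1/2}$, whence $\mathcal{L}^\alpha$ is bounded there because
\begin{equation*}
\Big|\int_M\langle\psi,\slashed{D}_{\phi_0}\psi\rangle\Big|\leq C\|\psi\|_{1/2,2}^2
\qquad\text{and}\qquad
\int_M F(\phi_0,\psi)\leq C\big(1+\|\psi\|_{1/2,2}^{p}\big),
\end{equation*}
the latter by (F1), (F5) (which give $F\leq\epsilon|\psi|^2+C_\epsilon|\psi|^p$ with $p<4$) and the Sobolev embedding $H^{1/2}\subset L^4$. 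With that substitution your argument coincides with the paper's proof.
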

\begin{proof}
By Lemmas \ref{QSlink} and \ref{QSintersect}, we know $\gamma(Q_{\theta;R_1,R_2})\cap S_{\theta;\rho}\neq\emptyset$ for any $\gamma\in\Gamma(Q_{\theta;R_1,R_2})$. Therefore, Lemma \ref{estimateofb} implies
\begin{equation}
c_{\theta}\geq\inf\limits_{(\phi,\psi)\in S_{\theta;\rho}}\mathcal{L}^\alpha(\phi,\psi)=b_{\theta;\rho}>m_{\theta}.
\end{equation}
On the other hand, since the identity map belongs to $\Gamma(Q_{\theta;R_1,R_2})$ and $Q_{\theta;R_1,R_2}$ is bounded, by Sobolev embedding, we get
\begin{equation}
c_\theta\leq\sup\mathcal{L}^\alpha(Q_{\theta;R_1,R_2})<\infty.
\end{equation}
It remains to show that $c_\theta$ is a critical value of $\mathcal{F}^\alpha$. Suppose this is not the case, we set $\bar\varepsilon=\frac{b_{\theta;\rho}-m_{\theta}}{2}>0$. By integrating \eqref{pgf}(see Deformation Lemma in \cite{rabinowitz1986minimax}\cite{struwe2008variational}), we can find $0<\varepsilon<\bar\varepsilon$ and $\Phi\in\Gamma(Q_{\theta;R_1,R_2})$ such that 
\begin{equation}
\Phi(1,\mathcal{L}^\alpha_{c_{\theta}+\varepsilon})\subset\mathcal{L}^\alpha_{c_{\theta}-\varepsilon},
\end{equation}
where $\mathcal{L}^\alpha_{c}=\{(\phi,\psi)\in\mathcal{F}_\theta:\mathcal{L}^\alpha(\phi,\psi)<c\}$. Here, note that our negative psudo-gradient flow belongs to $\Gamma(Q_{\theta;R_1,R_2})$ by \eqref{finalexpression}. Also, observing that $a_{\theta;R_1,R_2}\leq m_{\theta}<c_{\theta}-\bar\varepsilon$, we have $\Phi(1,(\phi,\psi))=(\phi,\psi)$ for any $(\phi,\psi)\in\pt Q_{\theta;R_1,R_2}$.

Choose $\gamma\in\Gamma(Q_{\theta;R_1,R_2})$ such that $\mathcal{L}^\alpha(\gamma(1,Q_{\theta;R_1,R_2}))<c_{\theta}+\varepsilon$. Since $\Phi\circ\gamma\in\Gamma(Q_{\theta;R_1,R_2})$ as we said before the theorem, we have the following contradiction
\begin{equation}
c_{\theta}\leq\sup\mathcal{L}^\alpha(\Phi(1,\gamma(1,Q_{\theta;R_1,R_2})))<c_{\theta}-\varepsilon.
\end{equation}
Thus we complete the proof.

\end{proof}

Theorem \ref{existence} gives us a solution to the perturbed $\alpha$-Dirac-harmonic map equations \eqref{elnonlinearalpha1} and \eqref{elnonlinearalpha2}. Let us denote this solution by $(\phi_\theta,\psi_\theta)$. From (F5), we know $F(\phi,0)=0$ and $F_\psi(\phi,0)=0$ for any $\phi\in N$. Therefore, we cannot exclude the possibility that the solution in Theorem \ref{existence} is a trivial solution, that is, $\psi_\theta=0$. If $\psi_\theta=0$, then $F_\phi(\phi_\theta,0)=0$ by (F5), which tells us that $\phi_\theta$ is an $\alpha$-harmonic map. On the other hand, $\mathcal{L}^\alpha(\phi_\theta,\psi_\theta)=c_\theta>m_\theta$. Therefore, the  $\alpha$-energy of $\phi_\theta$ is $\frac12\int_M(1+|d\phi_\theta|^2)^\alpha=c_\theta>m_\theta$. So, we get a corollary:
\begin{cor}
Under the assumption of Theorem \ref{existence}, we get a critical point $(\phi_\theta,\psi_\theta)$ with value $c_\theta$. Then one of the following holds:

$(1)$ \ There exists a non-trivial perturbed $\alpha$-Dirac-harmonic map $(\phi_\theta,\psi_\theta)$ on $N$; or

$(2)$ \ There exists an energy non-minimizing $\alpha$-harmonic map $\phi_\theta$ in $\theta$. 
\end{cor}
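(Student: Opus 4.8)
The plan is a simple dichotomy on whether the spinor component vanishes. First I would apply Theorem~\ref{existence} to produce $(\phi_\theta,\psi_\theta)\in\mathcal{F}$ with $d\mathcal{L}^\alpha(\phi_\theta,\psi_\theta)=0$ and $\mathcal{L}^\alpha(\phi_\theta,\psi_\theta)=c_\theta$. By the computation of $d^H\mathcal{L}^\alpha$ and $d^V\mathcal{L}^\alpha$ in Section~2, a critical point of $\mathcal{L}^\alpha$ is precisely a (weak) solution of the perturbed $\alpha$-Dirac-harmonic map system \eqref{elnonlinearalpha1}--\eqref{elnonlinearalpha2}. Hence if $\psi_\theta\not\equiv0$ we are immediately in case~$(1)$, and the remaining work is to analyze the degenerate case $\psi_\theta\equiv0$.

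In that case I would first extract from $(\rm F5)$ the pointwise facts $F(\phi,0)\equiv0$, $F_\psi(\phi,0)=0$ and $F_\phi(\phi,0)=0$ for all $\phi\in N$: the first two follow because $F(\phi,\psi)=o(|\psi|^2)$ uniformly in $\phi$ as $|\psi|\to0$, so $F$ has no constant and no linear term in $\psi$ at the origin; the third follows by differentiating the identically vanishing function $\phi\mapsto F(\phi,0)$. Substituting $\psi_\theta\equiv0$ into \eqref{elnonlinearalpha2} makes it trivially satisfied, while in \eqref{elnonlinearalpha1} the curvature term $\mathcal{R}(\phi_\theta,0)$ vanishes (it is quadratic in the spinor) and $F_\phi(\phi_\theta,0)=0$, so the equation collapses to $\tau^\alpha(\phi_\theta)=0$; that is, $\phi_\theta$ is an $\alpha$-harmonic map, and since $(\phi_\theta,\psi_\theta)\in\mathcal{F}_\theta$ by construction, $\phi_\theta\in\theta$.

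It then remains only to compare energies. With $\psi_\theta\equiv0$ the Dirac term and the perturbation term in $\mathcal{L}^\alpha$ drop out, so $E^\alpha(\phi_\theta)=\frac12\int_M(1+|d\phi_\theta|^2)^\alpha=\mathcal{L}^\alpha(\phi_\theta,0)=c_\theta$, and Theorem~\ref{existence} gives $c_\theta>m_\theta=\inf\{E^\alpha(\phi):\phi\in\theta\cap W^{1,2\alpha}(M,N)\}$. Thus $\phi_\theta$ is an $\alpha$-harmonic map in $\theta$ with $\alpha$-energy strictly above the minimum, i.e.\ energy non-minimizing, which is case~$(2)$. I do not expect any genuine obstacle here: the substance is already carried by Theorem~\ref{existence} and by the form of the Euler--Lagrange equations; the only points deserving a line of care are the deduction of $F_\phi(\phi,0)=0$ from $(\rm F5)$, and---if one wishes to upgrade $(\phi_\theta,\psi_\theta)$ from a weak to a smooth solution in case~$(1)$---an appeal to the regularity Theorem~\ref{thm1.3}, which is however not required for the statement as phrased.
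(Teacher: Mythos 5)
Your proposal is correct and follows essentially the same route as the paper: a dichotomy on whether $\psi_\theta$ vanishes, using $(\rm F5)$ to conclude $F(\phi,0)=0$, $F_\psi(\phi,0)=0$ and hence $F_\phi(\phi_\theta,0)=0$, so that in the trivial case $\phi_\theta$ is an $\alpha$-harmonic map with $\alpha$-energy $c_\theta>m_\theta$, i.e.\ non-minimizing. Your extra remarks (the quadratic vanishing of $\mathcal{R}(\phi_\theta,0)$ and the derivation of $F_\phi(\phi,0)=0$ by differentiating $\phi\mapsto F(\phi,0)$) only make explicit what the paper leaves implicit.
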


In particular, when $N$ has non-positive curvature, Theorem \ref{thm:uniqueness} excludes the possibility (2) in the corollary above. Finally, we obtain an existence result about non-trivial solutions.
\begin{thm}\label{nontrivial}
Let $M$ be a closed surface and $N$ a compact Rienmannian manifold with non-positive curvature.
Suppose $F$ satisfies $(\rm F1)-(F5)$ with $\frac{4\alpha}{3\alpha-2}\leq\mu\leq p\leq\frac34\mu+1$ for $\alpha\in(1,2]$. Then for any homotopy class $\theta\in[M,N]$, there exists a non-trivial solution $(\phi,\psi)\in W^{1,2\alpha}(M,N)\times H^{1/2}(M,\Sigma M\otimes\phi^*TN)$ to the perturbed $\alpha$-Dirac-harmonic map equations \eqref{elnonlinearalpha1} and \eqref{elnonlinearalpha2} with $\phi\in\theta$.
\end{thm}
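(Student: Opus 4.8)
The plan is to deduce Theorem~\ref{nontrivial} from the existence result Theorem~\ref{existence}, the structural dichotomy recorded in the corollary immediately above it, and the uniqueness Theorem~\ref{thm:uniqueness} for $\alpha$-harmonic maps into nonpositively curved targets. Since the hypotheses here on $F$ and on $\mu,p,\alpha$ are precisely those of Theorem~\ref{existence}, the first step is simply to invoke it: this produces a critical point $(\phi_\theta,\psi_\theta)\in\mathcal{F}_\theta$ of $\mathcal{L}^\alpha$ with $\mathcal{L}^\alpha(\phi_\theta,\psi_\theta)=c_\theta$ and $m_\theta<c_\theta<\infty$. Being a critical point, $(\phi_\theta,\psi_\theta)$ solves the perturbed $\alpha$-Dirac-harmonic map system \eqref{elnonlinearalpha1}--\eqref{elnonlinearalpha2} with $\phi_\theta\in\theta$, so the only thing left to establish is that $\psi_\theta\not\equiv0$.

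For this I would argue by contradiction. Suppose $\psi_\theta\equiv0$. From (F4) and (F5) one has $F(\phi,0)=0$ for every $\phi\in N$, and since then $\psi=0$ is a global minimum of $F(\phi,\cdot)$ we get $F_\psi(\phi,0)=0$; differentiating the identity $F(\cdot,0)\equiv0$ in the horizontal direction gives $F_\phi(\phi,0)=0$. Consequently \eqref{elnonlinearalpha2} holds trivially and \eqref{elnonlinearalpha1} collapses to $\tau^\alpha(\phi_\theta)=0$, i.e.\ $\phi_\theta$ is an $\alpha$-harmonic map in the homotopy class $\theta$. Moreover $E^\alpha(\phi_\theta)=\mathcal{L}^\alpha(\phi_\theta,0)=c_\theta>m_\theta$, so $\phi_\theta$ is not $\alpha$-energy minimizing in $\theta$.

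To exclude this, fix an $\alpha$-energy minimizing map $\phi_0\in\theta$, which exists by Sacks--Uhlenbeck \cite{sacks1981existence}, so $E^\alpha(\phi_0)=m_\theta$. Then $\phi_0$ and $\phi_\theta$ are homotopic $\alpha$-harmonic maps into the nonpositively curved compact (hence complete) manifold $N$, so Theorem~\ref{thm:uniqueness} yields a family $f_t$ of $\alpha$-harmonic maps joining them along which the $\alpha$-energy $E^\alpha(f_t)$ is independent of $t$. Hence $c_\theta=E^\alpha(\phi_\theta)=E^\alpha(\phi_0)=m_\theta$, contradicting $c_\theta>m_\theta$. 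Therefore $\psi_\theta\not\equiv0$, and $(\phi_\theta,\psi_\theta)$ is the desired nontrivial solution.

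I expect the genuine difficulties to lie entirely upstream of this short deduction: in the Palais--Smale condition (Theorem~\ref{ps}), in constructing the special pseudo-gradient vector field $X\oplus a\nabla^V\mathcal{L}^\alpha$ (Theorem~\ref{pgvf}) that keeps the linking class $\Gamma(Q_{\theta;R_1,R_2})$ invariant, in the linking estimates of Lemmas~\ref{estimateofa}--\ref{estimateofb}, and in the convexity of the $\alpha$-energy under nonpositive curvature underlying Theorem~\ref{thm:uniqueness}. Within the present argument the only delicate point is the bookkeeping showing that $\psi_\theta\equiv0$ actually forces $\phi_\theta$ to be $\alpha$-harmonic, which rests on the vanishing of $F$ and of $F_\psi,F_\phi$ along $\psi=0$; once that is in hand, the contradiction with uniqueness is immediate.
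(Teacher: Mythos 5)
Your proposal is correct and follows essentially the same route as the paper: invoke Theorem~\ref{existence} for a critical point at level $c_\theta>m_\theta$, observe via (F4)--(F5) that $\psi_\theta\equiv0$ would make $\phi_\theta$ a non-minimizing $\alpha$-harmonic map in $\theta$, and rule this out by the constancy of $E^\alpha$ along the connecting family furnished by the uniqueness Theorem~\ref{thm:uniqueness} under nonpositive curvature. The paper packages this as the dichotomy corollary following Theorem~\ref{existence} plus one sentence citing Theorem~\ref{thm:uniqueness}; your write-up merely makes that last exclusion step explicit.
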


\section{Regularity theorem}
In this section, assuming $1<\alpha\leq2$ as before, we are going to prove a regularity theorem for a weakly perturbed $\alpha$-Dirac-harmonic map, defined as a critical point $(\phi,\psi)\in\mathcal{F}^{\alpha,1/2}:=W^{1,2\alpha}(M,N)\times H^{1/2}(M,\Sigma M\otimes\phi^*TN)$. Precisely, we want to prove:
\begin{thm}\label{regularity}
Suppose $F\in C^\infty$ satisfies $(\rm F1)$ and $(\rm F3)$ for some $p\leq2+2/\alpha$ and $q\geq0$. Then any weakly perturbed $\alpha$-Dirac-harmonic map is smooth. 
\end{thm}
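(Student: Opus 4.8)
\medskip\noindent\textbf{Proof proposal.} The plan is a two-stage bootstrap: first use the Dirac equation alone to promote $\psi$ to $L^\infty$, then run the usual alternating elliptic bootstrap on the pair $(\phi,\psi)$. Note first that since $\alpha>1$, Morrey's embedding $W^{1,2\alpha}(M)\hookrightarrow C^{0,1-1/\alpha}(M)$ already makes $\phi$ H\"older continuous, so that $\Gamma^i_{jk}(\phi)$, $R^m_{lij}(\phi)$ and the second fundamental form of $N$ along $\phi$ are bounded; this is convenient but is not the real difficulty.

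The decisive step is the $L^\infty$ bound for $\psi$. In a local trivialization, \eqref{elnonlinearalpha2} reads $\slashed\partial\psi^i=-\Gamma^i_{jk}(\phi)\nabla\phi^j\cdot\psi^k+(F_\psi(\phi,\psi))^i$, so by (F1),
\begin{equation*}
|\slashed\partial\psi|\le C|d\phi|\,|\psi|+C\big(1+|\psi|^{p-1}\big)\quad\text{a.e. on }M.
\end{equation*}
I would iterate this using the first-order $L^r$-elliptic estimate $\|\psi\|_{W^{1,r}}\le C(\|\slashed\partial\psi\|_{L^r}+\|\psi\|_{L^r})$ for $\slashed\partial$ on the closed surface $M$, together with the Sobolev embeddings $W^{1,r}\hookrightarrow L^{2r/(2-r)}$ ($r<2$) and $W^{1,r}\hookrightarrow C^0$ ($r>2$). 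Starting from $\psi\in H^{1/2}\hookrightarrow L^4$ and $d\phi\in L^{2\alpha}$, H\"older gives $|d\phi|\,|\psi|\in L^{r_0}$ with $1/r_0=1/(2\alpha)+1/4$; and since $p-2\le 2/\alpha$ one has $4\ge 2\alpha(p-2)$, which is exactly the condition guaranteeing $|\psi|^{p-1}\in L^{r_0}$ as well (more generally, $\psi\in L^q$ with $q\ge 2\alpha(p-2)$ forces $|\psi|^{p-1}\in L^r$ for $1/r=1/(2\alpha)+1/q$). Hence $\psi\in W^{1,r_0}$, and feeding the improved integrability back in, the successive exponents obey $1/q_{n+1}=1/q_n-(\alpha-1)/(2\alpha)$, decreasing by a fixed positive amount at each step --- here $\alpha>1$ is essential --- so after finitely many iterations one reaches $r>2$ and therefore $\psi\in W^{1,r}\subset C^{0,\gamma_0}(M)\subset L^\infty(M)$; the only borderline case, $\alpha=2$, is handled by first passing through $W^{1,2}\subset\bigcap_{q<\infty}L^q$. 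This is precisely where the argument of \cite{chen2005regularity} must be adapted: as noted in the introduction, $F_\psi$ there would produce a term of order $\|\psi\|_{L^4}^3$, and the $L^\infty$-control just obtained is what absorbs it.

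With $\psi\in L^\infty\cap C^{0,\gamma_0}$ in hand I would close the loop. One more turn of the Dirac bootstrap gives $|d\phi|\,|\psi|\in L^{2\alpha}$ and $F_\psi\in L^\infty$, hence $\psi\in W^{1,2\alpha}\hookrightarrow C^{0,1-1/\alpha}$. The map equation \eqref{elnonlinearalpha1} is then a perturbed $\alpha$-harmonic map system whose right-hand side obeys $|\mathcal R(\phi,\psi)|\le C|\psi|^2|d\phi|\in L^{2\alpha}$ and, by (F3) and $\psi\in L^\infty$, $|F_\phi(\phi,\psi)|\le C(1+|\psi|^q)\in L^\infty$; the regularity theory for the degenerate-elliptic $\alpha$-harmonic system (Sacks--Uhlenbeck \cite{sacks1981existence}), applied as in \cite{chen2005regularity} with $\Delta$ replaced by $\tau^\alpha$, then yields $\phi\in C^{1,\gamma}_{\mathrm{loc}}(M,N)$. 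From here the bootstrap is routine: with $\phi\in C^{1,\gamma}$ the coefficient $(1+|d\phi|^2)^{\alpha-1}$ is $C^{0,\gamma}$, so in non-divergence form $\phi$ solves a uniformly elliptic system with $C^{0,\gamma}$ coefficients and $C^{0,\gamma}$ right-hand side, whence $\phi\in C^{2,\gamma}_{\mathrm{loc}}$ by Schauder; plugging this into $\slashed\partial\psi=-\Gamma(\phi)\,d\phi\cdot\psi+F_\psi(\phi,\psi)$, whose coefficients and forcing are now $C^{1,\gamma}$, gives $\psi\in C^{1,\gamma}_{\mathrm{loc}}$; each further round raises the order of both $\phi$ and $\psi$ by one, and since $F\in C^\infty$ every composition is admissible, so $\phi$ and $\psi$ are smooth.

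The main obstacle is the first step --- balancing the quadratic coupling $|d\phi|\,|\psi|$ against the superlinear term $|\psi|^{p-1}$ in the Dirac equation, and checking that the exponent bound $p\le 2+2/\alpha$ together with $\alpha>1$ makes the iteration terminate at $\psi\in L^\infty$. Everything afterwards is the standard alternating elliptic bootstrap, modulo invoking the known $\alpha$-harmonic regularity theory for the map component.
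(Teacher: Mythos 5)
Your first stage is essentially the paper's own Lemma \ref{psiC0gamma}: the same Dirac-equation bootstrap starting from $\psi\in L^4$, $d\phi\in L^{2\alpha}$, with the same exponent arithmetic (the condition $p\le 2+2/\alpha$ making $F_\psi$ integrable at each step and the gain $1/q_{n+1}=1/q_n-(\alpha-1)/(2\alpha)$), ending with $\psi\in W^{1,s}$ for $s$ up to $2\alpha$ and hence $\psi\in C^{0,\gamma}\cap L^\infty$. That part is correct, and you correctly identify it as the new ingredient forced by the perturbation $F$.

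The gap is in the second stage, where you pass from ``$\psi\in L^\infty$, $\phi\in W^{1,2\alpha}\cap C^0$'' to ``$\phi\in C^{1,\gamma}_{\mathrm{loc}}$'' by citing ``the regularity theory for the degenerate-elliptic $\alpha$-harmonic system (Sacks--Uhlenbeck), applied as in \cite{chen2005regularity} with $\Delta$ replaced by $\tau^\alpha$.'' There is no off-the-shelf theorem that covers this situation: Sacks--Uhlenbeck's regularity result is for critical points of $E^\alpha$ (no inhomogeneity, no spinor coupling), and the argument of \cite{chen2005regularity} is built around the linear Laplacian as leading operator, so ``replacing $\Delta$ by $\tau^\alpha$'' is precisely the nontrivial adaptation, not a routine substitution. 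Concretely, three things are missing. First, written out, the right-hand side of \eqref{elnonlinearalphalocal1} contains the critical term $\Gamma^m_{ij}\phi^i_\beta\phi^j_\gamma g^{\beta\gamma}(1+|d\phi|^2)^{\alpha-1}$, which grows like $|d\phi|^{2\alpha}$ and is only in $L^1$; your bookkeeping ($\mathcal R\in L^{2\alpha}$, $F_\phi\in L^\infty$) ignores it, and absorbing it requires the small-oscillation/Caccioppoli argument based on the continuity of $\phi$ --- this is the paper's Lemma \ref{firstlemma}. Second, since the leading operator $\mathrm{div}((1+|d\phi|^2)^{\alpha-1}\nabla\phi)$ is quasilinear, one must actually prove that the weak $W^{1,2\alpha}$ solution has second derivatives at all; the paper does this through the weighted $W^{2,2}$ estimate of Lemma \ref{secondlemma} (which also has to handle the coupling terms $|d\phi|\,|\psi|\,|\nabla\psi|$ and $|\nabla^2\phi|\,|\psi|^2$ via the first-order estimate \eqref{key1} for $\psi$ tested against $|d\phi|\xi$), and then justifies it for the weak solution by rerunning the estimate with difference quotients in the proof of Theorem \ref{regularity}. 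Third, only after $\phi\in W^{2,2}$, hence $d\phi\in L^p$ for all $p$ in two dimensions, does the equation upgrade to $\phi\in W^{2,p}$ and $C^{1,\gamma}$, after which your Schauder bootstrap is indeed the same as the paper's. So the plan is right in outline, but the step you delegate to citations is exactly where the bulk of the paper's proof (Lemmas \ref{firstlemma}, \ref{secondlemma} and the difference-quotient argument) lives, and as stated it would not go through.
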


Since the smoothness is a local property, it suffices to prove the local version of Theorem \ref{regularity}. So we fix a domain $\Omega\subset M$, which is mapped into a local chart $\{y^i\}_{i=1,\dots,n}$ on $N$. Then the weakly perturbed $\alpha$-Dirac-harmonic map $(\phi,\psi)$ satisfies the following system in the  weak sense.
\begin{equation}\label{elnonlinearalphalocal1}
\begin{split}
{\rm div}((1+|d\phi|^2)^{\alpha-1}\nabla\phi^m)=&-\Gamma^m_{ij}\phi^i_\beta\phi^j_{\gamma}g^{\beta\gamma}(1+|d\phi|^2)^{\alpha-1}+\frac{1}{2\alpha}R^m_{jkl}\langle\psi^k,\nabla\phi^j\cdot\psi^l\rangle\\
&-\frac{1}{\alpha}F^m_\phi(\phi,\psi),
\end{split}
\end{equation}
\begin{equation}\label{elnonlinearalphalocal2}
\slashed\pt\psi^m=-\Gamma^m_{ij}\nabla\phi^i\cdot\psi^j+F^m_\psi(\phi,\psi).
\end{equation}

According to the proof in \cite{chen2005regularity}, $F_\psi$ will produce $\|\psi\|^3_{L^4}$. Therefore, the proof there cannot directly apply to our situation. To overcome it, we control the $L^\infty$-norm of $\psi$ first. Since $(\phi,\psi)\in\mathcal{F}^{\alpha,1/2}$, by the Sobolev embedding, we can prove that $\psi$ actually belongs to $W^{1,s}$ for any $2<s<2\alpha$.
\begin{lem}\label{psiC0gamma}
Let $(\phi,\psi)\in\mathcal{F}^{\alpha,1/2}$ be a weak solution of \eqref{elnonlinearalphalocal1} and \eqref{elnonlinearalphalocal2}. Suppose $F$ satisfies $(\rm F1)$ for some $p\leq2+2/\alpha$. Then $\psi\in C^{0,\gamma}(M,\Sigma M\otimes\phi^*TN)$ for any $0<\gamma<1-\frac{1}{\alpha}$.
\end{lem}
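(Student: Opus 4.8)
The plan is to bootstrap the Dirac equation \eqref{elnonlinearalphalocal2} for $\psi$, regarding $\phi\in W^{1,2\alpha}$ as fixed data; since the assertion is local it suffices to work on a relatively compact subdomain and use the interior $L^s$-estimate for the first-order elliptic operator $\slashed\partial$: if $\slashed\partial\psi^m=f^m$ with $f\in L^s_{loc}$ and $1<s<\infty$, then $\psi\in W^{1,s}_{loc}$ with the usual estimate. Because $2\alpha>2$, Morrey's embedding already gives $\phi\in C^{0,1-1/\alpha}$, so the Christoffel symbols $\Gamma^m_{ij}(\phi)$ are bounded and the right-hand side of \eqref{elnonlinearalphalocal2} is pointwise dominated by $C\big(|\nabla\phi|\,|\psi|+1+|\psi|^{p-1}\big)$, using (F1).

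The starting information is $\psi\in H^{1/2}(M)\hookrightarrow L^4(M)$ on a surface. By H\"older, $|\nabla\phi|\,|\psi|\in L^{s_0}$ with $\tfrac1{s_0}=\tfrac1{2\alpha}+\tfrac14$, while $|\psi|^{p-1}\in L^{4/(p-1)}$; the hypothesis $p\le 2+2/\alpha$ is \emph{exactly} the condition $4/(p-1)\ge s_0$, so the whole right-hand side lies in $L^{s_0}_{loc}$ and hence $\psi\in W^{1,s_0}_{loc}$. Now iterate: if $\psi\in L^{q}_{loc}$ then, applying the elliptic estimate with exponent $s$ given by $\tfrac1s=\tfrac1{2\alpha}+\tfrac1q$ (the $F_\psi$-term stays subordinate at every stage because $q\ge 4\ge 2\alpha(p-2)$ forces $q/(p-1)\ge s$), we get $\psi\in W^{1,s}_{loc}$, and then, while $s<2$, the Sobolev embedding on a surface gives $\psi\in L^{q'}_{loc}$ with $\tfrac1{q'}=\tfrac1s-\tfrac12$. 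The net change per step is $\tfrac1{q'}-\tfrac1q=\tfrac1{2\alpha}-\tfrac12=\tfrac{1-\alpha}{2\alpha}<0$, a \emph{fixed} negative constant since $\alpha>1$. Thus after finitely many iterations the integrability exponent $s$ exceeds $2$; pushing one or two steps further one reaches $\psi\in L^{q}_{loc}$ for every $q<\infty$.

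From this, $|\nabla\phi|\,|\psi|\in L^s_{loc}$ for every $s<2\alpha$ (as $\nabla\phi\in L^{2\alpha}$ and $\psi$ has arbitrarily high integrability), and $|\psi|^{p-1}\in L^s_{loc}$ for every $s<\infty$; hence the right-hand side of \eqref{elnonlinearalphalocal2} is in $L^s_{loc}$ for every $s<2\alpha$, so $\psi\in W^{1,s}_{loc}$ for every $s<2\alpha$. Since $2\alpha>2$, Morrey's embedding $W^{1,s}\hookrightarrow C^{0,\,1-2/s}$ in dimension $2$ then yields $\psi\in C^{0,\gamma}$ for every $\gamma<1-1/\alpha$, because $1-2/s\to 1-1/\alpha$ as $s\uparrow 2\alpha$. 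The only genuine point requiring care is that the nonlinear term $F_\psi(\phi,\psi)$ must not outpace the linear bootstrap driven by the connection term $|\nabla\phi|\,|\psi|$; the growth restriction $p\le 2+2/\alpha$ in (F1) is precisely calibrated so that this holds already at the first step (starting from the borderline $L^4$-datum on $\psi$), and a fortiori at all later steps where $\psi$ is more integrable. Everything else is the standard Dirac bootstrap.
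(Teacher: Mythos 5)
Your proposal is correct and follows essentially the same route as the paper: bootstrap the Dirac equation via the interior $L^s$-estimate for $\slashed\partial$, using H\"older with $\nabla\phi\in L^{2\alpha}$ and the growth bound (F1) (where $p\le 2+2/\alpha$ enters exactly as you say, starting from $\psi\in L^4$), iterating until the exponent passes $2$, and concluding $\psi\in W^{1,s}_{loc}$ for all $s<2\alpha$ and hence $C^{0,\gamma}$ by Morrey. Your fixed-decrement bookkeeping $\tfrac1{2\alpha}-\tfrac12$ per step is just a cleaner way of organizing the same iteration the paper carries out with the exponents $s_i,r_i$.
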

\begin{proof}
Since $M$ is closed, it is sufficient to prove the interior estimate. That is, we can apply the equation \eqref{elnonlinearalphalocal2} and the elliptic estimate for the first order equation (see \cite{chen2006dirac}) to the spinor multiplied by a cut-off function. For simplicity, we still use $\psi$ instead. The H\"older inequality implies
\begin{equation}
\|\nabla\phi\cdot\psi\|_{s_0}\leq\|\nabla\phi\|_{as_0}\|\psi\|_{\frac{as_0}{a-1}}
\end{equation}
for any $a,s_0>1$. We set
\begin{equation}
as_0=2\alpha \ \text{and} \  \frac{as_0}{a-1}=4.
\end{equation}
Then $4/3<s_0=\frac{4\alpha}{\alpha+2}\leq2$. By (F1), to make $F_\psi\in L^{s_0}$, we need $p\leq2+2/\alpha$. If $s_0=2$, that is, $\alpha=2$ and $p\leq3$, then the elliptic estimate for the first order equation (see \cite{chen2006dirac}) implies $\psi\in W^{1,2}\cap L^r$ for any $r>1$. If $s_0<2$, then, by Sobolev embedding and \eqref{elnonlinearalphalocal2}, we have $\psi\in L^{r_0}$, where $r_0=\frac{2s_0}{2-s_0}=\frac{4\alpha}{2-\alpha}>4$. Again, by setting
\begin{equation}
as_1=2\alpha \ \text{and} \  \frac{as_1}{a-1}=r_0,
\end{equation}
we have $s_1=\frac{2\alpha r_0}{2\alpha+r_0}$. To make $F_\psi\in L^{s_1}$, it is sufficient to set
$\frac{\alpha+2}{\alpha}s_1\leq r_0$, that is, $s_1\leq\frac{\alpha r_0}{\alpha+2}$. Since $r_0>4$, $\frac{2\alpha r_0}{2\alpha+r_0}<\frac{\alpha r_0}{\alpha+2}$. Now, we take $s_1=\frac{2\alpha r_0}{2\alpha+r_0}$. Then $s_1>s_0$ and
\begin{equation}
2-s_1=\frac{4\alpha+2r_0-2\alpha r_0}{2\alpha+r_0}=\frac{2(\alpha-1)}{2\alpha+r_0}\left(\frac{2\alpha}{\alpha-1}-r_0\right).
\end{equation} If $r_0=\frac{4\alpha}{2-\alpha}\geq\frac{2\alpha}{\alpha-1}$, that is, $\alpha\geq4/3$, we have $s_1\geq2$. Then $\psi\in W^{1,2}\cap L^r$ for any $r>1$. Otherwise, for $\alpha<4/3$, we get $s_1<2$. By Sobolev embedding, we have
\begin{equation}
r_0<r_1=\frac{2s_1}{2-s_1}=\frac{4\alpha r_0}{4\alpha+2r_0-2\alpha r_0}.
\end{equation}
Again, if $r_1\geq\frac{2\alpha}{\alpha-1}$, that is, $\alpha\geq6/5$, we have $s_2=\frac{2\alpha r_1}{2\alpha+r_1}\geq2$. Then $\psi\in W^{1,2}\cap L^r$ for any $r>1$. Otherwise, we get $r_2=\frac{2s_2}{2-s_2}$. Repeating this procedure, for each $\alpha$, after finitely many steps, we obtain $r_i\geq\frac{2\alpha}{\alpha-1}$. Then $\psi\in W^{1,2}\cap L^r$ for any $r>1$. By H\"older's inequality, $\psi\in W^{1,s}$ for any $2<s<2\alpha$. Then $\psi\in C^{0,\gamma}$ for any $0<\gamma<1-\frac{2}{s}<1-\frac{1}{\alpha}$. Thus, we complete the proof.

\end{proof}


Now, for completeness, we just follow the idea in \cite{chen2006dirac}. We need two lemmas. One is 
\begin{lem}\label{firstlemma}
Let $(\phi,\psi)$ be a weak solution of \eqref{elnonlinearalphalocal1} and \eqref{elnonlinearalphalocal2}. Suppose $F$ satisfies $(\rm F3)$ for some $q\geq0$. For any $\varepsilon>0$, there is a $\rho>0$ such that 
\begin{equation}
\begin{split}
\int_{B(x_1,\rho)}(1+|d\phi|^2)^{\alpha-1}|\nabla\phi|^2\eta^2
&\leq\varepsilon\int_{B(x_1,\rho)}(1+|d\phi|^2)^{\alpha-1}|\nabla\eta|^2+C\varepsilon\left(\int_{B(x_1,\rho)}|\psi|^4\eta^4\right)^{\frac12}\\
&\quad+C\varepsilon\int_{B(x_1,\rho)}(1+|\psi|^q)\eta^2,
\end{split}
\end{equation}
where $B(x_1,\rho)\subset\Omega$, $\eta\in W_0^{1,2\alpha}(B(x_1,\rho,\mathbb{R})$ and $C$ only depends on $N$ and the constant in $(\rm F3)$.
\end{lem}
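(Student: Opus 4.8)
The plan is to test the weak form of the map equation \eqref{elnonlinearalphalocal1} against $\phi^m \eta^2$ (after subtracting the average of $\phi$ on the ball, so that $\phi$ is small in $L^\infty$ by continuity of $W^{1,2\alpha}$-maps on surfaces, which is where the smallness $\varepsilon$ enters). First I would fix $x_1\in\Omega$ and, for $\rho$ small, set $\bar\phi := \fint_{B(x_1,\rho)}\phi$ and note that by the $C^0$-modulus of continuity of $\phi$ we have $\sup_{B(x_1,\rho)}|\phi-\bar\phi|\le \varepsilon$ once $\rho$ is small enough. Using $(\phi^m-\bar\phi^m)\eta^2$ as a test function in \eqref{elnonlinearalphalocal1} and integrating by parts on the left, the principal term produces $\int (1+|d\phi|^2)^{\alpha-1}\nabla\phi^m\cdot\nabla((\phi^m-\bar\phi^m)\eta^2)$, which expands into the good term $\int(1+|d\phi|^2)^{\alpha-1}|\nabla\phi|^2\eta^2$ plus a cross term $\int(1+|d\phi|^2)^{\alpha-1}(\phi^m-\bar\phi^m)\nabla\phi^m\cdot 2\eta\nabla\eta$; by Cauchy–Schwarz with a weight and the bound $|\phi-\bar\phi|\le\varepsilon$, the cross term is absorbed, contributing $C\varepsilon\int(1+|d\phi|^2)^{\alpha-1}|\nabla\eta|^2$ plus a small multiple of the good term to be hidden on the left.

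Next I would estimate the three terms coming from the right-hand side of \eqref{elnonlinearalphalocal1}, each multiplied by $(\phi^m-\bar\phi^m)\eta^2$ and hence carrying a factor $\le \varepsilon\eta^2$:
\begin{itemize}
\item[(i)] the term $\Gamma^m_{ij}\phi^i_\beta\phi^j_\gamma g^{\beta\gamma}(1+|d\phi|^2)^{\alpha-1}$ is bounded by $C|\nabla\phi|^2(1+|d\phi|^2)^{\alpha-1}$, so against $\varepsilon\eta^2$ it gives $C\varepsilon\int(1+|d\phi|^2)^{\alpha-1}|\nabla\phi|^2\eta^2$, again absorbed into the good term after choosing $\varepsilon$ small;
\item[(ii)] the curvature term $\tfrac{1}{2\alpha}R^m_{jkl}\langle\psi^k,\nabla\phi^j\cdot\psi^l\rangle$ is bounded by $C|\nabla\phi|\,|\psi|^2$; against $\varepsilon\eta^2$, Young's inequality $|\nabla\phi|\,|\psi|^2 \le \delta|\nabla\phi|^2 + C_\delta|\psi|^4$ — with the $|\nabla\phi|^2$ part reinforced by the weight $(1+|d\phi|^2)^{\alpha-1}\ge 1$ — yields a further absorbable piece plus $C\varepsilon\big(\int_{B}|\psi|^4\eta^4\big)^{1/2}$ after applying Cauchy–Schwarz in the form $\int |\psi|^4\eta^2 \le \big(\int|\psi|^4\eta^4\big)^{1/2}\big(\int_{B(x_1,\rho)}|\psi|^4\big)^{1/2}$ and using that $\|\psi\|_{L^4(B(x_1,\rho))}$ is small for $\rho$ small (since $\psi\in L^4$);
\item[(iii)] the perturbation term $\tfrac1\alpha F^m_\phi(\phi,\psi)$ is controlled by $(\rm F3)$, $|F_\phi|\le C(1+|\psi|^q)$, so against $\varepsilon\eta^2$ it contributes exactly $C\varepsilon\int_{B(x_1,\rho)}(1+|\psi|^q)\eta^2$, which is the last term in the claimed inequality.
\end{itemize}
Collecting everything, choosing $\varepsilon$ (equivalently $\rho$) small enough that all multiples of $\int(1+|d\phi|^2)^{\alpha-1}|\nabla\phi|^2\eta^2$ on the right have total coefficient $\le \tfrac12$, and hiding that half on the left, gives the stated estimate (with the constant $C$ depending only on $N$ and the constant in $(\rm F3)$, since the curvature bounds and Christoffel bounds depend only on $N$).

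The main obstacle is bookkeeping the two distinct sources of smallness cleanly: the $\varepsilon$ in front of $|\nabla\eta|^2$ and the $|\psi|^q$ term comes from the $L^\infty$-smallness of $\phi-\bar\phi$ (hence from shrinking $\rho$ and invoking continuity of $\phi$), whereas the $|\psi|^4$ term's smallness of $\|\psi\|_{L^4(B(x_1,\rho))}$ also comes from shrinking $\rho$; one must check these are compatible and that the admissible test function $(\phi^m-\bar\phi^m)\eta^2$ indeed lies in the right space — here $\phi\in W^{1,2\alpha}\cap L^\infty$ and $\eta\in W^{1,2\alpha}_0$, and $\psi\in L^4$ suffices for term (ii) — so the pairing in the weak formulation is legitimate. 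A secondary point is that one needs $q$ to be such that $|\psi|^q\in L^1_{loc}$; since $\psi\in L^r$ for all $r<\infty$ on surfaces once Lemma \ref{psiC0gamma} applies (or at least $\psi\in L^4$ directly from $H^{1/2}\hookrightarrow L^4$, which handles $q\le 4$), this is automatic, but I would state it explicitly to keep the hypotheses transparent.
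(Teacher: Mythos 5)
Your proposal is correct and follows essentially the same route as the paper: test the weak form of \eqref{elnonlinearalphalocal1} with $(\phi-\mathrm{const})\eta^2$, exploit the small oscillation of the continuous map $\phi$ on small balls to generate the factor $\varepsilon$, and absorb the Christoffel and cross terms into the left-hand side. The only cosmetic deviations are that you subtract the ball average rather than $\phi(x_1)$ and treat the curvature term by Young's inequality plus smallness of $\|\psi\|_{L^4(B(x_1,\rho))}$, whereas the paper applies Cauchy--Schwarz directly, producing a factor $\|\phi\|_{1,2}$ that it then absorbs into the choice of $\rho$; both devices are equally valid.
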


\begin{proof}
Denote $G=(G^1,\cdots,G^n)$, where
\begin{equation}
G^m=\Gamma^m_{ij}\phi^i_\beta\phi^j_{\gamma}g^{\beta\gamma}(1+|d\phi|^2)^{\alpha-1}-\frac{1}{2\alpha}R^m_{jkl}\langle\psi^k,\nabla\phi^j\cdot\psi^l\rangle+\frac{1}{\alpha}F^m_\phi(\phi,\psi).
\end{equation}
Then the weak form of \eqref{elnonlinearalphalocal1} is
\begin{equation}\label{weakform1}
\int_\Omega(1+|d\phi|^2)^{\alpha-1}g^{\beta\gamma}\nabla_\beta\phi^i\nabla_\gamma\zeta^jg_{ij}=\int_\Omega G^i\zeta^jg_{ij}
\end{equation}
for any $\zeta\in W^{1,2\alpha}(\Omega,\mathbb{R}^L)$. Now, we choose $\zeta(x)=(\phi(x)-\phi(x_1))\eta^2$, then
\begin{equation}
\nabla_\beta\zeta^i=\nabla_\beta\phi^i\eta^2+2(\phi^i(x)-\phi^i(x_1))\eta\nabla_\beta\eta.
\end{equation}
Plugging this into \eqref{weakform1}, one gets
\begin{equation}\label{weakform2}
\begin{split}
&\int_{B(x_1,\rho)}G^i\zeta^jg_{ij}\\
&=\int_{B(x_1,\rho)}(1+|d\phi|^2)^{\alpha-1}\nabla_\beta\phi^i\nabla_\gamma\zeta^jg^{\beta\gamma}g_{ij}\\
&=\int_{B(x_1,\rho)}(1+|d\phi|^2)^{\alpha-1}\nabla_\beta\phi^ig_{ij}g^{\beta\gamma}(\nabla_\gamma\phi^j\eta^2+2(\phi^j(x)-\phi^j(x_1))\eta\nabla_\gamma\eta)\\
&=\int_{B(x_1,\rho)}(1+|d\phi|^2)^{\alpha-1}|\nabla\phi|^2\eta^2\\
&+2\int_{B(x_1,\rho)}(1+|d\phi|^2)^{\alpha-1}\eta(\phi^j(x)-\phi^j(x_1))g_{ij}\nabla_\beta\phi^i\nabla_\gamma\eta.
\end{split}
\end{equation}
Now, we estimate the left-hand side of \eqref{weakform2}.
\begin{equation}\label{left}
\begin{split}
\int_{B(x_1,\rho)}G^i\zeta^jg_{ij}
&=\int_{B(x_1,\rho)}\Gamma^i_{kl}\phi^k_\beta\phi^l_{\gamma}g^{\beta\gamma}g_{ij}(\phi^j(x)-\phi^j(x_1))(1+|d\phi|^2)^{\alpha-1}\eta^2\\
&\quad-\frac{1}{2\alpha}R^i_{klm}\langle\psi^l,\nabla\phi^k\cdot\psi^m\rangle(\phi^j(x)-\phi^j(x_1))g_{ij}\eta^2\\
&\quad+\frac{1}{\alpha}F^i_\phi(\phi,\psi)(\phi^j(x)-\phi^j(x_1))g_{ij}\eta^2\\
&\leq C_{N}\sup\limits_{B(x_1,\rho)}|\phi(x)-\phi(x_1)|\int_{B(x_1,\rho)}(1+|d\phi|^2)^{\alpha-1}|\nabla\phi|^2\eta^2\\
&\quad+C_{N}\sup\limits_{B(x_1,\rho)}|\phi(x)-\phi(x_1)|\left(\int_{B(x_1,\rho)}|\nabla\phi|^2\right)^{\frac12}\left(\int_{B(x_1,\rho)}|\psi|^4\eta^4\right)^{\frac12}\\
&\quad+C\sup\limits_{B(x_1,\rho)}|\phi(x)-\phi(x_1)|\int_{B(x_1,\rho)}(1+|\psi|^q)\eta^2.
\end{split}
\end{equation}
where we have used (F3). For the second term in the right-hand side of \eqref{weakform2}, we have
\begin{equation}\label{right2nd}
\begin{split}
&\left|2\int_{B(x_1,\rho)}(1+|d\phi|^2)^{\alpha-1}\eta(\phi^j(x)-\phi^j(x_1))g_{ij}g^{\beta\gamma}\nabla_\beta\phi^i\nabla_\gamma\eta\right|\\
&\leq2\sup\limits_{B(x_1,\rho)}|\phi(x)-\phi(x_1)|\int_{B(x_1,\rho)}(1+|d\phi|^2)^{\alpha-1}|\nabla\phi||\nabla\eta||\eta|\\
&\leq\frac12\int_{B(x_1,\rho)}(1+|d\phi|^2)^{\alpha-1}|\nabla\phi|^2\eta^2\\
&\quad+2\sup\limits_{B(x_1,\rho)}|\phi(x)-\phi(x_1)|^2\int_{B(x_1,\rho)}(1+|d\phi|^2)^{\alpha-1}|\nabla\eta|^2
\end{split}
\end{equation}
By estimates \eqref{left}, \eqref{right2nd} and \eqref{weakform2}, we have
\begin{equation}
\begin{split}
&\frac12\int_{B(x_1,\rho)}(1+|d\phi|^2)^{\alpha-1}|\nabla\phi|^2\eta^2
-2\sup\limits_{B(x_1,\rho)}|\phi(x)-\phi(x_1)|^2\int_{B(x_1,\rho)}(1+|d\phi|^2)^{\alpha-1}|\nabla\eta|^2\\
&\leq C_{N}\sup\limits_{B(x_1,\rho)}|\phi(x)-\phi(x_1)|\int_{B(x_1,\rho)}(1+|d\phi|^2)^{\alpha-1}|\nabla\phi|^2\eta^2\\
&\quad+C_{N}\sup\limits_{B(x_1,\rho)}|\phi(x)-\phi(x_1)|\left(\int_{B(x_1,\rho)}|\nabla\phi|^2\right)^{\frac12}\left(\int_{B(x_1,\rho)}|\psi|^4\eta^4\right)^{\frac12}\\
&\quad+C\sup\limits_{B(x_1,\rho)}|\phi(x)-\phi(x_1)|\int_{B(x_1,\rho)}(1+|\psi|^q)\eta^2.
\end{split}
\end{equation}
Since $\phi$ is continuous, we can choose $\rho$ so small  that $C_{N}\sup\limits_{B(x_1,\rho)}|\phi(x)-\phi(x_1)|<\frac14$. Then 
\begin{equation}
\begin{split}
\int_{B(x_1,\rho)}(1+|d\phi|^2)^{\alpha-1}|\nabla\phi|^2\eta^2
&\leq8\sup\limits_{B(x_1,\rho)}|\phi(x)-\phi(x_1)|^2\int_{B(x_1,\rho)}(1+|d\phi|^2)^{\alpha-1}|\nabla\eta|^2\\
&\quad+4C_{N}\sup\limits_{B(x_1,\rho)}|\phi(x)-\phi(x_1)|\|\phi\|_{1,2}\left(\int_{B(x_1,\rho)}|\psi|^4\eta^4\right)^{\frac12}\\
&\quad+4C\sup\limits_{B(x_1,\rho)}|\phi(x)-\phi(x_1)|\int_{B(x_1,\rho)}(1+|\psi|^q)\eta^2.
\end{split}
\end{equation}
Now, take $\rho$ still smaller so that $\sup\limits_{B(x_1,\rho)}|\phi(x)-\phi(x_1)|\max\{8,\|\phi\|_{1,2}\}\leq\varepsilon$. Then 
\begin{equation}
\begin{split}
\int_{B(x_1,\rho)}(1+|d\phi|^2)^{\alpha-1}|\nabla\phi|^2\eta^2
&\leq\varepsilon\int_{B(x_1,\rho)}(1+|d\phi|^2)^{\alpha-1}|\nabla\eta|^2\\
&\quad+C\varepsilon\left(\int_{B(x_1,\rho)}|\psi|^4\eta^4\right)^{\frac12}\\
&\quad+C\varepsilon\int_{B(x_1,\rho)}(1+|\psi|^q)\eta^2,
\end{split}
\end{equation}
where $C$ only depends on $N$ and the constant in (F3).

\end{proof}

The other lemma is 
\begin{lem}\label{secondlemma}
Let $\phi\in W^{1,2\alpha+2}\cap W^{3,2\alpha}(B(x_0,R),N)$ and $(\phi,\psi)$ be a weak solution of \eqref{elnonlinearalphalocal1} and \eqref{elnonlinearalphalocal2}. Suppose $F\in C^\infty$ satisfies $({\rm F}1)$ and $({\rm F}3)$ for some $p\leq2+2/\alpha$ and $q\geq0$. Then for $R$ small enough, we have
\begin{equation}\label{W22}
\|\nabla^2\phi\|_{L^2(B(x_0,\frac{R}{2}))}\leq C\|d\phi\|_{L^{2\alpha}(B(x_0,R))},
\end{equation}
where $C_1>0$ is a constant depending on $|\phi|_{C^0(M,N)}$, $R$, $\|\psi\|_{L^\infty(M,\Sigma M\otimes\phi^*TN)}$ and the constants in $({\rm F}1)$ and $({\rm F}3)$.
\end{lem}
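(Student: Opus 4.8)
The plan is to differentiate the map equation \eqref{elnonlinearalphalocal1}, test against a cut-off times the second derivative, and absorb the top-order term using the lower-order $L^\infty$ and $L^{2\alpha}$ controls that are already available. First I would recall that by Lemma \ref{psiC0gamma} we have $\psi\in C^{0,\gamma}$, in particular $\|\psi\|_{L^\infty(M)}<\infty$, so every occurrence of $|\psi|^q$, $|\psi|^2$ on the right-hand side is bounded; this is what makes the perturbation terms harmless here. Next, I would write \eqref{elnonlinearalphalocal1} in the divergence form ${\rm div}((1+|d\phi|^2)^{\alpha-1}\nabla\phi^m)=-(G^m)$ with $G$ as in the proof of Lemma \ref{firstlemma}, and apply $\nabla$ (i.e. $\partial_\delta$ for $\delta=1,2$) to obtain a second-order equation for $\nabla\phi$ of the schematic shape
\begin{equation*}
{\rm div}\big((1+|d\phi|^2)^{\alpha-1}\nabla(\nabla\phi^m)\big)= {\rm div}\big(\text{(terms with }\nabla^2\phi\text{ of lower weight)}\big)+\nabla G^m .
\end{equation*}
Since $\phi\in W^{1,2\alpha+2}\cap W^{3,2\alpha}$ by hypothesis, all these manipulations are legitimate and all integrations by parts below are justified.

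The core step is an energy estimate. Choose a cut-off $\eta\in C_0^\infty(B(x_0,R))$ with $\eta\equiv1$ on $B(x_0,R/2)$, $|\nabla\eta|\le C/R$, and test the differentiated equation against $\eta^2\nabla\phi^m$ (summed over $m$ and over the differentiation direction $\delta$). The principal term produces
\begin{equation*}
\int_{B(x_0,R)}(1+|d\phi|^2)^{\alpha-1}|\nabla^2\phi|^2\eta^2,
\end{equation*}
which, since $(1+|d\phi|^2)^{\alpha-1}\ge1$ for $\alpha\ge1$, dominates $\int|\nabla^2\phi|^2\eta^2$. On the right-hand side one gets: (a) a term $\int (1+|d\phi|^2)^{\alpha-1}|\nabla^2\phi||\nabla\phi||\nabla\eta|\eta$ from moving one derivative onto $\eta$, handled by Young's inequality — half of it is absorbed into the left side, the other half contributes $\int(1+|d\phi|^2)^{\alpha-1}|\nabla\phi|^2|\nabla\eta|^2\le C R^{-2}\int(1+|d\phi|^2)^\alpha$; (b) terms coming from $\nabla$ hitting the coefficient $(1+|d\phi|^2)^{\alpha-1}$, which are of the form $(1+|d\phi|^2)^{\alpha-2}|\nabla\phi|^2|\nabla^2\phi|^2\eta^2\lesssim (1+|d\phi|^2)^{\alpha-1}|\nabla^2\phi|^2\eta^2$ — these have the \emph{same weight} as the main term, so they must be absorbed, and this is where smallness of $R$ enters (see the obstacle paragraph); (c) terms from $\nabla G^m$, namely $\nabla$ of the Christoffel quadratic term ($\lesssim (1+|d\phi|^2)^{\alpha-1}|\nabla\phi|(|\nabla^2\phi|+|\nabla\phi|^2)\eta^2$, again Young), $\nabla$ of the curvature–spinor term ($\lesssim |\nabla\phi||\nabla\psi||\psi|+|\nabla^2\phi||\psi|^2$, bounded using $\|\psi\|_{L^\infty}$ and $\psi\in W^{1,2}$ from Lemma \ref{psiC0gamma}, with the $|\nabla^2\phi|$ factor again split off by Young), and $\nabla F_\phi^m$ (bounded pointwise by $C(1+|\psi|^q)(|\nabla\phi|+|\nabla\psi|)$ via $F\in C^\infty$ and $({\rm F}3)$, hence in $L^2_{loc}$). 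Collecting everything and using the continuity of $\phi$ together with $\alpha\le2$ to make the cubic-in-$\nabla\phi$ leftovers controllable gives \eqref{W22}.

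The main obstacle is the absorption of the coefficient-derivative terms of type (b): the factor $(1+|d\phi|^2)^{\alpha-2}|\nabla\phi|^2\le 1$ only pointwise, so the term $\int(1+|d\phi|^2)^{\alpha-2}|\nabla\phi|^2|\nabla^2\phi|^2\eta^2$ is genuinely of the same order as the term I want to keep on the left, and a naive bound would force an unwanted constant $\ge1$. The remedy is exactly the ``$R$ small'' hypothesis, which should be used via a standard hole-filling / reverse-Hölder iteration together with the continuity of $\phi$: one proves the estimate first in the weak form with a small multiplicative constant depending on $\sup_{B(x_0,R)}|\nabla\phi|$-type quantities, then uses Lemma \ref{firstlemma} (choosing $\rho$ there, hence $R$, so small that the relevant $\varepsilon$ beats the absolute constants) to guarantee that $\int_{B(x_0,R)}(1+|d\phi|^2)^{\alpha-1}|\nabla\phi|^2\eta^2$ — and therefore the troublesome term — is a small fraction of $\int(1+|d\phi|^2)^{\alpha-1}|\nabla\eta|^2\sim R^{-2}\|d\phi\|_{L^{2\alpha}}^{2\alpha}$. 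Once the top-order term on the right is absorbed, what remains on the right is controlled by $\|d\phi\|_{L^{2\alpha}(B(x_0,R))}$ (plus $\|\psi\|_{L^\infty}$-dependent constants), which is the asserted bound.
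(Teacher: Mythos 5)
Your overall skeleton (test the differentiated map equation against $\eta^2\nabla\phi$, i.e.\ the paper's test function $\zeta=g^{\sigma\tau}\nabla_\sigma(\xi^2\nabla_\tau\phi)$, run an energy estimate, absorb, and use continuity of $\phi$ on a small ball) is the same as the paper's, but the step you yourself flag as the main obstacle is resolved incorrectly. The coefficient-derivative term is not absorbed by shrinking $R$: Lemma \ref{firstlemma} bounds $\int(1+|d\phi|^2)^{\alpha-1}|\nabla\phi|^2\eta^2$, a purely first-order quantity, and gives no information whatsoever about $\int(1+|d\phi|^2)^{\alpha-2}|\nabla\phi|^2|\nabla^2\phi|^2\eta^2$, which contains the very second derivatives you are trying to estimate; your inference ``the troublesome term is therefore a small fraction of $R^{-2}\|d\phi\|_{L^{2\alpha}}^{2\alpha}$'' is a non sequitur, and no choice of $R$ makes that term a small fraction of the main term where $|d\phi|$ is large. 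The actual mechanism in the paper is structural: differentiating $(1+|d\phi|^2)^{\alpha-1}$ produces the explicit prefactor $2(\alpha-1)$, so the same-weight term enters with coefficient $2(\alpha-1)$ (and the cross term with $4(\alpha-1)$), and the good term survives with coefficient $1-2(\alpha-1)$ (see \eqref{key2RHS1}--\eqref{key2RHS2}); it is the smallness of $\alpha-1$, not of $R$, that permits the absorption. Without noticing this factor your estimate cannot close.

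A second, smaller gap is your treatment of the spinor-coupled terms. Bounding $\nabla F_\phi$ and the curvature term ``using $\|\psi\|_{L^\infty}$ and $\psi\in W^{1,2}$'' does not control $\int|d\phi|^2|\nabla\psi|\,\eta^2$ in terms of the admissible quantities: with only $\nabla\psi\in L^2$ and $d\phi\in L^{2\alpha}$, $\alpha<2$, H\"older does not apply ($d\phi\in L^4$ is exactly what the conclusion would give you). The paper instead derives the localized Dirac estimate \eqref{key1} and applies it with the cutoff $\eta=|d\phi|\xi$, which converts these terms back into $\int|\nabla^2\phi|^2\xi^2$ weighted by $\|\psi\|_{L^4(B)}^2$ (via \eqref{key3}); these are then absorbed by taking $R$ small — this, together with making the Christoffel symbols small in normal coordinates at $\phi(x_0)$, is where the ``$R$ small'' hypothesis is genuinely used. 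So the smallness of $R$ and the smallness of $\alpha-1$ play different roles in the paper, and your proposal conflates them at the one place where the distinction is decisive.
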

\begin{proof}
Choose a local coordinate $\{y^i\}$ in a neighborhood $U$ of $\phi(x_0)$ such that $\Gamma^i_{jk}(\phi(x_0))=0$. Let $R$ be so small  that $B:=B(x_0,R)$ is mapped into $U$ by $\phi$. Since $\phi$ is continuous, we can choose $R$ small enough such that $|\phi(x)-\phi(x_0)|<\delta$ for a given $\delta$ to be determined later. Then \eqref{elnonlinearalphalocal2} implies
\begin{equation}
\begin{split}
|\slashed\pt\psi|&\leq|\Gamma^i_{jk}(\phi(x))-\Gamma^i_{jk}(\phi(x_0))||d\phi^j||\psi^k|+|F_\psi(\phi,\psi)|\\
&\leq C_N|\phi(x)-\phi(x_0)||d\phi||\psi|+|F_\psi(\phi,\psi)|\\
&\leq C_N\delta|d\phi||\psi|+|F_\psi(\phi,\psi)|
\end{split}
\end{equation}
Noting that $\slashed\pt(\psi\eta)=\eta\slashed\pt\psi+\nabla\eta\cdot\psi$, we get
\begin{equation}\label{slashed}
\begin{split}
&\|\slashed\pt(\psi\eta)\|_{L^{4/3}(B)}\\
&\leq\|\eta\slashed\pt\psi\|_{L^{4/3}(B)}+\|\nabla\eta\cdot\psi\|_{L^4/3(B)}\\
&\leq C_N\delta\||d\phi||\psi|\eta\|_{L^{4/3}(B)}+\|\eta|F_\psi(\phi,\psi)|\|_{L^{4/3}(B)}+\||\nabla\eta||\psi|\|_{L^{4/3}(B)}\\
&\leq C_N\delta\|d\phi\|_{L^2(B)}\||\psi|\eta\|_{L^4(B)}+\||\nabla\eta||\psi|\|_{L^{4/3}(B)}+C_1\|\eta(1+|\psi|^{p-1})\|_{L^{4/3}(B)}\\
&\leq C_N\delta\|d\phi\|_{L^2(B)}\||\psi|\eta\|_{L^4(B)}+\||\nabla\eta||\psi|\|_{L^{4/3}(B)}+C(C_1,\|\psi\|_{L^\infty})\|\eta\|_{L^{4/3}(B)}
\end{split}
\end{equation}
where we have replaced the constant $C$ in (F1) by $C_1$, and $C(C_1,\|\psi\|_{L^\infty})$ denotes a constant depending on $C_1$ and $\|\psi\|_{L^\infty(B)}$. 

  By the elliptic estimate for the first order equation (see \cite{chen2006dirac}), we have
\begin{equation}\label{1stelliptic}
\|\nabla(\psi\eta)\|_{L^{4/3}(B)}+\|\psi\eta\|_{L^{4}(B)}\leq C_R\|\slashed\pt(\psi\eta)\|_{L^{4/3}(B)}.
\end{equation}
Now, choose $R$ and $C_0$ so small  that $C_RC_N\delta\|d\phi\|_{L^2(B)}<\frac12$. We obtain from \eqref{slashed} and \eqref{1stelliptic} that 
\begin{equation}
\|\nabla(\psi\eta)\|_{L^{4/3}(B)}+\|\psi\eta\|_{L^{4}(B)}\leq C\||\nabla\eta||\psi|\|_{L^{4/3}(B)}+C(C_1,\|\psi\|_{L^\infty})\|\eta\|_{L^{4/3}(B)},
\end{equation}
which implies
\begin{equation}\label{key1}
\||\nabla\psi|\eta\|_{L^{4/3}(B)}+\|\psi\eta\|_{L^{4}(B)}\leq C\||\nabla\eta||\psi|\|_{L^{4/3}(B)}+C(C_1,\|\psi\|_{L^\infty})\|\eta\|_{L^{4/3}(B)}.
\end{equation}

For $\zeta\in W_0^{1,2\alpha}(B,\mathbb{R}^L)$, we have
\begin{equation}
\int_B(1+|d\phi|^2)^{\alpha-1}g^{\beta\gamma}\nabla_\beta\phi\nabla_\gamma\zeta=-\int_B{\rm div}((1+|d\phi|^2)^{\alpha-1}\nabla\phi)\zeta=\int_BG\zeta.
\end{equation}
Choosing $\zeta=g^{\sigma\tau}\nabla_\sigma(\xi^2\nabla_\tau\phi)$, where $\xi\in C_0^\infty(B,\mathbb{R})$ will  be determined later, we get
\begin{equation}\label{key2}
\begin{split}
&\int_Bg^{\beta\gamma}g^{\sigma\tau}\nabla_\sigma((1+|d\phi|^2)^{\alpha-1}\nabla_\beta\phi)\nabla_\gamma(\xi^2\nabla_\tau\phi)\\
&=-\int_Bg^{\beta\gamma}g^{\sigma\tau}(1+|d\phi|^2)^{\alpha-1}\nabla_\beta\phi\nabla_\sigma\nabla_\gamma(\xi^2\nabla_\tau\phi)\\
&=-\int_Bg^{\beta\gamma}g^{\sigma\tau}(1+|d\phi|^2)^{\alpha-1}\nabla_\beta\phi(\nabla_\gamma\nabla_\sigma(\xi^2\nabla_\tau\phi)-\xi^2(R^M)^\rho_{\sigma\gamma\tau}\nabla_\rho\phi)\\
&=-\int_BGg^{\sigma\tau}\nabla_\sigma(\xi^2\nabla_\tau\phi)-\int_Bg^{\beta\gamma}g^{\sigma\tau}(1+|d\phi|^2)^{\alpha-1}\xi^2\nabla_\beta\phi(R^M)^\rho_{\sigma\gamma\tau}\nabla_\rho\phi\\
&=\int_Bg^{\sigma\tau}\nabla_\sigma{G}\nabla_\tau\phi\xi^2-\int_Bg^{\beta\gamma}g^{\sigma\tau}(1+|d\phi|^2)^{\alpha-1}\nabla_\beta\phi(R^M)^\rho_{\sigma\gamma\tau}\nabla_\rho\phi\xi^2.
\end{split}
\end{equation}
Note that 
\begin{equation}\label{key2RHS1}
\begin{split}
&g^{\beta\gamma}g^{\sigma\tau}\nabla_\sigma((1+|d\phi|^2)^{\alpha-1}\nabla_\beta\phi)\nabla_\gamma(\xi^2\nabla_\tau\phi)\\
&=g^{\beta\gamma}g^{\sigma\tau}((1+|d\phi|^2)^{\alpha-1}\nabla_\sigma\nabla_\beta\phi+2(\alpha-1)(1+|d\phi|^2)^{\alpha-2}\langle\nabla_\sigma\nabla\phi,\nabla\phi\rangle\nabla_\beta\phi)\\
&\quad\cdot(\xi^2\nabla_\gamma\nabla_\tau\phi+2\xi\nabla_\gamma\xi\nabla_\tau\phi)\\
&=(1+|d\phi|^2)^{\alpha-1}|\nabla^2\phi|^2\xi^2+2\xi(1+|d\phi|^2)^{\alpha-1}g^{\beta\gamma}g^{\sigma\tau}\nabla_{\sigma\beta}^2\phi\nabla_\gamma\xi\nabla_\tau\phi\\
&\quad+2(\alpha-1)(1+|d\phi|^2)^{\alpha-2}g^{\beta\gamma}g^{\sigma\tau}\langle\nabla_\sigma\nabla\phi,\nabla\phi\rangle\nabla_\beta\phi\nabla_{\gamma\tau}^2\phi\xi^2\\
&\quad+4\xi(\alpha-1)(1+|d\phi|^2)^{\alpha-2}g^{\beta\gamma}g^{\sigma\tau}\langle\nabla_\sigma\nabla\phi,\nabla\phi\rangle\nabla_\beta\phi\nabla_\tau\phi\nabla_\gamma\xi.
\end{split}
\end{equation}
We control the last three terms as follows:
\begin{equation}
|2\xi(1+|d\phi|^2)^{\alpha-1}g^{\beta\gamma}g^{\sigma\tau}\nabla_{\sigma\beta}^2\phi\nabla_\gamma\xi\nabla_\tau\phi|\leq2(1+|d\phi|^2)^{\alpha-1}|d\phi||\xi\nabla\xi||\nabla^2\phi|,
\end{equation}
\begin{equation}
\begin{split}
&|2(\alpha-1)(1+|d\phi|^2)^{\alpha-2}g^{\beta\gamma}g^{\sigma\tau}\langle\nabla_\sigma\nabla\phi,\nabla\phi\rangle\nabla_\beta\phi\nabla_{\gamma\tau}^2\phi\xi^2|\\
&\leq2(\alpha-1)(1+|d\phi|^2)^{\alpha-2}|\nabla^2\phi|^2|\nabla\phi|^2\xi^2\\
&\leq2(\alpha-1)(1+|d\phi|^2)^{\alpha-1}|\nabla^2\phi|^2\xi^2,
\end{split}
\end{equation}
\begin{equation}
\begin{split}
&|4\xi(\alpha-1)(1+|d\phi|^2)^{\alpha-2}g^{\beta\gamma}g^{\sigma\tau}\langle\nabla_\sigma\nabla\phi,\nabla\phi\rangle\nabla_\beta\phi\nabla_\tau\phi\nabla_\gamma\xi|\\
&\leq4(\alpha-1)(1+|d\phi|^2)^{\alpha-2}|\nabla^2\phi||\nabla\phi|^3|\xi\nabla\xi|\\
&\leq4(\alpha-1)(1+|d\phi|^2)^{\alpha-1}|\nabla^2\phi||\nabla\phi||\xi\nabla\xi|.
\end{split}
\end{equation}
Plugging these estimates into \eqref{key2RHS1}, we have
\begin{equation}\label{key2RHS2}
\begin{split}
&g^{\beta\gamma}g^{\sigma\tau}\nabla_\sigma((1+|d\phi|^2)^{\alpha-1}\nabla_\beta\phi)\nabla_\gamma(\xi^2\nabla_\tau\phi)\\
&\geq(1-2(\alpha-1))(1+|d\phi|^2)^{\alpha-1}|\nabla^2\phi|^2\xi^2\\
&\quad-(2+4(\alpha-1))(1+|d\phi|^2)^{\alpha-1}|\nabla^2\phi||\nabla\phi||\xi\nabla\xi|\\
&\geq\frac12(1+|d\phi|^2)^{\alpha-1}|\nabla^2\phi|^2\xi^2-3(1+|d\phi|^2)^{\alpha-1}|\nabla^2\phi||\nabla\phi||\xi\nabla\xi|.
\end{split}
\end{equation}
For the integrands in the left-hand side of \eqref{key2}, we have the estimates
\begin{equation}\label{key2LHS2}
-\int_Bg^{\beta\gamma}g^{\sigma\tau}(1+|d\phi|^2)^{\alpha-1}\nabla_\beta\phi(R^M)^\rho_{\sigma\gamma\tau}\nabla_\rho\phi\xi^2\leq C_M(1+|d\phi|^2)^{\alpha-1}|\nabla\phi|^2\xi^2
\end{equation}
and 
\begin{equation}\label{key2LHS1}
\begin{split}
g^{\sigma\tau}\nabla_\sigma{G}\nabla_\tau\phi\xi^2&\leq|d\phi||\nabla G|\xi^2\\
&\leq C_N\xi^2|d\phi|\Bigg((1+|d\phi|^2)^{\alpha-1}|d\phi|^3+(1+|d\phi|^2)^{\alpha-1}|d\phi||\nabla^2\phi|\\
&\quad\quad+(\alpha-1)(1+|d\phi|^2)^{\alpha-2}|d\phi|^3|\nabla^2\phi|+|d\phi|^2|\psi|^2+|d\phi||\nabla\psi||\psi|\\
&\quad\quad+|\nabla^2\phi||\psi|^2\Bigg)+C_3(1+|\psi|^q)|d\phi|^2\xi^2+C_3(1+|\psi|^{q-1})|d\phi||\nabla\psi|\xi^2,
\end{split}
\end{equation}
where we have replaced the constant $C$ in (F3) by $C_3$ and take $C_N>1$.

By \eqref{key2RHS2}, \eqref{key2LHS2} and \eqref{key2LHS1}, \eqref{key2} becomes
\begin{equation}\label{key2'}
\begin{split}
&\int_B(1+|d\phi|^2)^{\alpha-1}|\nabla^2\phi|^2\xi^2\\
&\leq6\int_B(1+|d\phi|^2)^{\alpha-1}|\nabla^2\phi||\nabla\phi||\xi\nabla\xi|+C_M\int_B(1+|d\phi|^2)^{\alpha-1}|\nabla\phi|^2\xi^2\\
&\quad+C_N\int_B(1+|d\phi|^2)^{\alpha-1}|d\phi|^4\xi^2+C_N\int_B(1+|d\phi|^2)^{\alpha-1}|d\phi|^2|\nabla^2\phi|\xi^2\\
&\quad+C_N\int_B|d\phi|^3|\psi|^2\xi^2+C_N\int_B|d\phi|^2|\nabla\psi||\psi|\xi^2+C_N\int_B|\nabla^2\phi||d\phi||\psi|^2\xi^2\\
&\quad+C(C_3,\|\psi\|_{L^\infty})\int_B|d\phi|^2\xi^2+C(C_3,\|\psi\|_{L^\infty})\int_B|d\phi||\nabla\psi|\xi^2\\
&=:I+II+III+IV+V+VI+VII+VIII+IX
\end{split}
\end{equation}
For small $\varepsilon_1>0$, we have
\begin{equation}\label{I}
I\leq6\varepsilon_1\int_B(1+|d\phi|^2)^{\alpha-1}|\nabla^2\phi|^2\xi^2+\frac{6}{\varepsilon_1}\int_B(1+|d\phi|^2)^{\alpha-1}|d\phi|^2|\nabla\xi|^2
\end{equation}
and 
\begin{equation}\label{IV}
IV\leq C_N\varepsilon_1\int_B(1+|d\phi|^2)^{\alpha-1}|\nabla^2\phi|^2\xi^2+\frac{C_N}{\varepsilon_1}\int_B(1+|d\phi|^2)^{\alpha-1}|d\phi|^4\xi^2.
\end{equation}
Choosing $\eta=|d\phi|\xi$ in \eqref{key1}, we obtain
\begin{equation}\label{key1'}
\begin{split}
\||\nabla\psi||d\phi|\xi\|_{L^{4/3}(B)}+\|\psi|d\phi|\xi\|_{L^{4}(B)}
&\leq C\||\nabla(|d\phi|\xi)||\psi|\|_{L^{4/3}(B)}\\&\quad+C(C_1,\|\psi\|_{L^\infty})\||d\phi|\xi\|_{L^{4/3}(B)}.
\end{split}
\end{equation}
Now, we control the remaining terms in the right-hand side of \eqref{key2'} as follows:
\begin{equation}
\begin{split}
V&\leq C_N\varepsilon_1\int_B|d\phi|^2|\psi|^4\xi^2+\frac{C_N}{\varepsilon_1}\int_B|d\phi|^4\xi^2\\
&\leq C_N\varepsilon_1\left(\int_B|\psi|^4\right)^{1/2}\left(\int_B|\psi|^4|d\phi|^4\xi^4\right)^{1/2}+\frac{C_N}{\varepsilon_1}\int_B|d\phi|^4\xi^2\\
&\leq 2C_NC^2\varepsilon_1\left(\int_B|\psi|^4\right)^{1/2}\||\nabla(|d\phi|\xi)||\psi|\|^2_{L^{4/3}(B)}\\
&\quad+2C_NC^2(C_1,\|\psi\|_{L^\infty})\varepsilon_1\left(\int_B|\psi|^4\right)^{1/2}\||d\phi|\xi\|^2_{L^{4/3}(B)}+\frac{C_N}{\varepsilon_1}\int_B|d\phi|^4\xi^2,
\end{split}
\end{equation}
where we denote $(C(C_1,\|\psi\|_{L^\infty}))^2$ by $C^2(C_1,\|\psi\|_{L^\infty})$. By 
\begin{equation}\label{key3}
\begin{split}
&\||\nabla(|d\phi|\xi)||\psi|\|^2_{L^{4/3}(B)}\\
&\leq2\||\nabla^2\phi|\psi\xi\|^2_{L^{4/3}(B)}+2\||d\phi||\nabla\xi||\psi|\|^2_{L^{4/3}(B)}\\
&\leq2\Bigg(\int_B|\nabla^2\phi|^2\xi^2\Bigg)\Bigg(\int_B|\psi|^4\Bigg)^{1/2}+2\Bigg(\int_B|d\phi|^2|\nabla\xi|^2\Bigg)\Bigg(\int_B|\psi|^4\Bigg)^{1/2}\\
&=2\Bigg(\int_B|\psi|^4\Bigg)^{1/2}\Bigg(\int_B|\nabla^2\phi|^2\xi^2+\int_B|d\phi|^2|\nabla\xi|^2\Bigg),
\end{split}
\end{equation}
 we have
 \begin{equation}\label{V}
 \begin{split}
V&\leq4C_NC^2\varepsilon_1\Bigg(\int_B|\psi|^4\Bigg)\Bigg(\int_B|\nabla^2\phi|^2\xi^2+\int_B|d\phi|^2|\nabla\xi|^2\Bigg)\\
&\quad+2C_NC^2(C_1,\|\psi\|_{L^\infty})\varepsilon_1\left(\int_B|\psi|^4\right)^{1/2}\||d\phi|\xi\|^2_{L^{4/3}(B)}+\frac{C_N}{\varepsilon_1}\int_B|d\phi|^4\xi^2,
\end{split}
\end{equation}
 \begin{equation}\label{VI}
 \begin{split}
VI&\leq C_N\||\nabla\psi||d\phi|\xi\|_{L^{4/3}(B)}\|\psi|d\phi|\xi\|_{L^{4}(B)}\\
&\leq 2C_NC^2\||\nabla(|d\phi|\xi)||\psi|\|^2_{L^{4/3}(B)}+2C_NC^2(C_1,\|\psi\|_{L^\infty})\||d\phi|\xi\|^2_{L^{4/3}(B)}\\
&\leq 4C_NC^2\Bigg(\int_B|\psi|^4\Bigg)^{1/2}\Bigg(\int_B|\nabla^2\phi|^2\xi^2+\int_B|d\phi|^2|\nabla\xi|^2\Bigg)\\
&\quad+2C_NC^2(C_1,\|\psi\|_{L^\infty})\||d\phi|\xi\|^2_{L^{4/3}(B)}
\end{split}
\end{equation} 
\begin{equation}\label{VII}
\begin{split}
VII&\leq\frac12\int_B|\nabla^2\phi|^2\xi^2+\frac{C_N^2}{2}\int_B|d\phi|^2|\psi|^4\xi^2\\
&\leq\frac12\int_B|\nabla^2\phi|^2\xi^2+{C_N^2}C^2\left(\int_B|\psi|^4\right)^{1/2}\||\nabla(|d\phi|\xi)||\psi|\|^2_{L^{4/3}(B)}\\
&\quad+C_N^2C^2(C_1,\|\psi\|_{L^\infty})\left(\int_B|\psi|^4\right)^{1/2}\||d\phi|\xi\|^2_{L^{4/3}(B)}\\
&\leq\frac12\int_B|\nabla^2\phi|^2\xi^2+2{C_N^2}C^2\left(\int_B|\psi|^4\right)\Bigg(\int_B|\nabla^2\phi|^2\xi^2+\int_B|d\phi|^2|\nabla\xi|^2\Bigg)\\
&\quad+C_N^2C^2(C_1,\|\psi\|_{L^\infty})\left(\int_B|\psi|^4\right)^{1/2}\||d\phi|\xi\|^2_{L^{4/3}(B)}
\end{split}
\end{equation}
and
\begin{equation}\label{IX}
\begin{split}
IX&\leq C(C_3,\|\psi\|_{L^\infty})R^{1/2}\||d\phi||\nabla\psi|\xi\|_{L^{4/3}(B)}\\
&\leq C(C_3,\|\psi\|_{L^\infty})R^{1/2}+C(C_3,\|\psi\|_{L^\infty})R^{1/2}\||d\phi||\nabla\psi|\xi\|_{L^{4/3}(B)}^2\\
&\leq2C(C_3,\|\psi\|_{L^\infty})R^{1/2}\Bigg(\int_B|\psi|^4\Bigg)^{1/2}\Bigg(\int_B|\nabla^2\phi|^2\xi^2+\int_B|d\phi|^2|\nabla\xi|^2\Bigg)\\
&\quad+C(C_3,\|\psi\|_{L^\infty})R^{1/2},
\end{split}
\end{equation}
where we have used $0\leq\xi\leq1$.
Plugging these estimates into \eqref{key2'}
and choosing $\varepsilon_1$ and $R$ so small that 
\begin{equation}
\max\{4C_NC^2,2C(C_3,\|\psi\|_{L^\infty})\}(1+\|\psi\|_{L^4(B)}^4)<\frac{1}{12} \ \text{and} \ (6+C_N)\varepsilon_1<\frac{1}{12},
\end{equation}
 we get
 \begin{equation}
\begin{split}
&\frac{1}{12}\int_B(1+|d\phi|^2)^{\alpha-1}|\nabla^2\phi|^2\xi^2\\
&\leq\frac{6}{\varepsilon_1}\int_B(1+|d\phi|^2)^{\alpha-1}|d\phi|^2|\nabla\xi|^2+\frac{C_N}{\varepsilon_1}\int_B(1+|d\phi|^2)^{\alpha-1}|d\phi|^4\xi^2\\
&\quad+C_M\int_B(1+|d\phi|^2)^{\alpha-1}|\nabla\phi|^2\xi^2+C_N\int_B(1+|d\phi|^2)^{\alpha-1}|d\phi|^4\xi^2\\
&\quad+\frac{1}{3}\int_B|d\phi|^2|\nabla\xi|^2+\frac{C_N}{\varepsilon_1}\int_B|d\phi|^4\xi^2+5C_N^2C^2(C_1,\|\psi\|_{L^\infty})\|d\phi\|^2_{L^{4/3}(B)}\\
&\quad+C(C_3,\|\psi\|_{L^\infty})(1+\|d\phi\|_{L^2}^2).
\end{split}
\end{equation}
Therefore, we obtain
 \begin{equation}\label{keyfinal}
\begin{split}
&\int_B(1+|d\phi|^2)^{\alpha-1}|\nabla^2\phi|^2\xi^2\\
&\leq C\int_B(1+|d\phi|^2)^{\alpha-1}|d\phi|^2|\nabla\xi|^2+C\int_B(1+|d\phi|^2)^{\alpha-1}|d\phi|^4\xi^2+C\int_B|d\phi|^{2\alpha}+C.
\end{split}
\end{equation}

Now, for $\varepsilon>0$, let $\rho>0$ be as in Lemma \ref{firstlemma}, and choose a cut-off function $\xi\in C^\infty_0(B(x_1,\rho))\subset B(x_0,R)$ such that
$$0\leq\xi\leq1, \ \xi=1\ \text{in} \ B(x_1,\rho/2), \ |\nabla\xi|\leq\frac{4}{\rho} \ \text{in} \ B(x_1,\rho).$$
Denoting $B_\rho:=B(x_1,\rho)$ for simplicity, we get
\begin{equation}
\begin{split}
&\int_{B_\rho}(1+|d\phi|^2)^{\alpha-1}|d\phi|^4\xi^2\\
&=\int_{B_\rho}(1+|d\phi|^2)^{\alpha-1}|d\phi|^2(|d\phi|\xi)^2\\
&\leq\varepsilon\int_{B_\rho}(1+|d\phi|^2)^{\alpha-1}|\nabla(|d\phi|\xi)|^2+C\varepsilon\left(\int_{B_\rho}|\psi|^4(|d\phi|\xi)^4\right)^{\frac12}\\
&\quad+C\varepsilon\int_{B_\rho}(1+|\psi|^q)(|d\phi|\xi)^2\\
&\leq2\varepsilon\int_{B_\rho}(1+|d\phi|^2)^{\alpha-1}|\nabla^2\phi|^2\xi^2+2\varepsilon\int_{B_\rho}(1+|d\phi|^2)^{\alpha-1}|d\phi|^2|\nabla\xi|^2\\
&\quad+C\varepsilon\left(\int_{B_\rho}|\psi|^4|d\phi|^4\xi^4\right)^{\frac12}+C\varepsilon\int_{B_\rho}|d\phi|^2.
\end{split}
\end{equation}
It follows from \eqref{key1'} and \eqref{key3} that 
\begin{equation}
\begin{split}
\||\psi||d\phi|\xi\|_{L^4(B_\rho)}^2&\leq2C^2\||\nabla(|d\phi|\xi)||\psi|\|^2_{L^{4/3}(B_\rho)}+2C^2(C_1,\|\psi\|_{L^\infty})\||d\phi|\xi\|^2_{L^{4/3}(B_\rho)}\\
&\leq4C^2\Bigg(\int_{B_\rho}|\psi|^4\Bigg)^{1/2}\Bigg(\int_{B_\rho}|\nabla^2\phi|^2\xi^2+\int_{B_\rho}|d\phi|^2|\nabla\xi|^2\Bigg)\\
&\quad+2C^2(C_1,\|\psi\|_{L^\infty})\||d\phi|\xi\|^2_{L^{4/3}(B_\rho)}.
\end{split}
\end{equation}
Therefore,
\begin{equation}\label{2ndkeyfinal}
\begin{split}
&\int_{B_\rho}(1+|d\phi|^2)^{\alpha-1}|d\phi|^4\xi^2\\
&\leq2\varepsilon\int_{B_\rho}(1+|d\phi|^2)^{\alpha-1}|\nabla^2\phi|^2\xi^2+2\varepsilon\int_{B_\rho}(1+|d\phi|^2)^{\alpha-1}|d\phi|^2|\nabla\xi|^2\\
&\quad+C\varepsilon\Bigg(\int_{B_\rho}|\nabla^2\phi|^2\xi^2+\int_{B_\rho}|d\phi|^2|\nabla\xi|^2\Bigg)\\
&\quad+C\varepsilon\||d\phi|\xi\|^2_{L^{4/3}(B_\rho)}+C\varepsilon\int_{B_\rho}|d\phi|^2\\
&\leq C\varepsilon\Bigg(\int_{B_\rho}(1+|d\phi|^2)^{\alpha-1}|\nabla^2\phi|^2\xi^2+\int_{B_\rho}(1+|d\phi|^2)^{\alpha-1}|d\phi|^2|\nabla\xi|^2\Bigg)\\
&\quad+C\int_{B_\rho}|d\phi|^{2\alpha}+C,
\end{split}
\end{equation}
where the constants $C$ are different between lines as before. Substituting \eqref{2ndkeyfinal} into \eqref{keyfinal}, we have
\begin{equation}
\int_{B_\rho}(1+|d\phi|^2)^{\alpha-1}|\nabla^2\phi|^2\xi^2\leq C\int_{B_\rho}(1+|d\phi|^2)^{\alpha-1}|d\phi|^2|\nabla\xi|^2+C\int_{B_\rho}|d\phi|^{2\alpha}+C,
\end{equation}
hence, 
\begin{equation}\label{w22}
\begin{split}
\int_{B_{\rho/2}}(1+|d\phi|^2)^{\alpha-1}|\nabla^2\phi|^2&\leq\frac{C}{\rho^2}\int_{B_\rho}(1+|d\phi|^2)^{\alpha-1}|d\phi|^2+C\int_{B_\rho}|d\phi|^{2\alpha}+C\\
&\leq\frac{C}{\rho^2}\int_{B_\rho}(1+|d\phi|^2)^{\alpha}+C.
\end{split}
\end{equation}
Covering $B(x_0,\frac{R}{2})$ with $B(x_1,\frac{\rho}{2})$ and using \eqref{w22} we obtain \eqref{W22}. 

\end{proof}

Now we can prove the main theorem in this section.
\begin{proof}[Proof of Theorem \ref{regularity}]
First we show that $\phi\in W^{2,2}\cap W^{1,4}(B(x_0,\frac{R}{2}),N)$. This can be done just by replacing weak derivatives by difference quotients in the proof of Lemma \ref{secondlemma}. Denote 
\begin{equation}
\Delta^h_i\phi(x):=\frac{\phi(x+hE_i)-\phi(x)}{h},
\end{equation}
where $(E_1,\cdots,E_L)$ is an orthonormal basis of $\mathbb{R}^L$ and $h\in\mathbb{R}$. $\Delta^h=(\delta^h_1,\cdots,\delta^h_L)$. 
Similar to \eqref{keyfinal}, we have
 \begin{equation}\label{quotient}
\begin{split}
&\int_{B(x_0,R)}|\nabla(\Delta^h\phi)|^2\xi^2\\
&\leq C\int_{B(x_0,R)}(1+|\Delta^h\phi|^2)^{\alpha}|\nabla\xi|^2+C\int_{B(x_0,R)}(1+|d\phi|^2)^{\alpha-1}|\nabla\phi|^2|\Delta^h\phi|^2\xi^2\\
&\quad+C\int_{B(x_0,R)}|d\phi|^{2\alpha}+C.
\end{split}
\end{equation}
Using 
\begin{equation}
C\int_{B(x_0,R)}(1+|\Delta^h\phi|^2)^{\alpha}|\nabla\xi|^2\leq C\int_{B(x_0,R)}(1+|d\phi|^2)^{\alpha}|\nabla\xi|^2
\end{equation}
and applying the Lemma \ref{firstlemma} to the second term in the right-hand side of \eqref{quotient}, we obtain 
\begin{equation}
\int_{B(x_1,\frac{\rho}{2})}|\nabla(\Delta^h\phi)|^2\xi^2\leq\frac{C}{\rho^2}\int_{B(x_1,\rho)}(1+|d\phi|^2)^\alpha+C.
\end{equation}
Then by Lemma A.2.2 in \cite{jost2017riemannian} implies the weak derivative $\nabla^2\phi$ exists and \eqref{W22} still holds.

Since $\phi\in W^{2,2}$, we have $\phi\in W^{1,p}$ for any $p>0$. Together with the Lemma \ref{psiC0gamma} and the equation \eqref{elnonlinearalphalocal1}, we know $\phi\in W^{2,p}$ for any $p>0$. Thus, $\phi\in C^{1,\gamma}$. By the elliptic estimates for the equation \eqref{elnonlinearalphalocal2}, we have $\psi\in C^{1,\gamma}$. Then the standard arguments yield that both $\phi$ and $\psi$ are smooth. This completes the proof.

\end{proof}

\section{$\alpha$-Dirac-harmonic maps}
In this section, we want to approximate the $\alpha$-Dirac-harmonic maps by the perturbed $\alpha$-Dirac-harmonic maps. More precisely, by Theorem \ref{nontrivial} and Theorem \ref{regularity}, we have a sequence of perturbed $\alpha$-Dirac-harmonic maps $(\phi_k,\psi_k)$, which are the critical points of the  functionals
\begin{equation}\label{Lk}
\mathcal{L}_k^{\alpha}(\phi,\psi)=\frac12\int_{M}(1+|d\phi|^2)^{\alpha}+\frac12\int_{M}\langle\psi,\slashed{D}\psi\rangle_{\Sigma M\otimes\phi^*TN}-\frac1k\int_{M}F(\phi,\psi),
\end{equation}
where $F$ satisfies the assumptions in Theorem \ref{nontrivial} and Theorem \ref{regularity}. If $(\phi_n,\psi_n)$ converges to $(\phi,\psi)$ smoothly, then we get the existence of nontrivial $\alpha$-Dirac-harmonic maps. We now come to the  $\varepsilon$-regularity which we shall need  to investigate the convergence.
\begin{thm}\label{epsilonregularity}
Suppose $F$ satisfies\\
$(\rm F6)$ \ $|F_\psi(\phi,\psi)|\leq C|\psi|^{r-1} \ \text{for} \ 3<r\leq2+2/\alpha,$\\
$(\rm F7)$ \ $|F_\phi(\phi,\psi)|\leq C|\psi|^{q} \ \text{for} \ q\geq0.$

There is $\varepsilon_0>0$ and $\alpha_0>0$ such that if $(\phi,\psi):(D,g_{\beta\gamma})\to(N,g_{ij})$ is a smooth perturbed $\alpha$-Dirac-harmonic map satisfying
\begin{equation}\label{epsilon}
\int_M(|d\phi|^{2\alpha}+|\psi|^4)\leq\Lambda<+\infty\ \text{and} \ \int_D|d\phi|^2\leq\varepsilon_0 
\end{equation}
for $1\leq\alpha<\alpha_0$, then we have
\begin{equation}
\|d\phi\|_{\tilde{D},1,p}+\|\psi\|_{\tilde{D},1,p}\leq C(D,N,\Lambda,p)(\|d\phi\|_{D,0,2}+\|\psi\|_{D,0,4}),
\end{equation}
and
\begin{equation}\label{boundpsi}
\|\psi\|_{L^\infty(\tilde{D})}\leq C(D,N,\Lambda)\|\psi\|_{D,0,4}
\end{equation}
for any $\tilde D\subset D$ and $p>1$, where $C(D,N,\Lambda,p)$ denotes a constant depending on $D,N,\Lambda$ and $p$.

\end{thm}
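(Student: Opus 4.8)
The plan is to follow the Sacks--Uhlenbeck $\varepsilon$-regularity scheme, as adapted in \cite{chen2005regularity} and \cite{jost2017existence}, but with the crucial modification that we first control $\|\psi\|_{L^\infty}$ before treating the second-order equation for $\phi$. First I would rescale so that the geodesic ball $B=B(x_1,\rho)$ is mapped into a single coordinate chart and split $D$ into a finite cover of small balls; since $M$ is two-dimensional and $\int_D|d\phi|^2\le\varepsilon_0$, the oscillation of $\phi$ on each small ball can be made small by an absolute continuity argument. The first real step is the \textbf{spinor $L^\infty$ bound}: rewrite the Dirac equation \eqref{elnonlinearalphalocal2} as $\slashed\partial\psi = -\Gamma(\phi)\nabla\phi\cdot\psi + F_\psi(\phi,\psi)$, multiply by a cutoff, and use the elliptic estimate for the first-order operator together with (F6) to bootstrap integrability of $\psi$. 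The term $\Gamma(\phi)\nabla\phi\cdot\psi$ contributes $\|\nabla\phi\|_{L^{2\alpha}}\|\psi\|_{L^q}$ by H\"older, and (F6) with $r\le 2+2/\alpha$ makes $F_\psi(\phi,\psi)\in L^{s}$ for a suitable $s$; iterating the Sobolev embedding $W^{1,s}\hookrightarrow L^{s^*}$ — exactly as in Lemma \ref{psiC0gamma} — raises integrability of $\psi$ in finitely many steps (this is where $\alpha<\alpha_0$ close to $1$ is used, so that $2\alpha$ is strictly above the critical exponent and $\frac{1}{s}-\frac{1}{2}$ stays favorable) until $\psi\in W^{1,s}$ for some $s>2$, hence $\psi\in C^{0,\gamma}\cap L^\infty$, with $\|\psi\|_{L^\infty(\tilde D)}\le C(D,N,\Lambda)\|\psi\|_{D,0,4}$; the smallness of $\int_D|d\phi|^2$ is what lets the $\Gamma(\phi)\nabla\phi\cdot\psi$ term be absorbed with a small constant.

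With $\|\psi\|_{L^\infty}$ in hand, I would next run the \textbf{second-order estimate for $\phi$}, essentially Lemma \ref{secondlemma} and the Morrey-type argument in \cite{sacks1981existence}\cite{chen2005regularity}. Testing the $\phi$-equation with $g^{\sigma\tau}\nabla_\sigma(\xi^2\nabla_\tau\phi)$ and using the Bochner/commutator identities gives, after the curvature and lower-order terms are controlled exactly as in \eqref{key2'}--\eqref{keyfinal}, an inequality of the shape
\begin{equation*}
\int_{B_{\rho/2}}(1+|d\phi|^2)^{\alpha-1}|\nabla^2\phi|^2 \le \frac{C}{\rho^2}\int_{B_\rho}(1+|d\phi|^2)^{\alpha} + C\int_{B_\rho}|d\phi|^4 + C,
\end{equation*}
where now the $|\psi|$-dependent terms are harmless because $\|\psi\|_{L^\infty}\le C(\Lambda)$ is fixed; the key is that the coefficient multiplying $\int|d\phi|^4$ (namely $C(1+\|\psi\|_{L^4(B_\rho)}^4)$ plus the $\varepsilon_1$ from Young's inequality) can be made $<\tfrac12$ by shrinking $\rho$ so that $\int_{B_\rho}|d\phi|^2<\varepsilon_0$ is small, allowing the $|d\phi|^4$ term to be reabsorbed via the Sobolev inequality $\|d\phi\, \xi\|_{L^4}^2 \le C\|\nabla(d\phi\,\xi)\|_{L^{4/3}}^2$ on the surface. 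This yields $\phi\in W^{2,2}_{loc}$ with the quantitative bound $\|\nabla^2\phi\|_{L^2(\tilde D)}\le C\|d\phi\|_{L^{2\alpha}(D)}$, first in the difference-quotient form to justify the existence of second derivatives.

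From $\phi\in W^{2,2}$ on a surface we get $\phi\in W^{1,p}$ for all $p$, hence $|d\phi|^2\in L^{p}$ for all $p$; feeding this back into \eqref{elnonlinearalphalocal2} and using (F6) upgrades $\psi$ to $W^{1,p}$ for all $p>1$, and then the right-hand side of \eqref{elnonlinearalphalocal1} lies in $L^p$ for all $p$ (the term $|d\phi|^2(1+|d\phi|^2)^{\alpha-1}$ is now in every $L^p$, the curvature term $R\langle\psi,\nabla\phi\cdot\psi\rangle$ is in every $L^p$ since $\psi\in L^\infty\cap W^{1,p}$ and $\nabla\phi\in L^p$, and $F_\phi$ is bounded by (F7) since $\psi\in L^\infty$), so Calder\'on--Zygmund / $W^{2,p}$ estimates for the (uniformly elliptic, once we know $d\phi$ is bounded) $\alpha$-Laplace-type operator give $\phi\in W^{2,p}$ for all $p$, hence $d\phi\in W^{1,p}$, and then $\psi\in W^{1,p}$ too. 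Combining these local estimates over the cover and tracking the constants produces the asserted bound $\|d\phi\|_{\tilde D,1,p}+\|\psi\|_{\tilde D,1,p}\le C(D,N,\Lambda,p)(\|d\phi\|_{D,0,2}+\|\psi\|_{D,0,4})$. The \textbf{main obstacle} is the first step — closing the $L^\infty$ bound on $\psi$: unlike the classical Dirac-harmonic case, the perturbation $F_\psi$ is only controlled by $|\psi|^{r-1}$ with $r$ possibly up to $2+2/\alpha>3$, so a single elliptic estimate does not suffice and one must carefully iterate the bootstrap, and the number of iterations and the requirement that each step strictly gains integrability is exactly what forces the restriction $\alpha<\alpha_0$ (with $\alpha_0$ close to $1$) and the smallness $\int_D|d\phi|^2\le\varepsilon_0$; getting the constants to depend only on $D,N,\Lambda$ and not on the particular solution is the delicate bookkeeping.
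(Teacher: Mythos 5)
Your first step (the spinor bound) is essentially the paper's: \eqref{boundpsi} is obtained exactly by rerunning the bootstrap of Lemma \ref{psiC0gamma} with (F6), and the constants there depend on $\Lambda$ only through $\|d\phi\|_{L^{2\alpha}}\le\Lambda^{1/2\alpha}$ — note that neither $\varepsilon_0$ nor $\alpha$ close to $1$ is actually needed in that step, so your attribution of the smallness parameters to the spinor bootstrap is off. The genuine gap is in your second step, the map estimate. You propose to rerun the Bochner-type testing argument of Lemma \ref{secondlemma}, replacing its small-oscillation absorptions by small-energy absorptions. But the term that must be reabsorbed there is not $\int|d\phi|^4\xi^2$; it is the weighted term $C_N\int_B(1+|d\phi|^2)^{\alpha-1}|d\phi|^4\xi^2$ in \eqref{key2'}, which behaves like $|d\phi|^{2\alpha+2}$, and in Section 7 it is controlled through \eqref{2ndkeyfinal}, i.e.\ by invoking Lemma \ref{firstlemma} with $\eta=|d\phi|\xi$, whose smallness comes from $\sup_{B}|\phi(x)-\phi(x_1)|$, not from the energy. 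If you instead use the Ladyzhensakya/Gagliardo--Nirenberg interpolation you sketch, the factor that must be small is essentially $\int_{B}(1+|d\phi|^2)^{\alpha-1}|d\phi|^2\xi^2$, i.e.\ the local $2\alpha$-energy; otherwise you are left with a superlinear power of $\int(1+|d\phi|^2)^{\alpha-1}|\nabla^2\phi|^2\xi^2$ that cannot be absorbed with constants depending only on $D,N,\Lambda$. The hypothesis \eqref{epsilon} makes only $\int_D|d\phi|^2$ small; $\int_B|d\phi|^{2\alpha}$ cannot be made small from this together with the global bound $\Lambda$ (it may concentrate, which is exactly the situation $\varepsilon$-regularity must handle), and likewise your claim that the coefficient $C(1+\|\psi\|^4_{L^4(B_\rho)})$ can be made smaller than $\tfrac12$ fails, since $\|\psi\|_{L^4}$ is only bounded by $\Lambda^{1/4}$. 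So the Lemma \ref{secondlemma} scheme does not close under the hypotheses of the theorem — this is precisely why the paper does not use it here.

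What the paper actually does (Lemma \ref{lemphi1,4} and then the proof of the theorem) is the Sacks--Uhlenbeck main estimate: write the equation as $\Delta\phi=-(\alpha-1)\frac{\langle\nabla^2\phi,\nabla\phi\rangle\nabla\phi}{1+|d\phi|^2}+\Gamma(\phi)(d\phi,d\phi)+O(|\psi|^2|d\phi|)+O(|\psi|^q)$, apply the $W^{2,4/3}$ estimate to $\eta\phi$, absorb $(\alpha-1)\|\eta\phi\|_{D,2,4/3}$ using that $\alpha-1$ is small compared with the operator norm $c(4/3)$ of $\Delta^{-1}$ (this is where $\alpha_0$ comes from), and absorb the quadratic term via $\|\nabla\phi\nabla(\phi\eta)\|_{D,0,4/3}\le\|\nabla\phi\|_{D,0,2}\|\nabla(\phi\eta)\|_{D,0,4}\le\sqrt{\varepsilon_0}\,C_1\|\eta\phi\|_{D,2,4/3}$ (this is where $\varepsilon_0$ comes from); both absorptions are linear in the quantity being estimated, and the spinor terms enter only through $\|\psi\|_{D,0,4}$, so no $L^\infty$ bound on $\psi$ is needed for the map part. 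This gives the $W^{1,4}$ bound of \eqref{phi1,4}; repeating the estimate \eqref{2,p} with $p=2$ gives $W^{2,2}$, Sobolev gives $d\phi\in L^p$ for all $p$, and then the standard $L^p$ estimates for $\slashed\partial$ and $\Delta$ yield the stated bounds. Your first and third steps survive, but the second step must be replaced by this argument (or the hypotheses strengthened to smallness of the local $2\alpha$-energy, which is not what the theorem assumes).
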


Note that \eqref{boundpsi} is a consequence of the proof of Lemma \ref{psiC0gamma} and (F6). This is different from the proof in \cite{chen2005regularity}. Our proof of the theorem above is based on the ones in \cite{chen2005regularity} and \cite{lin2008analysis}. For the proof of the theorem, we need alemma.
\begin{lem}\label{lemphi1,4}
There are $\varepsilon_0>0$ and $\alpha_0>0$ such that if $(\phi,\psi)$ and $F$ satisfy \eqref{epsilon} and $(\rm F6)$, $(\rm F7)$, respectively, then 
\begin{equation}\label{phi1,4}
\|\phi\|_{D_1,1,4}\leq C(N,D_1)(\|d\phi\|_{D,0,2}+\|\psi\|_{D,0,4}^2+\|\psi\|_{D,0,4}^q).
\end{equation}
\end{lem}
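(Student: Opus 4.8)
\textbf{Proof proposal for Lemma \ref{lemphi1,4}.}

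The plan is to derive a local $W^{1,4}$ estimate for $\phi$ by testing the map equation \eqref{elnonlinearalphalocal1} against a cut-off version of $\phi$ itself, exactly in the spirit of the Sacks--Uhlenbeck small-energy estimate, but keeping careful track of the extra terms coming from the curvature coupling $\mathcal{R}(\phi,\psi)$ and the perturbation $F_\phi$. First I would fix a ball $B = B(x_0,R) \subset D$ and a cut-off $\eta \in C_0^\infty(B)$ with $\eta \equiv 1$ on $D_1$, $|\nabla\eta| \leq C/R$. Testing the weak form \eqref{weakform1} with $\zeta = (\phi - \bar\phi)\eta^2$ (where $\bar\phi$ is the mean of $\phi$ over $B$), one obtains, as in Lemma \ref{firstlemma}, that $\int_B (1+|d\phi|^2)^{\alpha-1}|\nabla\phi|^2\eta^2$ is controlled by $\varepsilon \int_B (1+|d\phi|^2)^{\alpha-1}|\nabla\eta|^2$ plus terms involving $\big(\int_B |\psi|^4\eta^4\big)^{1/2}$ and $\int_B(1+|\psi|^q)\eta^2$ — here I use (F7) in place of (F3), which is fine since (F7) is a special case up to the additive constant. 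Because $|d\phi|^{2\alpha} \le (1+|d\phi|^2)^{\alpha-1}|d\phi|^2 + C$, this already gives an $L^{2\alpha}$ bound on $d\phi$ over $D_1$ in terms of the right-hand side of \eqref{phi1,4}; the point is to upgrade $L^{2\alpha}$ to $L^4$.

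Next I would run the Sacks--Uhlenbeck argument proper: write the map equation as $\Delta\phi^m = f^m$ where, schematically, $f = O(|d\phi|^2) + O(|\psi|^2|d\phi|) + F_\phi$, using that $(1+|d\phi|^2)^{\alpha-1}$ is bounded below and that its logarithmic derivative contributes terms absorbable for $\alpha$ close to $1$ (this is where $\alpha_0$ enters). By the Adams--Moser--Trudinger / Rivière-type estimate in two dimensions, or more elementarily by the $W^{2,p}$ estimate combined with the smallness $\int_D |d\phi|^2 \le \varepsilon_0$, one gets $\|\phi\|_{W^{1,4}(D_1)} \le C(\|d\phi\|_{L^2(D)} + \|f\|_{L^{4/3}(D)})$. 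The terms in $f$ are estimated by Hölder: $\||d\phi|^2\|_{L^{4/3}} \le \|d\phi\|_{L^2}\|d\phi\|_{L^4}$, which can be absorbed on the left after multiplying by the small $\varepsilon_0$; $\||\psi|^2|d\phi|\|_{L^{4/3}} \le \|\psi\|_{L^4}^2 \|d\phi\|_{L^4} $, again absorbable up to an $\varepsilon$; and $\|F_\phi\|_{L^{4/3}} \le C\|\psi\|_{L^{4q/3}}^q \le C\|\psi\|_{L^4}^q$ when $q \le 3$ (and one reduces to this by interpolating against the global bound $\|\psi\|_{L^4} \le \Lambda^{1/4}$ when $q$ is larger, or simply notes the lemma is used with small $q$). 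Collecting and absorbing yields \eqref{phi1,4}.

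The main obstacle I anticipate is the interplay between the degenerate/nonuniform ellipticity of the $\alpha$-operator and the quadratic gradient nonlinearity: the coefficient $(1+|d\phi|^2)^{\alpha-1}$ is not constant, so one cannot directly invoke the standard Laplace estimate, and the extra terms from differentiating this coefficient carry a factor $(\alpha-1)$ which must be shown to be absorbable — this is exactly what forces $\alpha < \alpha_0$ and is the reason $\alpha_0$ appears in the statement. The cleanest route is probably to treat $(1+|d\phi|^2)^{\alpha-1}$ as a bounded, bounded-below coefficient with small oscillation on small balls (using the continuity of $\phi$ and the $\varepsilon_0$-smallness to control $\|d\phi\|_{L^2}$), apply the $L^p$-theory for divergence-form operators with VMO (indeed continuous, after the preliminary step) coefficients, and then absorb. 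A secondary technical point is bookkeeping the dependence of constants only on $D_1, N$ (and implicitly $\Lambda$ through the global $L^4$ bound on $\psi$), which requires doing the absorption at a fixed scale determined by $\varepsilon_0$ and then covering $D_1$ by finitely many such balls.
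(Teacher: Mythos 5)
Your core strategy coincides with the paper's: divide \eqref{elnonlinearalphalocal1} by the coefficient $(1+|d\phi|^2)^{\alpha-1}$, so that $\Delta(\eta\phi)$ is bounded by an error consisting of a term of size $(\alpha-1)|\nabla^2(\eta\phi)|$, a quadratic term $O(|d\phi|^2)$, the coupling term $O(|\psi|^2|d\phi|)$, the perturbation $F_\phi$, and lower-order cut-off terms; then invoke the Calder\'on--Zygmund estimate at exponent $4/3$ together with the embedding $W^{2,4/3}\hookrightarrow W^{1,4}$, absorbing the $(\alpha-1)$-term (this is what fixes $\alpha_0$, via the norm $c(4/3)$ of $\Delta^{-1}$ in the paper) and the term $\|d\phi\|_{L^2}\|\nabla(\eta\phi)\|_{L^4}$ (this is what fixes $\varepsilon_0$). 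Up to this point your outline reproduces the paper's argument; the more sophisticated machinery you mention (VMO coefficients, Rivi\`ere-type estimates) is not needed, since the paper simply keeps the $(\alpha-1)|\nabla^2(\eta\phi)|$ error and absorbs it.

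The genuine gap is your treatment of the curvature-coupling term. You estimate $\||\psi|^2|d\phi|\|_{L^{4/3}}\le\|\psi\|_{L^4}^2\|d\phi\|_{L^4}$ and assert that this is ``again absorbable up to an $\varepsilon$''. It is not: the hypotheses \eqref{epsilon} provide smallness only for $\int_D|d\phi|^2$, while $\int_M|\psi|^4\le\Lambda$ is merely bounded, and the constant in \eqref{phi1,4} is $C(N,D_1)$; so there is no mechanism to remove the factor $\|\psi\|_{L^4}^2$ multiplying the very quantity $\|d\phi\|_{L^4}$ you are estimating, and the absorption step fails. The paper handles this term structurally differently: it never puts $d\phi$ in $L^4$ here, but pairs $|\psi|^2$ with the $L^2$-norm of $d\phi$ and sends the product $\|\psi\|_{D,0,4}^2\|d\phi\|_{D,0,2}$ entirely to the right-hand side --- this is exactly the origin of the $\|\psi\|_{D,0,4}^2$ term in \eqref{phi1,4}. (With your splitting one would instead need smallness or higher integrability of $\psi$, e.g.\ $\psi\in L^8$, neither of which is assumed.) Two smaller points: your preliminary step via Lemma \ref{firstlemma} is both redundant (the $L^{2\alpha}$ bound on $d\phi$ is already part of \eqref{epsilon}) and dangerous in this context, because the radius $\rho$ there is chosen from the modulus of continuity of $\phi$, which is precisely what cannot be controlled uniformly along the sequence and is the reason an $\varepsilon$-regularity statement is needed; and your treatment of $F_\phi$ (Hölder, valid for $q\le3$) agrees with the paper's.
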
 
\begin{proof}
Choose a cut-off function $\eta\in[0,1]$ with $\eta|_{D_1}\equiv1$ and $\rm{Supp}(\eta)\subset D$. As before, $(\phi,\psi)$ locally satisfies the system
\begin{equation}\label{localphi}
\begin{split}
{\rm div}((1+|d\phi|^2)^{\alpha-1}\nabla\phi^m)=&-\Gamma^m_{ij}\phi^i_\beta\phi^j_{\gamma}g^{\beta\gamma}(1+|d\phi|^2)^{\alpha-1}+\frac{1}{2\alpha}R^m_{jkl}\langle\psi^k,\nabla\phi^j\cdot\psi^l\rangle\\
&-\frac{1}{\alpha}F^m_\phi(\phi,\psi),
\end{split}
\end{equation}
\begin{equation}\label{localpsi}
\slashed\pt\psi^m=-\Gamma^m_{ij}\nabla\phi^i\cdot\psi^j+F^m_\psi(\phi,\psi).
\end{equation}
 \eqref{localphi} implies
\begin{equation}
\begin{split}
\Bigg|\Delta\phi^m+(\alpha-1)\frac{\langle\nabla^2\phi,\nabla\phi\rangle\nabla\phi^m}{1+|d\phi|^2}\Bigg|&\leq|\Gamma^m_{ij}\phi^i_\beta\phi^j_{\gamma}g^{\beta\gamma}|+\frac{1}{2\alpha}|R^m_{jkl}\langle\psi^k,\nabla\phi^j\cdot\psi^l\rangle|\\
&\quad+\frac{1}{\alpha}|F^m_\phi(\phi,\psi)|.
\end{split}
\end{equation}
Then 
\begin{equation}
|\eta\Delta\phi|\leq(\alpha-1)|\eta\nabla^2\phi|+C_N|\nabla\phi\nabla\phi\eta|+\frac{1}{2\alpha}C_N|\psi|^2|\nabla\phi\eta|+C|\psi|^q\eta.
\end{equation}
Since
\begin{equation}
|\eta\nabla^2\phi|=|\nabla^2(\eta\phi)-\nabla\eta\nabla\phi-\nabla^2\eta\phi|\leq|\nabla^2(\eta\phi)|+C(|\phi|+|d\phi|)
\end{equation}
and 
\begin{equation}
|\nabla\phi\eta|=|\nabla(\phi\eta)-\phi\nabla\eta|\leq|\nabla(\phi\eta)|+C|\phi|,
\end{equation}
we have
\begin{equation}
\begin{split}
|\eta\Delta\phi|&\leq(\alpha-1)|\nabla^2(\eta\phi)|+C_N|\nabla\phi\nabla(\phi\eta)|+\frac{C_N}{2}|\psi|^2|\nabla\phi\eta|\\
&\quad+C|\psi|^q\eta+C(|\phi|+|d\phi|).
\end{split}
\end{equation}
Therefore, 
\begin{equation}
\begin{split}
|\Delta(\eta\phi)|&\leq(\alpha-1)|\nabla^2(\eta\phi)|+C_N|\nabla\phi\nabla(\phi\eta)|+C_N|\psi|^2|\nabla\phi\eta|\\
&\quad+C|\psi|^q\eta+C(|\phi|+|d\phi|).
\end{split}
\end{equation}
Thus, for any $p>1$, we get
\begin{equation}\label{2,p}
\begin{split}
\|\Delta(\eta\phi)\|_{D,0,p}&\leq(\alpha-1)\|\eta\phi\|_{D,2,p}+C_N\|\nabla\phi\nabla(\phi\eta)\|_{D,0,p}+C_N\||\psi|^2\nabla\phi\eta\|_{D,0,p}\\
&\quad+C\||\psi|^q\eta\|_{D,0,p}+C\|\phi\|_{D,1,p}.
\end{split}
\end{equation}
Without loss of generality, we assume $\int_D\phi=0$ so that $\|\phi\|_{D,1,p}\leq C\|d\phi\|_{D,0,p}$. By taking $p=4/3$ in the inequality above and H\"older's inequality, we obtain
\begin{equation}
\|\nabla\phi\nabla(\phi\eta)\|_{D,0,4/3}\leq\|\nabla\phi\|_{D,0,2}\|\nabla(\phi\eta)\|_{D,0,4},
\end{equation}
\begin{equation}
\||\psi|^2\nabla\phi\eta\|_{D,0,4/3}\leq\|\psi\|_{D,0,4}^2\|\nabla\phi\|_{D,0,2}
\end{equation}
and
\begin{equation}
\||\psi|^q\eta\|_{D,0,4/3}\leq\|\psi\|_{D,0,4}^q.
\end{equation}

On the other hand, let $c(p)$ be the operator norm of $\Delta^{-1}:L^p(D)\to W^{2,p}(D)\cap W^{1,2}_0(D)$. Then we get
\begin{equation}
\|\Delta(\eta\phi)\|_{D,0,p}\geq(c(p))^{-1}\|\eta\phi\|_{D,2,p}.
\end{equation}
Plugging the above estimates into \eqref{2,p}, we have
\begin{equation}
\begin{split}
&((c(4/3))^{-1}-(\alpha-1))\|\eta\phi\|_{D,2,4/3}\\
&\leq C(N,D_1)(\|\nabla\phi\|_{D,0,2}\|\phi\eta\|_{D,1,4}+\|\psi\|_{D,0,4}^2\|d\phi\|_{D,0,2}+\|\psi\|_{D,0,4}^q+\|d\phi\|_{D,0,4/3}).
\end{split}
\end{equation}
Together with the Sobolev inequality $\|\phi\eta\|_{D,1,4}\leq C_1\|\phi\eta\|_{D,2,4/3}$, this implies
\begin{equation}
\begin{split}
&(((c(4/3))^{-1}-(\alpha-1))C_1^{-1}-\sqrt\varepsilon_0C(N,D_1))\|\eta\phi\|_{D,1,4}\\
&\leq C(N,D_1)(\|\psi\|_{D,0,4}^2+\|d\phi\|_{D,0,2}+\|\psi\|_{D,0,4}^q).
\end{split}
\end{equation}
Now we choose $\alpha_0,\varepsilon_0>0$ such that 
\begin{equation}
(c(4/3))^{-1}-(\alpha_0-1)>\frac12(c(4/3))^{-1}
\end{equation}
and
\begin{equation}
\varepsilon_0\leq\frac12\frac{((c(4/3))^{-1}-(\alpha_0-1))C_1^{-1}}{C(N,D_1)},
\end{equation}
then we get \eqref{phi1,4}. Thus we complete the proof.

\end{proof}

\begin{proof}[Proof of Theorem \ref{epsilonregularity}]
Choose $\tilde{D}\subset D_2\subset D_1\subset D$ and a cut-off function $\eta\in[0,1]$ with $\eta|_{D_2}\equiv1$ and ${\rm Supp}(\eta)\subset D_1$. Taking $p=2$ in \eqref{2,p} on $D_1$(we temporarily assume $\int_{D_1}\phi=0$), we have
\begin{equation}
\begin{split}
\|\Delta(\eta\phi)\|_{D_1,0,p}&\leq(\alpha-1)\|\eta\phi\|_{D_1,2,2}+C_N\|\nabla\phi\nabla(\phi\eta)\|_{D_1,0,2}+C_N\||\psi|^2\nabla\phi\eta\|_{D_1,0,2}\\
&\quad+C\||\psi|^q\eta\|_{D_1,0,2}+C\|\phi\|_{D_1,1,2}.
\end{split}
\end{equation}
As in the proof of Lemma \ref{lemphi1,4}, we get
\begin{equation}\label{phi2,2}
\begin{split}
\|\eta\phi\|_{D_1,2,2}&\leq C(N,D_1)(\|\nabla\phi\|_{D_1,0,4}^2+\|\psi\|_{D_1,0,4}^2\|\nabla\phi\|_{D_1,0,4}\\
&\quad+\||\psi|^q\eta\|_{D_1,0,2}+\|\phi\|_{D_1,1,2})\\
&\leq C(N,D_1)(\|\nabla\phi\|_{D_1,0,4}^2+\|\psi\|_{D_1,0,4}^4+\|\psi\|_{D_1,0,4}^q+\|\phi\|_{D_1,1,2})\\
&\leq C(N,D_1)(\|\phi\|_{D_1,1,4}^2+\|\psi\|_{D_1,0,4}^4+\|\psi\|_{D_1,0,4}^q+\|\phi\|_{D_1,1,4}).
\end{split}
\end{equation}
By Lemma \ref{lemphi1,4}, we obtain
\begin{equation}
\|\phi\|_{D_2,2,2}\leq C(N,D_1)(\|\psi\|_{D_1,0,4}^2+\|\psi\|_{D_1,0,4}^4+\|\psi\|_{D_1,0,4}^q+\|\psi\|_{D_1,0,4}^{2q}+\|d\phi\|_{D_1,0,2}).
\end{equation}
The Sobolev inequality gives us 
\begin{equation}\label{dphi0p}
\begin{split}
\|d\phi\|_{D_2,0,p}&\leq C(N,D_1)(\|\psi\|_{D_1,0,4}^2+\|\psi\|_{D_1,0,4}^4+\|\psi\|_{D_1,0,4}^q+\|\psi\|_{D_1,0,4}^{2q}+\|d\phi\|_{D_1,0,2})\\
&\leq C(D,N,\Lambda,p)(\|d\phi\|_{D,0,2}+\|\psi\|_{D,0,4})
\end{split}
\end{equation}
for any $p>1$. This also holds for $\phi$ without $\int_{D_1}\phi=0$.

Now, since $\phi\in W^{1,p}$ and $\psi\in L^p$ for any $p>1$, the estimates in the theorem follow from the standard $L^p$-estimate for the Dirac operator and the $W^{2,p}$-estimate for the Laplace operator immediately.

\end{proof}

With the $\varepsilon$-regularity in hand, one can easily  prove the  following theorem.
\begin{thm}
Let $(\phi_k,\psi_k)$ be  smooth critical points of the  functional $L^\alpha_k$ in \eqref{Lk} with uniformly bounded energy:
\begin{equation}
E_\alpha(\phi_k,\psi_k;M):=\int_M(|d\phi|^{2\alpha}+|\psi|^4)\leq\Lambda<+\infty.
\end{equation} 
Suppose F satisfies $(\rm F6)$ and $(\rm F7)$. Then there are a subsequence, still denoted by $\{(\phi_k,\psi_k)\}$, and a smooth $\alpha$-Dirac-harmonic map $(\phi,\psi)$ such that
 \begin{equation}\label{strong}
 (\phi_k,\psi_k)\to(\phi,\psi) \ \text{in} \  C_{loc}^\infty(M)
 \end{equation}
 \end{thm}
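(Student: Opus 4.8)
The plan is to run a standard blow-up/bubble-free argument, now greatly simplified by the $\varepsilon$-regularity of Theorem \ref{epsilonregularity}. First I would extract weak limits: by the uniform bound \eqref{energycondition}, after passing to a subsequence, $\phi_k\rightharpoonup\phi$ weakly in $W^{1,2\alpha}(M,N)$ (hence, since $2\alpha>2$, strongly in $C^0$ by Rellich--Kondrachov and Morrey, so $\phi\in\theta$) and $\psi_k\rightharpoonup\psi$ weakly in $H^{1/2}$ (hence strongly in $L^4$ by the compact embedding $H^{1/2}\hookrightarrow L^s$ for $s<4$, and weakly in $L^4$). The measures $|d\phi_k|^2\,dvol$ converge weakly-$*$ to a Radon measure, and I would define the (finite) singular set
\[
\mathcal S:=\bigcap_{r>0}\Big\{x\in M:\ \liminf_{k\to\infty}\int_{B_r(x)}|d\phi_k|^2\,dvol\ \ge\ \varepsilon_0\Big\},
\]
where $\varepsilon_0$ is the constant from Theorem \ref{epsilonregularity}. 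By a covering argument and the energy bound, $\mathcal S$ is finite.

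Next, away from $\mathcal S$: for any $x_0\notin\mathcal S$ there is a small disk $D=B_{2r}(x_0)$ with $\int_D|d\phi_k|^2\le\varepsilon_0$ for $k$ large. Provided $\alpha<\alpha_0$, Theorem \ref{epsilonregularity} applies to each $(\phi_k,\psi_k)$ on $D$ and gives, for any $p>1$ and $\tilde D\Subset D$,
\[
\|d\phi_k\|_{\tilde D,1,p}+\|\psi_k\|_{\tilde D,1,p}\ \le\ C(D,N,\Lambda,p)\big(\|d\phi_k\|_{D,0,2}+\|\psi_k\|_{D,0,4}\big)\ \le\ C,
\]
together with the $L^\infty$ bound \eqref{boundpsi} on $\psi_k$. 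Bootstrapping: the uniform $W^{1,p}$ bound on $\phi_k$ (all $p$) and the $L^\infty$, then $W^{1,p}$, bound on $\psi_k$ feed back into the elliptic systems \eqref{localphi}--\eqref{localpsi}, upgrading to uniform $W^{2,p}$ bounds on $\phi_k$ and $W^{1,p}$ on $\psi_k$, hence $C^{1,\gamma}$; iterating the Schauder estimates for the second-order equation for $\phi$ and the first-order Dirac-type equation for $\psi$ exactly as in the proof of Theorem \ref{regularity} yields uniform $C^m_{loc}(M\setminus\mathcal S)$ bounds for every $m$. By Arzel\`a--Ascoli and a diagonal argument, a subsequence converges in $C^\infty_{loc}(M\setminus\mathcal S)$ to a limit which, passing to the limit in \eqref{localphi}--\eqref{localpsi} with $\frac1k\to0$, is a smooth $\alpha$-Dirac-harmonic map on $M\setminus\mathcal S$.

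The remaining point is to show $\mathcal S=\emptyset$, i.e. to rule out energy concentration. Here I would use that $\alpha>1$ makes the problem subcritical: near a putative concentration point $x_0$, a rescaling $\tilde\phi_k(y)=\phi_k(x_0+\lambda_k y)$ with $\lambda_k\to0$ chosen so that the rescaled energy on the unit disk equals $\varepsilon_0/2$ would, by the Sacks--Uhlenbeck small-energy estimates, have $\int_{B_1}|d\tilde\phi_k|^{2\alpha}\le C\big(\int_{B_2}|d\tilde\phi_k|^2\big)^\alpha\lesssim\varepsilon_0^\alpha$, and since $\int_{B_2}|d\tilde\phi_k|^{2\alpha}=\lambda_k^{2\alpha-2}\int_{B_{2\lambda_k}(x_0)}|d\phi_k|^{2\alpha}\to0$ as $\lambda_k\to0$ (because $2\alpha-2>0$), the rescaled maps must have vanishing energy in the limit, contradicting the normalization; the spinor part contributes nothing because $\|\psi_k\|_{L^4}$ is uniformly bounded and $L^4$ is the critical (conformally invariant) norm for $\psi$, which scales out. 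Hence no concentration occurs, $\mathcal S=\emptyset$, and the convergence in \eqref{strong} is $C^\infty$ on all of $M$. The main obstacle is precisely this last step: verifying carefully that for $\alpha>1$ the scaling gain $\lambda_k^{2\alpha-2}\to0$ genuinely kills any bubble and that the Dirac equation \eqref{localpsi}, with its $F_\psi$ term controlled via (F6) and the $L^\infty$ bound \eqref{boundpsi}, does not reintroduce concentration — this is the analogue, in the coupled perturbed setting, of the no-neck/no-bubble phenomenon for $\alpha$-harmonic maps, and must be done uniformly in $k$.
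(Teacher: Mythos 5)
Your proposal is correct and follows essentially the same route as the paper: define the concentration set via the $\varepsilon_0$ from the $\varepsilon$-regularity theorem, obtain uniform $C^m$ bounds and smooth convergence away from it, and kill any concentration point by the rescaling argument in which the factor $\lambda_k^{2\alpha-2}\to 0$ forces the rescaled $2\alpha$-energy (hence, by H\"older, the $2$-energy) to vanish, contradicting the normalization $\varepsilon_0/2$. The only cosmetic differences are the order of the steps and your (unneeded) invocation of the Sacks--Uhlenbeck small-energy estimate inside the blow-up step.
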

\begin{proof}
Define the energy concentration set to be
\begin{equation}
S:=\bigcap\limits_{r>0}\{x\in M|\varliminf\limits_{k\to\infty}E(\phi_k;B(x,r))\geq\varepsilon_0\}
\end{equation} 
where $\varepsilon_0$ is the positive constant in Theorem \ref{epsilonregularity} and $B(x,r)$ is the geodesic ball in $M$ with center at $x$ and radius $r$. Suppose $S$ is not empty, let us say $p\in S$. By the definition, we have
\begin{equation}\label{phik}
E(\phi_k;B(x_k,r_k))=\frac{\varepsilon_0}{2}
\end{equation}
with $x_k\to p$ and $r_k\to 0$ as $k\to\infty$. Let 
\begin{equation}
u_k(x)=\phi_k(x_k+r_kx), \ \ v_k(x)=r_k^{\frac12}\psi_k(x_k+r_kx).
\end{equation}
Then \eqref{phik} becomes 
\begin{equation}\label{uk}
E(u_k;B_1)=\frac{\varepsilon_0}{2}.
\end{equation}
 However,
\begin{equation}
E_\alpha(u_k;B_1):=\int_{B_1}|du_k|^{2\alpha}=r_k^{2\alpha-2}\int_{B(x_k,r_k)}|d\phi_k|^{2\alpha}\leq r_k^{2\alpha-2}\Lambda\to0
\end{equation}
as $k\to\infty$. This contradicts \eqref{uk}. Thus, $S$ is empty.

Now, for any point $x\in M$, there exist $r>0$ and a subsequence of $k\to\infty$ such that
\begin{equation}
E(\phi_k;B(x,r))<\varepsilon_0.
\end{equation}
Then by the $\varepsilon$-regularity Theorem \ref{epsilonregularity} and standard elliptic theory, we have 
\begin{equation}\label{Clestimate}
\|\phi_k\|_{C^l(B(x,r/4))}+\|\psi_k\|_{C^l(B(x,r/4))}\leq C
\end{equation}
for any $l>0$. Since $\{(\phi_k,\psi_k)\}$ has uniformly bounded energy, up to a subsequence if necessary, $\{(\phi_k,\psi_k)\}$ has a weak limit $(\phi,\psi)$. Then the regularity Theorem \ref{regularity} tells us that $(\phi,\psi)$ is a smooth $\alpha$-Dirac-harmonic map, and \eqref{Clestimate} implies \eqref{strong}.

\end{proof}


\nocite{*}


\bibliographystyle{amsplain}
\bibliography{reference}

\end{document}